\documentclass[10pt]{article}
\usepackage{amsmath,amsfonts,amssymb,amsthm,mathrsfs}
\usepackage[a4paper,vmargin={3.5cm,3.5cm},hmargin={2.5cm,2.5cm}]{geometry}
\usepackage[font=sf, labelfont={sf,bf}, margin=1cm]{caption}
\usepackage{graphicx,graphics}
\usepackage{latexsym}
\usepackage[applemac]{inputenc}
\usepackage{ae,aecompl}
\usepackage[english]{babel}
 \usepackage[colorlinks=true]{hyperref}
 \usepackage{bbm}
\usepackage{enumerate}
\linespread{1.2}

\newcommand{\E}{\mathbb{E}}
\def\e{{\mathcal{E}}}
\def\f{\mathcal{F}}
\def\a{\mathcal{A}}
\def\b{\mathcal{B}}
\def\g{{\mathcal{G}}}
\def\u{{\mathcal{U}}}
\def\l{{\mathcal{L}}}
\def\ve{\varepsilon}
\def\la{{\longrightarrow}}
\def\h{{\mathcal{H}}}
\def\wt{\widetilde}
\def\tc{{\mathscr{T}}}
\def\t{{\mathcal{T}}}
\def\ov{\overline}

\def\T{{\mathbb{T}}}
\def\P{{\mathbb{P}}}
\def\Z{{\mathbb{Z}}}
\def\F{{\mathbb{F}}}
\def\C{{\mathbb{C}}}
\def\R{{\mathbb{R}}}
\def\bm{\mathbf{m}}
\def\bp{\mathbf{p}}

\def\build#1_#2^#3{\mathrel{
\mathop{\kern 0pt#1}\limits_{#2}^{#3}}}

\newtheorem{theorem}{Theorem}[]

\newtheorem{remark}{Remark}[]
\newtheorem{definition}{Definition}[]
\newtheorem{proposition}[]{Proposition}
\newtheorem{lemma}[proposition]{Lemma}
\newtheorem{corollary}[proposition]{Corollary}

\title{\textsc{First-passage percolation and local modifications\\ of distances in random triangulations}\footnote{Supported in part
by the grant ANR-14-CE25-0014 and by the Newton Institute for Mathematical Sciences}}
\date{}
\begin{document}

\author{Nicolas Curien\footnote{Université Paris-Sud, nicolas.curien@gmail.com}~ and Jean-Fran\c cois Le Gall\footnote{Université Paris-Sud, jean-francois.legall@math.u-psud.fr}}
\maketitle

\begin{abstract} We study local modifications of the graph distance in large random triangulations. Our main results
show that, in large scales, the modified distance behaves like a deterministic constant $\mathbf{c}~\in~(0,\infty)$ times the usual
graph distance. This applies in particular to the first-passage percolation distance obtained by
assigning independent random weights to the edges of the graph. We also consider the graph distance on the dual map, and the first-passage percolation
on the dual map with exponential edge weights, which is closely related to the so-called Eden model. In the latter two cases, we are able to compute explicitly
the constant $\mathbf{c}$ by using earlier results about asymptotics for the peeling process. In general however, the constant $\mathbf{c}$ is obtained from a subadditivity argument 
in the infinite half-plane model that describes the 
asymptotic shape of the triangulation near the boundary of a large ball.
Our results apply in particular to the infinite random triangulation known as the UIPT, and show that balls of the 
UIPT for the modified distance are asymptotically close to balls for the graph distance.
 \end{abstract}
 
 \section{Introduction}
 
 In the recent years, there has been much effort to understand the large-scale geometry of
 random planar maps viewed as random metric spaces for the usual graph distance on their
 vertex set. A major achievement in the area is the construction and study of the so-called
 Brownian map, which has been proved to be the universal scaling limit of many
 different classes of planar maps equipped with the graph distance (see \cite{LG11,Mie11} and
 more recently \cite{Ab13, AA13, BJM13}). An account of these developments can be
 found in the surveys \cite{LeGallICM, MieStFlour}. In the present work, we replace the graph distance
 by other natural choices of distances on the vertex set or on the set of faces, and we show 
 that, in large scales, these new distances behave like the original graph distance, up to 
 a constant multiplicative factor.  In particular, we prove that the vertex set of a 
 uniformly distributed random plane triangulation with $n$ vertices equipped with the (suitably rescaled) first-passage percolation
 distance obtained by assigning independent random lengths to the edges converges in distribution 
 to the Brownian map as
 $n\to\infty$, in the Gromov--Hausdorff sense. So, in some sense, the extra randomness coming 
 from the weights assigned to the edges plays no role in the limit. This is a new illustration of 
 the universality of the Brownian map as a two-dimensional model of random geometry. We mention here that
 the study of discrete or continuous models of random geometry has been strongly motivated by
 their relevance to various domains of theoretical physics, and in particular to the so-called
 two-dimensional quantum gravity. Discrete random geometry has been the subject 
 of intensive research in the physics literature since the beginning of the eighties when Polyakov suggested  to solve 
 questions coming from string theory and quantum gravity by developing a formalism to calculate sums over
 random surfaces, as a kind of analog of the famous Feynman path integrals. We refer to the book \cite{ADJ97}
 for an overview of the use of discrete random surfaces in theoretical physics.
 
 Let us turn to a more detailed description of our main results. We recall that a planar map is 
a proper embedding of a finite connected (multi)graph in the two-dimensional sphere, viewed up to 
orientation-preserving homeomorphisms of the sphere. We will always consider rooted planar maps, which means
that there is a distinguished oriented edge whose initial vertex is called the root vertex. The 
faces are the connected components of the complement of edges, and 
a planar map is called a triangulation if all its 
faces are triangles (possibly with two sides glued together).

\paragraph{Modified distances.}  If $ \mathrm{m}$ is a rooted planar map, we let $ \mathsf{V}( \mathrm{m}), \mathsf{E}( \mathrm{m})$ and $ \mathsf{F}( \mathrm{m})$ denote
respectively the sets of vertices, edges and faces of $ \mathrm{m}$. The set $ \mathsf{V}( \mathrm{m})$ is usually equipped
with the graph distance, which is denoted by $ \mathrm{d_{gr}}$ (or $ \mathrm{d}_{ \mathrm{gr}}^{ \mathrm{m}}$ if there is a risk of ambiguity). 
We introduce the following modifications of the graph distance.

 \begin{description}
\item[Case 0.] \textsc{First-passage (bond) percolation}. Assign independent identically distributed positive random variables  $ w(e)$  to all edges $e \in \mathsf{E}( \mathrm{m})$.
We assume that the common distribution of the  ``weights'' $w(e)$ is supported on
$[\kappa,1]$ for some $\kappa\in(0,1]$. The associated first-passage percolation distance is defined on $  \mathsf{V}( \mathrm{m})$ by setting for any $x, y \in \mathsf{V}( \mathrm{m})$,
$$ \mathrm{d}_{\rm fpp}(x,y) = \inf_{ \gamma : x \to y}\  \sum_{e \in \gamma} w(e),$$ 
where the infimum runs over all paths $\gamma$ going from $x$ to $y$ in the map $ \mathrm{m}$.
\item[Case 1.] \textsc{Dual graph distance}. Consider the dual map $ \mathrm{m}^\dagger$, whose vertices are the faces of $ \mathrm{m}$, and each edge $e$ of $ \mathrm{m}$ corresponds to an edge of $ \mathrm{m}^\dagger$ connecting the two (possibly equal)
faces incident to $e$. We may then consider the graph distance on $\mathsf{V}( \mathrm{m}^\dagger)=\mathsf{F}( \mathrm{m})$, which we denote by $\mathrm{d}^\dagger_{\rm gr}$.
\item[Case 2.] \textsc{Eden model.} This is the first-passage percolation model on $ \mathrm{m}^\dagger$ corresponding to exponential edge weights. More precisely, we assign independent exponential random variables
with parameter $1$ to the edges of $ \mathrm{m}^\dagger$ (or equivalently to the edges of $ \mathrm{m}$) and the associated first-passage percolation distance on $\mathsf{F}( \mathrm{m})=\mathsf{V}( \mathrm{m}^\dagger)$ is denoted by 
$\mathrm{d}^\dagger_{\rm Eden}$. We call this the Eden model because of the close relation with the classical Eden growth model, see 
in particular \cite[Section 6]{AB14} or \cite[Section 1.2.1]{MS13}, and also \cite[Proposition 15]{CLGpeeling}. 
\end{description}
The functions $ \mathrm{d_{gr}}$ and $ \mathrm{d_{fpp}}$ are distances on $ \mathsf{V}( \mathrm{m})$ whereas $ \mathrm{d}_{ \mathrm{gr}}^\dagger$ and $ \mathrm{d}_{ \mathrm{Eden}}^\dagger$ are distances on $ \mathsf{F}( \mathrm{m})$. To compare the latter distances to the usual graph metric on $ \mathrm{m}$, we will replace faces by incident vertices, and we use the notation $x\triangleleft f$ to mean that the vertex $x$ is incident to the face $f$.

\paragraph{Finite triangulations.} We consider these ``modified distances'' when $ \mathrm{m}=\t_n$ is a 
random planar map chosen uniformly at random
in the set of all rooted plane triangulations with $n+1$ vertices (we consider type I triangulations where loops and multiple edges are allowed). 
 In each of the previous cases, we are able to prove that the modified distances behave in large scales like a deterministic constant times the graph distance on $\mathsf{V}(\t_n)$. More precisely, there exist constants $  \mathbf{c}_{0}, \mathbf{c}_{1}$ and $ \mathbf{c}_{2}$ in $(0,\infty)$ such that we have the following three convergences in probability
\begin{eqnarray}
\label{intro-1}
n^{-1/4} \sup_{ \begin{subarray}{c}x, y \in \mathsf{V}(\t_{n})\\ {\color{white} x\triangleleft f \mathrm{\ and \ }y\triangleleft g} \end{subarray}} \big| \mathrm{d_{fpp}}(x,y) - \mathbf{c}_{0}\cdot \mathrm{d_{gr}}(x,y)\big| &\xrightarrow[n\to\infty]{}&0,\\
\label{intro-12} n^{-1/4} \sup_{ \begin{subarray}{c} x, y \in \mathsf{V}(\t_{n}), \ 
f,g\in \mathsf{F}(\t_n)\\
x\triangleleft f \mathrm{\ and \ }y\triangleleft g
\end{subarray}}
 \big| \mathrm{d^\dagger_{gr}}(f,g) - \mathbf{c}_1\cdot \mathrm{d_{gr}}(x,y)\big| &\xrightarrow[n\to\infty]{}&0,\\
\label{intro-13}n^{-1/4} \sup_{ \begin{subarray}{c} x, y \in \mathsf{V}(\t_{n}), \ 
f,g\in \mathsf{F}(\t_n)\\
x\triangleleft f \mathrm{\ and \ }y\triangleleft g
\end{subarray}}
 \big| \mathrm{d^\dagger_{Eden}}(f,g) - \mathbf{c}_2\cdot \mathrm{d_{gr}}(x,y)\big| &\xrightarrow[n\to\infty]{}&0.
 \end{eqnarray}

Since the convergence of rescaled triangulations to the Brownian map \cite{LG11} implies that the typical graph distance  
between two vertices of $\t_n$ is of order $n^{1/4}$, the convergence \eqref{intro-1} shows that in large scales $\mathrm{d_{fpp}}(x,y) $ is proportional to $\mathrm{d_{gr}}(x,y)$.
In fact \eqref{intro-1} implies that the set $\mathsf{V}(\t_n)$ equipped 
with the metric $n^{-1/4}\mathrm{d_{fpp}}$ converges in distribution in the Gromov--Hausdorff sense to (a scaled version of) the Brownian map, and that this convergence
takes place jointly with that of $(\mathsf{V}(\t_n),n^{-1/4}\mathrm{d_{gr}})$ proved in \cite{LG11}
(see Corollary \ref{cor:brownianmap} below). Similarly \eqref{intro-12} shows that uniform rooted trivalent maps with $n$ faces (which are the dual maps of rooted triangulations with $n$ vertices) converge after rescaling toward the Brownian map. 

In case $0$., the constant $ \mathbf{c}_{0}$ depends on the distribution of the weights and an explicit calculation of this constant seems hopeless. However in cases 1.~and 2.~(dual graph and Eden model) the constants can be computed exactly and we have $$\mathbf{c}_1=1+2\sqrt{3} \quad \mbox{and} \quad \mathbf{c}_2=2\sqrt{3}.$$
The reason why these two models are more tractable is the fact that balls for the dual graph distance or for the Eden model can be generated and studied
via an algorithmic procedure known as the peeling process, which was first discussed by Angel \cite{Ang03}, following the
work of Watabiki \cite{Wat95} in the physics literature. These peeling explorations
for balls have been studied in detail in the case of the Uniform Infinite Planar Triangulation (UIPT) in \cite{CLGpeeling}, see also \cite{AB14}
and \cite{Bud15}. Budd's results in \cite{Bud15} apply to more general classes of random planar maps, known as Boltzmann planar maps, 
and as mentioned in the introduction of \cite{Bud15} they might allow the explicit calculation of other scaling constants arising when
considering different metrics on these  random graphs.

\paragraph{The UIPT.}  We can also state versions of our results on the UIPT, which is
the random infinite lattice first discussed by Angel and Schramm \cite{AS03} (in fact, Angel and Schramm did not consider type I
 triangulations, but that case is a special instance of the constructions in Stephenson \cite{St14}). The UIPT, which will be denoted by
 $\t_\infty$, is the local limit of uniformly distributed plane triangulations with $n$ faces when $n\to\infty$. 
 We can equip the vertex set of the UIPT with the usual graph distance $\mathrm{d}_{\mathrm{gr}}$ or with a modified distance as above. 
 For simplicity, let us consider
 only the first-passage percolation distance $\mathrm{d_{fpp}}$  defined as previously from i.i.d.~edge weights (case 0.). 
 For every $r>0$, write $B_r(\t_\infty)$ for the planar map obtained by keeping only those faces of $\t_\infty$ that contain
 at least one vertex at graph distance strictly less than $r$ from the root vertex, and define
 $ B^\mathrm{fpp}_{r}(\t_\infty)$ analogously, replacing the graph distance by the first-passage percolation distance.
 Under the
 same assumptions on the weights, we prove 
 (Theorem \ref{balls-uipt}) that, for every $\ve>0$,
\begin{equation}
\label{intro-2}
\lim_{r\to\infty} \P\Bigg(\sup_{x,y\in \mathsf{V}(B_r(\t_\infty))} \big|\mathrm{d}_\mathrm{fpp}(x,y) 
-\mathbf{c}_{0}\cdot \mathrm{d}_\mathrm{gr}(x,y)\big| > \ve r\Bigg) = 0,
\end{equation}
with the same constant $ \mathbf{c}_{0}$ as above.
It follows that the inclusions
\begin{equation} \label{eq:intro-ball} B_{(1-\ve)r/\mathbf{c}_{0}}(\t_\infty) \subset B^\mathrm{fpp}_{r}(\t_\infty)
\subset B_{(1+\ve)r/\mathbf{c}_{0}}(\t_\infty) 
\end{equation}
hold with probability tending to $1$ as $r\to\infty$. In other words, large balls for the first-passage percolation
distance are close to balls for the graph distance. Similar results hold for the graph distance 
or the Eden distance on the dual map
of the UIPT (see Theorem \ref{balls-uipt-dual}). The particular case of the Eden model answers a question \cite[Question 9.14]{MS13}
raised by Miller and Sheffield, who used the Eden growth model as a  motivation for introducing the so-called Quantum Loewner Evolution of parameter $( \frac{8}{3},0)$. Notice that the value of $ \mathbf{c}_{2}$ had already been conjectured in \cite[Remark 5]{AB14}.

\paragraph{Comparison with other models of FPP.} Let us briefly discuss the connections between our results and the vast literature on (bond) first-passage
percolation on regular lattices such as $\Z^d$. The fact \eqref{eq:intro-ball} that our first-passage percolation distance grows balls that
are close to ``deterministic'' balls for the graph distance can be seen as a counterpart to the classical shape
theorem on regular lattices (see the survey papers \cite{How04,ADH15}).  On the other hand, very little is known about
the asymptotic shape of balls for first-passage percolation on $\Z^d$, and this shape 
is not expected to be the round ball since the anisotropy of $ \mathbb{Z}^d$ might persist in the limit. When the underlying lattice is the Delaunay triangulation of a standard Poisson point process on $ \mathbb{R}^2$, rotational invariance is restored and the limit shape is a round ball as proved in \cite{VW92}. However,  the dilation factor of the ball, known as the time constant, remains out of reach. 

Similarly to the case of Delaunay triangulations, our random setting of the UIPT is in a sense
more ``isotropic'' than regular lattices (even though the graph is not embedded and so the meaning of rotational invariance is unclear) which explains intuitively why balls in the modified metric grow
roughly like balls for the graph distance. It is remarkable that one can compute the values of the ``time constants'' in the 
particular cases 1.~and 2. Let us also mention that first-passage percolation on random graphs has been considered 
in other models, either in the ``dense'' case (e.g. supercritical Erd\"os-Renyi random graphs) \cite[Chapter 8]{RemcoRGII} or in tree-like graphs \cite{Stu15}. In these cases too, the resulting first-passage percolation metric 
is typically proportional to the graph distance, with an explicit multiplicative constant.  
 
\paragraph{Extensions.} 
Our techniques should extend to much more general settings. In case 0.~in particular, we expect that the assumption on the distribution of weights  can be relaxed significantly. Assuming that this distribution is supported on
 a compact subinterval of $(0,\infty)$ avoids a number of additional technicalities. Similarly, we can handle 
 more general first-passage percolation distances on the dual map. We restricted our
 attention to cases 1.~and 2.~because these cases allow explicit calculations of the values of $ \mathbf{c}_{1}$ and $ \mathbf{c}_{2}$ (which seem hopeless in more
 general situations). Our techniques are robust and could probably handle more involved modifications of the graph distance as long as these
 modifications remain ``local''. One such example suggested to us by Grégory Miermont is to consider the Riemannian metric associated with the Riemann surface structure of the map, which is obtained by gluing equilateral triangles according to the combinatorics of the map \cite{CurKPZ,GR10}.

 We also mention that similar techniques should be applicable to random \emph{quadrangulations}, and we hope to address this setting in future work. 
The case of quadrangulations is especially interesting because of Tutte's bijection between quadrangulations with $n$ faces and general planar maps with $n$ edges. 
We conjecture that our techniques can be used in combination with the results of \cite{BJM13} to verify that Tutte's bijection is asymptotically
an isometry. 

\subsection*{Ideas of proofs and methods}
We conclude this introduction with a brief discussion of our methods. An important part of the paper (Sections 2 to 4)
is devoted to geometric properties of the UIPT that are needed for the proofs of our main theorems. Section 2 
in particular discusses a decomposition of the UIPT into layers, which is closely related to the work of Krikun \cite{Kri04}.
To explain this decomposition, we define, for every integer $r\geq 1$, the hull $B^\bullet_r(\t_\infty)$
by adding to the ball $B_r(\t_\infty)$ the finite connected components of its complement. We then call layer
each set of the form $B^\bullet_r(\t_\infty)\backslash B^\bullet_{r-1}(\t_\infty)$. In each such layer we can distinguish 
special triangles, called downward triangles, which are in one-to-one correspondance with edges of the outer
boundary $\partial B^\bullet_r(\t_\infty)$ of the layer, see Fig.~\ref{fig:treesfromlayers}. It turns out that the configuration of downward triangles has a
very nice branching structure which can be described in an explicit manner in terms of 
a critical offspring distribution $\theta$ in the domain of attraction of a $\frac{3}{2}$-stable distribution. The configuration of 
downward triangles does not determine
the UIPT, but it is easy to reconstruct the UIPT given this configuration: the holes remaining when one removes the downward triangles are filled in by independent
Boltzmann triangulations with a boundary (called free triangulations in \cite{AS03}).

The layer decomposition of the UIPT suggests to introduce two half-plane models, which we call
the LHPT for lower half-plane triangulation and the UHPT for upper half-plane triangulation, and which
are discussed in Section 3.
Roughly speaking the LHPT corresponds to what one sees ``below'' the boundary of the hull
$B^\bullet_r(\t_\infty)$ when $r$ is large (see Fig.~\ref{Figu1} below). The UHPT, which was already discussed in
\cite{Ang05,ACpercopeel}, is a kind of dual model to the LHPT, and arises
as the local limit of (infinite) triangulations with a boundary when the size of the boundary tends to infinity.
The model of interest for our purposes is the LHPT: The constants $\mathbf{c}_0$, $\mathbf{c}_1$ and $\mathbf{c}_2$
arise from an application of Kingman's ergodic theorem to the (modified) distance between the root 
vertex or the root face and the horizontal line at vertical coordinate $-n$ in the LHPT (Propositions
\ref{lem:subadditive} and \ref{subadditive-dual}). 
However, certain estimates, concerning graph distances along
the boundary in these half-plane models, are easier to derive in the UHPT model and can then 
be ``transferred'' to the LHPT using the relations between the two models. These estimates, which play
a key role in the subsequent proofs, are discussed in Section 4.

\begin{figure}[!h]
 \begin{center}
 \includegraphics[width=16cm]{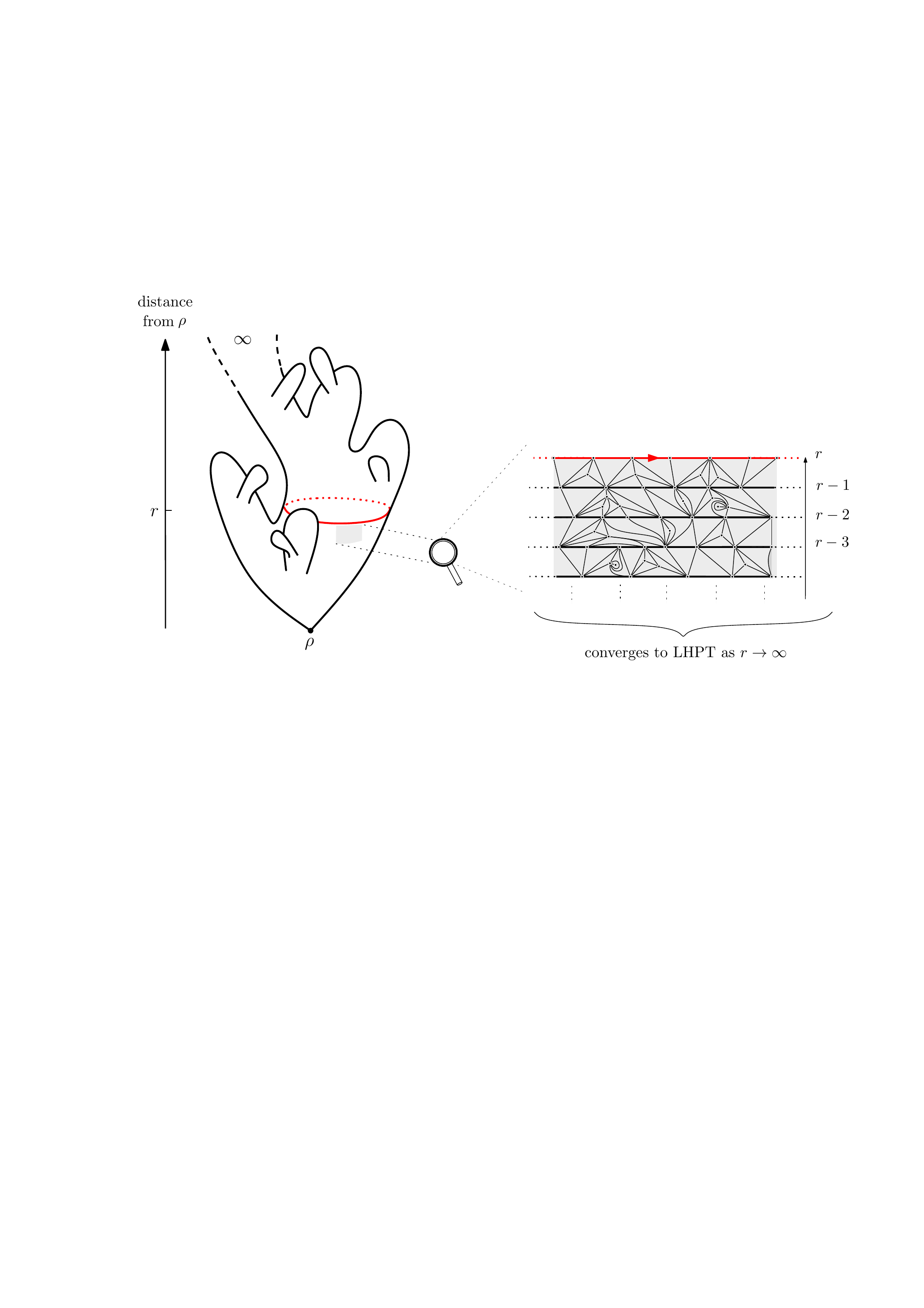}
 \caption{\label{Figu1}Illustration of the link between the UIPT and the LHPT. The latter appears as the local limit of the hull of radius $r$ of the 
 UIPT seen from the boundary of the hull.}
 \end{center}
 \vspace{-4mm}
 \end{figure}

Section 5 is the most technical part of the paper. We concentrate on the first-passage percolation distance
$\mathrm{d}_\mathrm{fpp}$ of case 0.~and explain how to apply the asymptotics known in the LHPT model
(resulting from the application of Kingman's subadditive ergodic theorem) in order to get
information on the UIPT. The key Proposition \ref{keytech} essentially shows that the $\mathrm{d}_\mathrm{fpp}$-distance
between {\em any} vertex of the boundary $\partial B^\bullet_r(\t_\infty)$ 
of the hull of radius $r$ and the boundary $\partial B^\bullet_{r-\lfloor \eta r\rfloor}(\t_\infty)$
of a smaller hull of radius $r-\lfloor \eta r\rfloor$ is close to $\mathbf{c}_0\eta r$ when $r$ is large, provided
$\eta>0$ has been fixed small enough. The proof involves a delicate comparison argument with the
LHPT model and makes use of the estimates for distances along the boundary derived in Section 4.
From Proposition \ref{keytech}, it is not too hard to verify that, with high probability, any vertex of
$\partial B^\bullet_r(\t_\infty)$ is at $ \mathrm{d_{fpp}}$-distance approximately $\mathbf{c}_0r$ from the root vertex
(Proposition \ref{dist-hull}). Our main results are then proved in Section 6 in the case 0. We use 
absolute continuity relations between the UIPT and finite triangulations to prove that the
$\mathrm{d}_\mathrm{fpp}$-distance between the root vertex of $\t_n$
and another vertex chosen uniformly at random in $\mathsf{V}(\t_n)$
is close to $\mathbf{c}_0$ times the graph distance between the same two vertices, up
to an error small in comparison with $ n^{1/4}$
(Proposition \ref{key-finitetri}). The convergence (\ref{intro-1}) follows easily. We can then 
return to the UIPT and deduce \eqref{intro-2} from \eqref{intro-1} and another application
of the absolute continuity relations between the UIPT and finite triangulations. 
Finally Section 7 explains the modifications needed to extend our results to cases 1.~and 2.~concerning the dual graph distance $\mathrm{d}^\dagger_{\rm gr}$ and the
Eden distance $\mathrm{d}^\dagger_{\rm Eden}$. As mentioned above, once the analog of \eqref{eq:intro-ball} has been proved in these cases, the values of $ \mathbf{c}_{1}$ and $ \mathbf{c}_{2}$ can be deduced from our previous work \cite{CLGpeeling} on the peeling process on random triangulations.\medskip

\noindent \textbf{Acknowledgments:} We thank Timothy Budd and Grégory Miermont for stimulating discussions.

 \section{Skeleton decomposition of random triangulations} \label{sec:skeletonmain}
In this section we present the skeleton decomposition of (type I) triangulations. This is closely related to the work of Krikun \cite{Kri04,Kri05}. Using explicit enumeration formulas found in \cite{Kri07} we are able to give simple expressions for the law of the skeleton of finite random triangulations.

We focus on the case of type I triangulations, where both loops and
multiple edges are allowed. Many of our arguments can be adapted to the case of type II triangulations, but we
do not discuss this here. As usual, all the planar maps that are considered in this work are rooted, i.e.~given with a distinguished oriented edge called the root edge, whose initial vertex is called the root vertex. A triangulation  $ \mathrm{t}$ with a boundary is a rooted planar map such that all faces are triangles, except for the face incident to the right of the root edge, which must be a simple face (its boundary is a simple cycle). The latter face will be called the bottom face
of the triangulation, and its boundary $\partial \mathrm{t}$ is called the bottom cycle. If the length of the bottom cycle is $p$, we speak of a
triangulation of the $p$-gon. The height of a vertex $x \in \mathsf{V}( \mathrm{t})$ is the minimal graph distance between $x$ and a vertex of the bottom cycle.

 \subsection{Enumeration background}
 \label{sec:enumeration}
 
For every $p\geq 1$ and $n\geq 0$, we let $ \T_{n,p}$ be the set of all (type I) triangulations of the $p$-gon with $n$ inner vertices. We list here the enumeration results that we will need. These results can be found in Krikun \cite{Kri07} (Krikun uses the number of edges as the size parameter and in order to apply his formulas we note that a triangulation of the $p$-gon with $n$ inner vertices has $3n+2p-3$ edges). The set
$ \T_{0,1}$ is empty, and,
for $n\geq 1$ and $p \geq 1$, or $n = 0$ and $p\geq 2$, we have 
 \begin{eqnarray} \label{eq:asymp} \# \T_{n,p} =4^{n - 1}  \frac{p\, (2p)!\,(2 p + 3 n - 5)!!}{(p!)^2\,n! \,
(2 p + n - 1)!!} \underset{n \to \infty}{\sim}   C(p) \,(12 \sqrt{3})^{n}\, n^{-5/2},  \end{eqnarray}
with
\begin{equation} \label{def:cp} C({p}) = \frac{3^{p-2} \, p \, (2 p)!}{4 \sqrt{2 \pi} \,  (p!)^2}  \underset{p \to \infty}{\sim} \frac{1}{36\pi \sqrt{2}} \, \sqrt{p} \ 12^p,
\end{equation}
where we write $a(p)\sim b(p)$ if the ratio $a(p)/b(p)$ tends to $1$.
Note that formula \eqref{eq:asymp} gives $ \# \T_{0,2}=1$, with 
the usual convention 
$(-1)!!=1$. This formula holds because we consider the rooted planar map consisting of 
a single (oriented) edge as a trivial triangulation of the $2$-gon. This ``edge-triangulation'' 
plays an important role as it will be used later to ``glue'' the two sides of a $2$-gon.

Consider a (rooted) plane triangulation $\mathrm{t}$ with $n$ vertices ($n\geq 3$). 
If we split the root edge of $\mathrm{t}$ into a double edge, then add a loop inside  the region bounded by this double edge,  and root the resulting map on this new loop in clockwise direction, we obtain a triangulation of the $1$-gon with $n-1$ internal vertices. The way the new loop is added should be clear from Fig.~\ref{fig:transform-root}, and we note that the 
construction works as well in the case where the initial root edge is itself a loop, as shown in the right part of Fig.~\ref{fig:transform-root}. The previous construction yields, for every $n\geq 3$, a 
bijection between  $ \T_{n-1,1}$ and the set of all rooted plane triangulations with $n$ vertices. Hence, we may and will often see rooted plane triangulations as triangulations of the $1$-gon.
\begin{figure}[!h]
 \begin{center}
 \includegraphics[width=1\linewidth]{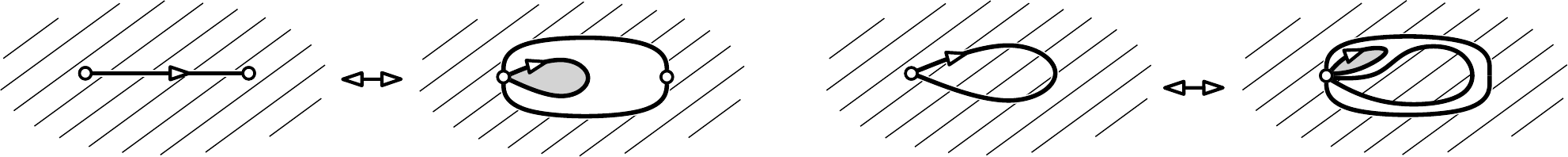}
 \caption{ \label{fig:transform-root} Transforming the root edge into a loop (in the right part, the case where the initial root edge is a loop) provides a bijection between $ \T_{n-1,1}$ and the set of all rooted plane triangulations with $n$ vertices.}
 \end{center}
 \end{figure}

Fix $p\geq 1$ and set
$Z(p) = \sum_{ n \geq 0}  (12 \sqrt{3})^{-n}\,\# \T_{n,p}$. Note that the series converges 
because of the asymptotics in \eqref{eq:asymp}. The (critical) Boltzmann distribution 
on triangulations of the $p$-gon is the probability measure on $\cup_{n\geq 0} \T_{n,p}$ that assigns mass $(12 \sqrt{3})^{-n}/Z(p)$ to every element of $ \T_{n,p}$, for every $n\geq 0$. We have the exact formulas (see \cite[Section 2.2]{ACpercopeel})
 \begin{eqnarray} Z(p) = \frac{6^p\,(2p-5)!! }{8 \sqrt{3} \,p!} \quad \text{if \ } p\geq 2,
  \qquad \qquad  Z(1) =  \frac{2 - \sqrt{3}}{4}.     \label{eq:zp} \end{eqnarray}
The generating series of $Z(p)$ can also be computed explicitly from \cite[formula (4)]{Kri07} 
and an appropriate change of variables (we omit the details):
  \begin{eqnarray} \label{eq:gfzp}  \sum_{p\geq 0} Z(p+1) z^p=  \frac{1}{2}+ \frac{ (1- 12 z)^ {3/2}-1}{24 \sqrt {3} z}. \end{eqnarray}
From the explicit formula \eqref{eq:zp} and Stirling's formula, we have 
\begin{equation}
\label{asympzp}
Z(p+1) \underset{p\to \infty}{\sim}   \frac{ \sqrt{3}}{8 \sqrt{\pi}} \,12^p \,p^{-5/2},
\end{equation}

\begin{lemma} \label{lem:bound}There exists a constant $c >0$ such that, for every $n\geq 1$ and $p \geq 1$, we have 
$$ \# \T_{n,p} \leq c \cdot C(p)\, n^{-5/2} (12 \sqrt{3})^n.$$
Furthermore, for every choice of $\alpha >0$, there exists a constant $c'=c'(\alpha)>0$ such that, for
every $n\geq 1$ and for all integers $p$ with $1\leq p\leq \alpha\sqrt{n}$, we have
$$ \# \T_{n,p} \geq c' \cdot C(p)\, n^{-5/2} (12 \sqrt{3})^n.$$
\end{lemma}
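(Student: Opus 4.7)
The plan is to derive both bounds from a uniform form of the asymptotic \eqref{eq:asymp}, obtained by applying Stirling's formula to the exact count. Introduce the normalized ratio
\[
R(n,p) := \frac{\#\T_{n,p}}{C(p)\,n^{-5/2}\,(12\sqrt{3})^n},
\]
so that we must prove $R(n,p)\le c$ uniformly for $n,p\ge 1$, and $R(n,p)\ge c'(\alpha)$ when $1\le p\le \alpha\sqrt{n}$. Plugging the explicit formula \eqref{eq:asymp} and the definition of $C(p)$ (and using $(12\sqrt 3)^n = 4^n\,3^{3n/2}$), the factors $p(2p)!/(p!)^2$ cancel, leaving
\[
R(n,p) = 9\sqrt{2\pi}\,\frac{n^{5/2}}{n!}\,3^{-3n/2-p}\,\frac{(2p+3n-5)!!}{(2p+n-1)!!}.
\]

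The double factorial ratio telescopes (irrespective of the parity of $n$) as the Pochhammer-type identity
\[
\frac{(2p+3n-5)!!}{(2p+n-1)!!} \;=\; \prod_{k=0}^{n-3}(2p+n+1+2k) \;=\; 2^{n-2}\,\frac{\Gamma(p+3n/2-3/2)}{\Gamma(p+n/2+1/2)}.
\]
I would then apply the uniform Stirling estimate $\Gamma(z)=\sqrt{2\pi}\,z^{z-1/2}e^{-z}(1+O(1/z))$ (valid for $z\ge 1$, which covers all three Gamma arguments as soon as $n,p\ge 1$) to $\Gamma(n+1)$, $\Gamma(p+3n/2-3/2)$ and $\Gamma(p+n/2+1/2)$. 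Writing $A=p+3n/2$, $B=p+n/2$ and expanding the resulting powers using
$(A-3/2)^{A-2}=A^{A-2}e^{-3/2}(1+O(1/A))$, $(n+1)^{n+1/2}=n^{n+1/2}e(1+O(1/n))$ and $(B+1/2)^{B}=B^{B}e^{1/2}(1+O(1/B))$, the exponential factors produced by the Gamma functions (giving $e^{3}$) cancel exactly against those from the power expansions (giving $e^{-3}$). After rearranging and setting $\beta:=p/n$ one obtains
\[
R(n,p) \;=\; \frac{9}{4(3/2+\beta)^{2}}\,h(\beta)^{\,n}\,\bigl(1+O(1/(n+p))\bigr),
\qquad
h(\beta) \;:=\; \frac{2\,(3/2+\beta)^{3/2+\beta}}{3^{3/2+\beta}\,(1/2+\beta)^{1/2+\beta}}.
\]

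The bounds then reduce to an analysis of $h$. A direct computation gives $h(0)=1$ and
\[
(\log h)'(\beta) = \log\frac{3/2+\beta}{3(1/2+\beta)},
\qquad
(\log h)''(\beta) = \frac{1}{3/2+\beta} - \frac{1}{1/2+\beta} \;<\; 0 \quad\text{for } \beta\ge 0.
\]
In particular $(\log h)'(0)=0$, so $\log h$ is strictly concave on $[0,\infty)$ with unique maximum $0$ at $\beta=0$; hence $h(\beta)\le 1$ throughout. Together with $1/(3/2+\beta)^{2}\le 4/9$ and the uniform boundedness of the Stirling correction on $n,p\ge 1$, this yields the upper bound $R(n,p)\le c$, the finitely many small pairs $(n,p)$ being handled by direct inspection. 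For the lower bound, the Taylor expansion $\log h(\beta) = -\tfrac{2}{3}\beta^{2}+O(\beta^{3})$ near $0$ gives
\[
h(\beta)^{n} \;=\; \exp\!\left(-\frac{2p^{2}}{3n}+O\!\Big(\frac{p^{3}}{n^{2}}\Big)\right) \;\ge\; e^{-c(\alpha)}
\]
whenever $p\le \alpha\sqrt n$ (since then $p^{2}/n\le \alpha^{2}$ and $p^{3}/n^{2}\le \alpha^{3}$), while the prefactor $1/(3/2+\beta)^{2}\ge (3/2+\alpha)^{-2}$ is bounded below in the same range; combining, $R(n,p)\ge c'(\alpha)>0$.

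The main bookkeeping obstacle is tracking the precise cancellation of the $e^{3}$ from the three Gamma asymptotics against the $e^{-3}$ from the first-order corrections in $A^{A-2}$, $(n+1)^{n+1/2}$ and $B^{B}$; this is what makes the constant $9/[4(3/2+\beta)^2]$ come out right and produces the exact value $1$ at $\beta=0$. Qualitatively, the strict concavity of $\log h$ together with the tangency $(\log h)'(0)=0$ is what simultaneously delivers the global upper bound $h\le 1$ and the Gaussian-type lower bound $h(\beta)^{n}\sim \exp(-2p^{2}/(3n))$ near $\beta=0$, the latter being what forces the quantitative restriction $p\le \alpha\sqrt n$ in the second part of the statement.
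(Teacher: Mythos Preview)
Your proof is correct and follows essentially the same route as the paper's: both apply Stirling to the exact formula and reduce the question to showing a single-variable function is $\le 1$ with a quadratic maximum at the origin (your $h(\beta)$ is exactly the square root of the paper's $f(x)=(2x/3+1)^{2x+3}(2x+1)^{-2x-1}$ under $x=\beta$, and your concavity argument is equivalent to the paper's derivative check after the substitution $u=2x+1$). One small imprecision: ``finitely many small pairs $(n,p)$'' is not literally right since $p$ ranges over all integers, but this is harmless --- your Stirling and power-expansion error factors are each bounded between positive constants uniformly over $n,p\ge 1$ (e.g.\ by the explicit Stirling bounds $1\le \Gamma(z)/(\sqrt{2\pi}z^{z-1/2}e^{-z})\le e^{1/(12z)}$), so no separate small-$n$ treatment is actually needed.
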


\proof We start with the first assertion. Note that this estimate does not immediately follow from \eqref{eq:asymp} because we want $c$
to be independent of $p$. However, \eqref{eq:asymp} allows us to restrict our attention to the case $p\geq 2$. 
Using Stirling's formula and the fact that 
$ c_{1} \sqrt{n}(n/e)^{n/2}\leq n!!\leq c_{2} \sqrt{n} (n/e)^{n/2}$ for every $n\geq 1$,
for some positive constants $c_1$ and $c_2$, we get, with a constant $c$ that may vary from line to line,
 \begin{eqnarray*} 
 4^{n - 1}  \frac{p\, (2p)!\,(2 p + 3 n - 5)!!}{(p!)^2\,n! \,
(2 p + n - 1)!!} 
& \leq & c \cdot 4^n p\, \frac{4^p}{ \sqrt{p}} \sqrt{ \frac{2p+3n-5}{2p+n-1}} \frac{1}{ \sqrt{n}} \left(\frac{(2 p + 3 n - 5)^{2 p + 3 n - 5}}{ n^{2n} (2 p + n-1)^{2 p + n-1} }\right)^{1/2} \\ 
& \leq & c \cdot \frac{4^n}{ \sqrt{n}} \sqrt{p} \,4^p \left( \frac{(3n)^{2p+3n-5}}{n^{2p+3n-1}}\frac{(\frac{2 p}{3n} + 1 - \frac{5}{3n})^{2 p + 3 n - 5}}{ ( \frac{2 p}{n} + 1-\frac{1}{n})^{2 p + n-1} }\right)^{1/2} \\
 & \leq & c \cdot  \frac{(12 \sqrt{3})^n}{ n^{5/2}} \ \sqrt{p} \,12^p \left( \frac{(\frac{2 p}{3n} + 1 - \frac{5}{3n})^{2 p + 3 n - 5}}{ ( \frac{2 p}{n} + 1-\frac{1}{n})^{2 p + n-1} }\right)^{1/2} \\ 
 &\leq& c \cdot C(p) \frac{(12 \sqrt{3})^n}{ n^{5/2}}  \left( \frac{(\frac{2 p-1}{3n} + 1)^{\frac{2p-1}{n} + 3}}{ ( \frac{2 p-1}{n} + 1)^{\frac{2p-1}{n} + 1} }\right)^{n/2}. \end{eqnarray*}
To complete the proof of the first assertion,  it suffices to verify that the quantity inside the big parentheses
in the last display is smaller than $1$ for any $n \geq 1$ and $p \geq 2$.  To this end, take $x = (p-\frac{1}{2})/n \geq 0$ and note that the function $f(x)= (2 x /3+1)^{2x+3} (2 x +1)^{-2 x -1}$ is bounded 
above by $1$ on $ \mathbb{R}_{+}$ (setting $u=2x+1\geq 1$, and taking logarithms, we
need to see that $(u+2)(\log(u+2)-\log 3)-u\log u\leq 0$, which we get by differentiating this
function of $u$).  

To get the second assertion of the lemma, we use similar arguments to arrive at the lower bound
$$ 4^{n - 1}  \frac{p\, (2p)!\,(2 p + 3 n - 5)!!}{(p!)^2\,n! \,
(2 p + n - 1)!!} \geq c' \cdot  C(p) \frac{(12 \sqrt{3})^n}{ n^{5/2}}   \left( \frac{(\frac{2 p}{3n} + 1 - \frac{5}{3n})^{2 p + 3 n - 5}}{ ( \frac{2 p}{n} + 1-\frac{1}{n})^{2 p + n-1} }\right)^{1/2},$$
and elementary considerations show that, under our condition $1\leq p\leq \alpha\sqrt{n}$, the ratio
$$ \frac{(\frac{2 p}{3n} + 1 - \frac{5}{3n})^{2 p + 3 n - 5}}{ ( \frac{2 p}{n} + 1-\frac{1}{n})^{2 p + n-1} }$$
is bounded below by a positive constant depending on $\alpha$.
\endproof

 \subsection{Triangulations of the cylinder and their skeleton decomposition}
 
 \label{sec:skeleton}
 
 In this section, we discuss a special class of planar maps, which we call triangulations of
 the cylinder. The reason for considering this class is the fact that hulls of
 triangulations with a boundary will be triangulations of
 the cylinder. We then describe these objects via a \textit{skeleton decomposition} which was first discussed in Krikun \cite{Kri04,Kri05} in the case of type II triangulations and quadrangulations.
 
 \begin{definition}
 \label{def:cylinder}
 Let $r\geq 1$ be an integer. A triangulation of the cylinder of height $r$ is a rooted planar map
such that all faces are triangles except for two distinguished faces called respectively the
bottom face and the top face, and such that the following properties hold. The boundaries of
the bottom face and of the top face are disjoint simple cycles.
The boundary of the 
bottom face contains the root edge, which is oriented in such a way that the bottom face lies on its right.
Finally, every vertex incident to the top face is at graph distance exactly $r$ from the boundary of the bottom 
face, and  every edge incident to the top face is also incident to a triangle whose third vertex is at distance $r-1$
 from the bottom face.
 \end{definition}
 
If $\Delta$ is a triangulation of the cylinder of height $r$, the bottom cycle is again denoted by $\partial \Delta$ and the top cycle
 (boundary of the top face) of $\Delta$ is denoted by $\partial^*\Delta$.

 \smallskip
 
Let $\Delta$ be a fixed triangulation of the cylinder of height $r$. We
 let $p\geq1$ be the length of the bottom cycle $\partial\Delta$, and let
$q\geq 1$ be the length of the top cycle $\partial^*\Delta$.
We will now describe a skeleton decomposition that encodes the triangulation $\Delta$ via an
ordered forest of $q$ (rooted) plane trees with maximal height $r$, and a collection, indexed by the vertices of the forest at
height strictly less than $r$, of triangulations with a boundary. To describe this decomposition, we first need to
introduce some notation. For $1\leq j<r$, the ball $B_j(\Delta)$ is defined as the union of all faces
of $\Delta$ that are incident to a vertex at graph distance strictly less than $j$ from the bottom cycle, and the hull
 $B^\bullet_j(\Delta)$ is obtained by adding to the ball $B_j(\Delta)$  all connected components of its complement
 except for the one containing the top cycle.
It is an easy exercise to see that $B^\bullet_j(\Delta)$ is then a triangulation of the cylinder 
 of height $j$, and we let $\partial_{j}\Delta=\partial^*B^\bullet_j(\Delta)$ denote its top cycle. By convention, $\partial_{0}\Delta = \partial \Delta$
 is the bottom cycle of $\Delta$ and $\partial_{r}\Delta = \partial^* \Delta$ is the top cycle of $\Delta$. 
 We may and will assume that $\Delta$ is drawn in the plane in such a way that the top face is the unbounded face, as in Fig.~\ref{fig:treesfromlayers},
 and we then orient all cycles $\partial_{j}\Delta$ in clockwise order (for $j=0$ this is consistent with the orientation of the root edge).

\smallskip 

If $k\in\{1,2,\ldots,r\}$, every edge of $  \partial_k \Delta$ is incident to (exactly) one
triangle whose third vertex belongs to $\partial_{k-1}\Delta$. We call such triangles the
downward triangles at height $k$. These triangles are in one-to-one correspondence with the edges 
of $\partial_k\Delta$. Let $\mathsf{E}_d(\Delta)$ be the collection of all edges of $\Delta$ that 
belong to one of the cycles $\partial_j\Delta$ for $0\leq j\leq r$. We define a genealogical order on the
set $\mathsf{E}_d(\Delta)$
by saying that, for every $k\in\{1,\ldots,r\}$, an edge $e$ of $\partial_k\Delta$ is the 
``parent'' of an edge $e'$ of $\partial_{k-1}\Delta$ if the downward triangle associated
with $e$ is the first one that one encounters when turning around $\partial_{k-1}\Delta$ inside $ \Delta\backslash B_{k-1}(\Delta)$
in clockwise order, starting from the middle of the edge $e'$. The definition of this genealogical order should be clear from the
right part of Fig.~\ref{fig:treesfromlayers}.

\smallskip
Thanks to the planar structure of $\Delta$, these genealogical relations lead to a forest of $q$ plane trees, 
whose vertices are (in one-to-one correspondence with) the edges belonging to $\mathsf{E}_d(\Delta)$, and which are rooted at the edges  
of $ \partial_r\Delta$. The maximal height in the forest is $r$,
and a vertex at height (distance from the ancestor) $r-j$, for $0\leq j\leq r$, corresponds to an edge of $ \partial_j\Delta$.
See Fig.~\ref{fig:treesfromlayers}. We write $\tau_1,\tau_2,\ldots,\tau_q$ for the trees in the forest listed around $\partial^*\Delta$ in clockwise order, in such a way that $\tau_1$ is the tree containing the vertex corresponding to the 
root edge of $\Delta$. Note that $\tau_1$ is a tree with height $r$ and a distinguished vertex
(the root edge of $\Delta$) at height $r$. 

\begin{figure}[!h]
 \begin{center}
 \includegraphics[width=1 \linewidth]{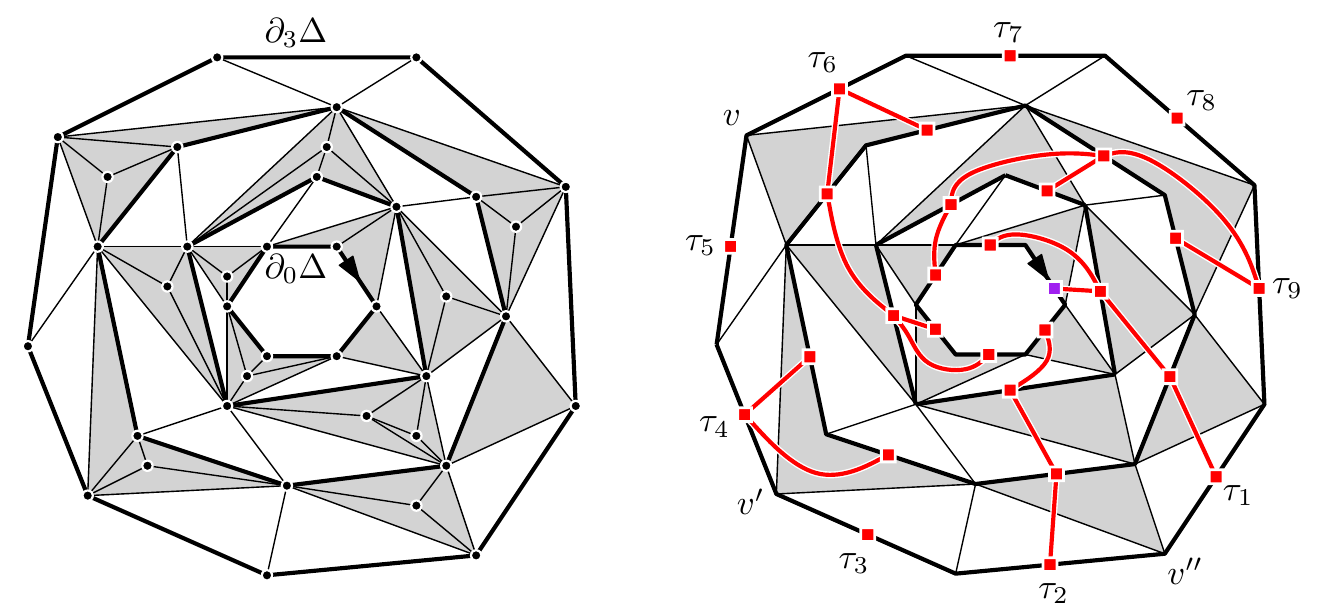}
 \caption{ \label{fig:treesfromlayers} The skeleton decomposition of a triangulation of the cylinder of height $3$. We have chosen to show a triangulation without multiple edges or loops  
 for the sake of clarity of the figure. The downward triangles are represented in white and the other triangles are in grey in the left part
 of the figure. In the right part, we have erased the edges that are not incident to downward triangles (except for those of $\partial_0\Delta=\partial \Delta$) so that
 the slots appear in grey. The forest of trees associated with the triangulation is in red in the right part of the figure (notice that the trees $\tau_3,\tau_5,\tau_7,\tau_8$ are trivial trees consisting only of the root).}
 \end{center}
 \end{figure}

The forest $(\tau_1,\tau_2,\ldots,\tau_q)$ does not give enough information to reconstruct the 
triangulation $\Delta$. Indeed this forest only characterizes the configuration
of downward triangles and, informally, we need to ``fill in'' the holes left by these
triangles. 
More precisely, if we remove all the downward triangles from $\Delta$, we are left with the top and bottom faces and a collection of  ``slots''. 
If $e$ is an edge of $ \partial_k\Delta$, where $1\leq k\leq r$, the associated slot is bounded
by the edges of $ \partial_{k-1}\Delta$ that are children of $e$ and by two ``vertical'' edges
connecting the initial vertex of $e$ (recall that $\partial_k\Delta$ is oriented in clockwise order) to vertices of $ \partial_{k-1}\Delta$ (if $e$ has no child, these two
vertical edges may, or may not, be glued in a single edge, see Fig.~\ref{fig:slots}). We may assign a root edge to the boundary
of each slot, by deciding that the root edge of the slot associated with an edge $e$ of $ \partial_k\Delta$ is the vertical edge, oriented so that 
its initial vertex is on $ \partial_{k-1}\Delta$, which is incident on its right to the downward triangle associated with $e$.  Then
the slot associated with $e$ is filled in by a well-defined triangulation of the $(c_e+2)$-gon, where 
$c_e\geq 0$ is the number of children of $e$. Note that when $c_e=0$ it may happen that
the slot is filled in by the edge-triangulation, which just means that the two vertical edges are glued together. 

\smallskip

Say that a forest $\mathcal{F}$ with a distinguished vertex
is $(p,q,r)$-admissible if
 \begin{enumerate}[(i)]
 \item the forest consists of an ordered sequence $ (\tau_{1}, \tau_{2}, \ldots , \tau_{q})$ 
 of $q$ (rooted) plane trees,
\item the maximal height of these trees is $r$,
\item the total number of vertices of the forest at generation $r$ is  $p$,
\item the distinguished vertex has height $r$,
\item the distinguished vertex is in $\tau_1$.
\end{enumerate}
If  $\mathcal{F}$ is a $(p,q,r)$-admissible forest, we write $\mathcal{F}^*$ for the set all vertices of $\mathcal{F}$ at height strictly less than $r$.

The preceding  decomposition yields a bijection between, on the one hand, triangulations $\Delta$ of the cylinder of height $r$ with a bottom cycle of length $p$ and a top cycle of length $q$, and on the other hand, 
pairs consisting of a
$(p,q,r)$-admissible pointed 
forest $\mathcal{F}$ and a collection $(M_v)_{v\in\mathcal{F}^*}$
 such that, for every $v\in\mathcal{F}^*$,  $M_v$ is
a triangulation of the $(c_v+2)$-gon, if $c_v$ stands for the number of children of $v$
in $\mathcal{F}$. We call this bijection the skeleton decomposition and we say that $\mathcal{F}$ is the skeleton of the triangulation $\Delta$. 

  \begin{figure}[!h]
  \begin{center}
  \includegraphics[width=1 \linewidth]{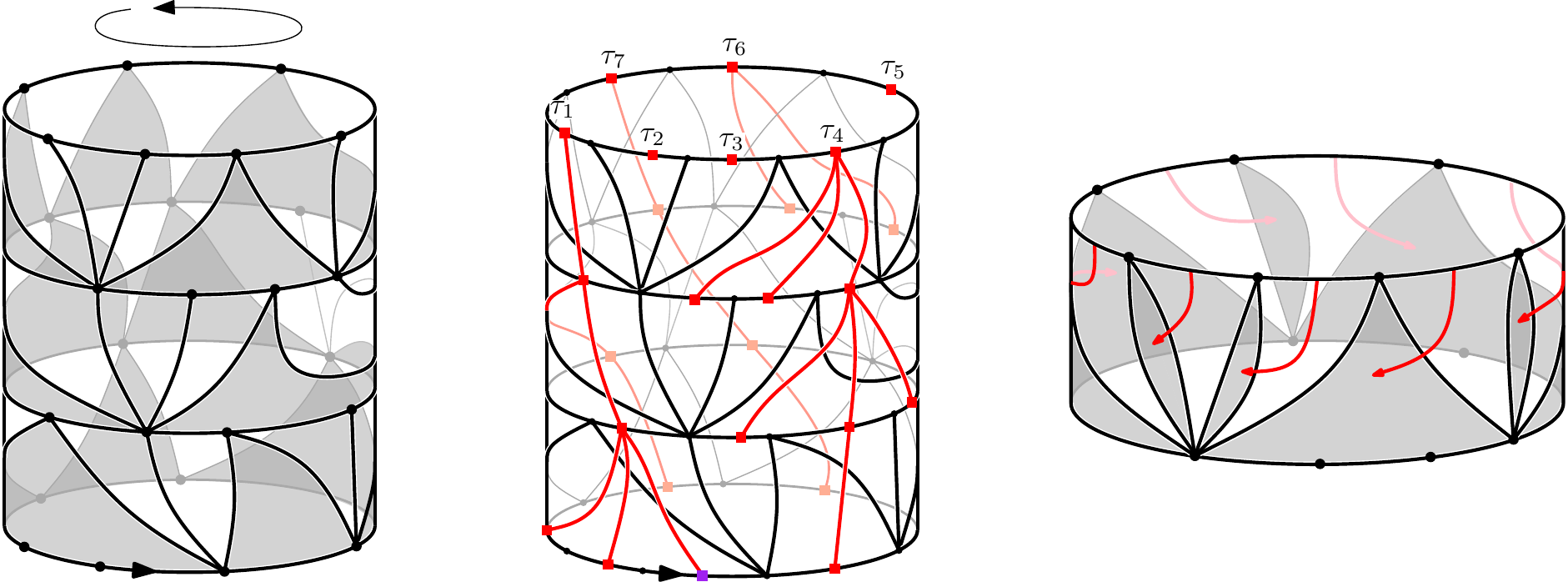}
  \caption{ \label{fig:slots} Left, a representation of a triangulation without multiple edges of the cylinder of height $3$ with the slots in grey. In the middle, the construction of the associated forest. Right, the slots  of a triangulation of the cylinder of height $1$ (possibly having multiple edges):  when an edge 
  of the top boundary has no child, the corresponding slot is bounded by a double edge, which may be glued into a single one if the slot is filled in by the edge-triangulation.}
  \end{center}
  \end{figure}
 
\begin{remark} Our skeleton decomposition is slightly simpler that the original one in Krikun \cite{Kri04}: Because we deal with with type I triangulations, where loops are allowed, we do not need to exclude the case considered in \cite[Lemma 2.2]{Kri04}.
\end{remark}

\paragraph{Left-most geodesics.}\label{sec:left-mostgeo} Let  $x$ be a vertex of $\partial_j \Delta$, where $1\leq j\leq r$.
We define the left-most geodesic from $x$ to the bottom cycle in the following way.
We first observe that half-edges incident to $x$ can be enumerated in clockwise order around $x$, starting from the 
half-edge of $\partial_j\Delta$ whose initial vertex is $x$ (recall that we have also oriented the cycles $\partial_k\Delta$).
The first edge on the
left-most geodesic from $x$ is the last edge connecting $x$ to $\partial_{j-1}\Delta$ arising in this enumeration. The path is then continued by
the obvious induction.

Left-most geodesics from distinct vertices may coalesce, and more precisely we have the following property. Let $u$ and $v$ 
be two distinct vertices of $\partial^*\Delta$.
If $\mathcal{F}$ is the skeleton of $\Delta$, let $\mathcal{F}'$ be the subforest of $\mathcal{F}$ consisting of the
trees rooted at the edges of $\partial^*\Delta$ that belong to the path going from $u$ to $v$ 
in clockwise order in $\partial^*\Delta$, and let $\mathcal{F}''$ consist of the trees of $\mathcal{F}$
that are not in $\mathcal{F}'$. Then, for every 
$k\in\{1,\ldots,r\}$, the left-most geodesics starting respectively from $u$ and from $v$  merge before step $k$
(possibly at step $k$) if and only if at least one of the two forests $\mathcal{F}'$ and $\mathcal{F}''$
has height strictly smaller than $k$. We leave the easy verification to the reader
(as an illustration, the reader may verify in Fig.~\ref{fig:treesfromlayers} that the left-most geodesics starting respectively
from $v$ and $v'$ coalesce at step $2$, but the left-most geodesics starting respectively from $v'$ and $v''$
do not coalesce). \medskip

\subsection{Hulls as triangulations of the cylinder}

 Let $\mathrm{t}$ be a triangulation of the $p$-gon, where $p\geq 1$. For every  integer $ r \geq 1$, the ball 
 $B_r(\mathrm{t})$ is  the planar map  obtained as the union of all faces of $\mathrm{t}$
 that are incident to a vertex at distance less than or equal to $r-1$ from the bottom cycle.
 Clearly, the graph distance between any vertex of $B_r(\mathrm{t})$ and the bottom cycle is
 at most $r$, and this graph distance is exactly $r$ if the vertex is in the boundary of a 
 connected component of the complement of $B_r(\mathrm{t})$ in the sphere.
 
We now let $o$
 be a distinguished vertex of $\mathrm{t}$. Then
 $\overline{\mathrm{t}}:=(\mathrm{t},o)$ is a pointed (and rooted) triangulation of the $p$-gon. 
Let $h$ be the height of $o$ (recall that this
 is the minimal distance between $o$ and a vertex of the bottom cycle). Suppose that $r<h$, so that the distinguished vertex $o$  lies in the complement of $B_r(\mathrm{t})$.
 The hull $B^\bullet_r(\,\overline{\mathrm{t}}\,)$ is obtained by adding to $B_r(\mathrm{t})$ all connected components of 
 the complement of $B_r(\mathrm{t})$,
 except for the one containing $o$. Then under the preceding assumptions,  it is easy to see that the hull $B^\bullet_r(\,\overline{\mathrm{t}}\,)$ is a triangulation of the cylinder of
 height $r$, in the sense of Definition \ref{def:cylinder}.  For future reference, we note that $B^\bullet_j(B^\bullet_r(\,\overline{\mathrm{t}}\,))=B^\bullet_j(\,\overline{\mathrm{t}}\,)$, for $1\leq j<r$.

\subsection{The skeleton decomposition of random triangulations}
\label{subsec:skeletondecom}

We now consider random triangulations. We fix $p\geq 1$ and, for every $n\geq 1$, we let $\t^{(p)}_n$
 be uniformly distributed over the set $\T_{n,p}$ of all triangulations of the $p$-gon with $n$
 inner vertices. We also write $\ov\t^{(p)}_n$ for the pointed 
 triangulation obtained by choosing an inner vertex uniformly at random in $\t^{(p)}_n$.
 For every integer $r\geq 1$, we can make sense of the hull $B^\bullet_r(\ov\t^{(p)}_n)$
 provided that the distance between the distinguished vertex and the bottom cycle is at least
 $r+1$. For definiteness, if the latter condition does not hold, we define the hull $B^\bullet_r(\ov\t^{(p)}_n)$ as
 the edge-triangulation. Our goal is to evaluate the asymptotics as $n\to\infty$ of the probability that the hull $B^\bullet_r(\ov\t^{(p)}_n)$ is equal to
 a given triangulation $\Delta$ of the cylinder of height $r$, in terms of the skeleton decomposition of $\Delta$.
 
Let $r\geq 1$, and let $\Delta$ be a triangulation of the cylinder of height  $r$. Write
 $p$ for the length of the bottom cycle of $\Delta$ and $q$ for the length of the top cycle. 
 Also let $N$ be the total number of vertices of $\Delta$. By the skeleton decomposition
 of Section \ref{sec:skeleton}, we can associate with $\Delta$ a $(p,q,r)$-admissible
 forest $\mathcal{F}=(\tau_1,\ldots,\tau_q)$. As in Section \ref{sec:skeleton}, we write $(M_v)_{v\in\mathcal{F}^*}$ for the triangulations
with a boundary filling in the slots in $\Delta$. We also let $ \mathsf{Inn}(M_v)$ stand for the number of
inner vertices of $M_v$, for every $v\in \mathcal{F}^*$. Recall the constants $C(p)$ and $Z(p)$ defined in \eqref{def:cp} and \eqref{eq:zp}.

 \begin{lemma}
 \label{law-hull}
 We have 
 $$\lim_{n\to\infty} \P(B^\bullet_r(\ov\t^{(p)}_n)=\Delta)= \frac{12^{-q}{C}(q)}{12^{-p}{C}(p)} \prod_{v \in \mathcal{F}^*} \theta(c_{v}) \times \prod_{v \in \mathcal{F}^*} \frac{(12\sqrt{3})^{-\mathsf{Inn}(M_v)}}{Z(c_v+2)}$$
where $c_{v}$ is the number of children of $v$ in the forest 
$\mathcal{F}$, and $\theta$ is the critical offspring distribution whose generating function $g_\theta$ is given by 
$$g_\theta(x)= \sum_{i=0}^\infty x^i \theta(i) = 1 - \left( 1+ \frac{1}{ \sqrt{1-x}}\right)^{-2}, \qquad x \in [0,1].$$
In particular we have 
\begin{equation}
\label{asymp-theta}
\theta(k) \build{\sim}_{k\to\infty}^{} \frac{3}{2\sqrt{\pi}} \,k^{-5/2}.
\end{equation}
\end{lemma}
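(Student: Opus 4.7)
The plan is to compute $\P(B^\bullet_r(\ov\t^{(p)}_n)=\Delta)$ exactly by counting, take $n\to\infty$, and then reorganize the resulting expression via the skeleton decomposition.

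\textbf{Exact counting and limit.} Let $N$ denote the total number of vertices of $\Delta$ and $q$ the length of its top cycle $\partial^*\Delta$. Using the skeleton bijection on the top face only: a pair $(\mathrm{t},o)$ with $\mathrm{t}\in\T_{n,p}$ and $o$ an inner vertex satisfies $B^\bullet_r(\mathrm{t},o)=\Delta$ iff $\mathrm{t}$ is obtained by canonically gluing $\Delta$ with some triangulation $\mathrm{t}'$ of the $q$-gon (the canonical identification being determined by the root of the tree $\tau_1$), and $o$ lies in the interior of $\mathrm{t}'$. A vertex count shows that $\mathrm{t}'$ has exactly $n-N+p$ inner vertices, so
$$
\P(B^\bullet_r(\ov\t^{(p)}_n)=\Delta)=\frac{(n-N+p)\,\#\T_{n-N+p,\,q}}{n\,\#\T_{n,p}}.
$$
Plugging in the asymptotic \eqref{eq:asymp} and letting $n\to\infty$ yields
$$
\lim_{n\to\infty}\P(B^\bullet_r(\ov\t^{(p)}_n)=\Delta)=\frac{C(q)}{C(p)}\,(12\sqrt3)^{p-N}.
$$

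\textbf{Rewriting via the skeleton and identification of $\theta$.} Next, I decompose the vertex count of $\Delta$: each vertex lies on exactly one of the cycles $\partial_k\Delta$ ($0\le k\le r$) or in the interior of a unique slot. Since $|\partial_k\Delta|$ equals the number of skeleton vertices at height $r-k$, this gives
$$
N-p-q=\sum_{k=1}^{r-1}|\partial_k\Delta|+\sum_{v\in\mathcal{F}^*}\mathsf{Inn}(M_v),\qquad \sum_{v\in\mathcal{F}^*}c_v=p-q+|\mathcal{F}^*|,
$$
the second equality counting non-root vertices of $\mathcal{F}$, and also $|\mathcal{F}^*|=q+\sum_{k=1}^{r-1}|\partial_k\Delta|$. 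I then set $\theta(c):=\frac{1}{\sqrt3}\,12^{-c}\,Z(c+2)$ and compute
$$
12^{p-q}\prod_{v\in\mathcal{F}^*}\frac{\theta(c_v)}{Z(c_v+2)}
=12^{p-q}\cdot 3^{-|\mathcal{F}^*|/2}\cdot 12^{-\sum_v c_v}
=(12\sqrt3)^{-|\mathcal{F}^*|}=(12\sqrt3)^{-q-\sum_{k=1}^{r-1}|\partial_k\Delta|}.
$$
Multiplying by $\prod_v(12\sqrt3)^{-\mathsf{Inn}(M_v)}$ recovers exactly $(12\sqrt3)^{p-N}$, which after combining with the prefactor $C(q)/C(p)$ gives the claimed product formula.

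\textbf{Generating function and tail.} It remains to check that this $\theta$ has the stated generating function and tail. Using \eqref{eq:gfzp} I get
$$
g_\theta(x)=\frac{1}{\sqrt3}\sum_{c\ge0}(x/12)^c\,Z(c+2)=\frac{3}{x}+\frac{2((1-x)^{3/2}-1)}{x^2},
$$
and an elementary simplification (substitute $u=(1-x)^{-1/2}$) rewrites the right-hand side as $1-(1+(1-x)^{-1/2})^{-2}$. In particular $g_\theta(1)=1$ so $\theta$ is a critical probability distribution, and \eqref{asymp-theta} follows directly from the tail asymptotic \eqref{asympzp}: $\theta(k)\sim \frac{12}{\sqrt3}\cdot\frac{\sqrt3}{8\sqrt\pi}\,k^{-5/2}=\frac{3}{2\sqrt\pi}\,k^{-5/2}$.

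\textbf{Main obstacle.} The only delicate step is the vertex/exponent bookkeeping in the middle: matching the exponents of $12$ and $\sqrt3$ produced by the asymptotics of $\#\T_{n,p}$ against the per-vertex factors of the skeleton, so that a universal offspring distribution emerges. Once the ansatz $\theta(c)\propto 12^{-c}Z(c+2)$ is fixed, everything else is routine algebra using \eqref{eq:gfzp} and \eqref{asympzp}.
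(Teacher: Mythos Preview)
Your proof is correct and follows essentially the same approach as the paper: exact counting via gluing a triangulation of the $q$-gon on top of $\Delta$, passing to the limit with the asymptotics of $\#\T_{n,p}$, then redistributing the factor $(12\sqrt3)^{p-N}$ over the skeleton vertices to reveal the offspring distribution $\theta(c)=\frac{1}{\sqrt3}12^{-c}Z(c+2)$, and finally computing the generating function from \eqref{eq:gfzp}. The only difference is cosmetic---the paper organizes the bookkeeping by multiplying by $(\alpha/\rho)^{p-q-\sum_v(c_v-1)}=1$, whereas you track the exponents of $12$ and $\sqrt3$ directly---but the content is identical.
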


\proof In this proof, we set $\rho=12\sqrt{3}$ and $\alpha=12$ to simplify notation. We observe that
the property $B^\bullet_r(\ov\t^{(p)}_n)=\Delta$ holds if and only if $\t^{(p)}_n$ is obtained from $\Delta$ by gluing 
an arbitrary triangulation of the $q$-gon 
with $n-(N-p)$ inner vertices on the top cycle of $\Delta$ (to perform this gluing, we may assume
that the top cycle is rooted at the root of the first forest in $\mathcal{F}$) and if the distinguished vertex
of $\ov\t^{(p)}_n$ is  chosen among the inner vertices of the glued triangulation. It follows that, for
$n\geq N-p$,
 \begin{equation} \label{eq:hulldiscrete} \P(B^\bullet_r(\ov\t^{(p)}_n)=\Delta)=\frac{\# \T_{n-(N-p),q}}{\#\T_{n,p}}\times \frac{n-(N-p)}{n}.  \end{equation}
Using \eqref{eq:asymp}, we now get
\begin{eqnarray}
\lim_{n\to\infty} \P(B^\bullet_r(\ov\t^{(p)}_n)=\Delta)= \frac{C(q)}{C(p)} \,\rho^{-N+p}.
\label{eq:loihullUIPHT}\end{eqnarray}
Notice that the total number of vertices of $\Delta$ can be decomposed as $N= S+ \sum_{v \in \mathcal{F}^*}  \mathsf{Inn}( M_v)$ where $S=\#\mathsf{E}_d(\Delta)$ is the total number of vertices in the 
cycles $\partial_{j}\Delta$, $0\leq j\leq r$, with the notation of Section \ref{sec:skeleton}. We have $$S = \sum_{i=1}^q \# \tau_{i} = q+ \sum_{v \in \mathcal{F}^*} c_{v}.$$ Hence the right-hand side of
\eqref{eq:loihullUIPHT} is also equal to
$$ 
\frac{C(q)}{C(p)}  {\rho^{p-q}}\prod_{v \in \mathcal{F}^*} \rho^{- \mathsf{Inn}( M_v)} \rho^{-c_{v}}.$$
 We then observe that $\sum_{ v \in  \mathcal{F}^*} (c_{v}-1) = p-q$ and so we can multiply the
quantity in the last display by
 $$\Big(\frac{\alpha}{\rho}\Big)^{p-q-\sum_{v\in\mathcal{F}^*} (c_v-1)}$$
 to get that
 \begin{eqnarray} 
\lim_{n\to\infty} \P(B^\bullet_r(\ov\t^{(p)}_n)=\Delta)  &=& \frac{\alpha ^{-q} C(q)}{\alpha ^{-p} C(p)} \prod_{v \in \mathcal{F}^*} \Big(\rho^{-1} \alpha^{-c_{v}+1} 
\rho^{- \mathsf{Inn}({M}_{v})}\Big)\nonumber \\ 
 &=& \frac{\alpha ^{-q} C(q)}{\alpha ^{-p} C(p)} \prod_{v \in \mathcal{F}^*} \Big(\theta( c_{v})  \cdot 
 \frac{\rho^{- \mathsf{Inn}({M}_{v})} }{Z({c_{v}+2})}\Big), \label{eq:enumetree} \end{eqnarray}
where we have set
 \begin{equation} \theta(k) =  \frac{1}{\rho}\alpha^{-k+1} Z({k+2}), \quad \mbox{ for every }k\geq 0.  \label{def:theta}\end{equation} Recall that $\rho=12 \sqrt{3}$ and $\alpha = 12$ so that, using   \eqref{eq:zp} and \eqref{eq:gfzp}, we get for $0<x\leq 1$,
\begin{align*}
\sum_{k=0}^\infty x^k\theta(k)
&=\frac{\alpha}{\rho}\sum_{k=0}^\infty  \Big(\frac{x}{\alpha}\Big)^k\, Z(k+2)\\
&=\frac{\alpha}{\rho}\sum_{i=1}^\infty  \Big(\frac{x}{\alpha}\Big)^{i-1}\, Z(i+1)\\
&=\frac{\alpha^2}{x \rho}\left(\sum_{i=0}^\infty \Big( \frac{x}{\alpha}\Big)^iZ(i+1) - Z(1)\right)\\
&=  \frac{4 \sqrt{3}}{x}\Big(\frac{1}{2} + \frac{(1-x)^{3/2}-1}{2\sqrt{3}\,x} - \frac{2 - \sqrt{3}}{4}\Big)\\
&=  \frac{2(1-x)^{3/2} + 3x -2}{x^2}\\
&= 1-\Big(1+\frac{1}{\sqrt{1-x}}\Big)^{-2}.
\end{align*}
From this explicit formula for the generating function, one gets  that $\theta$ is a probability
distribution with mean $1$. 
Finally the asymptotics \eqref{asymp-theta} are derived from the formula \eqref{def:theta}  for $\theta(k)$
and \eqref{asympzp}.
This completes the proof.
 \endproof
 
 \begin{remark} It is interesting to notice that although we consider a slightly different model, namely type I triangulations, we recover the same 
 offspring distribution $\theta$ in the case of type II triangulations \cite{Kri04} . This may be explained geometrically by the relations between 
 type I and type II triangulations. \end{remark}
 
 The distribution $\theta$ plays an important role in this work. It will be convenient to write
 $Y=(Y_r)_{r\geq 0}$ for a Galton--Watson process with offspring distribution $\theta$ that starts 
 from $k$ under the probability measure $\P_k$. Then, for every integer $r\geq 1$, the generating function of the  distribution 
 of $Z_r$ under $\P_1$ is the $r$-th iterate $g^{(r)}_\theta$ of $g_\theta$. A simple induction shows that, for every
 integer $r\geq 1$, for every $x\in[0,1)$,
 \begin{equation}
\label{iterate-gtheta}
\mathbb{E}_1[x^{Y_r}] = g^{(r)}_\theta(x)= 1 - \Big(r+\frac{1}{\sqrt{1-x}}\Big)^{-2}.
\end{equation}
The function $g^{(r)}_\theta$ can be extended to a holomorphic function on $\C\backslash [1,\infty)$. For this extension, we have
$$g^{(r)}_\theta(z)-z= (1-z)\Big(1-\frac{1}{1+r\sqrt{1-z}}\Big)\Big(1+\frac{1}{1+r\sqrt{1-z}}\Big) \sim 2r\,(1-z)^{3/2},$$
as $z\to 1$, $z\in \C\backslash [1,\infty)$. By a well-known result of singularity analysis (see Corollary VI.1 p.392 in \cite{FS09}), 
it follows that
\begin{equation}
\label{asymp-giterate}
\P_1(Y_r=k)\build{ \sim}_{k\to\infty}^{} \frac{3r}{2\sqrt{\pi}}\,k^{-5/2}.
\end{equation}
Of course we recover \eqref{asymp-theta} when $r=1$.

\medskip

We let $\F_{p,q,r}$ be the set of all $(p,q,r)$-admissible forests. We will also consider the set  $\F'_{p,q,r}$ of all (pointed) forests satisfying the same properties
as $(p,q,r)$-admissible forests, except that we do not require property (v) saying that the distinguished vertex belongs to
the first tree in the forest. Finally, we will write  $\F''_{p,q,r}$ for the set of all (non-pointed) forests satisfying the same properties
(i)--(iii)
as $(p,q,r)$-admissible forests except that no special vertex is distinguished (and so properties (iv) and (v) become irrelevant).

\begin{lemma}
\label{full-mass}
For every $p\geq 1$ and $r\geq 1$,
$$\sum_{q=1}^\infty \sum_{\mathcal{F}\in \F_{p,q,r}} \frac{12^{-q}{C}(q)}{12^{-p}{C}(p)} \prod_{v \in \mathcal{F}^*} \theta(c_{v}) = 1.$$
\end{lemma}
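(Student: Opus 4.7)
The plan is to interpret the sum probabilistically in terms of the Galton--Watson process with offspring distribution $\theta$ and then verify the resulting identity by generating functions.

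Let $Y^{(1)},\ldots,Y^{(q)}$ denote independent copies of the Galton--Watson process $Y$ under $\P_1$. Then $\prod_{v\in\mathcal{F}^*}\theta(c_v)$ is exactly the probability of observing the forest $(\tau_1,\ldots,\tau_q)$ truncated above height $r$ with prescribed offspring counts $(c_v)_{v\in\mathcal{F}^*}$; summing this quantity over unpointed forests with $q$ trees of height at most $r$ and exactly $p$ vertices at height $r$ yields $\P_q(Y_r=p)=\P(Y_r^{(1)}+\cdots+Y_r^{(q)}=p)$. For the pointed set $\F_{p,q,r}$, conditions (iv)--(v) add a combinatorial factor $Y_r^{(1)}$ counting the number of choices for the distinguished vertex inside $\tau_1$. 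Exchangeability of the i.i.d.\ trees then gives
\[
\sum_{\mathcal{F}\in\F_{p,q,r}}\prod_{v\in\mathcal{F}^*}\theta(c_v)
=\E\bigl[Y_r^{(1)}\mathbf{1}_{\{Y_r^{(1)}+\cdots+Y_r^{(q)}=p\}}\bigr]
=\frac{p}{q}\,\P_q(Y_r=p),
\]
via the size-biasing identity $q\,\E[Y_r^{(1)}\mathbf{1}_A]=\E\bigl[(Y_r^{(1)}+\cdots+Y_r^{(q)})\mathbf{1}_A\bigr]=p\,\P_q(Y_r=p)$ with $A=\{Y_r^{(1)}+\cdots+Y_r^{(q)}=p\}$.

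Next I simplify the weights. From the explicit formula \eqref{def:cp} a direct computation gives
\[
\frac{12^{-p}\,C(p)}{p}=\frac{\binom{2p}{p}}{36\sqrt{2\pi}\,4^p}.
\]
Multiplying the claimed identity through by $12^{-p}C(p)/p$ and canceling the universal prefactor $(36\sqrt{2\pi})^{-1}$, the lemma reduces to
\[
\sum_{q\geq 1}\binom{2q}{q}\,4^{-q}\,\P_q(Y_r=p)=\binom{2p}{p}\,4^{-p}\qquad\text{for every }p\geq 1.
\]

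I then verify this last identity via generating functions. Combining the classical expansion $(1-s)^{-1/2}=\sum_{p\geq 0}\binom{2p}{p}4^{-p}s^p$ with $\E_q[x^{Y_r}]=g_\theta^{(r)}(x)^q$, and noting that the $q=0$ term contributes $0$ to the coefficient of $x^p$ for $p\geq 1$,
\[
\sum_{p\geq 0}x^p\sum_{q\geq 0}\binom{2q}{q}4^{-q}\,\P_q(Y_r=p)=\bigl(1-g_\theta^{(r)}(x)\bigr)^{-1/2}=r+(1-x)^{-1/2},
\]
where the second equality is the explicit iterate \eqref{iterate-gtheta}. Comparing coefficients of $x^p$ on both sides for $p\geq 1$ yields the reduced identity, completing the proof. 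The main obstacle is the first step: correctly translating the pointed-forest sum into a size-biased Galton--Watson probability, in particular leveraging exchangeability to dispose of the constraint that the distinguished vertex lie inside $\tau_1$. Once this reformulation is secured, the rest is an elegant algebraic consequence of the explicit form of the iterate $g_\theta^{(r)}$.
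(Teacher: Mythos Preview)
Your proof is correct and follows essentially the same route as the paper: both reduce the forest sum to the stationarity identity $\sum_{q\geq 1} h(q)\,\P_q(Y_r=p)=h(p)$ with $h(k)=4^{-k}\binom{2k}{k}$, and verify it via the generating function $(1-x)^{-1/2}$. The only cosmetic differences are that the paper obtains the factor $p/q$ by two combinatorial bijections ($\F_{p,q,r}\to\F'_{p,q,r}\to\F''_{p,q,r}$) rather than your exchangeability argument, and checks the one-step stationarity $\Pi(g_\theta(x))-\Pi(g_\theta(0))=\Pi(x)$ rather than plugging in the explicit $r$-fold iterate \eqref{iterate-gtheta} directly.
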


\proof We start with some simple observations. If $\mathcal{F}'\in\F'_{p,q,r}$, we can obtain a forest $\mathbf{p}(\mathcal{F}')\in \F_{p,q,r}$ by
cyclically permuting the trees so that the first one will contain the distinguished vertex, and 
if $\mathcal{F}\in \F_{p,q,r}$ there are exactly $q$ choices of $\mathcal{F}'\in\F'_{p,q,r}$ such that
$\mathbf{p}(\mathcal{F}')=\mathcal{F}$. Hence, we have also
$$\sum_{q=1}^\infty \sum_{\mathcal{F}\in \F_{p,q,r}} \frac{12^{-q}{C}(q)}{12^{-p}{C}(p)} \prod_{v \in \mathcal{F}^*} \theta(c_{v}) = \sum_{q=1}^\infty \sum_{\mathcal{F}\in \F'_{p,q,r}} \frac{1}{q}\,\frac{12^{-q}{C}(q)}{12^{-p}{C}(p)} \prod_{v \in \mathcal{F}^*} \theta(c_{v}).$$
Then, with every 
$\mathcal{F}'\in\F'_{p,q,r}$, we can associate $\mathcal{F}''\in\F''_{p,q,r}$ by simply ``forgetting'' the
distinguished vertex, and each $\mathcal{F}''\in\F''_{p,q,r}$ corresponds through this operation
to $p$ possible choices  of $\mathcal{F}'\in\F'_{p,q,r}$. Hence,
\begin{equation}
\label{mass-forest}
\sum_{q=1}^\infty \sum_{\mathcal{F}\in \F'_{p,q,r}} \frac{1}{q}\,\frac{12^{-q}{C}(q)}{12^{-p}{C}(p)} \prod_{v \in \mathcal{F}^*} \theta(c_{v})
= \sum_{q=1}^\infty \sum_{\mathcal{F}\in \F''_{p,q,r}} \frac{p}{q}\,\frac{12^{-q}{C}(q)}{12^{-p}{C}(p)} \prod_{v \in \mathcal{F}^*} \theta(c_{v})
= \sum_{q=1}^\infty \sum_{\mathcal{F}\in \F''_{p,q,r}} \frac{h(q)}{h(p)}\, \prod_{v \in \mathcal{F}^*} \theta(c_{v}),
\end{equation}
where using \eqref{def:cp} we put for every $k\geq 1$, 
$$h(k)= 4^{-k} {2k \choose k}= {36} {\sqrt{2\pi}}\,\frac{12^{-k} C(k)}{k} .$$
The right-hand side of \eqref{mass-forest} can also be written as
$$\sum_{q=1}^\infty \frac{h(q)}{h(p)} \,\mathbb{P}_q(Y_r=p)$$
with the notation introduced before the statement of the lemma. 
So in order to prove the lemma, we need to verify that, for every $p\geq 1$,
\begin{equation}
\label{statio}
\sum_{q=1}^\infty h(q) \,\mathbb{P}_q(Y_r=p)= h(p).
\end{equation}
This is equivalent to saying that $(h(k))_{k\geq 1}$ is an infinite stationary  measure for the 
Galton--Watson process $Y$. To see this, 
set, for every $x\in[0,1)$,
$$\Pi(x)= \sum_{k=1}^\infty h(k)\,x^k = \frac{1}{\sqrt{1-x}} -1.$$
To verify that $(h(k))_{k\geq 1}$ is a stationary distribution for $Y$, it is enough
\cite[Chapter II]{AN72} to
check that $\Pi(g_\theta(x))-\Pi(g_\theta(0))= \Pi(x)$, for every $x\in [0,1)$. This follows
from the explicit formulas for $g_\theta$ and $\Pi$.
\endproof

For fixed integers $p\geq 1$ and $r\geq 1$, define
$$\F_{p,r}:=\bigcup_{q=1}^\infty \F_{p,q,r}.$$
Also write $\C_{p,r}$ for the (countable) set of all triangulations of the cylinder of height $r$ with a bottom cycle of length $p$. 

Lemma \ref{full-mass} allows us to define a probability measure $\mathbf{P}_{p,r}$ on $\F_{p,r}$ by setting, for every
forest $\mathcal{F}\in \F_{p,q,r}$,
\begin{equation}
\label{law-forest}
\mathbf{P}_{p,r}(\mathcal{F}) := \frac{12^{-q}{C}(q)}{12^{-p}{C}(p)} \prod_{v \in \mathcal{F}^*} \theta(c_{v})
.
\end{equation}
We then define a probability measure $\P_{p,r}$ on the set $\C_{p,r}$, by requiring that under $\P_{p,r}$ the skeleton 
is distributed according to $\mathbf{P}_{p,r}$ and, conditionally on the skeleton, the
triangulations with a boundary filling in the slots are independent and 
Boltzmann distributed (with boundary lengths prescribed by the skeleton).
We can then restate Lemma \ref{law-hull} by saying that, if $\Delta\in \C_{p,r}$,
\begin{equation}
\label{local-pUIPT}
\lim_{n\to\infty} \P(B^\bullet_r(\ov\t^{(p)}_n)=\Delta)=\P_{p,r}(\Delta),
\end{equation}
 or, in other words, the law of $B^\bullet_r(\ov\t^{(p)}_n)$ converges weakly to $\P_{p,r}$ as $n\to\infty$.
 
 For $1\leq r<r'$, there is a canonical projection $\mathbf{p}_{r,r'}$ from $\C_{p,r'}$ onto $\C_{p,r}$, which maps 
 $\Delta\in\C_{p,r'}$ to $B^\bullet_r(\Delta)$. The preceding convergence then implies that the probability measures $\P_{p,r}$, $r\geq 1$, are consistent
 in the sense that $\P_{p,r}=\P_{p,r'}\circ \mathbf{p}_{r,r'}^{-1}$  for every $1\leq r < r'$. It follows that we can define a random infinite triangulation of the plane, which we denote by $\t^{(p)}_\infty$,
 with a (simple) boundary of length $p$, such that 
 $\t^{(p)}_\infty$ has a unique end, and the law of $B^\bullet_r(\t^{(p)}_\infty)$ is equal to $\P_{p,r}$ for every
 integer $r\geq 1$. Note that here infinity plays the role of the distinguished point, so that the hull
 $B^\bullet_r(\t^{(p)}_\infty)$ is obtained by ``filling in'' the finite holes in the ball $B_r(\t^{(p)}_\infty)$, which is itself the
  union of all faces that are incident to a vertex at graph distance strictly less than
 $r$ from the boundary.
 
 We call $\t^{(p)}_\infty$ the (type I) uniform infinite triangulation of the $p$-gon. It follows from \eqref{local-pUIPT} that $\t^{(p)}_\infty$
 is the local limit of $\t^{(p)}_n$ as $n\to\infty$. When $p=1$, the boundary
 is a loop, and, after performing the inverse of the transformation described in Fig.~\ref{fig:transform-root}, we get a random infinite planar triangulation, which we denote by $ \t_{\infty}$.
 Then $\t_\infty$ is the local limit of uniform rooted (plane) triangulations with $n$ vertices when $n\to\infty$, and therefore
 is identified with the type I uniform infinite planar
 triangulation (UIPT), which was already constructed in \cite[Proposition 6.2]{St14}. Notice that this approach via the skeleton
 decomposition gives a simple method for
 constructing the type I UIPT. A similar method was already used by Krikun \cite{Kri05} to construct the Uniform Infinite Planar Quadrangulation.
 
 \subsection{The comparison principle}
 \label{subsec:compa}
 
 Our goal in this section is to obtain a comparison principle showing that
 the law of the skeleton of the hull  $B^\bullet_r(\t^{(p)}_\infty)$ is ``not too different''
 from the law of a finite sequence of independent Galton--Watson trees. We write
 $L^{(p)}_r$ for the  length of the top cycle of $B^\bullet_r(\t^{(p)}_\infty)$.
 When $p=1$, we will write $L_r=L^{(1)}_r$ to simplify notation.
 
 We first discuss a spatial Markov property of the law of $\t^{(p)}_\infty$. To this end, let 
 $1\leq r  < s$ be integers, and let $\Delta\in \C_{p,s}$. Let $q$ be the length 
 of the cycle $\partial_r\Delta$. Then $\Delta$
is obtained by gluing a triangulation $\Delta''\in \C_{q,s-r}$ on the top cycle of
 another triangulation $\Delta'\in \C_{p,r}$, whose top cycle has length $q$. From the explicit formula for 
the probability measure $\mathbf{P}_{p,r}$ on $\F_{p,r}$, it is a simple matter to
 verify that
 $$\P_{p,s}(\Delta)= \P_{p,r}(\Delta')\times \P_{q,s-r}(\Delta'').$$
 From this equality, it follows that, conditionally on $\{L^{(p)}_r=q\}$,
 $B^\bullet_s(\t^{(p)}_\infty)\backslash B^\bullet_r(\t^{(p)}_\infty)$ is distributed 
 according to $\P_{q,s-r}$ and is independent of $B^\bullet_r(\t^{(p)}_\infty)$ --- we slightly abuse notation
 by writing $B^\bullet_s(\t^{(p)}_\infty)\backslash B^\bullet_r(\t^{(p)}_\infty)$ for the triangulation of the cylinder of
 height $s-r$ consisting of the faces of $\t^{(p)}_\infty$ that lie in $B^\bullet_s(\t^{(p)}_\infty)$ but not in
 $B^\bullet_r(\t^{(p)}_\infty)$, and this triangulation is rooted at the edge of $\partial^*B^\bullet_r(\t^{(p)}_\infty)$
 that is the root of the first tree of the skeleton of $B^\bullet_r(\t^{(p)}_\infty)$.
 By letting $s\to\infty$, we also get that, conditionally on $\{L^{(p)}_r=q\}$, the triangulation 
 $\t^{(p)}_\infty\backslash B^\bullet_r(\t^{(p)}_\infty)$ is distributed as $\t^{(q)}_\infty$
 and is independent of $B^\bullet_r(\t^{(p)}_\infty)$. 
 
 The following lemma provides useful estimates about the distribution of $L_r=L^{(1)}_r$.
 
 \begin{lemma}
\label{bdlawperi}
There exists a constant $C_{0} >0$ such that for any integer $\alpha\geq 0$ and any choice of the
integers $r,p \geq 1$ we have 
  \begin{equation}  \mathbb{P}(L_{r} =p) \leq \frac{C_{0}}{r^2} \label{localbound}.\end{equation}
and
 \begin{equation} \label{eq:boundLs} \mathbb{P}(L_{r} > \alpha r^2)  \leq C_{0} e^{-\alpha/5}, \end{equation}
\end{lemma}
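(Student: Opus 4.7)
My plan is to use the skeleton decomposition of $B^\bullet_r(\t^{(1)}_\infty)$ given by Lemma \ref{law-hull} together with \eqref{local-pUIPT} in order to reduce both estimates to explicit computations for the Galton--Watson process $Y$ with offspring distribution $\theta$. For $p=1$ the top cycle $\partial^* B^\bullet_r(\t^{(1)}_\infty)$ has length $L_r=q$ exactly when the associated skeleton $\mathcal{F}\in\F_{1,q,r}$ is a forest of $q$ trees whose unique height-$r$ vertex lies in $\tau_1$. Using the same cyclic-rotation argument as in the proof of Lemma \ref{full-mass}---and observing that for $p=1$ the sets $\F'_{1,q,r}$ and $\F''_{1,q,r}$ coincide since the distinguished vertex is forced to be the unique height-$r$ vertex---the map $\F''_{1,q,r}\to\F_{1,q,r}$ obtained by cyclically rotating the trees is weight-preserving and $q$-to-$1$. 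Combined with the identity $\sum_{\mathcal{F}\in\F''_{1,q,r}}\prod_{v\in\mathcal{F}^*}\theta(c_v)=\P_q(Y_r=1)$ and $h(1)=1/2$, this yields the clean identity
$$\P(L_r=q) \;=\; 2\,h(q)\,\P_q(Y_r=1).$$

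Next I evaluate $\P_q(Y_r=1)$ from the closed form \eqref{iterate-gtheta} for the iterated generating function of $\theta$. Setting $x=0$ gives $\P_1(Y_r=0)=1-(r+1)^{-2}$, while differentiating once before setting $x=0$ gives $\P_1(Y_r=1)=(r+1)^{-3}$. Under $\P_q$, $Y_r$ is a sum of $q$ independent copies of $Y_r$ under $\P_1$, so only one of them must reach height $r$ (with exactly one descendant), whence
$$\P_q(Y_r=1)\;=\;q\,(r+1)^{-3}\bigl(1-(r+1)^{-2}\bigr)^{q-1}.$$
Combined with the Stirling-type bound $h(q)=4^{-q}\binom{2q}{q}\leq c\,q^{-1/2}$, this produces the master estimate
$$\P(L_r=q)\;\leq\; c'\,\sqrt{q}\,(r+1)^{-3}\exp\!\Bigl(-\frac{q-1}{(r+1)^2}\Bigr).$$

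From here, \eqref{localbound} follows by optimising the right-hand side in $q$: the function $q\mapsto\sqrt{q}\,e^{-q/(r+1)^2}$ attains its maximum at $q_\star=(r+1)^2/2$ with value of order $r+1$, which combined with the factor $(r+1)^{-3}$ gives a bound $O((r+1)^{-2})=O(r^{-2})$ uniform in $q$. For \eqref{eq:boundLs}, I bound the tail sum by the integral $\int_{\alpha r^2}^\infty\sqrt{q+1}\,e^{-q/(r+1)^2}\,dq$ and perform the change of variables $u=q/(r+1)^2$; the resulting $(r+1)^{3}$ Jacobian exactly cancels the prefactor $(r+1)^{-3}$, leaving a bound proportional to $\int_{\alpha r^2/(r+1)^2}^\infty\sqrt{u}\,e^{-u}\,du$. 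Because $r^2/(r+1)^2\geq 1/4$ for $r\geq 1$, the lower limit is at least $\alpha/4$; then splitting $e^{-u}=e^{-u/5}\,e^{-4u/5}$ and using that $\sqrt{u}\,e^{-u/5}$ is bounded on $\R_+$ produces the claimed $e^{-\alpha/5}$ decay.

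There is no deep obstacle here; the only technical point is keeping the constants in the tail estimate uniform in $r\geq 1$ rather than merely asymptotically valid. The factor-$1/4$ loss in $r^2/(r+1)^2$ at $r=1$, together with the polynomial factor $\sqrt{u}$ in the integrand, is precisely what weakens the naive exponent from $e^{-\alpha}$ to the $e^{-\alpha/5}$ appearing in the statement, and the splitting $e^{-u}=e^{-u/5}\,e^{-4u/5}$ is tailored to absorb both losses at once.
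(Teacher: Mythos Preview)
Your proof is correct and follows essentially the same route as the paper: you derive the identity $\P(L_r=q)=\frac{h(q)}{h(1)}\,\P_q(Y_r=1)$ via the cyclic-rotation argument, compute $\P_q(Y_r=1)=q(r+1)^{-3}(1-(r+1)^{-2})^{q-1}$ from the explicit iterated generating function, and then optimise in $q$ for \eqref{localbound} and integrate for \eqref{eq:boundLs}. The only cosmetic difference is in the tail estimate, where the paper uses the monotonicity of $x\mapsto\sqrt{x}\,e^{-x/4}$ for $x\ge 2$ while you split $e^{-u}=e^{-u/5}e^{-4u/5}$; both are equally valid ways to absorb the polynomial factor.
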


The proof of this lemma is postponed to the end of the section.

We now fix a positive constant $a\in(0,1)$,  which may be chosen  arbitrarily small. 
For every integer $r\geq 1$, we let $N_r^{(a)}$ be uniformly distributed over $\{\lfloor ar^2\rfloor+1,\ldots, \lfloor a^{-1}r^2\rfloor\}$. We also
consider a sequence $\tau_1,\tau_2,\ldots$ of independent
Galton--Watson trees with offspring distribution $\theta$, which is independent of $N^{(a)}_r$. 
For every $j\geq 0$, we write
$[\tau_i]_{j}$ for the tree $\tau_i$ truncated at generation $j$ (that is, we remove all vertices
at height strictly greater than $j$).

If $1\leq r < s$, we let $\mathcal{F}^{(1)}_{r,s}$ be the skeleton of $B_{s}^\bullet(\t_{\infty}^{(1)}) \backslash B_{r}^\bullet(\t_{\infty}^{(1)})$. We also write $\wt{\mathcal{F}}^{(1)}_{r,s}$
for the (non-pointed) forest obtained by a random cyclic permutation of the trees of $\mathcal{F}^{(1)}_{r,s}$ (so that
the first tree in $\wt{\mathcal{F}}^{(1)}_{r,s}$ is the tree of index $K$ in $\mathcal{F}^{(1)}_{r,s}$,
where $K$ is chosen uniformly over $\{1,2,\ldots, L_s\}$) and also
 ``forgetting'' the distinguished vertex at generation $s-r$. On the event
 $\{L_r=p\}\cap\{L_s=q\}$, $\wt{\mathcal{F}}^{(1)}_{r,s}$ is
 a random element of the set $\F''_{p,q,s-r}$ introduced before
 Lemma \ref{full-mass}. 

\begin{proposition}
\label{prop:compa}
There exists a constant $C_1$, which only depends on the real $a$, such that, for every sufficiently large integer $r$,
for every choice of $s \in \{r+1,r+2, \ldots \}$,
for every choice of the integers $p$ and $q$ with 
$ar^2< p\leq a^{-1}r^2$ and $ar^2< q\leq a^{-1}r^2$, for
every forest $\mathcal{F}\in \F''_{p,q,s-r}$, we have
\begin{equation}
\label{compabd}
\P\Big(\wt{\mathcal{F}}^{(1)}_{r,s}=\mathcal{F}\Big)
\leq C_1\,\P\Big( ([\tau_1]_{s-r},\ldots,[\tau_{N_r^{(a)}}]_{s-r})=\mathcal{F}\Big).
\end{equation}
\end{proposition}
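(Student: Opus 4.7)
The plan is to compute both sides of \eqref{compabd} explicitly using the skeleton distribution \eqref{law-forest} and then bound the ratio.

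\textbf{LHS.} By the spatial Markov property discussed in Section~\ref{subsec:compa}, conditionally on $\{L_r=p\}$ the skeleton $\mathcal{F}_{r,s}^{(1)}$ is distributed according to $\mathbf{P}_{p,s-r}$. Hence, for every pointed forest $\mathcal{F}^\star \in \F_{p,q,s-r}$,
$$\P(\mathcal{F}_{r,s}^{(1)}=\mathcal{F}^\star) = \P(L_r=p)\cdot \frac{12^{-q}C(q)}{12^{-p}C(p)}\prod_{v\in (\mathcal{F}^\star)^*}\theta(c_v).$$
Next I account for the two operations defining $\wt{\mathcal{F}}_{r,s}^{(1)}$ exactly as in the proof of Lemma~\ref{full-mass}: the uniform cyclic shift gives a bijection between $\F_{p,q,s-r}\times\{1,\dots,q\}$ and $\F'_{p,q,s-r}$, producing a factor $1/q$; and forgetting the distinguished vertex is a $p$-to-$1$ projection from $\F'_{p,q,s-r}$ to $\F''_{p,q,s-r}$, producing a factor $p$. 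Combining, for every $\mathcal{F}\in\F''_{p,q,s-r}$,
$$\P(\wt{\mathcal{F}}_{r,s}^{(1)}=\mathcal{F}) = \frac{p}{q}\,\P(L_r=p)\cdot \frac{12^{-q}C(q)}{12^{-p}C(p)}\prod_{v\in \mathcal{F}^*}\theta(c_v).$$

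\textbf{RHS.} Since the $\tau_i$ are i.i.d.\ Galton--Watson trees with offspring $\theta$, independent of $N_r^{(a)}$, and since for each tree $\P([\tau_i]_{s-r}=\sigma_i)=\prod_{v\in \sigma_i,\,\mathrm{ht}(v)<s-r}\theta(c_v)$, multiplying over $i$ gives
$$\P\big(([\tau_1]_{s-r},\dots,[\tau_{N_r^{(a)}}]_{s-r})=\mathcal{F}\big) = \frac{1}{\lfloor a^{-1}r^2\rfloor-\lfloor ar^2\rfloor}\prod_{v\in \mathcal{F}^*}\theta(c_v),$$
valid for every $\mathcal{F}\in \F''_{p,q,s-r}$ with $q\in(\lfloor ar^2\rfloor,\lfloor a^{-1}r^2\rfloor]$.

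\textbf{Comparison.} The product $\prod_{v\in\mathcal{F}^*}\theta(c_v)$ cancels and it suffices to show that the prefactor on the LHS is $\le C_1/r^2$ uniformly in $p,q\in(ar^2,a^{-1}r^2]$ and $s>r$. For this I use three bounds: (i) Lemma~\ref{bdlawperi} gives $\P(L_r=p)\le C_0/r^2$; (ii) the range constraint on $p,q$ yields $p/q\le a^{-2}$; (iii) the explicit asymptotics \eqref{def:cp}, namely $12^{-k}C(k)\sim (36\pi\sqrt 2)^{-1}\sqrt k$, imply that for $r$ large enough (so that $ar^2$ is in the asymptotic regime) the ratio $12^{-q}C(q)/12^{-p}C(p)$ is bounded by a constant depending only on $a$. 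Multiplying these three bounds gives the desired estimate with a constant $C_1=C_1(a)$.

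\textbf{Main difficulty.} The actual inequality is elementary once both sides have been rewritten. The only real subtlety is the bookkeeping of the cyclic-shift and forget-distinguished-vertex operations (the $p/q$ factor): conceptually this mirrors the manipulation used in Lemma~\ref{full-mass} to identify $(h(k))_{k\ge1}$ as a stationary measure for $Y$, and one must be careful that the random cyclic shift precisely corresponds to passing from $\F_{p,q,s-r}$ (distinguished vertex in first tree) to $\F'_{p,q,s-r}$ (distinguished vertex anywhere). The other step that requires care is ensuring the constant in the bound on $12^{-q}C(q)/12^{-p}C(p)$ is uniform in $r$; this is why the hypothesis ``$r$ sufficiently large'' appears in the statement.
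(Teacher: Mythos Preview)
Your proof is correct and follows essentially the same route as the paper's. The only cosmetic difference is that the paper combines your factors $p/q$ and $12^{-q}C(q)/(12^{-p}C(p))$ into the single ratio $h(q)/h(p)$ (recall $h(k)=36\sqrt{2\pi}\,12^{-k}C(k)/k$), which slightly streamlines the final bound; otherwise the computation of both sides and the use of Lemma~\ref{bdlawperi} are identical.
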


\begin{proof}
By a standard formula for Galton--Watson trees, we have first
\begin{align}
\label{compatech}
\P\Big( ([\tau_1]_{s-r},\ldots,[\tau_{N_r^{(a)}}]_{s-r})=\mathcal{F}\Big)
&= \P(N_r^{(a)}=p)\,\P\Big( ([\tau_1]_{s-r},\ldots,[\tau_{p}]_{s-r})=\mathcal{F}\Big)\nonumber\\
&= \frac{1}{ \lfloor a^{-1}r^2\rfloor-\lfloor ar^2\rfloor}
\times \prod_{v\in\mathcal{F}^*} \theta(c_v),
\end{align}
using again the notation $\mathcal{F}^*$ for the set of all vertices of $\mathcal{F}$ at height strictly less than $s-r$. 
On the other hand, let $\mathcal{F}^\circ$ be any (pointed) forest in 
$\F_{p,q,s-r}$ such that the sequence of trees in $\mathcal{F}^\circ$ coincides with that in $\mathcal{F}$
up to a cyclic permutation. By the observations of the beginning of this section,
we know that, conditionally on $L_r=p$,   $B^\bullet_s(\t^{(1)}_\infty)\backslash B^\bullet_r(\t^{(1)}_\infty)$ is distributed 
 according to $\P_{p,s-r}$, and thus
$$\P\Big({\mathcal{F}}^{(1)}_{r,s}=\mathcal{F}^\circ\,\Big|\,
L_r=p\Big)= \mathbf{P}_{p,s-r}(\mathcal{F}^\circ)=\frac{12^{-q}{C}(q)}{12^{-p}{C}(p)} \prod_{v \in \mathcal{F}^*} \theta(c_{v}),$$
Note that the right-hand side of the previous display only depends on
 $\mathcal{F}$ and not on the choice of $\mathcal{F}^\circ$. By arguments similar to those 
 of the proof of Lemma \ref{full-mass}, we have then
 $$\P\Big(\wt{\mathcal{F}}^{(1)}_{r,s}=\mathcal{F}\,\Big|\,
L_r=p\Big)
=  \frac{p}{q}\,\P\Big({\mathcal{F}}^{(1)}_{r,s}=\mathcal{F}^\circ\,\Big|\,
L_r=p\Big)=\frac{h(q)}{h(p)}\,\prod_{v\in\mathcal{F}^*} \theta(c_v)
.$$
By our conditions on $p$ and $q$, the ratio $h(q)/h(p)$ is bounded above by a constant $C_2$ (depending on $a$).  Using the bound \eqref{localbound},
we thus get 
$$\P\Big(\wt{\mathcal{F}}^{(1)}_{r,s}=\mathcal{F}\Big) \leq \frac{C_0}{r^2}\;C_2 \,\prod_{v\in\mathcal{F}^*} \theta(c_v)
.$$
The bound of the proposition now follows by comparing the right-hand side of
the previous display with the right-hand side of \eqref{compatech}.
\end{proof}

\begin{proof}[Proof of Lemma \ref{bdlawperi}] 
We observe that 
$$\P(L_r=p)= \sum_{\mathcal{F}\in \F''_{1,p,r}} \P(\wt{\mathcal{F}}^{(1)}_{0,r}=\mathcal{F})
= \sum_{\mathcal{F}\in \F''_{1,p,r}} \frac{h(p)}{h(1)}\,\prod_{v\in\mathcal{F}^*} \theta(c_v),$$
where 
$\wt{\mathcal{F}}^{(1)}_{0,r}$ is defined as above from the skeleton ${\mathcal{F}}^{(1)}_{0,r}$ of the triangulation of the cylinder $B^\bullet_r(\t^{(1)}_\infty)$. Hence
\begin{equation}
 \label{bdlawperi1}
 \P(L_r=p)
= \frac{h(p)}{h(1)}\,\P_p(Y_{r}=1)
\end{equation} where, as previously, $(Y_n)_{n\geq 0}$ stands for a Galton--Watson process with offspring distribution $\theta$ that starts from $k$ under the probability measure $\mathbb{P}_k$. 
From the explicit form of $h$ there exists a constant $C_3$ such that
 \begin{eqnarray}h(p)\leq \frac{C_3}{ \sqrt{p}}, 
 \label{bound-h}
 \end{eqnarray}
for every $p \geq 1$. On the other hand, from \eqref{iterate-gtheta},
we have
 \begin{equation}
 \label{heighttree}
 \mathbb{P}_1(Y_r=0)= 1- (r+1)^{-2}.
 \end{equation} 
and 
 \begin{eqnarray}
\mathbb{P}_{p}(Y_{r} =1) &= &
\lim_{x\downarrow 0} \;x^{-1} \Big( \E_p[x^{Y_r}] - \mathbb{P}_p(Y_r=0)\Big)\nonumber\\
 &=&
 \lim_{x\downarrow 0} \;x^{-1} \Big( \Big(1 - \Big(r+\frac{1}{\sqrt{1-x}}\Big)^{-2}\Big)^p
 - \Big(1 - (r+1)^{-2}\Big)^p\Big)\nonumber\\
 &=&\frac{p}{(r+1)^3}\, (1-(r+1)^{-2})^{p-1}.\label{probab==1}
 \end{eqnarray}
 It follows that, with some constants $C_{4},C_{5}>0$, 
 $$ \mathbb{P}(L_{r}=p) \leq \frac{C_{3}}{h(1)}\frac{ \sqrt{p}}{(r+1)^3} (1-(r+1)^{-2})^{p-1} \leq \frac{C_{4}}{r^2} \sqrt{ \frac{p}{r^2}} e^{-(p-1)/(r+1)^2} \leq \frac{C_{5}}{r^2} \sqrt{ \frac{p}{r^2}} e^{-p/(4r^2)}.$$
The bound \eqref{localbound} immediately follows. As for \eqref{eq:boundLs}, we use the fact that the function $ x \mapsto \sqrt{x} e^{-x/4}$ is decreasing when $x \geq 2$ so that, if $\alpha \geq 2$, we have, with some constant $C_{6}$,
 $$ \mathbb{P}(L_{r} > \alpha r^2) \leq \sum_{q = \alpha r^2 +1}^\infty \frac{C_{5}}{r^2} \sqrt{\frac{q}{r^2}} e^{-q/(4r^2)} \leq  \frac{C_{5}}{r^2} \int_{\alpha r^2}^\infty \mathrm{d}x\, \sqrt{\frac{x}{r^2}} e^{-x/(4r^2)}   \leq C_{6} e^{-\alpha/5}. $$
\end{proof}

\begin{remark}
\label{conv-dist-L}
 The preceding calculations
give for every $x>0$,
$$ \mathbb{P}_{\lfloor xr^2\rfloor}(Y_{r} =1) \build{\sim}_{r\to\infty}^{} {\frac{x}{r}\;\exp(-x)},$$
and, noting that $\frac{h(p)}{h(1)}\sim 2/\sqrt{\pi p}$ as $p\to\infty$, we obtain that, for every $x>0$,
$$\lim_{r\to\infty} r^2\,\P(L_r=\lfloor xr^2\rfloor)= \frac{2}{\sqrt{\pi}}\,\sqrt{x}\,\exp(-x).$$
In this way we recover (in a stronger form) the fact that $r^{-2}L_r$ converges in distribution to a 
Gamma distribution with parameter $3/2$ \cite[Theorem 2]{CLGpeeling}.
\end{remark}

We can apply Proposition \ref{prop:compa} to get information on the probability 
of coalescence of left-most geodesics from distinct vertices of $\partial^*B^\bullet_n(\t^{(1)}_\infty)$
when $n$ is large.  Let $u^{(n)}_0$ be chosen uniformly at random 
over the vertices of $\partial^*B^\bullet_n(\t^{(1)}_\infty)$, and enumerate all vertices
of $\partial^*B^\bullet_n(\t^{(1)}_\infty)$ in clockwise order as $u^{(n)}_0,u^{(n)}_1,\ldots,u^{(n)}_{L_n-1}$.
Fix $\delta >0$. We claim that, if $\eta\in(0,1/2)$ is chosen small enough, the probability of the intersection of 
\begin{equation}
\label{goodevent}
\{an^2\leq L_n\leq a^{-1}n^2\} \cap \{an^2\leq L_{n-\lfloor \eta n\rfloor}\leq a^{-1}n^2\} 
\end{equation}
with the event where the left-most geodesics from $u^{(n)}_0$ and $u^{(n)}_{\lfloor a n^2/2\rfloor}$
coalesce before hitting $\partial^* B^\bullet_{n-\lfloor \eta n\rfloor}(\t^{(1)}_\infty)$ is bounded above by $\delta$ for all large enough $n$. 
Indeed, using the remarks at the end of subsection \ref{sec:skeleton}, we need to bound the probability of 
(the intersection of the event in \eqref{goodevent} with) the event where the height of the subforest consisting of the
first $\lfloor an^2/2\rfloor$ trees of $\wt{\mathcal{F}}^{(1)}_{n-\lfloor \eta n\rfloor,n}$ is strictly smaller than $\lfloor \eta n\rfloor$, or the 
same holds for the height of the complementary 
subforest. Proposition \ref{prop:compa} shows that, up to a multiplicative constant
depending only on $a$, this probability is bounded above by twice the probability that a forest of $\lfloor an^2/2\rfloor$
independent Galton--Watson trees with offspring distribution $\theta$ has height smaller than $\lfloor \eta n\rfloor$, and our claim
now follows from \eqref{heighttree}.

 \section{Half-plane models} \label{sec:half-planes}

{In this section, we introduce the two half-plane models that are local limits of large random 
triangulations of the cylinder rooted either on the bottom cycle (as previously) or
on the top cycle. In the first case, we get the upper half-plane model, which was already discussed for type II triangulations in \cite{Ang05} and, in the second case, we get the lower half-plane model. We then 
obtain a relation between these
two half-plane models (Proposition \ref{size-bias-UHPT}). The lower half-plane model is most relevant for our study --- the time constants $ \mathbf{c}_{0}, \mathbf{c}_{1}$ and $ \mathbf{c}_{2}$ of the Introduction will arise in an application of the ergodic subadditive theorem on this random lattice (Propositions \ref{lem:subadditive} and \ref{subadditive-dual}). However
some of the delicate estimates that we will need about the geometry of the lower half-plane model are easier to derive
first for the upper half-plane model and can then be transferred to the lower half-plane model using Proposition \ref{size-bias-UHPT} and  Corollary \ref{abso-con}.}

\subsection{The upper half-plane triangulation}
\label{subsec:UHPT}

We construct a triangulation of the upper half-plane $\R\times \R_+$, whose vertex set contains all
points of the form $(i,j)$ for $i\in\Z$ and $j\in \Z_+$. A key ingredient of this
construction is an infinite tree, which is closely related to the Galton--Watson tree with offspring
distribution $\theta$ conditioned on non-extinction. This tree will be embedded in the half-plane so that
its vertices are exactly all points of the form $(\frac{1}{2}+i,j)$ for $i\in\Z$ and $j\in \Z_+$. Let us start by describing this tree 
and its embedding.

The tree has an infinite {\it spine} which consists of all vertices of the discrete half-line $\{(\frac{1}{2},j),\;j\in\Z_+\}$, with an
edge between any two successive vertices on this half-line. Informally, one may think that time runs backwards when we move upward the spine,
so that the vertex $(\frac{1}{2},j)$ is the parent of the vertex $(\frac{1}{2},j-1)$ for every $j\geq 1$. Write $\ov\theta$
for the size-biased distribution associated with $\theta$, namely $\ov\theta(k)=k\theta(k)$ for every $k\geq 1$. Every vertex of the
form $(\frac{1}{2},j)$, $j\geq 1$ has (independently of the others) a random number $m_j$ of children distributed according 
to $\ov\theta$, and these are the vertices $(\frac{1}{2}+k,j-1)$ for $\ell_j-m_j\leq k\leq \ell_j-1$, where 
$\ell_j$ is uniform over $\{1,2,\ldots,m_j\}$ (put differently, the rank of $(\frac{1}{2},j-1)$ among the children
of $(\frac{1}{2},j)$ is uniform). Of course the pairs $(\ell_j,m_j)$, $j\in\Z_+$, are assumed to be independent.

Then, to every vertex of the form $(\frac{1}{2}+k,j)$ ($k\not=0$) that is a child of a vertex 
of the spine,
we attach (independently and independently of $(\ell_i,m_i)_{i\in\Z_+}$) 
a
Galton--Watson tree with offspring distribution $\theta$ truncated at height $j$,  in such a way that
vertices at height $r\in\{0,1,\ldots,j\}$ in this truncated tree will be points of the form $(\frac{1}{2}+i,j-r)$. An easy calculation shows 
that on both sides of the spine infinitely many of these trees will hit the maximal possible height. It follows that
we may draw these trees in the upper half-plane, in such a way that edges do not cross and
vertices (including those of the spine) are exactly all points of the form $(\frac{1}{2}+i,j)$ for $i\in\Z$ and $j\in \Z_+$. In particular, 
every vertex $(\frac{1}{2}+i,j)$ with 
$i\not =0$ is a descendant of some vertex of the spine. 
Furthermore this embedding is unique.
Rather than giving a 
more formal construction, we refer the reader to Fig.~\ref{fig:defUHPT} from which the definition of
our infinite tree should be clear. 

\begin{figure}[!h]
 \begin{center}
 \includegraphics[width=0.8\linewidth]{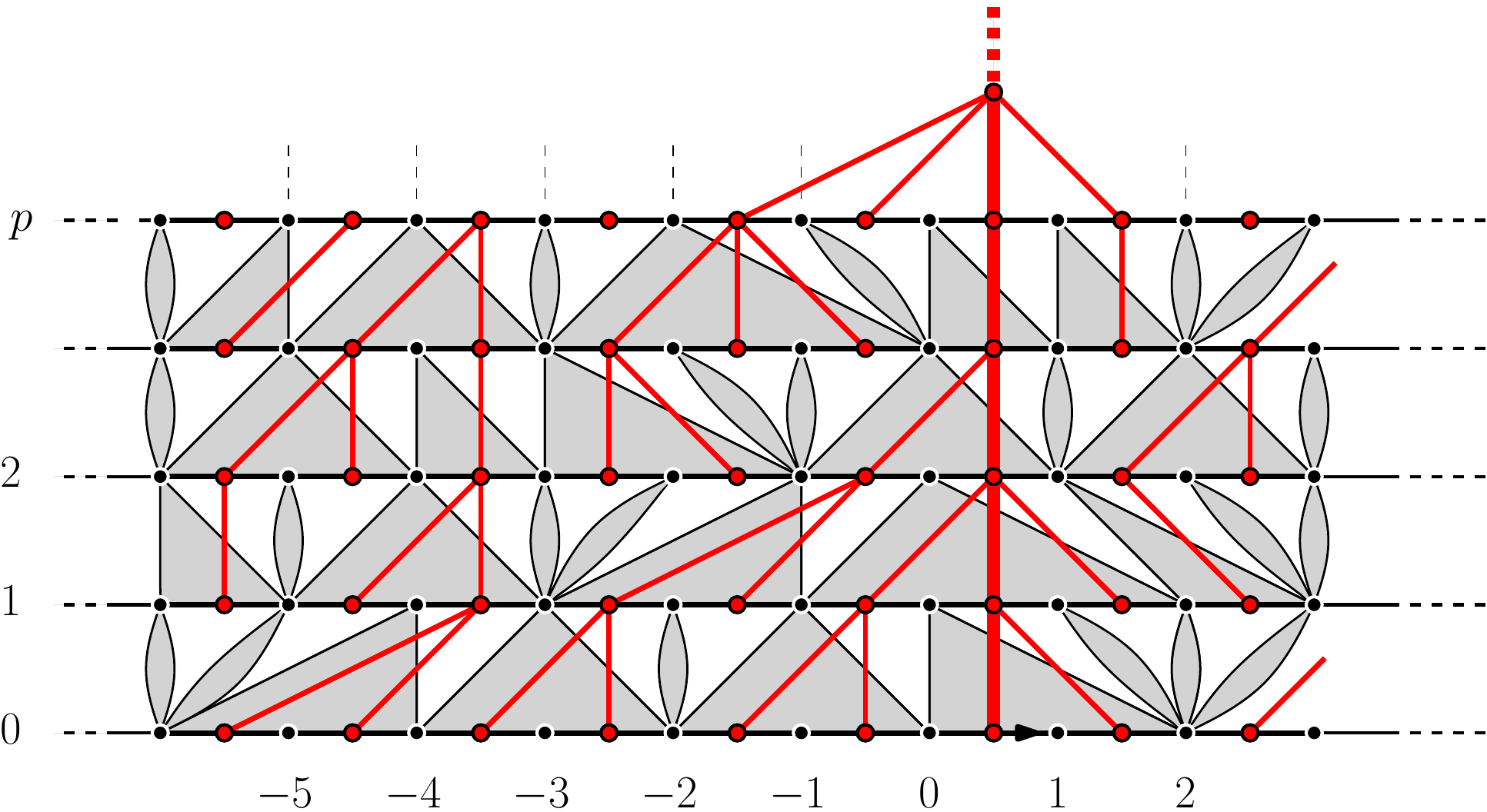}
 \caption{ \label{fig:defUHPT} Illustration of the construction of the upper half-plane triangulation. In red, the underlying tree giving the ``skeleton'' of the map and in grey, the slots to be filled in by independent Boltzmann triangulations. The thick red line represents the spine of the infinite tree.}
 \end{center}
 \end{figure}

Let us now construct our infinite triangulation of the upper half-plane. We start by constructing special triangles, which
we call the downward triangles by analogy with the previous sections, whose vertices will be elements of
$\Z\times \Z_+$. We first require that, for every $(i,j)\in\Z\times \Z_+$, the horizontal edge $[i,i+1]\times\{j\}$ connecting $(i,j)$ to $(i+1,j)$ is an edge
of the triangulation. For every such horizontal edge with $j\geq 1$, we construct a {\it downward triangle}
containing this edge, whose third vertex is the vertex $(k,j-1)$, where $k$ is the 
minimal integer such that the tree vertex $(\frac{1}{2}+k,j-1)$ is a child of $(\frac{1}{2}+i',j)$ for some $i'> i$.
We do this construction in such a way that edges are all distinct (and of course do not cross). Note in
particular that if a (tree) vertex $(\frac{1}{2} + i,j)$ with $j\geq 1$ has no child, there will be a double edge 
from $(i,j)$ to $(k,j-1)$ for some $k\in\Z$.
The configuration of downward triangles is then completely determined by the infinite tree.
As in the previous sections, the complement of the union of downward triangles in the half-plane consists of
slots, each vertex of the form $(i,j)$, $j\geq 1$, being at the ``top'' of a slot 
bounded by a cycle whose length is $2$ plus the number of children of $(\frac{1}{2}+i,j)$ in the infinite tree. 
We then fill in the slots by independent Boltzmann triangulations with the prescribed perimeters, and, as
previously, we make the convention that if a slot with perimeter $2$ is filled in by the edge-triangulation, 
this means that the double edge bounding this slot is glued into a single edge.
The resulting triangulation of the upper half-plane in called the UHPT for Upper Half-Plane Triangulation. 
It is rooted at the edge between $(0,0)$ and $(1,0)$, which is oriented from left to
right. We write $\u$ for the UHPT and $\partial \u$ for its (bottom) boundary.

\begin{proposition}
\label{conv-UHPT}
We have
$$\t^{(p)}_\infty \build{\la}_{p\to\infty}^{\rm(d)} \u,$$
in the sense of local limits of rooted planar maps.
\end{proposition}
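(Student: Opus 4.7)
The plan is to prove the statement via the skeleton decomposition of Section \ref{sec:skeleton}, reducing the local convergence of triangulations to the local convergence of the associated pointed forests. For every fixed $r\geq 1$, the ball of graph radius $\leq r$ around the root of both $\t^{(p)}_\infty$ and of $\u$ is contained in the ``hull of height $r$'' around the boundary, and this hull is determined by its pointed skeleton forest together with i.i.d.\ Boltzmann triangulations filling the slots. By Lemma \ref{law-hull}, the skeleton of $B^\bullet_r(\t^{(p)}_\infty)$ is distributed according to $\mathbf{P}_{p,r}$; and by the construction in Section \ref{subsec:UHPT}, the corresponding part of $\u$ is obtained from the UHPT tree structure truncated at map height $r$ together with the same i.i.d.\ Boltzmann fills. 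Hence it suffices to show that the joint law of $\tau_1$ and any fixed number of its neighbours on either side in the cyclic skeleton of $B^\bullet_r(\t^{(p)}_\infty)$ converges as $p\to\infty$ to the joint law of the corresponding trees in the UHPT skeleton.

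On the UHPT side, I first identify the target. By the Kesten--Lyons--Pemantle--Peres representation of size-biased Galton--Watson trees, the tree of $\u$ truncated at height $r$ and containing the root edge is precisely the GW$(\theta)$ tree of height $r$ size-biased by the number $N$ of leaves at generation $r$: an $r$-long spine ending at the distinguished leaf, with $\bar\theta$-distributed offspring (spine child uniform among them) at each spine vertex, and independent GW$(\theta)$ subtrees grafted on off-spine children. The trees at height $r$ immediately on either side of this spine tree form, by the branching property of GW applied to each off-spine subtree attached at spine vertices $(\tfrac12,j)$ with $j>r$, an i.i.d.\ sequence of unconditioned GW$(\theta)$ trees of height $\leq r$, independent of the spine tree.

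On the $\t^{(p)}_\infty$ side, I use the explicit formula
$\mathbf{P}_{p,r}(\mathcal{F}) = \frac{12^{-q}C(q)}{12^{-p}C(p)}\prod_{v\in\mathcal{F}^*}\theta(c_v)$.
Fix $K\geq 1$ and a configuration $(T_{-K+1},\ldots,T_{K-1})$ for the window of $2K-1$ trees around $\tau_1$ (with $T_0$ identified with $\tau_1$), and let $P'=\sum_i N(T_i)$. Summing over the $q-(2K-1)$ unspecified trees (which contribute $\P_{q-(2K-1)}(Y_r = p-P')$ by independence) and over the $N(T_0)$ choices of the distinguished leaf within $\tau_1 = T_0$, one obtains
\begin{equation*}
\P\bigl((\tau_i)_{i} = (T_i)_i\bigr) \;=\; N(T_0)\prod_{i=-K+1}^{K-1}\prod_{v\in T_i^*}\theta(c_v)\;\cdot\; S_p,
\end{equation*}
where $S_p := \sum_{q\geq 2K-1}\frac{12^{-q}C(q)}{12^{-p}C(p)}\,\P_{q-(2K-1)}(Y_r = p-P')$. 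Using the asymptotic $C(q)\sim 12^q\sqrt{q}/(36\pi\sqrt{2})$ from \eqref{def:cp} and a local limit theorem for $Y_r$ starting from $q$ (whose steps are in the $3/2$-stable domain by \eqref{asymp-theta}), one checks that $S_p\to 1$ as $p\to\infty$: the summand is essentially $\sqrt{q/p}\,q^{-2/3}g_r((p-q)/q^{2/3})$ for a density $g_r$ integrating to $1$. Passing to the limit yields $\P((\tau_i)_i = (T_i)_i)\to N(T_0)\prod_i\prod_v\theta(c_v)$, which is precisely the product of the size-biased GW$(\theta)$ law for $T_0$ and $2K-2$ independent unconditioned GW$(\theta)$ laws --- matching the UHPT skeleton identified above.

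The main obstacle is the local limit theorem for $Y_r$ starting from $q$, needed uniformly in the regime $q\approx p$ with $|p-q|=O(p^{2/3})$, and control of tail contributions to $S_p$ from $q$ far from $p$. One can derive the LLT directly from the explicit generating function \eqref{iterate-gtheta} via singularity analysis (as in the proof of \eqref{asymp-giterate}), combined with rough tail bounds on $\P_q(Y_r=p)$ analogous to Lemma \ref{bdlawperi}. Alternatively, the reversibility identity $\sum_q h(q)\P_q(Y_r = p) = h(p)$ of Lemma \ref{full-mass}, together with the asymptotic $h(q)\sim 2/\sqrt{\pi q}$, provides a priori estimates that localize $S_p$ to the central window and justify the pointwise limit.
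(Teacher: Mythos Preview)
Your approach is essentially the paper's: reduce local convergence to convergence of a finite window of trees in the skeleton, identify the UHPT target as a size-biased GW$(\theta)$ tree flanked by i.i.d.\ GW$(\theta)$ trees, and show that your sum $S_p$ (the paper calls it $A_p$) tends to $1$. The paper differs only in how it handles this last step, and its argument avoids the local limit theorem. Rather than proving $S_p\to 1$ directly, the paper shows only $\liminf_p S_p\geq 1$; since the target probabilities sum to $1$ over all window configurations, a Fatou-type argument then forces convergence. The lower bound uses two ingredients: the stationarity identity $\sum_\ell h(\ell)\,\P_\ell(Y_r=p)=h(p)$ established in Lemma~\ref{full-mass} (your ``reversibility''), combined with the fact that $12^{-\ell}C(\ell)=\mathrm{const}\cdot\ell\,h(\ell)$ is monotone; and a one-big-jump estimate from Denisov--Dieker--Shneer~\cite{DDS08} showing that the contribution to $S_p$ from $\ell\leq(1-\varepsilon)p$ is negligible. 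This sidesteps the uniform stable LLT and the separate tail control you would otherwise need for your primary route.
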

\begin{remark} In \cite{Ang05}, Angel uses a similar local convergence
to define the type II Uniform Infinite Half Planar Triangulation. Our approach is however different from the peeling construction given in \cite{Ang05,ACpercopeel}.\end{remark} 

\begin{proof} We first observe that, for every fixed $r\geq1$ and $j\geq 1$,
\begin{equation}
\label{asymp-perime}
\P(L^{(p)}_r =  j) \build{\la}_{p\to\infty}^{} 0.
\end{equation}
Indeed, the same arguments as in the proof of Lemma \ref{bdlawperi}
give
$$\P(L^{(p)}_r =  j)=\frac{h(j)}{h(p)}\mathbb{P}_j(Y_r=p)\leq \frac{h(j)}{h(p)}\,\frac{1}{p} \mathbb{E}_j[Y_r]= \frac{j\,h(j)}{p\,h(p)},$$
yielding the desired result since $p\,h(p)\la \infty$ as $p\to\infty$. 

In order to prove the proposition, it is enough to prove that, for every $r\geq 1$, if
$\mathcal{B}_r(\u)$, respectively $\mathcal{B}_r(\t^{(p)}_\infty)$, denotes the
planar map obtained by keeping only the faces of $\u$, resp. of $\t^{(p)}_\infty$,
that are incident to a vertex at graph distance strictly less than $r$ from the root vertex,
we have
\begin{equation}
\label{conv-balls}
\P(\mathcal{B}_r(\t^{(p)}_\infty)=A)
\build{\la}_{p\to\infty}^{}\P(\mathcal{B}_r(\u)=A),
\end{equation}
for any rooted planar map $A$. To get this convergence,
fix $r\geq 1$ and write $\mathcal{F}^{(p)}_{0,r}=(\mathscr{T}^{(p)}_0,\mathscr{T}^{(p)}_1,\ldots,\tc^{(p)}_{L^{(p)}_r-1})$ for the skeleton  of $B^\bullet_r(\t^{(p)}_\infty)$. 
We will prove that, for every $k\geq 1$, if $(\tau_{-k},\ldots,\tau_0,\ldots,\tau_k)$ is a finite
collection of plane trees having maximal height $r$ and a distinguished vertex at height $r$
that belongs to $\tau_0$,
\begin{align}
\label{conv-UHPT-tech}
&\P(\{\tc^{(p)}_{L^{(p)}_r-k}=\tau_{-k},\ldots,\tc^{(p)}_{L^{(p)}_r-1}=\tau_{-1},\tc^{(p)}_{0}=\tau_0,\tc^{(p)}_1=\tau_1,\ldots, \tc^{(p)}_k=\tau_k\}
\cap \{L^{(p)}_r \geq 2k+1\})\nonumber\\
&\quad \build{\la}_{p\to\infty}^{} \P(\Gamma_{(-k,r)}=\tau_{-k},\ldots,\Gamma_{(0,r)}=\tau_0,\ldots,\Gamma_{(k,r)}=\tau_k)
\end{align}
where $\Gamma_{(i,j)}$ stands for the subtree of descendants of $(\frac{1}{2}+i,j)$ in the infinite tree (here we view 
$\Gamma_{(i,j)}$ as an abstract plane tree, and we ``forget'' the embedding in the plane), and 
it is understood that $\Gamma_{(0,r)}$ has a distinguished vertex corresponding to $(\frac{1}{2},0)$, 
so that when we write the equalities $\tc^{(p)}_0=\tau_0$ or $\Gamma_{(0,r)}=\tau_0$, we 
mean an equality of pointed trees. We observe that,  if $k$ is large, we can find a collection $\mathbf{F}_k$
of forests $(\tau_{-k},\ldots,\tau_0,\ldots,\tau_k)$ such that the probability of the event $\{(\Gamma_{(-k,r)},\ldots,\Gamma_{(k,r)})\in\mathbf{F}_k\}$
is close to $1$, and, on the latter event,
the ball $\mathcal{B}_r(\u)$ is a deterministic function of the trees
$\Gamma_{(-k,r)},\ldots,\Gamma_{(k,r)}$ and of the triangulations with a boundary filling in the 
slots associated with the vertices of these trees. Similarly, on the event $\{(\tc^{(p)}_{L^{(p)}_r-k},\ldots,\tc^{(p)}_{L^{(p)}_r-1},\tc^{(p)}_0,\ldots,\tc^{(p)}_k)\in\mathbf{F}_k\}\cap\{L^{(p)}_r\geq 2k+1\}$, 
the ball $\mathcal{B}_r(\t^{(p)}_\infty)$ will be the same deterministic function of the trees
$\tc^{(p)}_{L^{(p)}_r-k},\ldots,\tc^{(p)}_{L^{(p)}_r-1},\tc^{(p)}_0,\ldots,\tc^{(p)}_k$ and of the associated triangulations
with a boundary. The desired convergence \eqref{conv-balls} thus follows from \eqref{conv-UHPT-tech}, using also
\eqref{asymp-perime}. 

It remains to prove \eqref{conv-UHPT-tech} and, to this end, we fix a forest $(\tau_{-k},\ldots,\tau_k)$ satisfying the assumptions stated above.
We first note that, if 
$\mathsf{V}^*(\tau_i)$ stands for the collection of all vertices of $\tau_i$
at height strictly less than $r$, we have
\begin{equation}
\label{UPHT-lem2-t1}
\P(\Gamma_{(-k,r)}=\tau_{-k},\ldots,\Gamma_{(0,r)}=\tau_0,\ldots,\Gamma_{(k,r)}=\tau_k)
=\prod_{v\in \mathsf{V}^*(\tau_{-k})\cup\cdots\cup \mathsf{V}^*(\tau_k)}
\theta(c_v),
\end{equation}
where $c_v$ denotes the number of children of $v$. The preceding equality holds 
because by construction the trees $\Gamma_{(i,r)}$, $i\not =0$ are independent
Galton--Watson trees with offspring distribution $\theta$ truncated at height $r$, and
the tree $\Gamma_{(0,r)}$ is a size-biased Galton--Watson tree with offspring distribution $\theta$ 
truncated at height $r$ and
given with a distinguished vertex at height $r$. See \cite{LPP95b}
for the definition and properties of size-biased Galton--Watson trees, noting that, if we ``forget'' the distinguished vertex, the right-hand side of the
preceding formula has an extra multiplicative factor equal to the size of
generation $r$ in $\tau_0$.

Consider then the left-hand side of \eqref{conv-UHPT-tech}. To simplify notation, write
$\f_{(k)}$ for the forest $(\tau_{-k},\ldots,\tau_k)$ and $\mathrm{m}_k$ for the number of vertices of
$\f_{(k)}$ at generation $r$. For any 
forest $\mathcal{F}=(\sigma_0,\ldots,\sigma_\ell)\in \F_{p,\ell,r}$, with $\ell\geq 2k+1$, write
$\Phi_k(\mathcal{F})= (\sigma_{\ell-k}, \ldots,\sigma_{\ell-1},\sigma_0,\ldots,\sigma_k)$ where it is understood that,
in $\Phi_k(\mathcal{F})$ as in $\mathcal{F}$, $\sigma_0$ comes with a distinguished vertex at height $r$.
Then, using \eqref{law-forest} and the fact that the law of $B^\bullet_r(\t^{(p)}_\infty)$ is $\mathbf{P}_{p,r}$, 
we can rewrite the left-hand side of \eqref{conv-UHPT-tech} as
\begin{align}
\label{UPHT-lem2-t2}
&\sum_{\ell=2k+1}^\infty\  \sum_{\mathcal{F}\in \F_{p,\ell,r}: \Phi_k(\mathcal{F})=\mathcal{F}_{(k)}}\ 
\frac{12^{-\ell}C(\ell)}{12^{-p}C(p)} \prod_{v\in \f^*} \theta(c_v)
= \Bigg(\prod_{v\in \mathsf{V}^*(\tau_{-k})\cup\cdots\cup \mathsf{V}^*(\tau_k)}
\theta(c_v)\Bigg) \nonumber\\
&\qquad\times
\Bigg(\sum_{\ell=2k+1}^\infty  \frac{12^{-\ell}C(\ell)}{12^{-p}C(p)} \build{
\sum_{\sigma_{k+1},\sigma_{k+2},\ldots,\sigma_{\ell-k-1}}}_{\#\sigma_{k+1}(r)+\cdots+\#\sigma_{\ell-k-1}(r)=p-\mathrm{m}_k}^{}
\prod_{v\in \mathsf{V}^*(\sigma_{k+1})\cup\cdots\cup \mathsf{V}^*(\sigma_{\ell-k-1})} \theta(c_v)\Bigg),
\end{align}
where the second
sum in the last line is over all choices of the plane trees $\sigma_{k+1},\sigma_{k+2},\ldots,\sigma_{\ell-k-1}$
having a total number of vertices at height $r$ equal to $p-\mathrm{m}_k$. Set $\varphi(\ell)
=12^{-\ell}C(\ell)$ to simplify notation, and write $A_p$ for the quantity inside parentheses in the second line
of \eqref{UPHT-lem2-t2}. Then we have 
\begin{align*}
A_p=\sum_{\ell=2k+1}^\infty  \frac{\varphi(\ell)}{\varphi(p)} \; \mathbb{P}_{\ell-(2k+1)}(Y_r=p-\mathrm{m}_k)
&=\sum_{\ell=0}^\infty \frac{\varphi(\ell +2k+1)}{\varphi(p)}\; \mathbb{P}_{\ell}(Y_r=p-\mathrm{m}_k)\\
&\geq \sum_{\ell=0}^\infty \frac{\varphi(\ell)}{\varphi(p)}\; \mathbb{P}_{\ell}(Y_r=p-\mathrm{m}_k),
\end{align*}
since $\varphi$ is monotone increasing.
Fix $\ve\in(0,1/2)$. Recalling that $\varphi(\ell)=\ell\, h(\ell)$, we have then
\begin{equation}
\label{UHPT-t3}
A_p \geq (1-\ve)\,\sum_{\ell=\lfloor (1-\ve)p\rfloor+1}^\infty \frac{h(\ell)}{h(p)}\;\mathbb{P}_{\ell}(Y_r=p-\mathrm{m}_k).
\end{equation}
On the other hand, we claim that
\begin{equation}
\label{claimUHPT}
\lim_{p\to \infty} \sum_{\ell=0}^{\lfloor (1-\ve)p\rfloor} \frac{h(\ell)}{h(p)}\;\mathbb{P}_{\ell}(Y_r=p-\mathrm{m}_k) = 0.
\end{equation}
To see this, observe that the law of $Y_r-\ell$ under $\P_\ell$ is the law of the sum of 
$\ell$ centered i.i.d. random variables whose tail asymptotics are given by \eqref{asymp-giterate}. As a consequence of
\cite[Corollary 2.1]{DDS08}, there exist constants $C_\ve$ and $C'_\ve$ such that, for every sufficiently large $p$
and every $\ell\in\{0,1,\ldots, \lfloor (1-\ve)p\rfloor\}$, 
$$\mathbb{P}_{\ell}(Y_r=p-\mathrm{m}_k) \leq C_\ve\,\ell\, \mathbb{P}_{1}(Y_r=p-\ell +1-\mathrm{m}_k) \leq C'_\ve\,\ell\, p^{-5/2},$$
using  \eqref{asymp-giterate} in the last bound (to be precise,  \cite[Corollary 2.1]{DDS08} gives this only for  ``large''
values
$\ell\geq \ell_0$ for some integer $\ell_0$, but the values $\ell\leq \ell_0$ are easily treated by a direct argument). 
Since $h(k)\sim 1/\sqrt{\pi k}$ as $k\to \infty$, we then get, for all sufficiently large $p$,
$$\sum_{\ell=0}^{\lfloor (1-\ve)p\rfloor} \frac{h(\ell)}{h(p)}\;\mathbb{P}_{\ell}(Y_r=p-\mathrm{m}_k)
\leq C''_\ve \,p^{-2}\sum_{\ell=0}^{\lfloor (1-\ve)p\rfloor} \ell^{1/2}$$
which tends to $0$ as $p\to\infty$, proving our claim \eqref{claimUHPT}.

Using \eqref{UHPT-t3} and \eqref{claimUHPT}, we have then
$$\liminf_{p\to\infty} A_p \geq (1-\ve)\,\liminf_{p\to\infty} \sum_{\ell=0}^{\infty} \frac{h(\ell)}{h(p)}\;\mathbb{P}_{\ell}(Y_r=p-\mathrm{m}_k) 
= (1-\ve) \liminf_{p\to\infty} \frac{h(p-\mathrm{m}_k)}{h(p)},$$
by \eqref{statio}. Since $h(p-m_k)/h(p)$ tends to $1$ and $\ve$ was arbitrary, we have indeed proved that
$$\liminf_{p\to\infty} A_p \geq 1.$$
From \eqref{UPHT-lem2-t1} and \eqref{UPHT-lem2-t2}, we get that the liminf of the quantities in the left-hand side of \eqref{conv-UHPT-tech} is greater than or equal 
to the right-hand side, for any choice of the forest $(\tau_{-k},\ldots,\tau_k)$. On the other hand the sum of the
quantities in the right-hand side over possible choices of $(\tau_{-k},\ldots,\tau_k)$ is equal to $1$. It follows
that the convergence \eqref{conv-UHPT-tech} holds. This completes the proof  of the proposition.
\end{proof}

\subsection{The lower half-plane triangulation}

We now discuss the lower half-plane triangulation or LHPT, which can be obtained as the local limit
in distribution of the hulls $B^\bullet_r(\t^{(p)}_\infty)$ when $r\to\infty$, provided that these hulls are 
re-rooted at an edge chosen uniformly on the top cycle. 

The construction of the LHPT is similar to that of the UHPT in the previous section. The vertex set now contains all
points of $\Z\times\Z_-$, and the role of the infinite tree is
played by a doubly infinite sequence $(\tc_i)_{i\in\Z}$ of independent Galton--Watson trees 
with offspring distribution $\theta$. These trees are then embedded in the lower half-plane 
so that the root of $\tc_i$ is $(\frac{1}{2}+i,0)$ for every $i\in \Z$, and the collection of all vertices of the trees
is exactly the set of all points of the form $(\frac{1}{2}+i,j)$, where $(i,j)\in\Z\times \Z_-$
(vertices at height $k$ in a tree being of the form $(\frac{1}{2}+i,-k)$). Here there are several ways of doing this embedding, but 
for definiteness we may agree that the collection of vertices of the trees $\tc_i$ for $i\geq 0$
is $(\frac{1}{2}+\Z_+)\times \Z_-$. See Fig.~\ref{fig:LHPT}. 

Given the trees, the downward triangles of the LHPT are constructed in a very similar way
to what was done in the previous section. 
For every
horizontal edge $[i,i+1]\times\{j\}$ connecting $(i,j)$ to $(i+1,j)$, where $i\in\Z$ and $j\in\Z_-$, we construct a {downward triangle}
containing this edge whose third vertex is the vertex $(k,j-1)$, where $k$ is the 
minimal integer such that $(\frac{1}{2}+k,j-1)$ is a child of $(\frac{1}{2}+i',j)$ for some $i'> i$. We then fill in the slots left by
the downward triangles by independent Boltzmann triangulations with a boundary to get the
LHPT, which is denoted by $\l$. By convention  $\l$ is rooted at the
edge between $(0,0)$ and $(1,0)$, which is oriented from left to
right.  

\begin{figure}[!h]
 \begin{center}
 \includegraphics[width=0.9\linewidth]{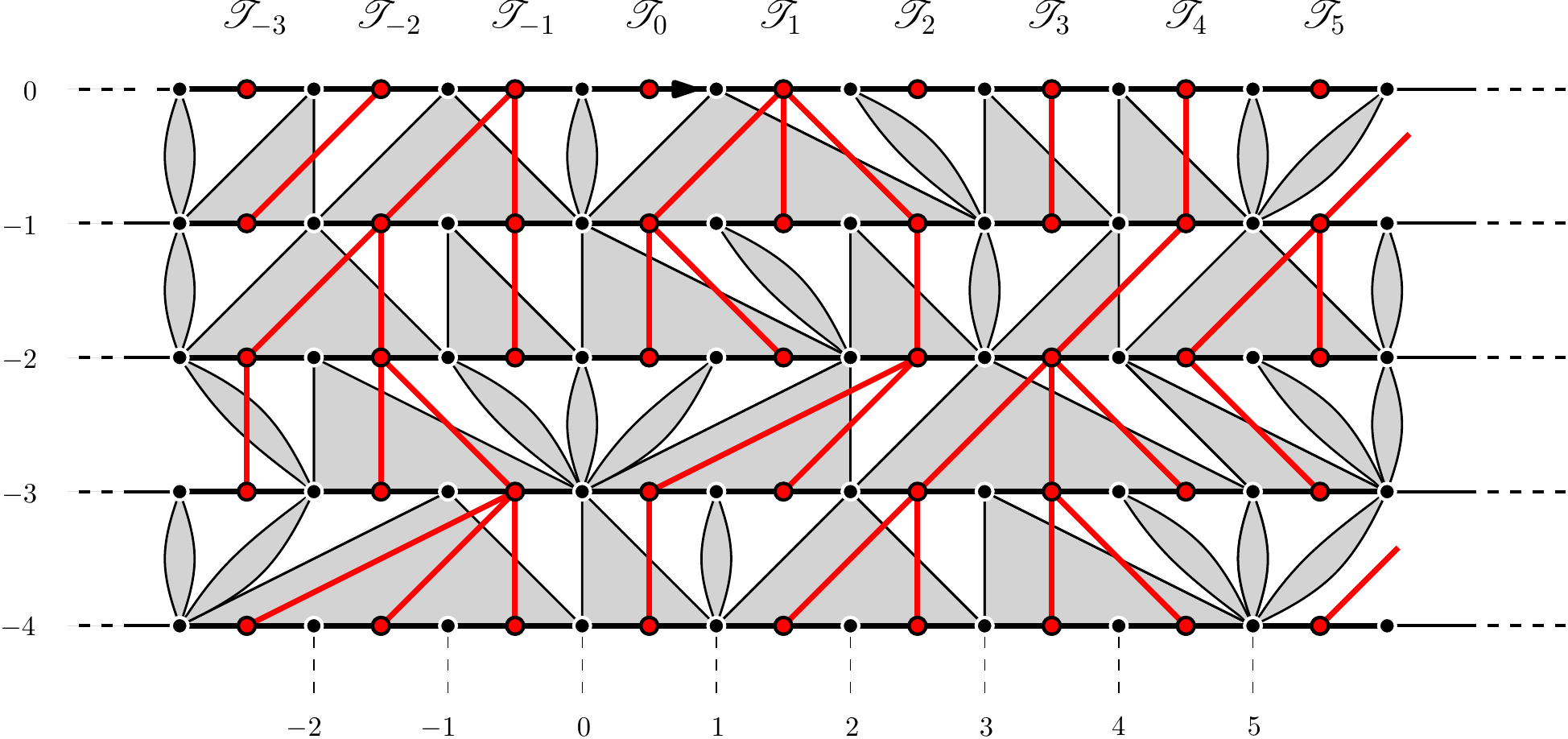}
 \caption{\label{fig:LHPT}Illustration of the construction of the LHPT.}
 \end{center}
 \end{figure}

\begin{proposition}
\label{conv-LHPT}
Let $p\geq 1$, and, for every $r\geq 1$, let $\wt B^\bullet_r(\t^{(p)}_\infty)$ stand for the hull $B^\bullet_r(\t^{(p)}_\infty)$
re-rooted at an edge chosen uniformly at random on the top boundary and oriented so that the top face is lying on its left-hand side. Then,
$$\wt B^\bullet_r(\t^{(p)}_\infty) \xrightarrow[r\to\infty]{ \mathrm{(d)}} \l,$$
in the sense of local limits of rooted planar maps.
\end{proposition}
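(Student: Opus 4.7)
The overall strategy parallels the proof of Proposition \ref{conv-UHPT}. The plan is to establish local convergence by showing that, for any fixed $k\geq 1$ and $s\geq 1$, the $2k+1$ consecutive trees of the re-rooted skeleton centered at the new root edge, each of height at most $s$, converge jointly in distribution to $2k+1$ independent Galton--Watson trees with offspring $\theta$. Since in both the hull $\wt B^\bullet_r(\t^{(p)}_\infty)$ and the LHPT $\l$ the slots of the skeleton are filled in by independent Boltzmann triangulations with the prescribed perimeters, the full local convergence then follows by the same ``reconstruction'' argument as at the end of the proof of Proposition \ref{conv-UHPT}, provided one also notes that $L^{(p)}_r\to\infty$ in probability by Lemma \ref{bdlawperi}, so that any prescribed finite number of trees around the new root is available with probability tending to $1$.

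To describe the law of the re-rooted skeleton, first observe that, conditionally on $L^{(p)}_r=q$, the skeleton of $B^\bullet_r(\t^{(p)}_\infty)$ has law $\mathbf P_{p,r}$ restricted and normalized to $\F_{p,q,r}$. After a uniform cyclic re-rooting of the top cycle and forgetting the distinguished vertex coming from the bottom cycle, the resulting non-pointed forest is distributed on $\F''_{p,q,r}$ according to $\frac{h(q)}{h(p)}\prod_{v\in\mathcal F^*}\theta(c_v)$; this is proved by exactly the same combinatorial manipulation as in the proof of Lemma \ref{full-mass} (and already used in the proof of Proposition \ref{prop:compa}). For trees $\sigma_{-k},\dots,\sigma_k$ of height at most $s<r$, none of the $\sigma_i$ has a vertex at generation $r$, and summing over the $q-(2k+1)$ remaining trees gives
$$\P\bigl(\wt\tc^{(p,r)}_{-k}=\sigma_{-k},\dots,\wt\tc^{(p,r)}_{k}=\sigma_k\bigr)=\Biggl(\prod_{i=-k}^{k}\prod_{v\in\sigma_i}\theta(c_v)\Biggr)\,A_{p,r},\qquad A_{p,r}:=\frac{1}{h(p)}\sum_{q\geq 2k+1}h(q)\,\P_{q-(2k+1)}(Y_r=p),$$
where $\wt\tc^{(p,r)}_i$ stands for the $i$th tree of the re-rooted skeleton in cyclic order, centered at the new root. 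The bracketed product is precisely the probability that $2k+1$ independent $\mathrm{GW}(\theta)$ trees equal $(\sigma_{-k},\dots,\sigma_k)$, so the desired skeleton convergence reduces to showing $A_{p,r}\to 1$ as $r\to\infty$.

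For this last step, the crucial input is the stationarity identity from the proof of Lemma \ref{full-mass}, namely $\sum_{q\geq 1}h(q)\P_q(Y_r=p)=h(p)$. Setting $\pi_r(j)=h(j)\P_j(Y_r=p)/h(p)$, a probability measure on $j\geq 1$, one rewrites
$$A_{p,r}-1=\sum_{j\geq 1}\pi_r(j)\,\frac{h(j+2k+1)-h(j)}{h(j)}.$$
Since $h(j)\sim 2/\sqrt{\pi j}$ as $j\to\infty$, the integrand is $O(1/j)$; and since the typical value of $L^{(p)}_r$ is of order $r^2$ (as in Remark \ref{conv-dist-L} combined with Lemma \ref{bdlawperi}), the measure $\pi_r$ charges $\{1,\dots,J\}$ with probability tending to $0$ as $r\to\infty$ for any fixed $J$. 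Splitting the sum at $J$ then yields $A_{p,r}\to 1$. The main obstacle is precisely this control of $A_{p,r}$: one has to handle the small-$q$ terms (where the ratio $h(q+2k+1)/h(q)$ is not close to $1$ but $\P_q(Y_r=p)$ is small because a critical GW process started with few particles dies out quickly) and the large-$q$ contribution (where $h(q+2k+1)/h(q)$ is close to $1$ but a uniform asymptotic is needed) simultaneously, which requires a slight strengthening of the arguments already present in Section \ref{subsec:compa}.
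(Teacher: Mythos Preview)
Your proof is correct and follows exactly the approach the paper suggests: it adapts the argument of Proposition \ref{conv-UHPT}, using the stationarity identity \eqref{statio} for $h$ to control the key quantity $A_{p,r}$, and the paper itself omits the details (``This can be proved by arguments very similar to the proof of Proposition \ref{conv-UHPT}''). One minor remark: Lemma \ref{bdlawperi} is only stated for $p=1$, but the fact you actually need---that $\pi_r(\{1,\dots,J\})=\P(L^{(p)}_r\leq J)\to 0$---follows directly from $\P_j(Y_r=p)\to 0$ for fixed $j$ (criticality of the Galton--Watson process), which you already invoke in the last step, so no further input is required.
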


This can be proved by arguments very similar to the proof of Proposition \ref{conv-UHPT}. We omit the details
as this statement is not needed in what follows.

Let us now discuss the connections between $\u$ and $\l$. For every integer $r\geq 1$, we let 
$\u_{[0,r]}$ stand for the (infinite) rooted planar map obtained by keeping only the first $r$ layers of $\u$. 
More precisely, in our construction of the UHPT we keep only those vertices and edges that lie in the strip
$\R\times [0,r]$. Alternatively, we may view $\u_{[0,r]}$ as the hull of radius $r$, corresponding to
distances from the bottom boundary. Similarly, we write $\l_{[0,r]}$ for the rooted planar map obtained by
keeping only the first $r$ layers of $\l$, that is, the part of $\l$ lying in the strip $\R\times [-r,0]$. We will also use the notation $\u_r$, resp. $\l_r$, 
for the horizontal line $\u_r=\{(i,r):i\in\Z\}$, resp. $\l_r=\{(i,-r):i\in\Z\}$. 

One may expect that the two infinite planar maps $\u_{[0,r]}$ and $\l_{[0,r]}$ are
closely related, and that informally $\l_{[0,r]}$ should correspond to $\u_{[0,r]}$ re-rooted at an edge of
its upper boundary. To give a precise statement, we introduce some notation.

We fix $r\geq 1$ and recall our notation $\Gamma_{(i,r)}$ for the subtree of descendants of $(\frac{1}{2}+i,r)$ in the infinite tree 
associated with $\u$. We already noticed that the trees $\Gamma_{(i,r)}$, $i\not =0$ are independent
Galton--Watson trees with offspring distribution $\theta$ truncated at height $r$, and
the tree $\Gamma_{(0,r)}$ is a size-biased Galton--Watson tree with offspring distribution $\theta$ 
truncated at height $r$ and
given with a {uniform} distinguished vertex at height $r$. 
For every $i\in\Z$, let $\Gamma_{(i,r)}(r)$ stand for the set of vertices of $\Gamma_{(i,r)}$ at height $r$. Also let
$K_r\geq 1$ be the first index $i\geq 1$ such that $\Gamma_{(i,r)}(r)\not=\varnothing$. See Fig.~\ref{fig:re-rooting}. 

\begin{figure}[!h]
 \begin{center}
 \includegraphics[width=0.8\linewidth]{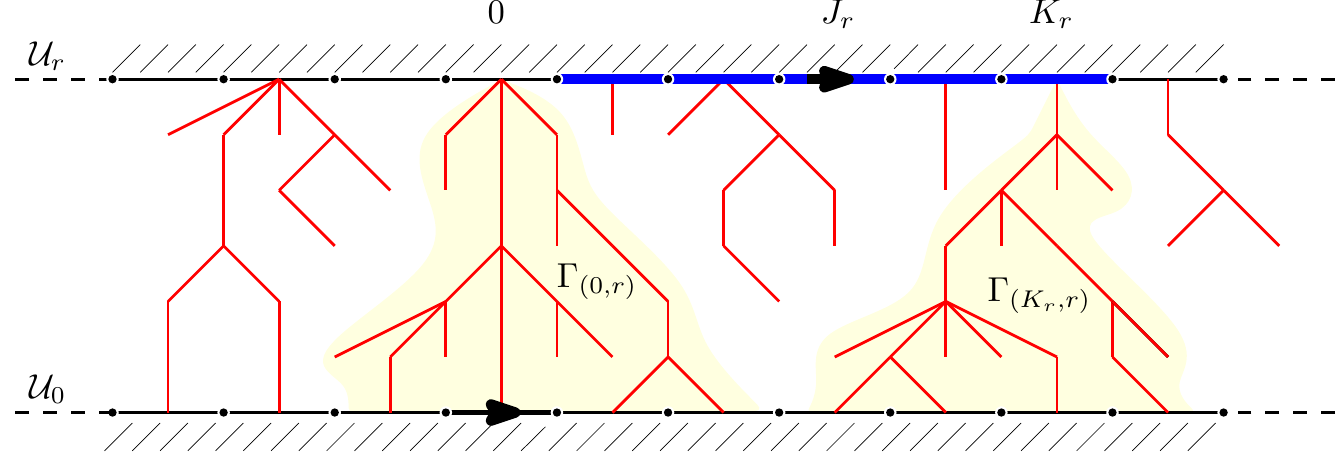}
 \caption{\label{fig:re-rooting} From $ \mathcal{U}_{[0,r]}$ to $ \widetilde{\mathcal{U}}_{[0,r]}$. The thick part of the  line 
 $\mathcal{U}_r$ corresponds to the possible edges at which the map can be re-rooted.}
 \end{center}
 \end{figure}

We also let $i_r< 0$ be the largest integer $i< 0$ such that $\tc_i$ has height
at least $r$. As previously, let $\tc_{i_r}(r)$ be the set of all vertices of $\tc_{i_r}$
at height $r$.

\begin{proposition}
\label{size-bias-UHPT}
Let $\wt \u_{[0,r]}$ stand for the infinite rooted planar map obtained by re-rooting 
$\u_{[0,r]}$ so that the root edge is the horizontal edge from $(J_r,r)$ to $(J_r+1,r)$, where the index
$J_r$ is chosen uniformly at random in $\{1,\ldots,K_r\}$. Then, for any nonnegative measurable function $f$,
$$\E[K_r\;f(\wt\u_{[0,r]})] = \E[\#\tc_{i_r}(r)\;f(\l_{[0,r]})].$$
\end{proposition}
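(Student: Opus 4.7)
The plan is to view both $\u_{[0,r]}$ and $\l_{[0,r]}$ as deterministic functionals of a common random data structure, namely a doubly infinite skeleton of truncated $\theta$-Galton--Watson trees together with independent Boltzmann triangulations filling in the resulting slots. By construction, the law of $\l_{[0,r]}$ corresponds to an i.i.d.\ skeleton $(\tc_i)_{i\in\Z}$, while the law of $\u_{[0,r]}$ has exactly the same product structure except that the central tree $\Gamma_{(0,r)}$ is size-biased and carries a uniform distinguished vertex $v^*$ at height $r$. After integrating out $v^*$ via the usual Lyons--Pemantle--Peres size-biasing identity, the joint law of $(\Gamma_{(i,r)})_{i\in\Z}$ becomes the i.i.d.\ law weighted by the factor $\#\Gamma_{(0,r)}(r)$. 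The proof will then reduce to a change of variables turning the distinguished vertex into a choice of top-boundary re-rooting edge, followed by the collapsing of the resulting sum.

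First I will unfold the uniform random choice of $J_r$ so that $\E[K_r\,f(\wt\u_{[0,r]})] = \E\big[\sum_{j=1}^{K_r} f(\u_{[0,r],j})\big]$, where $\u_{[0,r],j}$ denotes $\u_{[0,r]}$ re-rooted at the horizontal edge from $(j,r)$ to $(j+1,r)$, with the unbounded face on its left (matching the orientation convention of $\l_{[0,r]}$). After integrating out $v^*$ and expanding according to skeleton $S=(S_i)_{i\in\Z}$ and slot-fillings $B$, the left-hand side becomes a sum over $(S,B,j)$ weighted by $\#S_0(r)\cdot\mathbf{1}_{\{1\leq j\leq K_r(S)\}}$, applied to the rooted map built from $(S,B)$ and rooted at the edge $(j,r)$-$(j+1,r)$; here the skeleton $S$ now has the plain i.i.d.\ truncated $\theta$-GW law.

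For each $j$ I then perform the horizontal translation $T_i:=S_{j+i}$, extended to the slot-fillings. The product law of $S$ is invariant under this translation, the constraint $1\leq j\leq K_r(S)$ becomes $\#T_{-1}(r)=\cdots=\#T_{-(j-1)}(r)=0$, that is $1\leq j\leq K^-(T):=\min\{k\geq 1:\#T_{-k}(r)>0\}$, and the weight $\#S_0(r)$ turns into $\#T_{-j}(r)$. Now for every $j<K^-(T)$ one has $\#T_{-j}(r)=0$ by the very definition of $K^-(T)$, so only the single term $j=K^-(T)$ survives, producing the weight $\#T_{-K^-(T)}(r)=\#\tc_{i_r}(r)$ under the identification $i_r=-K^-(T)$. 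The translated rooted map built from $(T,B)$ and rooted at $(0,r)$-$(1,r)$ is, after the natural vertical relabelling, exactly $\l_{[0,r]}$ under its native law. Putting everything together, the left-hand side equals $\E[\#\tc_{i_r}(r)\,f(\l_{[0,r]})]$, which is the right-hand side.

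The main obstacle I foresee is the bookkeeping around the identification of rooted planar maps: one must check carefully that the horizontal shift sending $\u_{[0,r],j}$ to the map built from $(S^{(j)},B^{(j)})$ rooted at $(0,r)$-$(1,r)$ genuinely preserves $f$, which boils down to verifying that the skeleton plus slot-filling data determines the planar map up to translation and that the orientation conventions agree. Once this is done, the proposition is essentially a combinatorial identity in disguise: the factor $K_r$ on the left exactly compensates for the extra multiplicative weight carried by the size-biased central tree of $\u$ compared with the unbiased trees of $\l$.
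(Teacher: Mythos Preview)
Your proposal is correct and follows essentially the same approach as the paper. Both arguments rest on the same two ingredients: (i) after forgetting the distinguished height-$r$ vertex, the law of the central tree $\Gamma_{(0,r)}$ is the plain $\theta$-Galton--Watson law tilted by the factor $\#\Gamma_{(0,r)}(r)$; and (ii) the doubly infinite i.i.d.\ skeleton is translation-invariant. The paper verifies the resulting identity by computing probabilities on cylinder events $\{\Gamma'_{-\ell_1}=\tau_{-\ell_1},\ldots,\Gamma'_{\ell_2}=\tau_{\ell_2}\}$ and matching them term by term with the corresponding events for $([\tc_i]_r)_{i\in\Z}$, whereas you package the same computation as a single horizontal change of variables followed by the observation that all terms $j<K^-(T)$ vanish because their weight $\#T_{-j}(r)$ is zero. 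Your presentation is arguably cleaner, but the content is identical; the ``bookkeeping'' you flag as the main obstacle is exactly what the paper handles by its explicit description of the re-rooted skeleton $(\Gamma'_i)_{i\in\Z}$ with $\Gamma'_i=\Gamma_{(i+J_r,r)}$.
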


\proof 
Clearly, it is enough to verify that the distribution of the configuration of downward triangles is the same for 
$\wt\u_{[0,r]}$, under the measure having density $K_r$ with respect to $\P$, and for  $\l_{[0,r]}$,
under the measure having density $\#\tc_{i_r}(r)$ with respect to $\P$. In both cases,
the configuration of downward triangles is coded by a doubly infinite sequence of trees, and we
need to verify that these two sequences have the same distribution. By construction, the sequence of trees
associated with $\wt\u_{[0,r]}$ is $(\Gamma'_i)_{i\in \Z}$, where $\Gamma'_i=\Gamma_{(i+J_r,r)}$ if $i\not =-J_r$, and 
$\Gamma'_{-J_r}=\Gamma^{\rm unp}_{(0,r)}$, where we use the notation 
$\Gamma^{\rm unp}_{(0,r)}$ to represent the tree $\Gamma_{(0,r)}$ without its distinguished vertex at height $r$. Recall also that the trees $(\tc_i)_{i\in\Z}$ coding the configuration of downward triangles in $\l$
are just independent Galton--Watson trees with offspring distribution $\theta$.

Fix four integers $k_1,k_2,\ell_1,\ell_2$ such that $0< k_1\leq \ell_1$ and $0\leq k_2\leq \ell_2$. 
Consider a finite sequence $( \tau_{-\ell_{1}},\tau_{-\ell_1+1},\ldots ,  \tau_{\ell_{2}})$ of $\ell_{1}+\ell_{2}+1$ trees having height less than or equal to $r$ and such that the trees $  \tau_{i}$ for $-k_{1}<i<k_{2}$ have height strictly less than $r$, whereas $\tau_{-k_{1}}$ and $\tau_{k_{2}}$ have height $r$. By construction, we have
$$\{\Gamma'_{-\ell_1}=\tau_{-\ell_1},\ldots, \Gamma'_{\ell_2}=\tau_{\ell_2}\} = \{\Gamma_{(-\ell_1+k_1,r)}=\tau_{-\ell_1},\ldots, \Gamma^{\rm unp}_{(0,r)}=\tau_{-k_1},\ldots, 
\Gamma_{(\ell_2+k_1,r)}=\tau_{\ell_2}\} \cap \{J_{r}=k_{1}\}.$$ 
Since we have $K_r=k_1+k_2$ on the  first event in the right-hand side, it follows that 
\begin{eqnarray*} \P(\Gamma'_{-\ell_1}=\tau_{-\ell_1},\ldots, \Gamma'_{\ell_2}=\tau_{\ell_2})
&=&\frac{1}{k_1+k_2}\,\P(\Gamma_{(-\ell_1+k_1,r)}=\tau_{-\ell_1},\ldots, \Gamma^{\rm unp}_{(0,r)}=\tau_{-k_1},\ldots, 
\Gamma_{(\ell_2+k_1,r)}=\tau_{\ell_2}) \\
&=& \frac{\#\tau_{-k_1}(r)}{k_1+k_2}\,\P([\tc_{-\ell_1}]_r=\tau_{-\ell_1},\ldots, [\tc_{\ell_2}]_r=\tau_{\ell_2})  \end{eqnarray*}
where $[\tc_i]_r$ denotes the tree $\tc_i$ truncated at height $r$, and we used the fact that $\Gamma^{\rm unp}_{(0,r)}$ is a size-biased Galton--Watson tree with offspring distribution $\theta$ truncated at height $r$. The statement of the proposition follows since $K_r=k_1+k_2$  on $\{\Gamma'_{-\ell_1}=\tau_{-\ell_1},\ldots, \Gamma'_{\ell_2}=\tau_{\ell_2}\}$
and $i_r=-k_1$ on $\{[\tc_{-\ell_1}]_r=\tau_{-\ell_1},\ldots, [\tc_{\ell_2}]_r=\tau_{\ell_2}\}$. 
\endproof

\begin{corollary}
\label{abso-con}
For every $\ve>0$, we can choose $\delta>0$ small enough, so that for every 
$r\geq 1$, for every measurable set $A$, the property $\P(\wt\u_{[0,r]} \in A)\leq\delta$
implies $\P(\l_{[0,r]}\in A)\leq\ve$. 
\end{corollary}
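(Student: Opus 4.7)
The plan is to combine the measure identity from Proposition \ref{size-bias-UHPT} (applied with $f=\mathbf{1}_A$),
\[
\E\!\left[\#\tc_{i_r}(r)\,\mathbf{1}_A(\l_{[0,r]})\right]=\E\!\left[K_r\,\mathbf{1}_A(\wt\u_{[0,r]})\right],
\]
with a two-sided truncation: $\#\tc_{i_r}(r)$ must be bounded from below and $K_r$ from above. Both random variables have mean $(r+1)^2$, so the natural scale for the truncations is $(r+1)^2$; the fact that this quantity grows with $r$ is precisely what makes the crude bound $\P(\l_{[0,r]}\in A)\leq \E[K_r\mathbf{1}_A(\wt\u_{[0,r]})]$ (obtained from $\#\tc_{i_r}(r)\geq 1$) useless and forces the truncations.

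For $a>0$ small and $d>0$ large (to be chosen later), I first write
\[
\P(\l_{[0,r]}\in A)\leq \P\!\left(\#\tc_{i_r}(r)<a(r+1)^2\right)+\E\!\left[\mathbf{1}_A(\l_{[0,r]})\,\mathbf{1}_{\#\tc_{i_r}(r)\geq a(r+1)^2}\right].
\]
Using $\mathbf{1}_{\#\tc_{i_r}(r)\geq a(r+1)^2}\leq \#\tc_{i_r}(r)/(a(r+1)^2)$ and the identity above, the second summand is at most $\frac{1}{a(r+1)^2}\E[K_r\mathbf{1}_A(\wt\u_{[0,r]})]$. Splitting further according to whether $K_r\leq d(r+1)^2$ and using the fact that $K_r$ is geometric with parameter $(r+1)^{-2}$ (which gives the explicit $\E[K_r\mathbf{1}_{K_r>d(r+1)^2}]\leq(d+1)(r+1)^2 e^{-d}$), I get
\[
\P(\l_{[0,r]}\in A)\leq \P\!\left(\#\tc_{i_r}(r)<a(r+1)^2\right)+\frac{d\,\delta}{a}+\frac{(d+1)e^{-d}}{a}.
\]

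The remaining task, which is the main technical point, is the uniform bound $\sup_{r\geq 1}\P(\#\tc_{i_r}(r)<a(r+1)^2)\to 0$ as $a\to 0$. By the i.i.d.\ structure of the trees $(\tc_i)_{i\in\Z}$, $\#\tc_{i_r}(r)$ has the same distribution as $Y_r$ conditionally on $Y_r\geq 1$, and from \eqref{iterate-gtheta} I compute
\[
\E\!\left[e^{-\lambda Y_r/(r+1)^2}\,\big|\,Y_r\geq 1\right]=1-\frac{(r+1)^2}{\big(r+(1-e^{-\lambda/(r+1)^2})^{-1/2}\big)^2}.
\]
Using $1-e^{-x}\geq x/2$ for $x\in[0,1]$ yields $(1-e^{-\lambda/(r+1)^2})^{-1/2}\leq \sqrt{2}\,(r+1)/\sqrt{\lambda}$ whenever $\lambda\leq(r+1)^2$, hence the above expression is bounded by $1-(1+\sqrt{2/\lambda})^{-2}=O(1/\sqrt{\lambda})$ as $\lambda\to\infty$. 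Applying Markov's inequality with $\lambda=1/a$ gives $\P(\#\tc_{i_r}(r)<a(r+1)^2)=O(\sqrt{a})$ uniformly in $r\geq 1/\sqrt{a}-1$; in the remaining range $r<1/\sqrt{a}-1$ one has $a(r+1)^2<1$, so the probability vanishes because $\#\tc_{i_r}(r)\geq 1$ by definition of $i_r$. Given $\varepsilon>0$, one chooses $a$ so that this uniform bound is $<\varepsilon/3$, then $d$ so that $(d+1)e^{-d}/a<\varepsilon/3$, and finally $\delta=a\varepsilon/(3d)$, which yields the conclusion.
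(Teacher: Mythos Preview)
Your proof is correct and follows essentially the same approach as the paper: both use the identity of Proposition~\ref{size-bias-UHPT} with $f=\mathbf{1}_A$, lower-truncate $\#\tc_{i_r}(r)$ at a level of order $(r+1)^2$, and control the $K_r$-side of the identity. The only technical differences are that the paper handles $\E[K_r\mathbf{1}_A(\wt\u_{[0,r]})]$ in one stroke via Cauchy--Schwarz and the second moment bound $\E[K_r^2]\leq 4(r+1)^4$ (avoiding your extra truncation parameter $d$), and obtains the uniform smallness of $\P(\#\tc_{i_r}(r)<\eta(r+1)^2)$ from convergence in distribution of $(r+1)^{-2}\#\tc_{i_r}(r)$ rather than your explicit $O(\sqrt{a})$ Markov bound.
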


\proof Since $\tc_{i_r}$ is just a Galton--Watson tree 
with offspring distribution $\theta$ conditioned on non-extinction at generation $r$, the generating function 
of $\#\tc_{i_r}(r)$ is derived from \eqref{iterate-gtheta} and \eqref{heighttree},
$$\E[x^{\#\tc_{i_r}(r)}] = (r+1)^{2}\Big( (r+1)^{-2}- \Big(r+\frac{1}{\sqrt{1-x}}\Big)^{-2}\Big),\qquad x\in [0,1].$$
From this, it is elementary to verify that $(r+1)^{-2}\#\tc_{i_r}(r)$ converges in distribution to a 
random variable $U$ with Laplace transform $\E[e^{-\lambda U}] = 1-(1+\lambda^{-1/2})^{-2}$. Since $U>0$ a.s.,
we can find $\eta >0$ such that $\P(\#\tc_{i_r}(r) < \eta (r+1)^2)\leq \ve/2$ for every $r\geq1$.

We take $\delta= \eta^2\ve^2/16$ and consider a measurable set $A$ such that $\P(\wt\u_{[0,r]} \in A)\leq\delta$.
By Proposition \ref{size-bias-UHPT}, we have for every ${r}\geq 1$,
\begin{equation}
\label{dist-bd-tech}
\E[K_r\,\mathbf{1}_{A}(\wt\u_{[0,r]})] = \E[\#\tc_{i_r}(r)\,\mathbf{1}_{A}(\l_{[0,r]})],
\end{equation}
and \eqref{heighttree} shows that the distribution of $K_r$ is given by $\P(K_r\geq j)=(1-(r+1)^{-2})^{j-1}$ for every $j\geq 1$. Straightforward
calculations then give $ \mathbb{E}[(K_r)^2]\leq 4\,(r+1)^4$ and the Cauchy--Schwarz inequality implies that the
left-hand side of \eqref{dist-bd-tech} is bounded above by
$$\E[(K_r)^2]^{1/2}\,\P(\wt\u_{[0,r]}\in A)^{1/2} \leq \frac{\eta\ve}{2}\;(r+1)^2.$$
The right-hand side of \eqref{dist-bd-tech} is bounded below by
$$\eta\, (r+1)^2\,\P(\{\l_{[0,r]}\in A\}\cap \{\#\tc_{i_r}(r) \geq \eta (r+1)^2\}).$$
By combining the preceding two bounds we get
$$\P(\{\l_{[0,r]}\in A\}\cap \{\#\tc_{i_r}(r) \geq \eta (r+1)^2\})\leq \frac{\ve}{2}.$$
By our choice of $\eta$, we also know that
$\P(\#\tc_{i_r}(r) < \eta (r+1)^2)\leq \ve/2$, and we get $\P(\l_{[0,r]}\in A)\leq \ve$. This 
completes the proof. \endproof

 \section{Estimates for distances along the boundary}
 
{ In this section, we derive asymptotic estimates for distances on the boundary of the UHPT or of the LHPT, which roughly say that the graph distance (with respect to the half-plane triangulation) between boundary vertices grows like the square root of their distance along the boundary.  We are interested in the case of the LHPT for future
 applications, but, for technical reasons, we start with the case of the UHPT. To derive these estimates in Section \ref{sec:distanceUHPT}, we first study the layers of balls centered at the root vertex of the UHPT, in the spirit of Section \ref{sec:skeleton}.}
 
 \subsection{Layers of balls in the UHPT}
 
 Let $r\geq 1$ be an integer, which will be fixed throughout this subsection. The ball  $\mathcal{B}_r(\u)$ is defined
 as the union of all 
 triangles of $\u$ which are incident to a vertex at graph distance smaller than or equal to $r-1$ from the root vertex, and
the
 hull $\mathcal{B}^\bullet_r(\u)$ is the complement of the unique infinite component of the complement of $\mathcal{B}_r(\u)$. 
 Then $\mathcal{B}^\bullet_r(\u)$ is a triangulation with a simple boundary consisting of (finitely many) edges on the 
 boundary of $\u$, including the root edge, and a simple path formed by non-boundary edges of $\u$ that connects the two
 extreme vertices of $\mathcal{B}_r(\u)$ lying on the boundary of $\u$. It will be useful to keep the information given by these two
 extreme vertices. So we view $\mathcal{B}^\bullet_r(\u)$ as a triangulation with a simple boundary, given with two 
 distinguished vertices on the boundary, which are distinct and distinct from the root vertex.
 
 \begin{lemma}
 \label{law-hull-UHPT}
 Let $A$ be a triangulation with a boundary, given with two distinct distinguished vertices on the boundary other than the root vertex. We write $\wt\partial A$ for the part of $\partial A$ that consists of the path between
 the two distinguished vertices that contains the root edge.
 Assume that $\P(\mathcal{B}^\bullet_r(\u)=A)>0$. Let $m\geq 2$ be the number of
 edges of $\wt\partial A$ 
 and  let $q\geq 1$ be the number of edges of $\partial A\backslash \wt\partial A$. Also let $N\geq 0$ be the number of vertices of $A$ that do not belong to $\wt\partial A$. Then,
 $$\P(\mathcal{B}^\bullet_r(\u)=A) = 12^{q-m}\,(12\sqrt{3})^{-N}.$$
 \end{lemma}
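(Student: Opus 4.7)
The plan is to derive the formula by combining the local convergence $\t^{(p)}_\infty \to \u$ from Proposition \ref{conv-UHPT} with the enumeration results of Subsection \ref{sec:enumeration}. Since $\{\mathcal{B}^\bullet_r(\cdot) = A\}$ is a local event depending only on the combinatorial ball of radius $r$ around the root vertex, Proposition \ref{conv-UHPT} gives
$$\P(\mathcal{B}^\bullet_r(\u) = A) = \lim_{p \to \infty} \P(\mathcal{B}^\bullet_r(\t^{(p)}_\infty) = A),$$
and a further local convergence $\t^{(p)}_n \to \t^{(p)}_\infty$ as $n \to \infty$ reduces the problem to counting in $\T_{n, p}$:
$$\P(\mathcal{B}^\bullet_r(\t^{(p)}_n) = A) = \frac{\#\{t \in \T_{n, p} : \mathcal{B}^\bullet_r(t) = A\}}{\#\T_{n, p}},$$
where for the finite model (with $p > m$) the hull is taken to be the complement in $t$ of the component of $t\setminus\mathcal{B}_r(t)$ containing the unused part of $\partial t$.

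The combinatorial heart is a bijection between $\{t \in \T_{n, p} : \mathcal{B}^\bullet_r(t) = A\}$ and $\T_{n - N,\, p - m + q}$, obtained by cutting $t$ along $\partial A \setminus \wt\partial A$: this separates $A$ from its ``completion'', a rooted triangulation of the $(p - m + q)$-gon (the rooting being canonical from the gluing data) with $n - N$ internal vertices. The nontrivial part is the surjectivity of this map, namely that every completion of $A$ yields a $t$ whose hull equals $A$. Here the hypothesis $\P(\mathcal{B}^\bullet_r(\u) = A) > 0$ enters: it forces every vertex of $\partial A \setminus \wt\partial A$ to lie at graph distance exactly $r$ from the root inside $A$ and to form the ``distance-$r$ frontier'' of $A$. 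Since any path from the root into the complementary region must exit $A$ through $\partial A \setminus \wt\partial A$, every internal vertex of the completion is then at distance at least $r + 1$ from the root in $t$, so no face of the completion lies in $\mathcal{B}_r(t)$; one also has to verify that no completion can create a shortcut reducing the distance of a frontier vertex below $r$.

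Once the bijection is justified, applying $\#\T_{n,p} \sim C(p)\,(12\sqrt{3})^n\, n^{-5/2}$ from \eqref{eq:asymp} yields
$$\lim_{n \to \infty} \frac{\#\T_{n - N,\, p - m + q}}{\#\T_{n, p}} = \frac{C(p - m + q)}{C(p)}\,(12\sqrt{3})^{-N},$$
and letting $p \to \infty$ with $C(p) \sim \tfrac{1}{36\pi\sqrt{2}}\sqrt{p}\, 12^p$ from \eqref{def:cp} gives $C(p - m + q)/C(p) \to 12^{q - m}$, producing the announced formula. The main obstacle is the surjectivity of the cutting bijection, i.e.~checking that no completion can create distance shortcuts that shrink the hull below $A$; this exploits the rigidity of the ``distance-$r$ barrier'' $\partial A \setminus \wt\partial A$ built into the hypothesis. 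Everything else is routine asymptotic analysis based on \eqref{eq:asymp} and \eqref{def:cp}.
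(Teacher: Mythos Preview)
Your approach is essentially the same as the paper's---local convergence to reduce to finite triangulations, a cutting bijection, then the asymptotics \eqref{eq:asymp} and \eqref{def:cp}---and the final computation is identical. There is one organizational difference worth noting.

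The paper does \emph{not} work with the event $\{\mathcal{B}^\bullet_r(\t^{(p)}_n)=A\}$ at the finite level. Instead it introduces the purely combinatorial relation $A\sqsubset \t^{(p)}_n$ (``$A$ sits inside $\t^{(p)}_n$ with $\wt\partial A$ on the boundary and no other edge of $A$ on the boundary''), and computes
\[
\P(A\sqsubset \t^{(p)}_n)=\frac{\#\T_{n-N,\,p+q-m}}{\#\T_{n,p}}.
\]
For this relation the cutting map is \emph{tautologically} a bijection: every gluing of a triangulation of the $(p+q-m)$-gon onto $\partial A\setminus\wt\partial A$ yields a $t$ with $A\sqsubset t$, and conversely. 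So the surjectivity issue you flag simply does not arise at the finite level, and there is no need for your ad hoc definition of the hull in $\t^{(p)}_n$. The hypothesis $\P(\mathcal{B}^\bullet_r(\u)=A)>0$ is used only once, at the very first step, to assert
\[
\{\mathcal{B}^\bullet_r(\u)=A\}=\{A\sqsubset \u\},
\]
i.e.\ your ``no shortcut'' argument is carried out once in the UHPT rather than uniformly over all completions. Your version is correct, but packaging the geometric content into this single identity at the infinite level, and keeping the finite-level counting purely combinatorial, is cleaner.
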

 
 \proof
 If $A'$ is another triangulation with a boundary, we use the notation $A\sqsubset A'$ to mean that $A$
 can be obtained as a subtriangulation of $A'$ with root edges coinciding and in such a way that
$\wt\partial A$
 is part of $\partial A'$ and no other edge of
 $A$ is on $\partial A'$. By Proposition \ref{conv-UHPT}, under the condition $\P(\mathcal{B}^\bullet_r(\u)=A)>0$,
 $$\P(\mathcal{B}^\bullet_r(\u)=A) = \lim_{p\to\infty} \P(A\sqsubset \t^{(p)}_\infty).$$
 On the other hand, the fact that $\t^{(p)}_\infty$ is the local limit in distribution of the finite triangulations
 $\t^{(p)}_n$ ensures that
 $$\P(A\sqsubset \t^{(p)}_\infty)=\lim_{n\to\infty} \P(A\sqsubset \t^{(p)}_n).$$
 Fix $p>m$, and note that the property $A\sqsubset \t^{(p)}_n$ will hold if and only
 if $\t^{(p)}_n$ is obtained by gluing a triangulation with a boundary of length $q+(p-m)$ to $A$, in such a way
 that a part (of length $q$) of the boundary of the glued triangulation is identified to 
 $\partial A\backslash \wt\partial A$
 (this should be made more precise by saying that the root edge of the glued triangulation is glued
 to a specific edge of $\partial A\backslash \wt\partial A$). This argument shows that, for $n$ large enough,
 $$\P(A\sqsubset \t^{(p)}_n)= \frac{\#\T_{n-N,p+q-m}}{\#\T_{n,p}}.$$
  In a way similar to the derivation of \eqref{eq:loihullUIPHT},
 we now use \eqref{eq:asymp} to get
 $$\P(A\sqsubset \t^{(p)}_\infty)= \frac{C(p+q-m)}{C(p)} \,(12\sqrt{3})^{-N},$$
 and then, 
  $$\P(\mathcal{B}^\bullet_r(\u)=A) = \lim_{p\to\infty} \P(A\sqsubset \t^{(p)}_\infty)= 12^{q-m}\,(12\sqrt{3})^{-N},$$
  which completes the proof.
  \endproof
 
  \smallskip
  
  Our next goal is to describe the conditional distribution of the ``layer'' $\mathcal{B}^\bullet_{r+1}(\u)\backslash \mathcal{B}^\bullet_r(\u)$
  given $\mathcal{B}^\bullet_r(\u)$. We call internal edge of $\partial \mathcal{B}^\bullet_{r}(\u)$ any edge of
  $\partial \mathcal{B}^\bullet_{r}(\u)$ that does not belong to $\partial \u=\u_0$. We order the internal edges of
  $\partial \mathcal{B}^\bullet_{r+1}(\u)$ in clockwise order and denote them as $E_1,E_2,\ldots,E_Q$, where
  $Q\geq 1$ is the number of internal edges of $\partial \mathcal{B}^\bullet_{r+1}(\u)$.  
  We also let $(-L',0)$ be the left-most vertex of $\partial \mathcal{B}^\bullet_{r}(\u)\cap \partial\u$, and
  $(R',0)$ be the right-most vertex of $\partial \mathcal{B}^\bullet_{r}(\u)\cap \partial\u$. We define
  similarly $L''$ and $R''$ replacing $\mathcal{B}^\bullet_{r}(\u)$ by $\mathcal{B}^\bullet_{r+1}(\u)$.
  See Fig.~\ref{fig:hull-UHPT} for an illustration. 
  
  Any internal edge of $\partial \mathcal{B}^\bullet_{r+1}(\u)$ connects two vertices at distance $r+1$
  from the root vertex and is incident to a ``downward'' triangle whose third vertex belongs to 
  $\partial \mathcal{B}^\bullet_{r}(\u)$. Write $V_1,\ldots,V_Q$ for the vertices of $\partial \mathcal{B}^\bullet_{r}(\u)$ that
  belong to the downward triangles associated with $E_1,\ldots,E_Q$ respectively. Note that $V_1,\ldots,V_Q$
  are not necessarily distinct.
   For $1\leq j\leq Q+1$,
  write $S_i$ for the number of edges of $\partial \mathcal{B}^\bullet_{r}(\u)$ lying between $V_{j-1}$ and $V_j$, 
  where by convention $V_0$ is the vertex $(-L',0)$, and 
  $V_{Q+1}$ is the vertex  $(R',0)$. Note that
  $S_1+\cdots+ S_{Q+1}=:\mathbf{P}_r$ is the number of internal edges of $\partial \mathcal{B}^\bullet_{r}(\u)$.
  
\begin{figure}[!h]
 \begin{center}
 \includegraphics[width=0.9\linewidth]{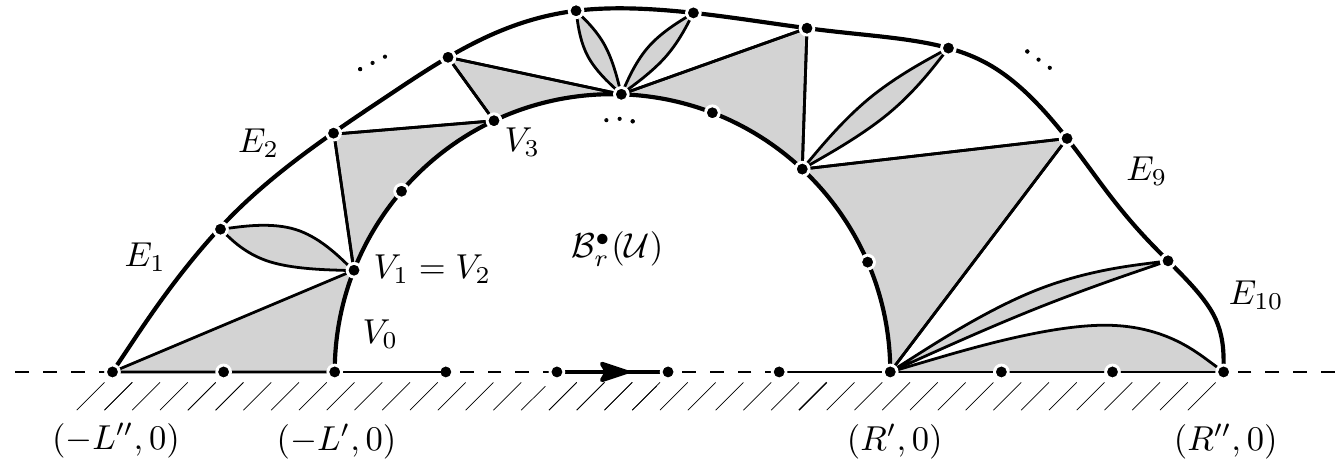}
 \caption{\label{fig:hull-UHPT}Illustration of the setting of Proposition \ref{law-hull-UHPT2}: the layer $ \mathcal{B}_{r+1}^\bullet( \mathcal{U}) \backslash \mathcal{B}_{r}^\bullet( \mathcal{U})$ is displayed. As usual the ``slots'' are represented in grey and not filled in for the clarity of the figure. Comparing with the skeleton decomposition of Section \ref{sec:skeleton}, we notice the particular roles played by the two extremes slots of the layer.}
 \end{center}
 \end{figure}
  
  \begin{proposition}
  \label{law-hull-UHPT2}
  Let  $q\geq 1$ be an integer. Then, for any choice of the nonnegative  integers $s_1,\ldots,s_{q+1}$
  and $k_1,k_2$,
 \begin{align*}
 &\P(Q=q,S_1=s_1,\ldots,S_{q+1}=s_{q+1}, L''-L'=k_1+1,R''-R'=k_2+1\mid \mathcal{B}^\bullet_r(\u))\\
  &\qquad=\frac{1}{4}\;\mathbf{1}_{\{s_1+\cdots+s_{q+1}=\mathbf{P}_r\}}\;\theta(s_1+k_1)\,\theta(s_2)\,\theta(s_3)\cdots\theta(s_q)\,\theta(s_{q+1}+k_2).
  \end{align*}
  \end{proposition}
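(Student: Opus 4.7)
The plan is to prove the formula by a direct computation based on Lemma~\ref{law-hull-UHPT}. Fix a triangulation $A$ with $\P(\mathcal{B}^\bullet_r(\u)=A)>0$; Lemma~\ref{law-hull-UHPT} gives $\P(\mathcal{B}^\bullet_r(\u)=A)=12^{\mathbf{P}_r-m}(12\sqrt{3})^{-N}$, where $m$ counts the boundary edges of $A$ lying on $\partial\u$, $\mathbf{P}_r$ is the internal perimeter, and $N$ is the number of vertices of $A$ off $\wt\partial A$. Given the prescribed layer data $(q,s_1,\ldots,s_{q+1},k_1,k_2)$ (the constraint $s_1+\cdots+s_{q+1}=\mathbf{P}_r$ being automatic, since the $s_j$'s merely distribute the internal edges of $\partial A$ between consecutive $V_{j-1},V_j$), the extensions $A'=\mathcal{B}^\bullet_{r+1}(\u)$ of $A$ realising this data are in bijection with collections $(M_1,\ldots,M_{q+1})$ of triangulations of the appropriate polygons filling in the $q+1$ slots of the layer. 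I would sum the ratio $\P(\mathcal{B}^\bullet_{r+1}(\u)=A')/\P(\mathcal{B}^\bullet_r(\u)=A)$ over these slot fillings.

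The crucial step is to identify the perimeters of the $q+1$ slots. The $q-1$ middle slots, between $V_{i-1}$ and $V_i$ for $2\le i\le q$, are bounded by $s_i$ internal edges of $\partial A$ together with two lateral edges of adjacent downward triangles, hence are $(s_i+2)$-gons exactly as in the cylinder skeleton of Section~\ref{sec:skeleton}. The two extreme slots carry additional edges coming from $\partial\u$: the left slot is bounded by the $s_1$ internal edges of $\partial A$ from $V_0=(-L',0)$ to $V_1$, by the $k_1+1$ fresh edges of $\partial\u$ from $V_0$ to $(-L'',0)$, and by the left lateral edge of the first downward triangle, so it is an $(s_1+k_1+2)$-gon; symmetrically the right extreme slot is an $(s_{q+1}+k_2+2)$-gon. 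A parallel inspection of the parameters of Lemma~\ref{law-hull-UHPT} for $A'$ yields $m'-m=k_1+k_2+2$ (the new $\partial\u$-edges on left and right) and $N'-N=(q-1)+\sum_j\mathsf{Inn}(M_j)$: the $q-1$ intermediate internal vertices of the $E_i$-path contribute to $N'$, whereas the $k_1+k_2+2$ fresh $\partial\u$-vertices sit on $\wt\partial A'$ and do not.

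Summing the ratio over slot fillings, each slot of perimeter $p$ contributes $\sum_M(12\sqrt{3})^{-\mathsf{Inn}(M)}=Z(p)$. Substituting the rearranged definition $Z(c+2)=(12\sqrt{3})\,12^{c-1}\theta(c)$ from \eqref{def:theta} turns every $Z$-factor into the matching $\theta$-factor, producing exactly $\theta(s_1+k_1)\,\theta(s_2)\cdots\theta(s_q)\,\theta(s_{q+1}+k_2)$. The remaining numerical prefactor reduces after a routine collection of powers of $12$ and $12\sqrt{3}$ to $(12\sqrt{3})^{2}\cdot 12^{-3}=432/1728=1/4$, which matches the announced formula. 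The main difficulty lies in the geometric bookkeeping around the two extreme slots, which behave differently from the middle ones because one lateral downward-triangle edge is effectively replaced by $k_j+1$ boundary edges of $\partial\u$; once the slot perimeters and the $m$, $N$ differences are correctly identified, the rest is just arithmetic.
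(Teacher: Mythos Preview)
Your proposal is correct and follows essentially the same route as the paper's proof: fix $A$, identify the slot perimeters (with the two extreme slots of size $s_1+k_1+2$ and $s_{q+1}+k_2+2$), compute the ratio $\P(\mathcal{B}^\bullet_{r+1}(\u)=A')/\P(\mathcal{B}^\bullet_r(\u)=A)$ via Lemma~\ref{law-hull-UHPT}, sum over slot fillings, and reduce to $\theta$'s using \eqref{def:theta}. The only cosmetic difference is that the paper converts each factor $\rho^{-\mathsf{Inn}(M_j)}$ into $\theta(\wt s_j)\cdot\rho^{-\mathsf{Inn}(M_j)}/Z(\wt s_j+2)$ \emph{before} summing over the $M_j$, whereas you sum first to produce the $Z$-factors and then substitute; the arithmetic is identical.
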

  
\begin{remark} Note that the above conditional distribution depends on $ \mathcal{B}_{r}^\bullet( \mathcal{U})$ 
only through its internal perimeter $\mathbf{P}_r$. This can be interpreted via the spatial Markov property of the UHPT. \end{remark}
  
  \proof We again write $\alpha=12$ and $\rho=12\sqrt{3}$
  to simplify notation. Fix a triangulation with a boundary $A$, with two distinguished vertices on the 
  boundary as previously, such that $\P(\mathcal{B}^\bullet_r(\u)=A)>0$. Let $p$ be the number of
  internal edges of $\partial A$. The proposition will follow if we can verify that, for any choice of the nonnegative  integers $s_1,\ldots,s_{q+1}$
  and $k_1,k_2$ such that $s_1+\cdots+s_{q+1}=p$, 
\begin{align*}
 &\P(Q=q,S_1=s_1,\ldots,S_{q+1}=s_{q+1}, L''-L'=k_1+1,R''-R'=k_2+1\mid \mathcal{B}^\bullet_r(\u)=A)\\
  &\qquad=\frac{1}{4}\;\theta(s_1+k_1)\,\theta(s_2)\,\theta(s_3)\cdots\theta(s_q)\,\theta(s_{q+1}+k_2).
  \end{align*}
  The left-hand side of the preceding display can be written as
  $$\sum_{A'} \P(\mathcal{B}^\bullet_{r+1}(\u)=A' \mid \mathcal{B}^\bullet_r(\u)=A)$$
  where the sum is over all triangulations with a boundary $A'$ having two distinct 
  distinguished boundary vertices (also distinct from the root vertex) that satisfy
  the following properties:
  \begin{enumerate}
  \item[$\bullet$] $A\sqsubset A'$ in the sense
  explained in the proof of Lemma \ref{law-hull-UHPT};
  \item[$\bullet$] $\wt \partial A\subset \wt \partial A'$, and there are $k_1+1$ boundary edges, resp. $k_2+1$ boundary edges, between 
  the left-most vertex of $\wt\partial A$ and the left-most vertex of $\wt\partial A'$, resp. 
  between 
  the right-most vertex of $\wt\partial A$ and the right-most vertex of $\wt\partial A'$;
  \item[$\bullet$] $\partial A'\backslash \wt\partial A'$ has $q$ edges, each of which is 
  incident is a ``downward triangle'' whose third vertex is incident to $\partial A\backslash \wt\partial A$, and the configuration of these downward triangles is characterized by the numbers $s_1,\ldots,s_{q+1}$ as explained before
  the proposition (see also Fig.~\ref{fig:hull-UHPT}).
  \end{enumerate}
 Fix $A'$ satisfying the preceding properties, and note that
$$ \P(\mathcal{B}^\bullet_{r+1}(\u)=A' \mid \mathcal{B}^\bullet_r(\u)=A)= \frac{\P(\mathcal{B}^\bullet_{r+1}(\u)=A')}
 {\P(\mathcal{B}^\bullet_r(\u)=A)},$$
 because the event $\{\mathcal{B}^\bullet_{r+1}(\u)=A'\}$ is contained in $\{\mathcal{B}^\bullet_r(\u)=A\}$. 
 We use Lemma \ref{law-hull-UHPT} to evaluate the ratio of the probabilities appearing in the 
 previous display. It readily follows that
 \begin{equation}
 \label{hull-UHPT-1}
 \frac{\P(\mathcal{B}^\bullet_{r+1}(\u)=A')}
 {\P(\mathcal{B}^\bullet_r(\u)=A)}= \alpha^{q-p}\,\alpha^{-(k_1+k_2+2)}\,\rho^{-N}
 \end{equation}
 where $N$ denotes the number of vertices of $A'$ that are not vertices of $A$ or of $\partial \u$. 
 
 At this point we note that the triangulation $A'$ is completely determined if in addition to the
 preceding properties we know the triangulations with a boundary that fill in the slots
 of $A'\backslash A$
 left by the downward triangles. Note that there are $q+1$ such slots, and that the first one
 and the last one play a particular role since their boundary contains edges of $\wt \partial A$. 
 More precisely, for $2\leq i\leq q$, the boundary of the slot contains $s_i+2$
 edges, whereas for $i=1$ it contains $s_1+k_1+2$ edges, and for $i=q+1$ it contains 
 $s_{q+1}+k_2+2$ edges. Write $\mathcal{M}_i$ for the triangulations with a boundary
 filling the $i$-th slot, and let ${\rm Inn}(\mathcal{M}_i)$ be the number of inner vertices of $\mathcal{M}_i$. Then, 
 we have
 $$N= q-1 + \sum_{i=1}^{q+1} {\rm Inn}(\mathcal{M}_i).$$
 Set $\wt s_i=s_i$ if $2\leq i\leq q$ and $\wt s_1 =s_1+k_1$, $\wt s_{q+1}=s_2+k_2$
 to simplify notation. Then simple manipulations show that formula
 \eqref{hull-UHPT-1} can be rewritten in the form
 $$ \frac{\P(\mathcal{B}^\bullet_{r+1}(\u)=A')}
 {\P(\mathcal{B}^\bullet_r(\u)=A)}= \alpha^{-3}\rho^2 \prod_{i=1}^{q+1} \Big(\frac{1}{\rho}
 \alpha^{-(\wt s_i-1)} \,\rho^{-{\rm Inn}(\mathcal{M}_i)}\Big)$$
 (observe that $\sum_{i=1}^{q+1} (\wt s_i-1)= p-(q+1)+k_1+k_2$). Next note that $ \alpha^{-3}\rho^2=\frac{1}{4}$
 and recall the definition \eqref{def:theta} of the probability distribution $\theta$. We arrive at the formula
 \begin{equation}
 \label{hull-UHPT-2}
 \frac{\P(\mathcal{B}^\bullet_{r+1}(\u)=A')}
 {\P(\mathcal{B}^\bullet_r(\u)=A)}=
 \frac{1}{4}\,\prod_{i=1}^{q+1}  \Big( \theta(\wt s_i)\;\frac{\rho^{-{\rm Inn}(\mathcal{M}_i)}}{Z(\wt s_i+2)}\Big).
 \end{equation}
 It remains to sum over all possible choices of $A'$. But as explained earlier, this amounts to
 summing over possible choices of the triangulations $\mathcal{M}_1,\ldots,\mathcal{M}_{q+1}$ with boundaries of
 the prescribed lengths. By the very definition of $Z(\cdot)$, we obtain
$$\sum_{A'} \frac{\P(\mathcal{B}^\bullet_{r+1}(\u)=A')}
 {\P(\mathcal{B}^\bullet_r(\u)=A)}=  \frac{1}{4}\,\prod_{i=1}^{q+1}  \theta(\wt s_i),$$
 which completes the proof. \endproof
 
 \begin{remark}
A simplified version of the arguments of the preceding proof also gives 
the distribution of the hull $\mathcal{B}^\bullet_1(\u)$. Let $Q$ denote the number of internal edges of 
$\partial \mathcal{B}^\bullet_1(\u)$, and write $(R,0)$, respectively $(-L,0)$, for the 
right-most vertex, resp.~the left-most vertex, of
$\partial \mathcal{B}^\bullet_1(\u)\cap \partial \u$. Then, for every $q\geq 1$ and $k_1,k_2\geq 0$,
\begin{equation}
\label{hullradius1}
\P(Q=q,L=k_1+1,R=k_2+1)= \frac{1}{4}\,\theta(0)^{q-1}\,\theta(k_1)\,\theta(k_2)
\end{equation}
(note that $\theta(0)=3/4$). We leave the details to the reader.
\end{remark}
 
 \begin{corollary}
 \label{bd-UHPT-boundary}
 For every integers $c,k\geq 0$,
\begin{equation}
\label{bdry-formula}
\P(S_{Q+1}=c,R''-R'=k+1\mid \mathcal{B}^\bullet_r(\u))=\mathbf{1}_{\{c\leq \mathbf{P}_r\}}\,\frac{1}{2}
\big(1+h(\mathbf{P}_r-c)\big)\,\theta(c+k),
\end{equation}
 where we recall that $h(j)=4^{-j}{2j\choose j}$ for every integer $j\geq 0$. 
 Consequently, for every integer $k\geq 0$,
\begin{equation}
\label{bd-bdry}
\P(R''-R'=k+1\mid \mathcal{B}^\bullet_r(\u))\leq \theta([k,\infty)).
\end{equation}
 \end{corollary}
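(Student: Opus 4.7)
The plan is to obtain \eqref{bdry-formula} by marginalizing the joint law provided by Proposition \ref{law-hull-UHPT2}, and then to derive \eqref{bd-bdry} by summing \eqref{bdry-formula} over $c$ together with an elementary pointwise bound on $h$. Fix $c,k\geq 0$ with $c\leq \mathbf{P}_r$. Summing the formula of Proposition \ref{law-hull-UHPT2} over $q\geq 1$, over $(s_1,\ldots,s_q)$ with $s_1+\cdots+s_q=\mathbf{P}_r-c$, and over $k_1\geq 0$ (with $s_{q+1}=c$ and $k_2=k$ held fixed), we factor out $\tfrac14\theta(c+k)$ and are left with computing
\[
T(m):=\sum_{q\geq 1}\ \sum_{s_1+\cdots+s_q=m}\ \sum_{k_1\geq 0}\theta(s_1+k_1)\,\theta(s_2)\cdots\theta(s_q),
\]
where $m=\mathbf{P}_r-c\geq 0$. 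The target identity \eqref{bdry-formula} amounts to the clean statement $T(m)=2(1+h(m))$ for every $m\geq 0$.

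To establish this, I would pass to generating functions. The inner sum $\sum_{k_1\geq 0}\theta(s_1+k_1)=\theta([s_1,\infty))$, and a direct computation shows that $F(x):=\sum_{s\geq 0}\theta([s,\infty))\,x^s=(1-xg_\theta(x))/(1-x)$. Splitting $T$ according to $q$ identifies $T$ as the convolution of $F$ with $\sum_{r\geq 0}\theta^{*r}$, so its generating function is $G(x)=F(x)/(1-g_\theta(x))$. Using the explicit formula $g_\theta(x)=1-(1+1/\sqrt{1-x})^{-2}$ from Lemma \ref{law-hull}, we set $u=\sqrt{1-x}$ and compute
\[
1-g_\theta(x)=\frac{u^2}{(1+u)^2},\qquad 1-xg_\theta(x)=\frac{2u^2}{1+u},\qquad F(x)=\frac{2}{1+u},
\]
so
\[
G(x)=\frac{2(1+u)}{u^2}=\frac{2}{1-x}+\frac{2}{\sqrt{1-x}}.
\]
Extracting the coefficient of $x^m$ and using $\sum_{m\geq 0}h(m)x^m=(1-x)^{-1/2}$ (which we recall from $\Pi(x)=(1-x)^{-1/2}-1$ in the proof of Lemma \ref{full-mass}) yields $T(m)=2+2h(m)$, proving \eqref{bdry-formula}.

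For \eqref{bd-bdry} I would sum \eqref{bdry-formula} over $c\in\{0,1,\ldots,\mathbf{P}_r\}$ and use that $h(j)=4^{-j}\binom{2j}{j}\leq h(0)=1$ for every $j\geq 0$ (the sequence $h(j)$ is decreasing in $j$), so that $\tfrac12(1+h(\mathbf{P}_r-c))\leq 1$. This gives
\[
\P(R''-R'=k+1\mid \mathcal{B}^\bullet_r(\u))\leq \sum_{c=0}^{\mathbf{P}_r}\theta(c+k)\leq \sum_{c\geq 0}\theta(c+k)=\theta([k,\infty)),
\]
which is the desired bound.

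The only real obstacle is the generating-function identification $G(x)=2/(1-x)+2/\sqrt{1-x}$; once the change of variable $u=\sqrt{1-x}$ is made, the remaining manipulations are routine algebra and an application of the binomial series, and the passage from \eqref{bdry-formula} to \eqref{bd-bdry} is immediate from monotonicity of $h$.
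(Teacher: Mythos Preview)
Your proof is correct and follows essentially the same generating-function strategy as the paper. The only organizational difference is that the paper splits $\theta([s_1,\infty))=\theta(s_1)+\theta((s_1,\infty))$ and treats the two resulting sums separately---the $\theta((s_1,\infty))$ piece via a one-line stopping-time identity (it equals $1$), and the $\theta(s_1)$ piece as the Green function $G(0,p)$ of the $\theta$-random walk with generating function $(1-g_\theta(x))^{-1}=(1+(1-x)^{-1/2})^2$---whereas you compute the combined generating function $F(x)/(1-g_\theta(x))$ in one shot via the substitution $u=\sqrt{1-x}$. Both routes land on $T(m)=2+2h(m)$, and your treatment of \eqref{bd-bdry} is identical to the paper's.
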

 
 \proof
 From the identity of Proposition \ref{law-hull-UHPT2}, we get that
\begin{align*}
&\P(S_{Q+1}=c,R''-R'=k+1\mid \mathcal{B}^\bullet_r(\u))\\
&\qquad=
 \frac{1}{4}\;\Bigg(\sum_{q=1}^\infty\;\sum_{s_1+\cdots+s_{q}=\mathbf{P}_r-c}\;\theta([s_1,\infty))\,\theta(s_2)\cdots
 \theta(s_{q-1})\,\theta(s_q)\Bigg) \theta(c+k).
 \end{align*}
 where the right-hand side is $0$ if $\mathbf{P}_r<c$. We first observe that, for
 every integer $p\geq0$,
 \begin{equation}
 \label{bd-UHPT-tech}
 \sum_{q=1}^\infty\;\sum_{s_1+\cdots+s_{q}=p}\;\theta((s_1,\infty))\,\theta(s_2)\cdots\theta(s_{q-1})\,\theta(s_{q}) = 1.
 \end{equation}
 This identity is immediate: If $X_1,X_2,\ldots$ is a sequence of i.i.d. random variables with distribution $\theta$, and
 if $H_p:=\min\{q\geq 1: X_1+\cdots+X_q>p\}$, the left-hand side of \eqref{bd-UHPT-tech} is just $\sum_{q=1}^\infty \P(H_p=q)=1$. 
 To get formula \eqref{bdry-formula}, we then need to verify that, for every integer $p\geq 0$,
  \begin{equation}
 \label{bd-UHPT-tech2}
\sum_{q=1}^\infty\sum_{s_1+\cdots+s_{q}=p}\;\theta(s_1)\,\theta(s_2)\cdots\theta(s_{q-1})\,\theta(s_{q})= 1+ 2\,h(p).
 \end{equation}
For $p=0$, this follows immediately from the fact that $\theta(0)=3/4$.
 So we restrict our attention to $p\geq 1$. We first observe that 
 $$\sum_{s_1+\cdots+s_{q}=p}\;\theta(s_1)\,\theta(s_2)\cdots\theta(s_{q-1})\,\theta(s_{q})= Q_q(0,p)$$
 where $Q_q(i,j)$ denotes the transition kernel of the random walk with jump distribution $\theta$. Hence, for $p\geq 1$,
the left-hand side of \eqref{bd-UHPT-tech2} is equal to $G(0,p)$, where $G(i,j)=\sum_{q=0}^\infty Q_q(i,j)$
is the Green kernel of the same random walk. We then observe that, for $x\in[0,1)$,
$$\sum_{k=0}^\infty x^k\,G(0,k)= \sum_{q=0}^\infty g_\theta(x)^q= \frac{1}{1-g_\theta(x)}= \Big(1+\frac{1}{\sqrt{1-x}}\Big)^2.$$
By expanding $(1+\frac{1}{\sqrt{1-x}})^2$ as a power series, we obtain that, for every $k\geq 0$,
$$G(0,k)=\mathbf{1}_{\{k=0\}} + 1 + 2\,h(k),$$
giving the right-hand side of \eqref{bd-UHPT-tech2} when $k=p\geq 1$. This completes the proof of
\eqref{bdry-formula}. The bound \eqref{bd-bdry} follows by summing over $c$, noting that $h(j)\leq 1$ for every $j\geq 0$.
\endproof
  
  \subsection{Distances along the boundary of the UHPT}
   \label{sec:distanceUHPT}
  We now use the results of the preceding subsection to get bounds on distances between vertices of $\partial \u$
 and the root vertex. Recall that $\partial \u$ is identified to $\Z\times \{0\}$, so that $(0,0)$
  is the root vertex. 
  
  For every integer $r\geq 1$, we now write $(-L^\u_r,0)$ for the left-most vertex in $\partial \mathcal{B}^\bullet_r(\u)\cap\partial \u$
  and $(0,R^\u_r)$ for the right-most vertex in $\partial \mathcal{B}^\bullet_r(\u)\cap\partial \u$. We also set $R^\u_0=0$
  and $L^\u_0=0$ by convention, and we also agree that $ \mathcal{B}^\bullet_0(\u)$ is the 
  edge-triangulation. Recall that
  $\mathrm{d}^\u_{\rm gr}$ is the graph distance on the vertex set of $\u$.
  
  \begin{proposition}
  \label{dist-bdry}
  The sequences $(r^{-2}L^\u_r)_{r\geq 1}$ and $(r^{-2}R^\u_r)_{r\geq 1}$ are bounded in probability: For every 
  $\ve>0$, there exists a constant $K$ such that
  $$\sup_{r\geq 1} \P(L^\u_r\geq K\,r^2) \leq \ve \quad \mbox{ and } \quad \sup_{r\geq 1} \P(R^\u_r\geq K\,r^2) \leq \ve,$$
and consequently, for every $r\geq 1$,
\begin{equation}
\label{dist-bdry-tec1}
\P\Big(\min_{|j|\geq K\,r^2} \mathrm{d}^\u_{\rm gr}((0,0),(j,0)) >r\Big) \geq 1-2\ve.
\end{equation}
For every integer $m\geq 1$, set $T_m:=\min\{r\geq 1: R^\u_r > m\}$. There exists a constant $K'$
such that, for every $m\geq 1$ and every $j\geq 1$,
$$\P( R^\u_{T_m}-m>j)\leq K'\,\sqrt{\frac{m}{m+j}}.$$
  \end{proposition}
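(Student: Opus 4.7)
This is an overshoot estimate for the non-decreasing integer-valued process $(R^\u_r)_{r\geq 0}$. My plan is to combine a uniform one-step tail bound on the increments with a sublinear estimate on the expected first-passage time $\E[T_m]$. Using the identity of Corollary~\ref{bd-UHPT-boundary} together with the tail asymptotic $\theta([k,\infty))\sim\tfrac{1}{\sqrt\pi}\,k^{-3/2}$ that follows from~\eqref{asymp-theta}, a direct summation of~\eqref{bd-bdry} over $k\geq\ell$ gives a constant $c_1>0$ such that
\[
\P\!\left(R^\u_{r+1}-R^\u_r>\ell \,\Big|\,\mathcal{B}^\bullet_r(\u)\right) \;\leq\; c_1\,\ell^{-1/2}, \qquad \ell\geq 1.
\]

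Now I decompose the overshoot event according to the value of $T_m$ and condition on $\mathcal{B}^\bullet_r(\u)$:
\[
\P(R^\u_{T_m}-m>j) \;=\; \sum_{r\geq 0}\E\!\left[\mathbf{1}_{\{R^\u_r\leq m\}}\,\P\bigl(R^\u_{r+1}>m+j\mid\mathcal{B}^\bullet_r(\u)\bigr)\right] \;\leq\; c_1\sum_{r\geq 0}\E\!\left[\mathbf{1}_{\{R^\u_r\leq m\}}\,(m+j-R^\u_r)^{-1/2}\right].
\]
I split this sum according to whether $R^\u_r\leq m/2$ or $R^\u_r>m/2$: on the first event $(m+j-R^\u_r)^{-1/2}\leq\sqrt{2/(m+j)}$, while on the second it is at most $j^{-1/2}$. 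Since $\sum_{r\geq 0}\P(R^\u_r\leq k)=\E[T_k]$, this yields
\[
\P(R^\u_{T_m}-m>j) \;\leq\; c_1\sqrt{2/(m+j)}\,\E[T_{m/2}]\;+\;c_1\,j^{-1/2}\,\E[T_m].
\]
Granted the estimate $\E[T_m]\leq c_2\sqrt{m}$ for all $m\geq 1$, the first term is bounded by $C\sqrt{m/(m+j)}$; the second is $\leq C\sqrt{m/j}\leq\sqrt{2}\,C\sqrt{m/(m+j)}$ whenever $j\geq m$ (using $j\geq(m+j)/2$). For $j<m$ one just falls back on the trivial bound $\P(\cdot)\leq 1\leq\sqrt{2}\sqrt{m/(m+j)}$. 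Together this gives the claim with $K'=(1+\sqrt 2)C$.

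The main obstacle is the estimate $\E[T_m]\leq c_2\sqrt{m}$: it amounts to a lower-tail control $\P(R^\u_r\leq m)\to 0$ as $r/\sqrt{m}\to\infty$, which is not directly given by the first part of the present proposition (that only provides an upper-tail bound on $R^\u_r$). My plan to obtain it is to first prove a pointwise local estimate of the form $\P(R^\u_r=s)\leq c_3/r^2$ uniformly in $s$, in the spirit of Lemma~\ref{bdlawperi}: this should follow from the skeleton-type description of the hull $\mathcal B^\bullet_r(\u)$ provided by Proposition~\ref{law-hull-UHPT2}, which expresses $R^\u_r$ in terms of an underlying $\theta$-Galton--Watson process, just as $L_r$ is expressed in terms of $Y$ in Section~\ref{sec:skeleton}. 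Once this is established, setting $\pi(s):=\P(s\in\{R^\u_r:r\geq 0\})$ and using the first part of the proposition to rule out visits at values $s\gg Kr^2$, I obtain $\pi(s)\leq c_3\sum_{r\gtrsim\sqrt{s}}r^{-2}\leq c_4/\sqrt{s}$. Since the increments of $R^\u$ are at least $1$, the process is strictly increasing, so $\E[T_m]=\sum_{s=0}^{m}\pi(s)\leq 2c_4\sqrt{m}$, as needed.
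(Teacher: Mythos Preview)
Your decomposition
\[
\P(R^\u_{T_m}-m>j)=\sum_{r\geq 0}\E\!\left[\mathbf{1}_{\{R^\u_r\leq m\}}\,\P\bigl(R^\u_{r+1}-R^\u_r>m+j-R^\u_r\mid\mathcal{B}^\bullet_r(\u)\bigr)\right]
\]
is exactly the one the paper uses. The divergence is in how the conditional probability is bounded. You apply the \emph{absolute} bound $\P(\Delta_r>\ell\mid\mathcal{B}^\bullet_r(\u))\leq c_1\ell^{-1/2}$, which forces you to control $\sum_r\P(R^\u_r\leq m)=\E[T_m]$. The paper instead proves a \emph{ratio} estimate (their display (\ref{dist-bdry-tech1}) in the proof): for $0\leq\ell\leq m$ and $j\geq 0$,
\[
\P\bigl(\Delta_r>\ell+j\mid\mathcal{B}^\bullet_r(\u)\bigr)\;\leq\;K'\sqrt{\tfrac{m}{m+j}}\;\P\bigl(\Delta_r>\ell\mid\mathcal{B}^\bullet_r(\u)\bigr),
\]
obtained directly from the exact formula~\eqref{bdry-formula} of Corollary~\ref{bd-UHPT-boundary}. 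With $\ell=m-R^\u_r$ this gives
\[
\P(R^\u_{T_m}-m>j)\;\leq\;K'\sqrt{\tfrac{m}{m+j}}\sum_{r\geq 0}\E\!\left[\mathbf{1}_{\{R^\u_r\leq m\}}\,\P\bigl(\Delta_r>m-R^\u_r\mid\mathcal{B}^\bullet_r(\u)\bigr)\right]\;=\;K'\sqrt{\tfrac{m}{m+j}},
\]
since the remaining sum is just $\P(T_m<\infty)=1$. No lower-tail information on $R^\u_r$ is needed at all.

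Your route, by contrast, has a genuine gap at the step $\E[T_m]\leq c_2\sqrt m$. Even granting the local estimate $\P(R^\u_r=s)\leq c_3/r^2$ (which is not obvious here --- the dynamics of $R^\u_r$ in Proposition~\ref{law-hull-UHPT2} are more involved than those of $L_r$, and no explicit generating function is available), your argument for $\pi(s)\leq c_4/\sqrt s$ does not close. You propose to use the first part of the proposition to discard $r\ll\sqrt{s/K}$; but that part only gives $\P(R^\u_r\geq Kr^2)\leq\ve$, a fixed constant, so summing over the $\sim\sqrt{s/K}$ small values of $r$ yields $\ve\sqrt{s/K}$, which grows rather than decays. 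To make this work you would need a quantitative large-deviation bound such as $\P(R^\u_r=s)\lesssim r\,s^{-3/2}$ for $s\gg r^2$, or alternatively a \emph{lower}-tail estimate $\P(R^\u_r\leq m)\to 0$ summably fast as $r/\sqrt m\to\infty$ (e.g.\ via a coupling with a stable($1/2$) subordinator from below). Either of these is plausible but represents substantial extra work that you have not outlined. The paper's ratio trick sidesteps all of this.
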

  
  \proof Let us start with the first statement. By symmetry, it suffices to consider the sequence $(r^{-2}R^\u_r)_{r\geq 1}$. By combining \eqref{bd-bdry}
  and \eqref{asymp-theta}, we get the existence of a constant $H$ such that, for every $r\geq 1$,
  and every $k\geq 1$,
 \begin{equation}
 \label{bd-increment}
 \P(R^\u_{r+1}-R^\u_r=k\mid R^\u_1,\ldots,R^\u_r) \leq H\, k^{-3/2}.
 \end{equation}
  From this bound, for instance by using a coupling with a stable subordinator with index $1/2$, one derives the
  fact that the sequence $(r^{-2}R^\u_r)_{r\geq 1}$ is bounded in probability. We then note that
  the condition $j>R^\u_r$, or $j<-L^\u_r$ implies by definition that 
  $(j,0)\notin \mathcal{B}^\bullet_r(\u)$. Therefore,
  $$\min_{|j|> R^\u_r\vee L^\u_r} \mathrm{d}^\u_{\rm gr}((0,0),(j,0)) > r,$$
  giving \eqref{dist-bdry-tec1} since,
  by the first part of the proposition,
  $$\P(R^\u_r\vee L^\u_r < K\,r^2) \geq 1-2\ve.$$
  
  Let us turn to the proof of the last assertion. It is enough to establish the existence of a constant
  $K'$, which does not depend on $m$, such that, for every integer $k\geq 0$, for every $\ell\in\{0,1,\ldots,m\}$ and for every integer $j\geq 0$,
  \begin{equation}
  \label{dist-bdry-tech1}
  \P(R^\u_{k+1}-R^\u_{k}> \ell +j\mid \mathcal{B}^\bullet_k(\u)) \leq K'\sqrt{\frac{m}{m+j}}\,\P(R^\u_{k+1}-R^\u_{k}> \ell \mid \mathcal{B}^\bullet_k(\u)).
  \end{equation}
  Indeed, if \eqref{dist-bdry-tech1} holds, we have, for every $j\geq 0$,
  \begin{align*}
  \P(R^\u_{T_m} -m>j)&=\sum_{k=0}^\infty  \P(R^\u_k\leq m,R^\u_{k+1}-R^\u_k> m+j-R^\u_k)\\
  &=\sum_{k=0}^\infty \E[\mathbf{1}_{\{R^\u_k\leq m\}}\,\P(R^\u_{k+1}-R^\u_k>m+j-R^\u_k\mid \mathcal{B}^\bullet_k(\u))]\\
  &\leq K'\sqrt{\frac{m}{m+j}}\,\sum_{k=0}^\infty \E[\mathbf{1}_{\{R^\u_k\leq m\}}\,\P(R^\u_{k+1}-R^\u_k> m-R^\u_k\mid \mathcal{B}^\bullet_k(\u))]\\
 & =K'\sqrt{\frac{m}{m+j}}\,.
 \end{align*}
  Let us prove \eqref{dist-bdry-tech1}. The case $k=0$ follows very easily from \eqref{hullradius1} and so we concentrate on the case
  $k\geq 1$. From Corollary \ref{bd-UHPT-boundary}, we have, for every $j\geq 0$,
  $$\P(R^\u_{k+1}-R^\u_{k}> \ell +j\mid \mathcal{B}^\bullet_k(\u))
  = \frac{1}{2}\sum_{i=\ell+j}^\infty \sum_{c=0}^{\mathbf{P}_k} \big(1+h(\mathbf{P}_k-c)\big)\,\theta(c+i)=\frac{1}{2}\sum_{c=0}^{\mathbf{P}_k} \big(1+h(\mathbf{P}_k-c)\big)\,\theta([c+\ell+j,\infty)) ,$$
  where $\mathbf{P}_k$ is the number of internal edges of $\partial\mathcal{B}^\bullet_k(\u)$. Since $1\leq 1+h(\mathbf{P}_k-c)\leq 2$, the bound 
  \eqref{dist-bdry-tech1} will follow if we can verify that, for every $p\geq 1$, for every $\ell\in\{0,1,\ldots,m\}$ and every $j\geq 0$,
  $$\sum_{c=0}^{p} \theta([c+\ell+j,\infty))\leq K''\,\sqrt{\frac{m}{m+j}}\,\sum_{c=0}^{p} \theta([c+\ell,\infty)),$$
  with a suitable constant $K''$. The bound in the last display is derived by elementary arguments relying on 
  \eqref{asymp-theta}: Note that, if $0\leq c\leq m$, the ratio $\theta([c+\ell+j,\infty))/\theta([c+\ell,\infty))$ is
  bounded above by (a constant times) $(m/(m+j))^{3/2}$, and on the other hand
  $$\sum_{c=m}^{\infty} \theta([c+\ell+j,\infty))$$
  is bounded above by a constant times $(m+j)^{-1/2}$, whereas 
  $$\sum_{c=0}^{m} \theta([c+\ell,\infty))$$
  is bounded below by a positive constant times $m^{-1/2}$. 
   \endproof
   
    We will need a reinforcement of property \eqref{dist-bdry-tec1}, which is given in the next proposition. 
    
   \begin{proposition}
   \label{dist-bdry2}
   Let $\ve>0$. For every integer $A>0$, we can choose an integer $K>0$ sufficiently large so that, 
   for every $r\geq 1$,
   $$\P\Big(\min_{0\leq i\leq Ar^2,\;j\geq Kr^2} \mathrm{d}_{\rm gr}^\u ((i,0),(j,0)) \leq r\Big) \leq \ve.$$
   \end{proposition}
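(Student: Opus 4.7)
My plan is to reduce the event to bounds on the right-extents of hulls of $\u$ centered at the vertices $(i,0)$ with $0\le i\le Ar^2$, using translation invariance of $\u$ along its boundary. By translation invariance, re-rooting $\u$ at the edge $(i,0)\text{-}(i+1,0)$ gives a triangulation with the same distribution as $\u$, so the right-extent $R^{(i)}_r$ on $\partial\u$ of the hull of radius $r$ centered at $(i,0)$ is distributed as $R^\u_r$. If $\mathrm{d}^\u_\mathrm{gr}((i,0),(j,0))\le r$ with $0\le i\le Ar^2$ and $j\ge Kr^2$, then $(j,0)$ lies in this hull, and since the intersection of the hull with $\partial\u$ is an interval, we must have $R^{(i)}_r\ge j-i\ge (K-A)r^2$. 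A naive union bound over the $\sim Ar^2$ values of $i$ fails, since the tail of $R^\u_r$ decays only like $r/\sqrt{x}$ on the relevant scale, so a more structured argument is needed.

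The key idea is to cover the whole interval $[0,Ar^2]$ by a single large hull around $(0,0)$. The plan is to choose $C=C(A,\ve)$ large enough so that $\P(R^\u_{Cr}\ge Ar^2)\ge 1-\ve/2$ uniformly in $r\ge 1$; this lower-tail bound complements the upper-tail estimate of Proposition \ref{dist-bdry} and can be extracted from the convergence in distribution of $r^{-2}R^\u_r$ to a positive limit law, a half-plane counterpart of Remark \ref{conv-dist-L}, together with a tightness/uniformity argument in $r$. On the event $\{R^\u_{Cr}\ge Ar^2\}$, every $(i,0)$ with $i\in[0,Ar^2]$ lies in the hull $\mathcal{B}^\bullet_{Cr}(\u)$, and is therefore at graph distance at most $C'r$ from $(0,0)$, for some constant $C'=C'(C)$ slightly larger than $C$ accounting for the possibility that $(i,0)$ is in a finite component of the complement of the ball $B_{Cr}$ within the hull. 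The triangle inequality combined with $\mathrm{d}^\u_\mathrm{gr}((i,0),(j,0))\le r$ then gives $\mathrm{d}^\u_\mathrm{gr}((0,0),(j,0))\le (C'+1)r$, so that $R^\u_{(C'+1)r}\ge Kr^2$; by the first part of Proposition \ref{dist-bdry}, this has probability at most $\ve/2$ for $K$ large depending on $C'$ alone. Adding the two contributions yields the desired bound $\P(E)\le\ve$.

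The main obstacle is the absence of a direct lower-tail bound on $R^\u_r$ in Proposition \ref{dist-bdry}: we need $\P(R^\u_{Cr}\ge Ar^2)\ge 1-\ve/2$ uniformly in $r$. A natural route is via Corollary \ref{bd-UHPT-boundary}, whose explicit formula shows that the conditional increments $R^\u_{r+1}-R^\u_r$ stochastically dominate a heavy-tailed distribution in the domain of attraction of the $1/2$-stable law, so that $R^\u_r$ is stochastically bounded below by a sum that concentrates on scale $r^2$. A secondary technical point is controlling the discrepancy between the ball $B_{Cr}$ and the hull $\mathcal{B}^\bullet_{Cr}(\u)$ on the boundary: a boundary vertex in the hull but lying in a finite complementary component of the ball may a priori be at distance substantially larger than $Cr$ from the root, and this is controlled by bounding the diameter of such components in terms of the overall perimeter of the hull, which by Proposition \ref{dist-bdry} is itself of order $r^2$.
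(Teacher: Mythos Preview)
Your overall strategy --- cover $[0,Ar^2]\times\{0\}$ by a single hull centered at the root and then control how far that hull can reach after $r$ further steps --- is the right one, and is what the paper does. But your execution has a genuine gap at the triangle-inequality step. You assert that $(i,0)\in\mathcal{B}^\bullet_{Cr}(\u)$ implies $\mathrm{d}^\u_\mathrm{gr}((0,0),(i,0))\le C'r$ for some constant $C'=C'(C)$, with the correction ``accounting for'' finite complementary components. This is not justified: a boundary vertex $(i,0)$ with $0\le i\le R^\u_{Cr}$ can lie in a pocket of the hull (a finite component of $\u\setminus\mathcal{B}_{Cr}(\u)$), and nothing in the paper bounds the diameters of such pockets by $O(r)$. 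Your proposed fix --- controlling pocket diameters via the hull perimeter --- would amount to diameter estimates for Boltzmann-type triangulations with boundary of size $\sim r^2$, which is not available here and is nontrivial.

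The point is that one does \emph{not} need a distance bound to the root. What is actually used is the hull-inclusion property: if $(i,0)\in\mathcal{B}^\bullet_s(\u)$ and $(j,0)\notin\mathcal{B}^\bullet_{s+r}(\u)$, then $\mathrm{d}^\u_\mathrm{gr}((i,0),(j,0))>r$. Indeed, $(j,0)$ lies in the infinite component of $\u\setminus\mathcal{B}_s(\u)$, while $(i,0)$ does not; since no vertex of a finite component is adjacent to a vertex of the infinite component, any path from $(i,0)$ to $(j,0)$ must pass through a vertex of $\mathcal{B}_s(\u)$, hence a vertex at distance $\le s$ from the root, from which $(j,0)$ is at distance $>r$. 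Replacing your triangle-inequality step by this observation repairs the argument.

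The paper also sidesteps your lower-tail issue. Instead of a deterministic radius $Cr$ (which forces you to prove $\P(R^\u_{Cr}<Ar^2)$ is small), it uses the random stopping time $T_{Ar^2}=\min\{s:R^\u_s>Ar^2\}$, so that $[0,Ar^2]\times\{0\}\subset\mathcal{B}^\bullet_{T_{Ar^2}}(\u)$ holds by definition. The overshoot $R^\u_{T_{Ar^2}}-Ar^2$ is controlled by the last assertion of Proposition~\ref{dist-bdry}, and the further growth $R^\u_{T_{Ar^2}+r}-R^\u_{T_{Ar^2}}$ by the increment bound \eqref{bd-increment} together with the (strong) Markov property at $T_{Ar^2}$. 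This uses exactly the tools already on the table and avoids both of your technical detours.
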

   
   \proof
   Fix an integer $A>0$.
   By the last assertion of the previous proposition, we can choose an integer $A'>A$ large enough so that,
   for every $r\geq 1$,
   $$\P\Big( R^\u_{T_{Ar^2}} \geq A'r^2\Big) <\frac{\ve}{2}.$$
  Fix $r\geq 1$ and set, for every $j\geq 0$,
   $$\wt R^\u_j:= R^\u_{T_{Ar^2}+j} - R^\u_{T_{Ar^2}}.$$
   Since $T_{Ar^2}$ is a stopping time of the process $(R^\u_j)_{j\geq 0}$, it follows from
   \eqref{bd-increment} that we have also, for every $j\geq 0$ and $k\geq 1$,
   $$ \P(\wt R^\u_{j+1}-\wt R^\u_j=k\mid \wt R^\u_1,\ldots,\wt R^\u_j) \leq H\, k^{-3/2}.$$
   As in the proof of Proposition \ref{dist-bdry}, this implies that the sequence $(j^{-2}\wt R^\u_j)_{j\geq 1}$ is bounded in probability, 
   and since $H$ does not depend on our choice of $r$, we even get bounds that are uniform in $r$. Hence 
   we can choose an integer $K>A'$, which does not depend on the choice of $r$, so that
      $$\P(\wt R^\u_r \geq (K-A')r^2) < \frac{\ve}{2}.$$
   
   Finally, we know that any vertex $(i,0)$ with $0\leq i\leq Ar^2$
   belongs to the hull of radius $T_{Ar^2}$ (because by definition $R^\u_{T_{Ar^2}}>Ar^2$).
 On the event $\{R^\u_{T_{Ar^2}} < A'r^2\}\cap \{\wt R^\u_r < (K-A')r^2\}$,
 we have   $R^\u_{T_{Ar^2}+r} < Kr^2$, which implies that vertices $(j,0)$ with 
$ j\geq Kr^2$ do not belong to the hull of radius $T_{Ar^2}+r$. Hence, on the latter event
we must have $\mathrm{d}_{\rm gr}^\u ((i,0),(j,0))>r$ whenever $0\leq i\leq Ar^2$ and 
$ j\geq Kr^2$. This completes the proof. \endproof
 
\begin{remark} The known results for quadrangulations \cite{CCuihpq}
strongly suggest that $(r^{-2} L_{\lfloor tr \rfloor }^{ \mathcal{U}}, r^{-2} R_{\lfloor tr \rfloor}^{ \mathcal{U}})_{t \geq 0}$ should 
converge in distribution to the pair formed by the last hitting time processes of two independent Bessel processes of dimension $5$.
 \end{remark}

\subsection{Distances along the boundary of the LHPT}

We will now deduce an analog of Proposition \ref{dist-bdry2} for distances along the
boundary of the LHPT $\l$. Our main technical tool will be the absolute continuity
property of Proposition \ref{size-bias-UHPT}. We will also use left-most geodesics, which are defined 
in a way similar to the end of Section \ref{sec:skeleton}. For every $i\in\Z$ and every integer $r\geq 1$, the
left-most geodesic from $(i,0)$ in $\l$ is the infinite geodesic path $\omega$ in $\l$ that starts from $\omega(0)=(i,0)$, visits
a vertex $\omega(n)\in \l_n$ at each step $n\geq 0$, and is obtained by choosing at step $n+1$ the left-most
edge between $\omega(n)$ and $\l_{n+1}$. For every $r\geq 1$, the  first $r$ edges on this path
give the
left-most geodesic from $(i,0)$ to $\l_r$ in $\l$. Similarly, in the UHPT $\u$, we can define the
left-most geodesic from $(i,r)$ to $\partial \u$, for every $i\in\Z$ and $r\geq 1$. Furthermore, if $1\leq i<j$, 
the left-most geodesics from $(i,r)$ to $\partial \u$ and from $(j,r)$ to $\partial \u$ coalesce before hitting $\partial \u$
(possibly when they hit $\partial \u$), if and only if none of the trees $\Gamma_{(i,r)},\Gamma_{(i+1,r)},\ldots,\Gamma_{(j-1,r)}$
has height $r$. 

Recall that $\mathrm{d}^\l_{\rm gr}$ is the graph distance on the vertex set of $\l$.

\begin{proposition}
\label{dist-bdry3}
For every 
  $\ve>0$, there exists an integer $K\geq 1$ such that
  for every $r\geq 1$,
$$\P\Big(\min_{|j|\geq K\,r^2} \mathrm{d}^\l_{\rm gr}((0,0),(j,0)) \geq r\Big) \geq 1-\ve.$$
Consequently, we have also with $K'=4\,K$, for every $r\geq 1$,
$$\P\Big(\min_{|j|\geq 2K'\,r^2}\  \min_{-K'r^2\leq i\leq K'r^2}\mathrm{d}^\l_{\rm gr}((i,0),(j,0)) \geq r\Big) \geq 1-2\ve.$$
\end{proposition}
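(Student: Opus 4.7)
The plan is to deduce the first assertion from the UHPT estimates via the absolute continuity relation of Corollary~\ref{abso-con}, and then to derive the second assertion from the first by combining translation invariance of $\partial\l$ with a planar separation argument.

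For the first assertion, observe that any path in $\l$ of length less than $r$ starting at $(0,0)\in\l_0$ visits only vertices at depth at most $r-1$, so the event
$$A_r^K := \Big\{\min_{|j|\geq Kr^2} \mathrm{d}^\l_{\rm gr}((0,0),(j,0)) < r\Big\}$$
is measurable with respect to $\l_{[0,r]}$. By Corollary~\ref{abso-con}, given $\ve>0$ one can choose $\delta=\delta(\ve)>0$ so that it suffices to exhibit an integer $K$ with $\P(\wt\u_{[0,r]}\in A_r^K)\leq\delta$ for every $r\geq 1$. Under the identification coming from Proposition~\ref{size-bias-UHPT}, the event $A_r^K$ for $\wt\u_{[0,r]}$ says that inside the strip $\u_{[0,r]}$ some vertex $(J_r+j,r)\in\u_r$ with $|j|\geq Kr^2$ is at graph distance $<r$ from the re-rooted root $(J_r,r)$; because distances inside the strip dominate those in the full UHPT, it is enough to bound $\P(\exists\,|j|\geq Kr^2:\mathrm{d}^\u_{\rm gr}((J_r,r),(J_r+j,r))<r)$. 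To control this I would use left-most geodesics from $\u_r$ down to $\partial\u$: writing $(\phi(i),0)$ for the endpoint of the length-$r$ left-most geodesic from $(i,r)$, the triangle inequality gives $\mathrm{d}^\u_{\rm gr}((\phi(J_r),0),(\phi(J_r+j),0))<3r$ whenever the original distance is $<r$, while the coalescence criterion stated at the end of Section~\ref{sec:skeleton} together with \eqref{heighttree} implies that $|\phi(J_r+j)-\phi(J_r)|$ dominates the number of indices $i$ for which $\Gamma_{(J_r+i,r)}$ reaches generation $r$, itself a Binomial$(|j|,(r+1)^{-2})$ variable. Combined with the a priori tightness of $(r+1)^{-2}K_r$ (used in the proof of Corollary~\ref{abso-con}) to locate $\phi(J_r)$ in a window of size $O(r^2)$, a careful application of Proposition~\ref{dist-bdry2} at the scale $3r$ then delivers the bound $\delta$ for $K$ large enough. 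The main technical obstacle is precisely the quantitative comparison between $|j|$ along $\u_r$ and $|\phi(J_r+j)-\phi(J_r)|$ along $\partial\u$: the average slope of $\phi$ is only $(r+1)^{-2}$, so a combinatorial gap $Kr^2$ along $\u_r$ only guarantees a gap of order $K$ along $\partial\u$, and one must apply Proposition~\ref{dist-bdry2} iteratively or at carefully-chosen intermediate scales to absorb this loss.

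For the second assertion, the LHPT skeleton $(\tc_i)_{i\in\Z}$ consists of i.i.d. Galton--Watson trees, so the distribution of $\l$ is invariant under integer horizontal shifts of $\partial\l$. Setting $K'=4K$ and applying the first assertion at $(\pm K'r^2,0)$ via this translation invariance yields
$$\P\Big(\min_{j\geq 2K'r^2}\mathrm{d}^\l_{\rm gr}((K'r^2,0),(j,0))\geq r\Big)\geq 1-\ve$$
and the symmetric bound for $(-K'r^2,0)$; a union bound costs at most $2\ve$. Assume on the good event that some $(i,0)$ with $|i|\leq K'r^2$ and some $(j,0)$ with $j\geq 2K'r^2$ satisfy $\mathrm{d}^\l_{\rm gr}((i,0),(j,0))<r$, and consider the right-most geodesic $\gamma_R$ from $(K'r^2,0)$ in $\l$: this is an infinite path visiting one vertex per layer and, by planarity of the half-plane triangulation, it separates the boundary vertices with index $<K'r^2$ from those with index $>K'r^2$. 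Hence any connecting path of length $<r$ must share a vertex $v$ with $\gamma_R$, and since the path has length $<r$ and starts on $\l_0$, we have $v\in\l_k$ for some $k<r$; as $\gamma_R$ is a genuine geodesic, $\mathrm{d}^\l_{\rm gr}((K'r^2,0),v)=k$, and combining with the remaining length along the path gives $\mathrm{d}^\l_{\rm gr}((K'r^2,0),(j,0))\leq k+(r-1-k)=r-1<r$, a contradiction. The case $j\leq -2K'r^2$ is handled symmetrically using the left-most geodesic from $(-K'r^2,0)$.
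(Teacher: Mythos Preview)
Your overall architecture for the first assertion matches the paper's: reduce to $\wt\u_{[0,r]}$ via Corollary~\ref{abso-con}, project the two endpoints on $\u_r$ down to $\partial\u$ along left-most geodesics, and invoke Proposition~\ref{dist-bdry2} at scale $3r$ on the boundary. However, your ``main technical obstacle'' is a phantom caused by a scaling error. You assert that ``the average slope of $\phi$ is only $(r+1)^{-2}$'' and that a gap $Kr^2$ on $\u_r$ collapses to a gap of order $K$ on $\partial\u$; this is wrong. The displacement $\phi(J_r+j)-\phi(J_r)$ is not merely the number of trees $\Gamma_{(J_r+i,r)}$ that survive to generation $r$ (that binomial count): it equals the \emph{total} number of generation-$r$ vertices in those trees, i.e.\ a sum of $|j|$ variables each with mean $1$ (since $\theta$ is critical). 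Thus $\phi$ has average slope $1$, not $(r+1)^{-2}$, and a gap $Kr^2$ on $\u_r$ produces a gap of the same order on $\partial\u$. The paper makes exactly this observation: it shows that the projection $I'_r(K)$ of $(Kr^2,r)$ is bounded below by $M_r(K)=\sum_{\ell=1}^{Kr^2-1}\#\Gamma_{(\ell,r)}(r)$, and proves $\P(M_r(K)<Kr^2/3)$ is small for $K$ large. This removes any need for the ``iterative'' application of Proposition~\ref{dist-bdry2} that you propose (and do not carry out). A secondary issue: you invoke tightness of $r^{-2}K_r$ to locate $\phi(J_r)$, but $K_r$ lives on $\u_r$, not on $\partial\u$. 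The paper instead observes that all left-most geodesics from $(1,r),\ldots,(K_r,r)$ coalesce, so $\phi(J_r)=\phi(1)=I_r$, and bounds $I_r$ by the generation-$r$ size $N_r$ of the size-biased tree $\Gamma_{(0,r)}$, which is tight at scale $r^2$.

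Your argument for the second assertion is correct and in fact slightly sharper than the paper's. Both use translation invariance and a planar separation by a one-sided geodesic from $(K'r^2,0)$; the paper simply bounds $\mathrm{d}^\l_{\rm gr}((K'r^2,0),(j,0))<2r$ and then applies the first assertion with $r$ replaced by $2r$ (noting $K(2r)^2=K'r^2$), whereas you exploit that the intersection vertex $v=\gamma_R(k)$ lies in $\l_k$, whence the initial segment of the short path has length at least $k$, yielding the tighter bound $<r$ and allowing the first assertion at scale $r$. Either route works.
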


\proof Let us start with the first assertion.
For obvious symmetry reasons, it is enough to consider positive values of $j$. 
For every integer $K\geq 1$, and for every $r\geq 1$, we consider the measurable set $A_r(K)$ such that
$\l_{[0,r]}\in A_r(K)$ if and only if there is a path in $\l_{[0,r]}$
of length strictly smaller than $r$ that connects $(0,0)$ to $(0,j)$, for some 
$j\geq K\,r^2$. Note that $\l_{[0,r]}\in A_r(K)$ if and only if 
$$\min_{j\geq K\,r^2} \mathrm{d}^\l_{\rm gr}((0,0),(j,0))<  r,$$
since a path in $\l$ that starts from $(0,0)$ and has length strictly smaller than $r$
must stay in $\l_{[0,r]}$. We therefore need to prove that,
if $K$ is chosen sufficiently large, we have, for every $r\geq 1$,
$$\P(\l_{[0,r]}\in A_r(K))< \ve.$$
Thanks to Corollary \ref{abso-con}, it is then enough to verify that, if
$K$ is sufficiently large, we have for every $r\geq 1$,
$$\P(\wt\u_{[0,r]}\in A_r(K))< \ve,$$
where $\wt\u_{[0,r]}$ is defined in Proposition \ref{size-bias-UHPT}.
Recalling the notation of this proposition, we have
$$\{\wt\u_{[0,r]}\in A_r(K)\} \subset \Big\{\min_{i\geq Kr^2} \mathrm{d}_{\rm gr}^\u((J_r,r),(J_r+i,r))<r\Big\}.$$
To bound the probability of the event in the right-hand side, let $(I_r,0)$
be the endpoint of the left-most geodesic from $(1,r)$ to $\partial \u$ in $\u$. 
By the observations preceding the proposition, and the fact that
$1\leq J_r\leq K_r$, we get that the left-most geodesic from $(J_r,r)$ to $\partial\u$ coalesces with the 
one from $(1,r)$ before reaching $\partial \u$ (possibly when hitting $\partial \u$). Consequently, we have
$$\mathrm{d}^\u_{\rm gr}((J_r,r),(I_r,0))= r.$$
Let $(I'_r(K),0)$ be the endpoint of the left-most geodesic from $(Kr^2,r)$ to $\partial \u$ in $\u$. See Fig.~\ref{fig:lowerbound} for
an illustration of the preceding definitions. 
Suppose that both $J_r<Kr^2$ and there exists $i\geq Kr^2$
such that $\mathrm{d}_{\rm gr}^\u((J_r,r),(J_r+i,r))<r$. Then the endpoint  
of  the left-most geodesic from $(J_r+i,r)$ to $\partial \u$ is of the form $(j,0)$
with $j\geq I'_r(K)$ since $J_r+i\geq Kr^2$. The triangle inequality then shows that
$$\mathrm{d}^\u_{\rm gr}((I_r,0),(j,0))\leq \mathrm{d}^\u_{\rm gr}((I_r,0),(J_r,r)) + \mathrm{d}^\u_{\rm gr}((J_r,r),(J_r+i,r)) + 
\mathrm{d}^\u_{\rm gr}((J_r+i,r),(j,0)) < 3r.$$

\begin{figure}[!h]
 \begin{center}
 \includegraphics[width=0.8\linewidth]{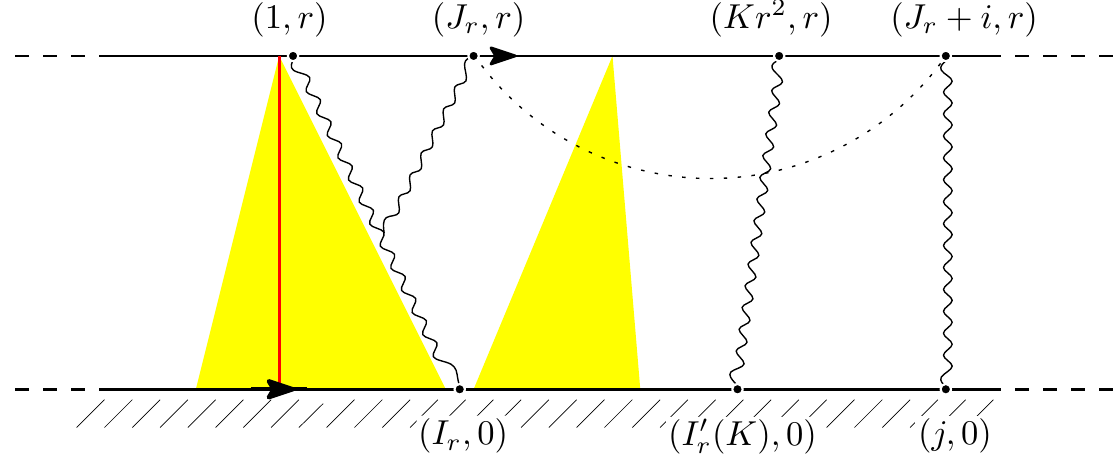}
 \caption{\label{fig:lowerbound}Illustration of the Proof of Proposition \ref{dist-bdry3}. The wavy lines represent left-most geodesics, the dotted line is the unlikely path linking $(J_{r},r)$ to $(J_{r}+i,r)$. Finally the first two trees $\Gamma_{(\ell,r)}$ with $\ell \geq 0$ that hit $\partial \u$ are represented in yellow.}
 \end{center}
 \end{figure}

Summarizing, we have
\begin{equation}
\label{estim0}
\P(\wt\u_{[0,r]}\in A_r(K))\leq \P(J_r\geq Kr^2) + \P\Big(\min_{j\geq I'_r(K)}\mathrm{d}^\u_{\rm gr}((I_r,0),(j,0))< 3r\Big).
\end{equation}
Since we already noticed that $\E[(J_r)^2]\leq 4(r+1)^4$, the Markov inequality immediately
tells us that we can find $K_0$ such that $\P(J_r\geq Kr^2)\leq \ve/4$ if $K\geq K_0$. We then
bound the second term in the right-hand side of \eqref{estim0}. 
We first claim that, we can find an integer $B$ such that, for
every $r\geq 1$,
\begin{equation}
\label{estim1}
\P(I_r\geq B r^2)\leq \frac{\ve}{4}.
\end{equation}
Indeed, from the construction of the downward triangles of the UHPT in Section \ref{subsec:UHPT}, 
it is not hard to see that $I_r$ is equal to the maximal integer $m\geq 1$ such that
$(m-\frac{1}{2},0)$ is a vertex of $\Gamma_{(0,r)}$. Consequently,  $I_r$ is bounded above by the size $N_r$ of 
generation
$r$ of the tree $\Gamma_{(0,r)}$, which is a size-biased Galton--Watson tree
with offspring distribution $\theta$. Note that, for $x\in[0,1)$,
$$\E[x^{N_r}]=\sum_{k=1}^\infty k\,\P_1(Y_r=k)\,x^k= x \times \frac{\mathrm{d}}{\mathrm{d}x} \Big(1-(r+\frac{1}{\sqrt{1-x}})^{-2}\Big)
= x (1+r\sqrt{1-x})^{-3}$$
from which one easily gets that $r^{-2}N_r$ converges in distribution, yielding the estimate 
\eqref{estim1}. 

We then use Proposition \ref{dist-bdry2}, which allows us to find an integer
$K$ sufficiently large so that, for every $r\geq 1$,
\begin{equation}
\label{estim2}
\P\Big(\min_{0\leq i\leq Br^2,\;j\geq Kr^2/3} \mathrm{d}_{\rm gr}^\u ((i,0),(j,0)) \leq 3r\Big) \leq \frac{\ve}{4}.
\end{equation}

We finally claim that, by choosing $K$ even larger if needed, we have also for every $r\geq 1$,
\begin{equation}
\label{estim3}
\P(I'_r(K)< Kr^2/3) \leq \frac{\ve}{4}.
\end{equation}

Assuming that \eqref{estim3} holds, we can combine \eqref{estim1}, \eqref{estim2} and \eqref{estim3} to get that 
the second term in the right-hand side of \eqref{estim0} is bounded above by $3\ve/4$, and since we also know that 
$\P(J_r\geq Kr^2)\leq \ve/4$, we deduce from \eqref{estim0} that $\P(\wt\u_{[0,r]}\in A_r(K))\leq\ve$,
which was the desired result for the first assertion of the proposition.

It remains to verify that the estimate \eqref{estim3} holds. From the construction of
the UHPT, the quantity $I'_r(K)$ is bounded below by 
$$M_r(K):=\sum_{\ell=1}^{Kr^2-1} \#\Gamma_{(\ell,r)}(r).$$
We have, for every $x\in[0,1)$,
$$ \mathbb{E}[x^{M_r(K)}]= \Big(1-\Big(r+\frac{1}{\sqrt{1-x}}\Big)^{-2}\Big)^{Kr^2-1},$$
and it follows by straightforward calculations that $r^{-2} M_r(K)$ converges in
distribution to a random variable $U_K$ with Laplace 
transform $  \mathbb{E}[e^{-\lambda U_K}]= \exp(-K(1+\lambda^{-1/2})^{-2})$. Then
$U_K/K$ converges in probability to $1$ as $K\to\infty$, and thus we can fix $K$ sufficiently large
so that $ \mathbb{P}(U_K>K/2)>1-\ve/4$. For this value of $K$, the estimate \eqref{estim3}
holds for all sufficiently large $r$. We can deal with the remaining values of 
$r$ by taking $K$ even larger if necessary (using now the law of large numbers). This completes the proof of
the first assertion.

The proof of the second assertion is now easy. If we assume that for some 
$j\geq 2K'r^2$ and for some $i\in\{-K'r^2,\ldots,K'r^2\}$, we have 
$\mathrm{d}^\l_{\rm gr}((i,0),(j,0)) <r$, then any geodesic from $(i,0)$ to $(j,0)$ 
must stay in $\l_{[0,r]}$ (otherwise it has length greater than $r$) and by a planarity
argument it must intersect the leftmost geodesic from $(K'r^2,0)$ to the horizontal line
$\l_r$, and it follows that $\mathrm{d}^\l_{\rm gr}((K'r^2,0),(j,0)) <2r$. However, by the first assertion
of the proposition (with $r$ replaced by $2r$) and a translation argument, the probability that 
$\mathrm{d}^\l_{\rm gr}((K'r^2,0),(j,0)) <2r$ for some $j\geq 2K'r^2$ is bounded above by $\ve$. The desired
result follows, noting that we must also consider the case $j<-2K'r^2$.
\endproof

\medskip
The preceding proposition provides lower bounds on distances between
vertices on the boundary of the LHPT. We state another proposition that 
gives upper bounds for the same quantities.
We recall the notation $\l_i$ for the line $\{-i\}\times \Z$. Also recall the definition of the left-most geodesic in $\l$
 starting from $v$, for every $v\in\partial \l$.

\begin{proposition}
\label{upper-distLHPT}
Let $\delta>0$ and $\gamma>0$. We can choose an integer $A\geq 1$
such that the following holds for every sufficiently large $n$. With
probability at least $1-\delta$, we have:
\begin{enumerate}
\item[$\bullet$] for every $i\in\{-n+1,-n+2,\ldots,n\}$, the left-most geodesic starting from $(i,0)$ coalesces with the left-most geodesic starting from
$(-n+\lfloor 2\ell n/A\rfloor,0)$, for some $0\leq \ell \leq A$, before hitting $\l_{\lfloor \gamma 
\sqrt{n}\rfloor}$;
\item[$\bullet$] for every $i,j\in\{-n+1,-n+2,\ldots,n\}$, with $i<j$, there is a path 
from $(i,0)$ to $(j,0)$ that stays in $\l_{[0,{\lfloor \gamma \sqrt{n}\rfloor}]}$ and has length
smaller than
$$\Big(\big\lfloor \frac{A(j-i)}{2n}\big\rfloor+2\Big) (1+2\gamma\sqrt{n}).$$
\end{enumerate}
\end{proposition}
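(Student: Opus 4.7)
The plan is to work at the level of the independent Galton--Watson trees $(\tc_i)_{i\in\Z}$ underlying the construction of $\l$, using the coalescence criterion for left-most geodesics (the LHPT analog of the UHPT property recalled in Section \ref{sec:half-planes}): for $i<j$, the left-most geodesics from $(i,0)$ and $(j,0)$ in $\l$ coalesce before reaching $\l_r$ if and only if none of the trees $\tc_i,\tc_{i+1},\ldots,\tc_{j-1}$ has height $\geq r$. Throughout we set $r=\lfloor\gamma\sqrt{n}\rfloor$ and call a tree \emph{tall} if its height is $\geq r$; by \eqref{heighttree} each $\tc_i$ is independently tall with probability $(r+1)^{-2}$. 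Everything will hinge on a single good event $\Omega_A^n$: no two distinct indices $i,j\in\{-n+1,\ldots,n-1\}$ with $|i-j|\leq 2n/A$ have $\tc_i$ and $\tc_j$ simultaneously tall. A union bound over the $O(n^2/A)$ candidate pairs, combined with $(r+1)^{-4}\leq\gamma^{-4}n^{-2}$ (valid for $n\geq 1/\gamma^2$), gives $\P((\Omega_A^n)^c)\leq C/(A\gamma^4)$ for a universal constant $C$; choosing $A$ large enough in terms of $\delta$ and $\gamma$ brings this below $\delta$ uniformly in large $n$, so it will suffice to verify both conclusions deterministically on $\Omega_A^n$.

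For the first conclusion, fix $i\in\{-n+1,\ldots,n\}$ and locate the sub-interval $[m_\ell,m_{\ell+1}-1]$ of length at most $2n/A$ containing $i$. Under $\Omega_A^n$ this sub-interval contains at most one tall tree: if it has none, the coalescence criterion yields coalescence of $(i,0)$'s geodesic with both endpoint markers' geodesics before $\l_r$; if it has one, at position $p$, then $p\geq i$ makes $[m_\ell,i-1]$ tall-tree-free and hence forces coalescence with $m_\ell$, while $p<i$ symmetrically forces coalescence with $m_{\ell+1}$.

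For the second conclusion, fix $i<j$ in $\{-n+1,\ldots,n\}$ and let $q_1<\cdots<q_K$ list the tall trees in $[i,j-1]$. The spacing $q_{s+1}-q_s>2n/A$ imposed by $\Omega_A^n$ gives $(K-1)\cdot 2n/A<q_K-q_1<j-i$, so $K\leq\lfloor A(j-i)/(2n)\rfloor+1$. We build the path as a zigzag: partition $[i,j]$ into the $K+1$ consecutive segments $[i,q_1],[q_1+1,q_2],\ldots,[q_K+1,j]$, none of which contains a tall tree in its interior. Within each segment $[a,b]$ the coalescence criterion provides a common vertex $v_s$ of the left-most geodesics from $(a,0)$ and $(b,0)$ at some depth $d_s\leq r-1$, and following $(a,0)$'s geodesic down to $v_s$ and then $(b,0)$'s geodesic in reverse from $v_s$ back up to $(b,0)$ yields a path of length $2d_s\leq 2r$ in $\l_{[0,r-1]}$; we connect consecutive segment paths by the horizontal boundary edges $(q_s,0)\to(q_s+1,0)$. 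The resulting path from $(i,0)$ to $(j,0)$ lies in $\l_{[0,r]}$ and has length at most
\[
2r(K+1)+K \leq 2rN+(N-1) < N(1+2r), \qquad N:=\lfloor A(j-i)/(2n)\rfloor+2.
\]

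The main conceptual step will be the choice of the good event. Merely controlling the number of tall trees per fixed sub-interval $[m_\ell,m_{\ell+1}-1]$ would already suffice for the first conclusion, but it allows two tall trees to straddle a marker at close range; in that configuration the zigzag above costs an extra $2r$ edges and violates the bound of the second conclusion. The stronger ``no two tall trees within $2n/A$'' property cures this while still yielding $\P(\Omega_A^n)\geq 1-\delta$ for $A$ chosen large enough in terms of $\delta$ and $\gamma$.
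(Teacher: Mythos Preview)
Your argument is correct and follows the same route as the paper: the good event is that tall trees (height $\geq r=\lfloor\gamma\sqrt{n}\rfloor$) are spaced more than $2n/A$ apart, and the path in the second bullet is built by concatenating pairs of left-most geodesics across each tall-tree-free stretch, with single horizontal edges hopping over each tall tree. The paper obtains the spacing event via convergence of the counting process of tall trees to a Poisson process; your direct second-moment union bound on pairs of tall trees is a clean simplification that yields the same conclusion without any limiting argument.

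One small boundary slip to patch. Your $\Omega_A^n$ constrains tree indices in $\{-n+1,\ldots,n-1\}$, but for coalescence of $(i,0)$ with the leftmost marker $m_0=-n$ you need $\tc_{-n}$ to be short, and $\Omega_A^n$ as written says nothing about $\tc_{-n}$. If $\tc_{-n}$ is tall together with some $\tc_p$, $-n+1\leq p\leq m_1-1$ (which your event allows), then any $i\in\{-n+1,\ldots,p\}$ coalesces with neither $m_0$ nor $m_1$, and the first bullet fails. The fix is immediate: extend the index range of $\Omega_A^n$ to $\{-n,\ldots,n-1\}$; the union bound is unaffected.
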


\proof
Let $U^{(n)}_1<U^{(n)}_2<\cdots<U^{(n)}_{m_n}$
be all indices in $\{1,2,\ldots, 2n-1\}$ such that
the height of $\tc_{-n+i}$ is greater than or equal to $\lfloor\gamma\sqrt{n}\rfloor$. 
If we set for every $t\in [0,2n]$,
$$N^{(n)}_t:=\#\{i\in\{1,\ldots,m_n\}: U^{(n)}_i\leq t\},$$
it follows from \eqref{heighttree} that
$(N^{(n)}_{\lfloor nt\rfloor})_{0\leq t\leq 2}$ converges in
distribution in the Skorokhod sense to a Poisson 
process with parameter $\gamma^{-2}$. Write 
$U^{(n)}_0=0$ and $U^{(n)}_{m_n+1}=2n$ by convention.
The preceding observations imply that we can choose
$\eta>0$ small enough so that, for every sufficiently large
$n$, the property
\begin{equation}
\label{spacing}
U^{(n)}_{i+1}- U^{(n)}_i > \eta n\ , \quad \forall i\in\{0,1,\ldots,m_n\}
\end{equation}
holds with probability at least $1-\delta$. 

By the coalescence property of left-most geodesics, if $U^{(n)}_j< i\leq i'\leq U^{(n)}_{j+1}$, the leftmost geodesic
from $(-n+i,0)$ coalesces with that from $(-n+i',0)$ before
hitting the line $\l_{\lfloor \gamma 
\sqrt{n}\rfloor}$.
Now set $A=\lfloor 2/\eta\rfloor +1$, so that $2/A<\eta$. On the event
where \eqref{spacing} holds, each interval $]U^{(n)}_j, U^{(n)}_{j+1}]$,
for $0<j\leq m_n$,
contains at least one of the points $\lfloor 2\ell n/A\rfloor$, $1\leq \ell \leq A$.
The first assertion of the proposition follows.

\begin{figure}[!h]
 \begin{center}
 \includegraphics[width=0.9\linewidth]{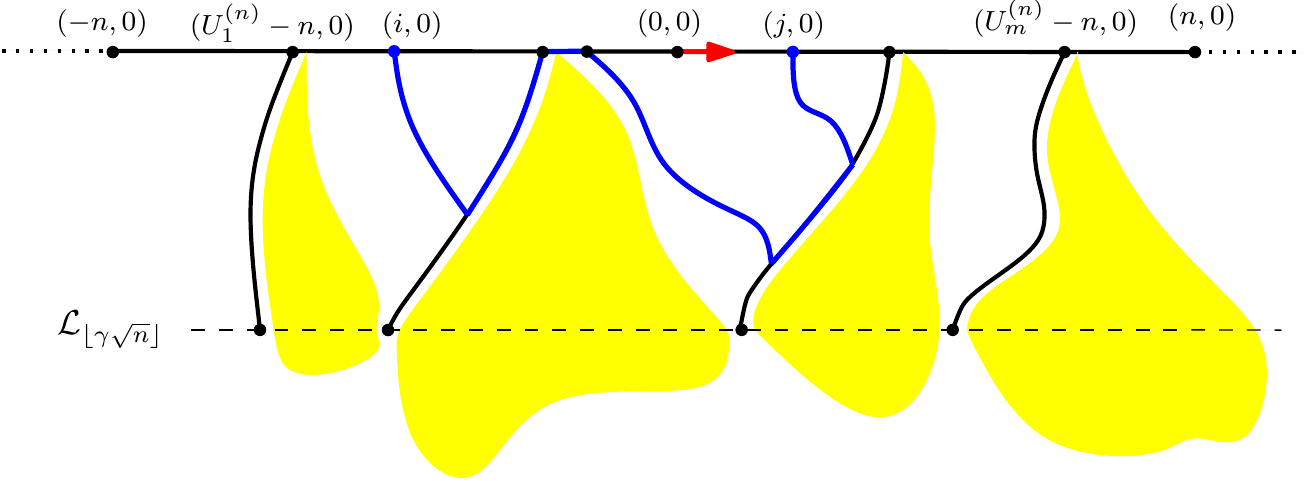}
 \caption{\label{fig:upper-distLHPT}Illustration of the proof of Proposition \ref{upper-distLHPT}. The trees 
 reaching height $\lfloor\gamma \sqrt{n}\rfloor$ are represented in yellow. The left-most geodesics are represented in black and the (non-geodesic) path connecting $(i,0)$ to $(j,0)$, which is constructed in the proof, is represented in blue.}
 \end{center}
 \end{figure}
To get the second assertion, let $k\leq \ell$ be such that $U^{(n)}_k+1\leq i\leq U^{(n)}_{k+1}$
and $U^{(n)}_\ell+1\leq j\leq U^{(n)}_{\ell+1}$. If $k=\ell$, the left-most geodesics
from $(i,0)$ and $(j,0)$ coalesce before hitting $\l_{\lfloor \gamma 
\sqrt{n}\rfloor}$, and we construct a path from $(i,0)$ to $(j,0)$, with length bounded above by
$2\gamma\sqrt{n}$, by concatenating the parts of these two geodesics before their coalescence 
time. If $k<\ell$, we construct a path from $(i,0)$ to $(j,0)$ as follows. We first construct a path from
$(i,0)$ to $(U^{(n)}_{k+1},0)$ with length smaller than $2\gamma\sqrt{n}$ by concatenating 
left-most geodesics, and
we add to this path the edge between $(U^{(n)}_{k+1},0)$ and $(U^{(n)}_{k+1}+1,0)$. We then 
concatenate the left-most geodesics from $(U^{(n)}_{k+1}+1,0)$ and from $(U^{(n)}_{k+2},0)$
up to their coalescence time
to get a path from $(U^{(n)}_{k+1}+1,0)$ to $(U^{(n)}_{k+2},0)$
with length smaller than $2\gamma\sqrt{n}$, and we add to this path the edge between 
$(U^{(n)}_{k+2},0)$ and $(U^{(n)}_{k+2}+1,0)$. We continue inductively and, when we reach 
$(U^{(n)}_\ell +1,0)$, we add the path obtained by the concatenation of the two
left-most geodesics from $(U^{(n)}_\ell +1,0)$ and from $(j,0)$ (see Fig.~\ref{fig:upper-distLHPT}). This construction yields a path 
from $(i,0)$ to $(j,0)$ with length smaller than $(\ell-k+1)(1+2\gamma\sqrt{n})$. Finally,
on the event where \eqref{spacing} holds, we have $(\ell-k-1)\eta n\leq j-i$, hence
$$\ell-k \leq \lfloor\frac{j-i}{\eta n}\rfloor +1\leq \big\lfloor \frac{A(j-i)}{2n}\big\rfloor +1,$$
giving the desired result.
\endproof

\medskip

We will now derive a result similar to the preceding proposition for the UIPT $\t_\infty^{(1)}$ of the $1$-gon.  Recall that we denote the length of  $\partial^* B^\bullet_{r}( \mathcal{T}^{(1)}_{\infty})$ by $L_{r}$. 

For every integer $n\geq 1$, we
write $u^{(n)}_0$ for a vertex chosen uniformly at random on $\partial^* B^\bullet_n(\t_\infty^{(1)})$,
and $u^{(n)}_1,\ldots,u^{(n)}_{L_n-1}$ for the other vertices of 
$\partial^* B^\bullet_n(\t_\infty^{(1)})$
enumerated in clockwise order, starting from $u^{(n)}_0$. We extend the definition 
of $u^{(n)}_i$ by periodicity, requiring that $u^{(n)}_i=u^{(n)}_{L_n+i}$
for every $i\in \Z$. Note that, for every $i\in\Z$, $u^{(n)}_i$ is
also uniformly distributed over $\partial^* B^\bullet_n(\t_\infty^{(1)})$.

\begin{proposition}
\label{densit2}
Let $\gamma\in(0,1/2)$ and $\delta>0$. For every integer $A\geq 1$, let
$H_{n,A}$ be the event where any left-most
geodesic to the root starting from a vertex of $\partial^* B^\bullet_n(\t_\infty^{(1)})$ coalesces
before time 
$\lfloor\gamma\, n\rfloor$ with one of the left-most geodesics to the root
starting from $u^{(n)}_{\lfloor kn^2/A\rfloor}$, $0\leq k \leq \lfloor n^{-2}L_{n} \, A\rfloor $. Then, we can
choose $A$ large enough so that, for every sufficiently large $n$, 
$$\P(H_{n,A})\geq 1-\delta.$$
\end{proposition}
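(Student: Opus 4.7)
The plan is to translate $H_{n,A}$ into a combinatorial condition on the skeleton forest $\wt{\mathcal{F}}^{(1)}_{n-\lfloor\gamma n\rfloor,n}$ and then exploit the comparison with a forest of independent Galton--Watson trees given by Proposition \ref{prop:compa}.

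First I would control the perimeters. Using the exponential tail \eqref{eq:boundLs} together with the explicit bound $\P(L_r=p)\le \frac{C_5}{r^2}\sqrt{p/r^2}\,e^{-p/(4r^2)}$ derived in the proof of Lemma \ref{bdlawperi}, one checks that $\P(L_r\le a r^2)\le C' a^{3/2}$ uniformly in $r$ large. Hence, fixing $a\in(0,1)$ sufficiently small, the event
\[G_n := \{a r_n^2< L_{n-\lfloor\gamma n\rfloor}\le a^{-1}r_n^2\}\cap\{a r_n^2< L_n \le a^{-1}r_n^2\},\qquad r_n:=n-\lfloor\gamma n\rfloor,\]
satisfies $\P(G_n)\ge 1-\delta/2$ for all $n$ large enough.

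The central step is a geometric reformulation. Enumerate the trees of $\wt{\mathcal{F}}^{(1)}_{r_n,n}$ cyclically as $T_0,T_1,\ldots,T_{L_n-1}$, with $T_i$ rooted at the edge from $u^{(n)}_i$ to $u^{(n)}_{i+1}$, and call $T_i$ \emph{tall} if its height equals $\lfloor\gamma n\rfloor$. By the coalescence criterion at the end of Section \ref{sec:skeleton}, the leftmost geodesics from $u^{(n)}_i$ and from a marked point $v_k = u^{(n)}_{\lfloor kn^2/A\rfloor}$ coalesce before time $\lfloor\gamma n\rfloor$ iff at least one of the two cyclic arcs of trees between them contains no tall tree. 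A case analysis comparing $v_k$ to the tall trees surrounding $u^{(n)}_i$ shows that $u^{(n)}_i$ coalesces with \emph{some} marked point iff the cyclic gap delimited by the two tall trees adjacent to $u^{(n)}_i$ contains at least one marked point (with the convention that this gap is the entire boundary if fewer than two tall trees exist). Since consecutive marked points are at cyclic distance at most $\lceil n^2/A\rceil$, any gap of size strictly larger than this value must contain a marked point, so
\[\{H_{n,A}^c\}\subset B_{n,A}:=\{\exists\, i\ne j:\, T_i \text{ and } T_j \text{ are tall, at cyclic distance}\le\lceil n^2/A\rceil\}.\]

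Finally, I would apply Proposition \ref{prop:compa} with $r=r_n$ and $s=n$, summing its forest-by-forest inequality over all admissible $\mathcal{F}\in B_{n,A}$ with perimeters in the good range, to obtain $\P(B_{n,A}\cap G_n)\le C_1\,\P(B'_{n,A})$, where $B'_{n,A}$ is the analogous close-pair event for a forest of $N^{(a)}_{r_n}$ independent $\theta$-Galton--Watson trees truncated at height $\lfloor\gamma n\rfloor$. In this i.i.d.\ setting each tree is tall with probability $\lfloor\gamma n\rfloor^{-2}$ by \eqref{heighttree} and $N^{(a)}_{r_n}\le a^{-1}n^2$, so a first-moment estimate gives
\[\P(B'_{n,A})\le a^{-1}n^2\cdot\lceil n^2/A\rceil\cdot\lfloor\gamma n\rfloor^{-4}\le \frac{C''}{aA\gamma^4},\]
which is $\le \delta/(2C_1)$ once $A$ is chosen large enough. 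Combining yields $\P(H_{n,A}^c)\le\P(G_n^c)+\P(B_{n,A}\cap G_n)\le\delta$. The main obstacle will be the geometric case analysis in the central step: across all positions of $v_k$ and across both cyclic arcs, one needs to verify that failure to coalesce with \emph{any} marked point is precisely captured by the absence of a marked point in the gap containing $u^{(n)}_i$.
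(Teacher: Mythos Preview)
Your proof is correct and follows the same approach as the paper: translate the coalescence condition into a statement about tall trees in the skeleton forest $\wt{\mathcal{F}}^{(1)}_{n-\lfloor\gamma n\rfloor,n}$, apply Proposition~\ref{prop:compa} to compare with independent Galton--Watson trees, and bound the resulting probability. The only minor difference is in the final step, where the paper appeals to the Poisson-convergence argument from the proof of Proposition~\ref{upper-distLHPT} to show that all gaps between tall trees are large, whereas you give a direct first-moment bound on pairs of nearby tall trees; both work equally well.
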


\proof Recall the notation introduced before Proposition \ref{prop:compa}. We may assume
that the first tree in the forest $\wt\f^{(1)}_{n-\lfloor\gamma n\rfloor,n}$ 
is the tree rooted at the edge between $u^{(n)}_0$
and $u^{(n)}_1$. Then write $\wt\f^{(1)}_{n-\lfloor\gamma n\rfloor,n}
= (\tau^{(n)}_1,\ldots,\tau^{(n)}_{L_n})$. By the remarks of the
end of subsection \ref{sec:skeleton}, we know that, for $1\leq i<j\leq n$,
the left-most geodesics to the root from $u^{(n)}_i$ and from $u^{(n)}_j$ 
coalesce before time $\lfloor\gamma\, n\rfloor$ (possibly exactly at time $\lfloor\gamma\, n\rfloor$) as soon as all the trees 
$\tau^{(n)}_{i+1},\tau^{(n)}_{i+2},\ldots,\tau^{(n)}_{j}$ have height strictly smaller than
$\lfloor\gamma\, n\rfloor$. To verify that $H_{n,A}$ holds, it is therefore sufficient to verify that, for any $i\in\{1,\ldots,L_n\}$
there exists an index $k$ with $0\leq k \leq \lfloor n^{-2} L_{n}A\rfloor$
such that all trees $\tau^{(n)}_j$ with $\lfloor kn^2/A\rfloor\wedge i < j\leq
\lfloor kn^2/A\rfloor\vee i$ have height strictly smaller than $\lfloor\gamma\, n\rfloor$. Write $H'_{n,A}$ for the event where the
latter property holds.
By \eqref{eq:boundLs} and \eqref{localbound}  we can find $a >0$ such that the event 
$$ \{\lfloor an^2\rfloor< L_n
\leq \lfloor a^{-1}n^2\rfloor\}\cap \{\lfloor an^2\rfloor< L_{n-\lfloor \gamma n\rfloor}
\leq \lfloor a^{-1}n^2\rfloor\}$$ has probability at least $1- \delta/2$. On the other hand, if we want to bound the probability
of the intersection of the latter event with the complement of $H'_{n,A}$, Proposition \ref{prop:compa} shows that we may replace the 
forest $\wt\f^{(1)}_{n-\lfloor\gamma n\rfloor,n}$ by a forest of independent
Galton--Watson trees with offspring distribution $\theta$ truncated at height $\lfloor\gamma\, n\rfloor$
(at the cost of the multiplicative constant $C_1$). But then the desired result follows by the same arguments as in the first part of the proof of
Proposition \ref{upper-distLHPT}. \endproof

\section{First-passage percolation on the UIPT}
The geometric estimates gathered in the last sections will now be used to study the behavior of modified distances in random triangulations. As explained in the Introduction, we first concentrate on the case of the first-passage percolation $ \mathrm{d_{fpp}}$. After establishing an easy subadditive result (Proposition \ref{lem:subadditive}), we derive the key result of this section (Proposition \ref{dist-hull}), which deals with  the modified distance between the root vertex and
an arbitrary vertex of the boundary of the hull of radius $r$ in the UIPT.

\subsection{Subadditivity in the lower half-plane model}

We consider the LHPT $\l$, and,
conditionally on $\l$, we assign i.i.d.~random weights to the edges of $\l$.
We assume that the common distribution of these random variables is supported 
on the interval $[\kappa,1]$ for some $\kappa\in(0,1]$.
From these random weights, we define the first-passage percolation distance 
$\mathrm{d}_\mathrm{fpp}^\l$ as explained in the Introduction.

Recall our notation 
$\l_r=\{(i,-r):i\in\Z\}$ for the lower boundary of $\l_{[0,r]}$. Also recall that $\rho=(0,0)$ is the root vertex.

\begin{proposition}
\label{lem:subadditive}
There exists a constant $\mathbf{c}_0 \in [\kappa,1]$ such that
$$ r^{-1} \mathrm{d}_\mathrm{fpp}^\l\big(\rho, \l_r\big) \xrightarrow[r\to\infty]{\rm a.s.} \mathbf{c}_0.$$
\end{proposition}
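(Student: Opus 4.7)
The plan is to apply Kingman's subadditive ergodic theorem. First I introduce a vertical shift $\vartheta$ on the space of weighted LHPTs: $\vartheta\l$ is obtained by discarding the top layer of $\l$ and re-interpreting $(i,-1)$ as the new origin $(i,0)$. The branching property of the Galton--Watson trees underlying the skeleton of $\l$ (Section~3.2), combined with the horizontal translation invariance of the LHPT distribution, shows that $\vartheta$ is measure-preserving: conditionally on the top layer, the part of $\l$ at depth $\geq 1$ is again a LHPT, up to a horizontal translation fixed by the deterministic choice of new origin. Ergodicity of $\vartheta$ follows from a tail-triviality argument based on the Markov property of the layers: the intersection of the $\sigma$-fields generated by the substructures at depths $\geq r$ is trivial.

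Next I consider the auxiliary process
\[
X_{m,n}(\l):=\mathrm{d}_\mathrm{fpp}^{\l_{[m,\infty)}}\big((0,-m),(0,-n)\big),\qquad 0\leq m<n,
\]
where $\l_{[m,\infty)}$ denotes the substructure of $\l$ at depth $\geq m$. The triangle inequality applied at $(0,-m)$ yields $X_{0,n}\leq X_{0,m}+X_{m,n}$ almost surely, the stationarity identity $X_{m,n}(\l)=X_{0,n-m}(\vartheta^m\l)$ holds by construction, and $\kappa\leq X_{0,1}\leq 1$ supplies integrability. Kingman's theorem therefore gives $X_{0,n}/n\to\mathbf{c}_0$ almost surely, where $\mathbf{c}_0:=\inf_n\E[X_{0,n}]/n\in[\kappa,1]$.

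It remains to transfer the convergence from $X_{0,n}$ to $T_r:=\mathrm{d}_\mathrm{fpp}^\l(\rho,\l_r)$. The upper bound $T_n\leq X_{0,n}$ is immediate since $(0,-n)\in\l_n$. For the matching lower bound, let $v_n=(i_n,-n)\in\l_n$ realize the infimum defining $T_n$; since a geodesic contains at most $T_n/\kappa\leq n/\kappa$ edges we have $|i_n|=O(n)$. Proposition~\ref{upper-distLHPT}, applied to a suitable sub-LHPT attached at $\l_n$ via the Markov property, then supplies a path from $v_n$ to $(0,-n)$ of length $O(\sqrt n)$ with high probability, so $X_{0,n}\leq T_n+o(n)$ in probability.

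The main obstacle is upgrading $|T_n-X_{0,n}|=o(n)$ from an in-probability statement to an almost-sure one, since Proposition~\ref{upper-distLHPT} delivers fixed-probability bounds rather than summable tails. I plan to handle this by combining the almost-sure upper bound $\limsup_n T_n/n\leq\mathbf{c}_0$ --- which follows by iterating the stochastic subadditivity $T_{m+n}\leq_{\mathrm{st}}T_m+T_n'$ (a direct consequence of the Markov property of the layers and horizontal translation invariance) and invoking the strong law of large numbers on the resulting i.i.d.\ summands --- with the ergodicity of $\vartheta$: $\liminf_n T_n/n$ is $\vartheta$-invariant and hence almost-surely equal to a constant $c_-\leq\mathbf{c}_0$, and Fekete's lemma applied to $n\mapsto\E[T_n]$, together with the in-probability comparison with $X_{0,n}$, forces $c_-=\mathbf{c}_0$.
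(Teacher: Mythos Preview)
Your detour through point-to-point distances $X_{0,n}=\mathrm{d}_\mathrm{fpp}^{\l_{[0,\infty)}}((0,0),(0,-n))$ creates difficulties that are entirely avoidable. The paper applies Liggett's subadditive ergodic theorem \emph{directly} to the point-to-line distances: set $X_{m,n}:=\mathrm{d}_\mathrm{fpp}^{\l_{[m,n]}}(x_m,\l_n)$, where $x_m\in\l_m$ is the leftmost minimizer of $v\mapsto\mathrm{d}_\mathrm{fpp}^{\l_{[0,m]}}(\rho,v)$ over $\l_m$. Then $X_{0,n}=\mathrm{d}_\mathrm{fpp}^\l(\rho,\l_n)$ already (a geodesic to $\l_n$ never needs to go below $\l_n$), the subadditivity $X_{0,n}\leq X_{0,m}+X_{m,n}$ is the triangle inequality, and since $x_m$ is $\l_{[0,m]}$-measurable while the strip $\l_{[m,n]}$ is independent of $\l_{[0,m]}$, the variable $X_{m,n}$ is independent of $\l_{[0,m]}$ and distributed as $X_{0,n-m}$ by horizontal translation invariance. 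Liggett's hypotheses follow immediately and no transfer step is needed. You essentially state this very inequality (your ``stochastic subadditivity $T_{m+n}\leq_{\mathrm{st}}T_m+T_n'$\,'') in the last paragraph, but use it only for the $\limsup$ bound instead of recognizing it as the full input to Liggett's theorem.

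Your proposed transfer step has genuine gaps. First, the $\vartheta$-invariance of $\liminf_n T_n/n$ is not justified: in your coordinates $T_n(\vartheta\l)=\mathrm{d}_\mathrm{fpp}^{\l_{[1,\infty)}}((0,-1),\l_{n+1})$, which is computed from a different basepoint than $T_n(\l)$ or $T_{n+1}(\l)$; there is no bound $|T_n(\l)-T_n(\vartheta\l)|\leq C$ with $C$ independent of $n$, because $(0,-1)$ need not be adjacent to $(0,0)$ and the point where a $\rho$-to-$\l_{n+1}$ geodesic last visits $\l_1$ may drift with $n$. Second, even granting that $c_-:=\liminf_n T_n/n$ is almost surely constant, your conclusion that ``Fekete plus the in-probability comparison forces $c_-=\mathbf{c}_0$'' does not follow. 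Convergence in probability $T_n/n\to\mathbf{c}_0$ is compatible with $\liminf_n T_n/n<\mathbf{c}_0$ almost surely: the random indices at which $T_n/n$ dips below $\mathbf{c}_0-\varepsilon$ can have vanishing density yet occur infinitely often for every $\omega$. Fatou only yields $c_-=\E[c_-]\leq\liminf\E[T_n/n]=\mathbf{c}_0$, which is the wrong direction.
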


\proof
For integers $0\leq m<n$, we define $\l_{[m,n]}$ as the part of $\l$ lying in the strip $\R\times [-n,-m]$. The first-passage percolation distance $\mathrm{d}_{\mathrm{fpp}}^{\l_{[m,n]}}$ on the vertex set 
of $\l_{[m,n]}$ is defined by considering the minimal weight of paths that stay
in $\l_{[m,n]}$ (thus, if $v$ and $v'$ are two vertices of $\l_{[m,n]}$ we have
$\mathrm{d}_\mathrm{fpp}^\l(v,v')\leq \mathrm{d}_{\mathrm{fpp}}^{\l_{[m,n]}}(v,v')$).

Then let $n,m\geq 1$ and let $x_m$ be the left-most vertex of $\l_m$
such that $\mathrm{d}_\mathrm{fpp}^\l\big(\rho, \l_m\big) = \mathrm{d}_\mathrm{fpp}^\l(\rho,x_m)$. We have
$$\mathrm{d}_\mathrm{fpp}^\l\big(\rho, \l_{m+n}\big)\leq \mathrm{d}_\mathrm{fpp}^\l\big(\rho, \l_m\big)
+ \mathrm{d}_{\mathrm{fpp}}^{\l_{[m,m+n]}}(x_m,\l_{m+n}).$$
Remark that $x_{m}$ is a function of $ \mathcal{L}_{[0,m]}$ only and that, by the independence of the layers in $ \mathcal{L}$, the random variable $\mathrm{d}_{\mathrm{fpp}}^{\l_{[m,m+n]}}(x_m,\l_{m+n})$
is independent of $\l_{[0,m]}$ and has the same distribution as $\mathrm{d}_\mathrm{fpp}^\l\big(\rho, \l_n\big)$. We can then 
 apply Liggett's version of Kingman's subadditive ergodic theorem \cite{Lig85} to get the statement of the
 proposition (the fact that the limit is constant is easy from a zero-one law argument, and the property
 $\kappa\leq \mathbf{c}_0\leq 1$ is obvious). \endproof

\subsection{From the lower half-plane to the UIPT}

We now discuss the first-passage percolation distance $\mathrm{d}_\mathrm{fpp}$  on the UIPT of the $1$-gon $\t_\infty^{(1)}$. We assume that this distance is defined in terms of i.i.d.~weights on the edges of the 
UIPT, these weights having the same distribution as in the
previous section. Note in particular that $\mathrm{d}_\mathrm{fpp}\leq \mathrm{d}_\mathrm{gr}$
since we assume that weights are bounded above by $1$. We still write $\mathbf{c}_0$ for the constant arising 
in Proposition \ref{lem:subadditive}. 

Let us state our main technical result, which builds upon Proposition \ref{lem:subadditive} and the geometric estimates of the last section. To simplify notation, we write
$B^\bullet_{n}=B^\bullet_{n}(\t_\infty^{(1)})$ and $ \partial^* B_{n}^\bullet = \partial^* B_{n}^\bullet(  \mathcal{T}^{(1)}_{\infty})$ only in this subsection.

\begin{proposition}
\label{keytech}
Let $\varepsilon\in(0,1)$ and $\delta>0$. We can find $\eta \in (0,1/2)$ 
such that, for every sufficiently large $n$, the property
$$(1-\varepsilon)\mathbf{c}_0\,\eta n \leq
\mathrm{d}_{\rm fpp}(v,\partial^* B^\bullet_{n-\lfloor \eta n\rfloor}) \leq 
(1+\varepsilon)\mathbf{c}_0\, \eta n\, ,\quad\forall v\in \partial^* B^\bullet_{n},$$
holds with probability at least $1-\delta$.
\end{proposition}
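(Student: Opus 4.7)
The plan is to reduce the uniform-over-$v$ estimate to an estimate for a bounded number of ``representative'' vertices on $\partial^* B^\bullet_n$, and then to use coalescence of left-most geodesics to transfer this estimate to all remaining vertices.

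First, I would establish the analogue of the proposition for a single, uniformly chosen vertex. More precisely, if $u_0^{(n)}$ is chosen uniformly on $\partial^*B^\bullet_n(\t_\infty^{(1)})$, I claim that
$$\frac{1}{\lfloor \eta n\rfloor}\,\mathrm{d}_{\mathrm{fpp}}\bigl(u_0^{(n)},\partial^*B^\bullet_{n-\lfloor\eta n\rfloor}\bigr) \xrightarrow[n\to\infty]{\P} \mathbf{c}_0.$$
This is essentially a transfer of Proposition \ref{lem:subadditive} from the LHPT $\l$ to the UIPT. By the spatial Markov property of Section \ref{subsec:compa}, conditionally on $L_n=p$ and after re-rooting at $u_0^{(n)}$, the top $\lfloor\eta n\rfloor$ layers of $\t_\infty^{(1)}$ seen from $u_0^{(n)}$ are distributed as the re-rooted hull $\wt B^\bullet_{\lfloor\eta n\rfloor}(\t_\infty^{(p)})$. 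Since $L_n$ is of order $n^2$ by Lemma \ref{bdlawperi}, and since any FPP path from $u_0^{(n)}$ of FPP-length at most $\lfloor\eta n\rfloor$ has at most $\lfloor\eta n\rfloor/\kappa$ edges and hence horizontal extent $O(n)\ll\sqrt{L_n}$, the relevant region is close in distribution to the top $\lfloor\eta n\rfloor$ layers of $\l$. This can be made precise either via the comparison principle (Proposition \ref{prop:compa}), which dominates the UIPT skeleton by a forest of i.i.d.\ Galton--Watson trees (exactly the skeleton of $\l$), or by an extension of Proposition \ref{conv-LHPT} allowing $p\to\infty$ jointly with $\lfloor\eta n\rfloor\to\infty$. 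On the LHPT side, Proposition \ref{lem:subadditive} gives $\mathrm{d}_\mathrm{fpp}^\l(\rho,\l_{\lfloor\eta n\rfloor})/\lfloor\eta n\rfloor\to \mathbf{c}_0$ in probability, yielding the desired convergence.

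Next, I would fix $\gamma\in(0,1/2)$ to be chosen later, and, by Proposition \ref{densit2}, choose an integer $A=A(\gamma,\delta)$ such that, with probability at least $1-\delta/3$, every left-most geodesic from a vertex of $\partial^*B^\bullet_n$ to the root coalesces before depth $\lfloor\gamma n\rfloor$ with one of the left-most geodesics from the ``representative'' vertices $v_k^{(n)}:=u_{\lfloor k n^2/A\rfloor}^{(n)}$, for $0\leq k\leq \lfloor L_n A/n^2\rfloor$. By re-rooting invariance each $v_k^{(n)}$ is uniform on $\partial^*B^\bullet_n$, and by Lemma \ref{bdlawperi} we may further restrict to the event $\{L_n\leq Mn^2\}$ of probability at least $1-\delta/3$, on which there are at most $AM+1$ representatives. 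A union bound applied to the result of the previous paragraph then yields, for all sufficiently large $n$ and with probability at least $1-\delta$,
$$\max_{k}\,\bigl|\mathrm{d}_{\mathrm{fpp}}\bigl(v_k^{(n)},\partial^*B^\bullet_{n-\lfloor\eta n\rfloor}\bigr)-\mathbf{c}_0\eta n\bigr|\leq \tfrac{\varepsilon}{2}\mathbf{c}_0\eta n.$$

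Finally, on the intersection of the preceding good events, for any $v\in\partial^*B^\bullet_n$ I would pick a representative $v_k^{(n)}$ whose left-most geodesic to the root coalesces with that from $v$ before depth $\lfloor\gamma n\rfloor$. Concatenating these two geodesics up to their meeting point gives a path from $v$ to $v_k^{(n)}$ of graph length at most $2\lfloor\gamma n\rfloor$, hence of FPP length at most $2\gamma n$ (weights being bounded by $1$). The triangle inequality together with the previous display yields
$$\bigl|\mathrm{d}_{\mathrm{fpp}}\bigl(v,\partial^*B^\bullet_{n-\lfloor\eta n\rfloor}\bigr)-\mathbf{c}_0\eta n\bigr|\leq 2\gamma n +\tfrac{\varepsilon}{2}\mathbf{c}_0\eta n,$$
so choosing $\gamma\leq \varepsilon \mathbf{c}_0\eta/4$ gives the claim uniformly in $v\in\partial^*B^\bullet_n$. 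The main obstacle is the first step: Proposition \ref{lem:subadditive} is an almost-sure statement in an infinite half-plane model, while in the UIPT the layer has random finite perimeter $L_n\sim n^2$ and a cyclic (rather than linear) top boundary. Executing the transfer rigorously relies on the skeleton comparison with independent Galton--Watson forests (Proposition \ref{prop:compa}) and on the observation that FPP geodesics of $O(n)$ edges cannot feel the cyclic structure of a boundary of length $\Theta(n^2)$.
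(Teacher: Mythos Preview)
Your overall architecture---establish the estimate for a single uniform vertex via comparison with the LHPT, then use Proposition~\ref{densit2} and coalescence of left-most geodesics to extend to all of $\partial^* B^\bullet_n$---is exactly the paper's. The third step (representatives plus triangle inequality with $\gamma\leq \varepsilon\mathbf{c}_0\eta/4$) matches the paper's use of Proposition~\ref{densit2} essentially verbatim.

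The gap is in your first step, and it is not a detail. You write that an FPP path from $u_0^{(n)}$ of length at most $\lfloor\eta n\rfloor/\kappa$ edges ``hence'' has horizontal extent $O(n)\ll\sqrt{L_n}$. This inference is false: a single edge in these layered triangulations can span arbitrarily large horizontal distance (the third vertex of a downward triangle may be far from its top edge, and slots can be large). The correct order is that a path of $m$ edges can reach horizontal distance of order $m^2$, not $m$; this is exactly the content of Proposition~\ref{dist-bdry3}. With $m\sim\eta n/\kappa$ this gives horizontal extent $O(\eta^2 n^2)$, which is \emph{comparable} to $L_n\sim n^2$ unless $\eta$ is taken small. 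So the claim that the geodesic ``cannot feel the cyclic structure'' is precisely what needs to be proved, and is the reason the statement only asserts existence of \emph{some} small $\eta$.

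The paper handles this by first choosing $\eta$ small enough that (i) the left-most geodesics from $u^{(n)}_{j\pm\lfloor an^2/4\rfloor}$ do not coalesce before reaching $\partial^* B^\bullet_{n-\lfloor\eta n\rfloor}$ (so they bound a well-defined region $\mathcal{G}^{(n)}_j(\eta)$), and (ii) no path of length $\leq 4\eta n/\kappa$ from near $u^{(n)}_j$ can reach the lateral boundary $\partial_\ell\mathcal{G}^{(n)}_j(\eta)$. Point (ii) is obtained by transferring, via Proposition~\ref{prop:compa}, to the LHPT and invoking the second assertion of Proposition~\ref{dist-bdry3}. Only once this localization is in place can the FPP distance be read off from a fixed window of $\lfloor an^2/2\rfloor$ trees and compared to $\mathrm{d}^\l_{\mathrm{fpp}}(\rho,\l_{\lfloor\eta n\rfloor})$ through Proposition~\ref{lem:subadditive}. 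You should also note that the comparison naturally yields the restricted distance $\mathrm{d}^{(n)}_{\mathrm{fpp}}$ inside $B^\bullet_n$; passing to the global $\mathrm{d}_{\mathrm{fpp}}$ requires the short extra argument at the end of the paper's proof.
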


\proof 
Let us briefly outline the main steps of the proof.
Recall that $u^{(n)}_j$, for $0\leq j\leq L_n-1$, are the vertices of $\partial^* B^\bullet_{n}$
enumerated as explained before Proposition \ref{densit2}, and that
we have extended the definition of $u^{(n)}_j$ to all $j\in\Z$ by periodicity.
The first step of the proof if to use Proposition \ref{dist-bdry3} to observe that an FPP shortest path from $u^{(n)}_j$
(for some fixed $j$) to $\partial^* B^\bullet_{n-\lfloor \eta n\rfloor}$ 
that stays in $B^\bullet_n$ cannot ``turn around'' the layer $B^\bullet_n \backslash B^\bullet_{n-\lfloor \eta n\rfloor}$,
and more precisely that it must stay in the region bounded by the left-most geodesics coming 
from $u^{(n)}_{j-\lfloor cn^2\rfloor}$ and $u^{(n)}_{j+\lfloor cn^2\rfloor}$ respectively , for some $c>0$. Proposition \ref{prop:compa}
then allows us to compare the distribution of the trees of the skeleton of $B^\bullet_n \backslash B^\bullet_{n-\lfloor \eta n\rfloor}$
that code the latter region, with the distribution of independent 
Galton--Watson trees. This makes it possible to transfer the result known in the LHPT case (Proposition \ref{lem:subadditive}) 
to FPP distances in $B^\bullet_n \backslash B^\bullet_{n-\lfloor \eta n\rfloor}$. Finally, to get uniformity in the  starting vertex $u^{(n)}_j$, we use 
the coalescence property given in Proposition \ref{densit2}, which roughly speaking says that it is enough to consider 
a fixed number $A$ (large but independent of $n$) of values of $j$. 

Let us turn to the details of the argument. As a consequence of the bounds of Lemma \ref{bdlawperi}, we can fix $a\in(0,1/2)$ small enough (depending on $\delta$)
so that, for every $n\geq 2$, for every $\eta\in(0,1/2)$, the event 
$$\e_{n}(\eta) := \{\lfloor an^2\rfloor+1\leq L_{n}\leq \lfloor a^{-1}n^2\rfloor\}\cap \{\lfloor an^2\rfloor+1\leq L_{n-\lfloor\eta n\rfloor}\leq \lfloor a^{-1}n^2\rfloor\}$$ 
holds with probability at least $1 - \delta/4$.

For $\eta\in(0,1/2)$ and $j\in\Z$, let $\h_{n,j}(\eta)$ be the intersection of $\e_n(\eta)$ with the event where the leftmost geodesics 
starting respectively from $u^{(n)}_{j-\lfloor a n^2/4\rfloor}$ and from $u^{(n)}_{j+\lfloor a n^2/4\rfloor}$
do not coalesce before hitting $B^\bullet_{n-\lfloor \eta n\rfloor}$. By the considerations of the
end of subsection \ref{subsec:compa}, we know that, if $\eta>0$ is small enough, we have
for every sufficiently large $n$, for every $j\in\Z$,
\begin{equation}
\label{keytech22}
\P(\e_n(\eta)\cap (\h_{n,j}(\eta))^c)\leq a^2\delta/80.
\end{equation}
On the event 
$\h_{n,j}(\eta)$, we define $\g^{(n)}_j(\eta)$ as the subregion of $B^\bullet_n\backslash B^\bullet_{n-\lfloor\eta n\rfloor}$
that contains $u^{(n)}_j$ and is bounded on one side by the leftmost
geodesic from $u^{(n)}_{j-\lfloor a n^2/4\rfloor}$ and on the other side by the
leftmost geodesic from $u^{(n)}_{j+\lfloor a n^2/4\rfloor}$. 
We also write $\partial_\ell\g^{(n)}_j(\eta)$ for the part of the boundary of $\g^{(n)}_j(\eta)$ that
is contained in the union of the leftmost geodesics from $u^{(n)}_{j-\lfloor a n^2/4\rfloor}$ and 
from $u^{(n)}_{j+\lfloor a n^2/4\rfloor}$.

Let $\a_{n,j}(\eta)$ be the intersection of  $\h_{n,j}(\eta)$ with the event where, for some $i$ with $j-an^2/16\leq i\leq j+an^2/16$, there exists
a path from $u^{(n)}_i$ to $\partial_\ell\g^{(n)}_j(\eta)$ that stays in
$B^\bullet_n\backslash B^\bullet_{n-\lfloor \eta n\rfloor}$ and has length smaller
that $4\eta n/\kappa$ (recall that the weights of our first-passage percolation belong to $[\kappa,1]$).
 We claim that, by choosing $\eta\in (0, \frac{1}{2})$ even smaller if necessary, we have also, for every sufficiently large $n$ and for every $j\in\Z$,
\begin{equation}
\label{keytech23}
\P(\a_{n,j}(\eta))\leq a^2\delta/80.
\end{equation}
Let us prove this claim. Obviously it is enough to take $j=0$ (recall that $u^{(n)}_0$
is chosen uniformly at random over $\partial^* B^\bullet_n$). For every $i\in \Z$, write
$\tc^{(n,\lfloor\eta n\rfloor)}_i$ for the tree in the skeleton of
$B^\bullet_n\backslash B^\bullet_{n-\lfloor \eta n\rfloor}$ that is rooted at the edge between
$u^{(n)}_{i-1}$ and $u^{(n)}_{i}$. Then
note that, on the event $ \h_{n,j}(\eta)$,  the region $\g^{(n)}_j(\eta)$ is determined, as a 
planar map, by the trees $\tc^{(n,\lfloor\eta n\rfloor)}_i$ for $-\lfloor an^2/4\rfloor < i\leq \lfloor
an^2/4\rfloor$, and by the Boltzmann distributed triangulations with a boundary
used to fill in the slots associated with vertices of these trees at height strictly less than $\lfloor \eta n\rfloor$. 
Let $\f$ be a forest with height  $\lfloor\eta n\rfloor$ such that the number of trees in $\f$ and the number of vertices at generation $\lfloor\eta n\rfloor$
both lie between $\lfloor an^2\rfloor+1$
and $ \lfloor a^{-1}n^2\rfloor$. Proposition \ref{prop:compa} shows that the probability 
of the event where the finite collection of random trees $(\tc^{(n,\lfloor\eta n\rfloor)}_1,\ldots, \tc^{(n,\lfloor\eta n\rfloor)}_{L_n})$
is equal to $\f$ is bounded above, up to a multiplicative 
constant depending only on $a$, by the analogous probability for independent
Galton--Watson trees with offspring distribution $\theta$
truncated at generation $\lfloor\eta n\rfloor$. It follows that
the probability of the event $\a_{n,0}(\eta)$
can also be bounded by (a constant times) the 
probability of the similar event in the LHPT model. More precisely, $\P( \a_{n,0}(\eta))$
is bounded above, up to a multiplicative constant depending on $a$, by the probability that, in the half-plane model, there is a path going from $(i,0)$,
for some $i$ such that $-an^2/16\leq i\leq an^2/16$,
to the leftmost geodesic from $(\lfloor an^2/4\rfloor,0)$
(or from $(-\lfloor an^2/4\rfloor,0)$)  with length at most $4\eta n/\kappa$. If such a path exists, this implies 
that $\mathrm{d}_{\rm gr}^\l((i,0),(\lfloor an^2/4\rfloor,0))\leq 8\eta n/\kappa$. If we choose $\eta$ small, 
the second assertion of Proposition \ref{dist-bdry3}
shows that the probability of the latter event can be made arbitrarily small, uniformly for all $n\geq n_0$,
for some $n_0$. This gives our claim. 

From now on, we fix $\eta$ so that both \eqref{keytech22} and \eqref{keytech23} hold for $n$ large.
We set
$$\b_n(\eta):= \Bigg(\bigcap_{k=0}^{\lfloor 9a^{-2}\rfloor} \h_{n,k\lfloor an^2/8\rfloor}(\eta)\Bigg)
\cap \Bigg(\bigcap_{k=0}^{\lfloor 9a^{-2}\rfloor} (\a_{n,k\lfloor an^2/8\rfloor}(\eta))^c\Bigg).$$
We note that, for $n$ large,
\begin{align*}
\P(\b_n(\eta)^c)
&\leq \P((\e_n(\eta))^c)+ \sum_{k=0}^{\lfloor 9a^{-2}\rfloor} \P(\e_n(\eta)\cap (\h_{n,k\lfloor an^2/8\rfloor}(\eta))^c)
+ \sum_{k=0}^{\lfloor 9a^{-2}\rfloor} \P(\a_{n,k\lfloor an^2/8\rfloor}(\eta))\\
&\leq \frac{\delta}{4} +( 9 a^{-2}+1) \times \frac{a^2\delta}{80} + (9 a^{-2}+1) \times \frac{a^2\delta}{80}\\
&\leq \frac{\delta}{2},
\end{align*}
using \eqref{keytech22} and \eqref{keytech23}. 

Next,  consider, for every $i\in \Z$, the event
$$\mathcal{D}^{(n)}_i=\{\mathrm{d}^{(n)}_{\rm fpp}(u^{(n)}_i,\partial^* B^\bullet_{n-\lfloor \eta n\rfloor})
\in [(1-\varepsilon)\mathbf{c}_0 \eta n ,(1+\varepsilon)\mathbf{c}_0\eta n]\},$$
where the FPP distance $\mathrm{d}^{(n)}_{\rm fpp}$ is defined as $\mathrm{d}_{\rm fpp}$, 
but considering only paths that stay in $B^\bullet_n$. 

Let $k\in\{0,\ldots,\lfloor 9a^{-2}\rfloor\}$ and assume that $\h_{n,k\lfloor an^2/8\rfloor}(\eta)$
and $(\a_{n,k\lfloor an^2/8\rfloor}(\eta))^c$ both hold.  Then, for every $i$ such that
$k\lfloor an^2/8\rfloor - an^2/16\leq i\leq k\lfloor an^2/8\rfloor + an^2/16$, 
the minimal FPP length of a path from $u^{(n)}_i$ to $\partial^* B^\bullet_{n-\lfloor \eta n\rfloor}$
that stays in $B^\bullet_{n}\backslash
B^\bullet_{n-\lfloor \eta n\rfloor}$ can be evaluated by considering only paths that
stay in $\g^{(n)}_{k\lfloor an^2/8\rfloor }(\eta)$. Indeed, the FPP length of a path that would 
hit $\partial_\ell\g^{(n)}_{k\lfloor an^2/8\rfloor }(\eta)$ is at least $\kappa \times (4\eta n/\kappa)= 4\eta n$, 
whereas the (graph distance) geodesic from $u^{(n)}_i$ to $\partial^* B^\bullet_{n-\lfloor \eta n\rfloor}$
has FFP length bounded above by $\eta n$.
Hence, using Proposition \ref{prop:compa}, we
have (provided that $n$ is large enough), for every $i$ such that
$k\lfloor an^2/8\rfloor - an^2/16\leq i\leq k\lfloor an^2/8\rfloor + an^2/16$, 
\begin{equation}
\label{key30}
\P(\b_{n}(\eta)\cap (\mathcal{D}^{(n)}_i)^c)\leq \P( \h_{n,k\lfloor an^2/8\rfloor}(\eta)\cap(\a_{n,k\lfloor an^2/8\rfloor}(\eta))^c\cap (\mathcal{D}^{(n)}_i)^c)\leq   C_1\,\P(F_n)
\end{equation}
where $F_n$ denotes the event $\{\mathrm{d}^\l_{\rm fpp}((0,0),\l_{\lfloor \eta n\rfloor})
\notin [(1-\varepsilon)\mathbf{c}_0\eta n,(1+\varepsilon)\mathbf{c}_0 \eta n]\}$ and $C_1$
is a constant depending only on $a$. 
To derive the last bound, we note that, on the event $\h_{n,k\lfloor an^2/8\rfloor}(\eta)$, the properties 
considered in the event $(\a_{n,k\lfloor an^2/8\rfloor}(\eta))^c\cap (\mathcal{D}^{(n)}_i)^c$ can be expressed in terms of
the trees $\tc^{(n,\lfloor\eta n\rfloor)}_j$
for $k\lfloor an^2/8\rfloor-\lfloor an^2/4\rfloor< j\leq k\lfloor an^2/8\rfloor+ \lfloor an^2/4\rfloor$
(together with the triangulations filling in the slots), and we again use
the bound of Proposition \ref{prop:compa}.

Let $A$ be the integer found in Proposition \ref{densit2},
where we replace $\gamma$
by $\varepsilon \mathbf{c}_0\eta/2$ and $\delta$ by $\delta/4$. 
Using the conclusion of Proposition \ref{densit2}, we see that we have 
for all sufficiently large $n$,
\begin{align}
\label{keyinclu}
&\b_{n}(\eta)\cap \{\mathrm{d}^{(n)}_{\rm fpp}(v,\partial^* B^\bullet_{n-\lfloor\eta n\rfloor})
\notin [(1-2\varepsilon)\mathbf{c}_0\eta n,(1+2\varepsilon)\mathbf{c}_0\eta n],
\hbox{ for some } v\in\partial^* B^\bullet_n\}\nonumber\\
&\subset\b_{n}(\eta)\cap
\{\mathrm{d}^{(n)}_{\rm fpp}(u^{(n)}_{\lfloor jn^2/A\rfloor},\partial^* B^\bullet_{n-\lfloor \eta n\rfloor})
\notin [(1-\varepsilon)\mathbf{c}_0 \eta n,(1+\varepsilon)\mathbf{c}_0 \eta n],
\hbox{ for some } 0\leq j\leq \lfloor a^{-1}A\rfloor\},
\end{align}
except possibly on an event of probability at most $\delta/4$. The point is that
if we assume that $\e_n(\eta)$ holds (in particular if $\b_n(\eta)$ holds) but discard the set of probability at most $\delta/4$
considered in Proposition \ref{densit2}, any vertex of $\partial^* B^\bullet_{n}$ will be at 
$\mathrm{d}^{(n)}_{\rm fpp}$-distance smaller than $2 \times \varepsilon \mathbf{c}_0\eta\,n/2$
from one of the vertices $u^{(n)}_{\lfloor jn^2/A\rfloor}$, $0\leq j\leq \lfloor a^{-1}A\rfloor$.

Next, using  Proposition \ref{lem:subadditive}, we have, for all 
sufficiently large $n$,
$$C_1\,\P(F_n)\leq \frac{a\delta}{2(A+1)},$$
and it follows from \eqref{key30} that $\P(\b_{n}(\eta)\cap(\mathcal{D}^{(n)}_i)^c)\leq \frac{a\delta}{2(A+1)}$ for every $i\in\{0,1,\ldots,\lfloor a^{-1}n^2\rfloor\}$
(observe that $\lfloor 9 a^{-2}\rfloor \times \lfloor an^2/8\rfloor \geq \lfloor a^{-1}n^2\rfloor$ if $n$ is large).

Finally, the probability of the event in the right-hand side of \eqref{keyinclu} is
bounded above for $n$ large by
\begin{align*}
\sum_{j=0}^{\lfloor a^{-1}A\rfloor}
\P\Big(\b_{n}(\eta)
\cap (\mathcal{D}^{(n)}_{\lfloor jn^2/A\rfloor})^c\Big)
\leq (\lfloor a^{-1}A\rfloor+1) \times \frac{a\delta}{2(A+1)} \leq \frac{\delta}{2}.
\end{align*}
Recalling that $ \mathbb{P}(\b_{n}(\eta)^c)\leq \delta/2$, and using the last bound together with \eqref{keyinclu},
we arrive at the bound 
$$\P\Big(\mathrm{d}^{(n)}_{\rm fpp}(v,\partial^* B^\bullet_{n-\lfloor \eta n\rfloor})
\in [(1-2\varepsilon)\mathbf{c}_0\lfloor \eta n\rfloor,(1+2\varepsilon)\mathbf{c}_0\lfloor \eta n\rfloor],
\hbox{ for every } v\in\partial^* B^\bullet_n\Big)\geq  1-\delta.$$
Now note that we can replace $\mathrm{d}^{(n)}_{\rm fpp}$ by $\mathrm{d}_{\rm fpp}$ in the last bound, 
since clearly $\mathrm{d}_{\rm fpp}\leq \mathrm{d}^{(n)}_{\rm fpp}$, and, on the other hand, it is 
also true that, for every $v\in \partial^* B^\bullet_n$,
$$\mathrm{d}_{\rm fpp}(v,\partial^*B^\bullet_{n-\lfloor \eta n\rfloor})
\geq \min_{v'\in  \partial^* B^\bullet_n} \mathrm{d}^{(n)}_{\rm fpp}(v',\partial^* B^\bullet_{n-\lfloor \eta n\rfloor}).$$
This completes the proof.
\endproof

\begin{proposition}
\label{dist-hull}
For every  $\varepsilon\in(0,1)$, 
$$\P\Big((\mathbf{c}_0-\varepsilon)n\leq \mathrm{d}_{\rm fpp}(\rho,v)\leq (\mathbf{c}_0+\varepsilon)n,\ 
\hbox{ for every vertex } v\hbox{ in } \partial^* B^\bullet_n \Big) \build{\la}_{n\to\infty}^{} 1.$$
\end{proposition}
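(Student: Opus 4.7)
The plan is to iterate Proposition~\ref{keytech} across a bounded number of nested hulls and apply a union bound.  Fix an auxiliary precision $\varepsilon'=\varepsilon/(4\mathbf{c}_0)$ and a constant $\eta\in(0,1/2)$ to be chosen.  Define the deterministic radii $n_0:=n$ and $n_{k+1}:=n_k-\lfloor \eta n_k\rfloor$, and let $K=K(\eta,\varepsilon)$ be the first index with $n_K\leq \varepsilon n/(10\mathbf{c}_0)$; since $n_k\approx (1-\eta)^k n$, the integer $K$ is bounded by a function of $\eta$ and $\varepsilon$ alone (not of $n$).  Choose $\eta$ small enough that Proposition~\ref{keytech}, applied with tolerance $\varepsilon'$ and failure probability $\delta/K$, yields for every sufficiently large $m$ an event $G_m$ of probability at least $1-\delta/K$ on which
$$(1-\varepsilon')\mathbf{c}_0\,\eta m\;\leq\; \mathrm{d}_{\rm fpp}\bigl(v,\partial^* B^\bullet_{m-\lfloor \eta m\rfloor}\bigr)\;\leq\; (1+\varepsilon')\mathbf{c}_0\,\eta m,\qquad \forall v\in\partial^* B^\bullet_m.$$
(The apparent circularity is harmless: once $\varepsilon$ and $\delta$ are fixed, first shrink $\eta$ to determine $K(\eta)$, then re-invoke Proposition~\ref{keytech} with tolerance $\delta/K$ — possibly shrinking $\eta$ a little more, but this does not change the argument.)  Set $G:=G_{n_0}\cap\cdots\cap G_{n_{K-1}}$, so that $\P(G)\geq 1-\delta$; the rest of the argument takes place on $G$ for $n$ large enough.

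For the \emph{upper bound}, I will iteratively produce, starting from $v_0:=v\in\partial^* B^\bullet_n$, vertices $v_{k+1}\in\partial^* B^\bullet_{n_{k+1}}$ realizing the upper half of the estimate above, so that $\mathrm{d}_{\rm fpp}(v_k,v_{k+1})\leq (1+\varepsilon')\mathbf{c}_0\,\eta n_k$.  Since every vertex of $\partial^* B^\bullet_{n_K}$ is at graph distance $n_K$ from $\rho$ and weights lie in $[\kappa,1]$, we have $\mathrm{d}_{\rm fpp}(\rho,v_K)\leq n_K$, and the triangle inequality together with $\eta n_k\leq (n_k-n_{k+1})+1$ gives
$$\mathrm{d}_{\rm fpp}(\rho,v)\;\leq\; n_K+(1+\varepsilon')\mathbf{c}_0\bigl((n-n_K)+K\bigr),$$
which is at most $(\mathbf{c}_0+\varepsilon)n$ for $n$ large, by the choice of $n_K$ and $\varepsilon'$.

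For the \emph{lower bound}, let $\gamma$ be an FPP-geodesic from $\rho$ to $v$ and let $w_k$ be the last vertex of $\gamma$ lying in $B^\bullet_{n_k}$.  A short planarity argument — the complement of $B^\bullet_{n_k}$ in the plane is a single unbounded region with boundary $\partial^* B^\bullet_{n_k}$, so any edge of $\gamma$ leaving the hull has its interior endpoint on the top cycle — shows that $w_k\in\partial^* B^\bullet_{n_k}$ for $k=0,\ldots,K-1$; moreover $w_{k+1}$ precedes $w_k$ along $\gamma$ because $B^\bullet_{n_{k+1}}\subset B^\bullet_{n_k}$.  The sub-path of $\gamma$ from $w_{k+1}$ to $w_k$ thus has FPP-length at least $\mathrm{d}_{\rm fpp}(w_k,\partial^* B^\bullet_{n_{k+1}})\geq (1-\varepsilon')\mathbf{c}_0\,\eta n_k$ on $G_{n_k}$.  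Summing and telescoping, using $\eta n_k\geq n_k-n_{k+1}$,
$$\mathrm{d}_{\rm fpp}(\rho,v)\;\geq\;(1-\varepsilon')\mathbf{c}_0\sum_{k=0}^{K-1}(n_k-n_{k+1})\;=\;(1-\varepsilon')\mathbf{c}_0(n-n_K)\;\geq\;(\mathbf{c}_0-\varepsilon)n,$$
again for $n$ large.

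The heavy lifting has already been done in Proposition~\ref{keytech}; the only genuinely delicate point in the present argument is coordinating the choice of $\eta$ with the number of iterations $K$, because the tolerance $\delta/K$ demanded of Proposition~\ref{keytech} depends on $\eta$ itself through $K(\eta)$.  This is resolved by the iterated choice sketched above, exploiting the fact that Proposition~\ref{keytech} permits arbitrarily small failure probabilities.
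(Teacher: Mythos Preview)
Your overall strategy---iterating Proposition~\ref{keytech} along a geometric sequence of hull radii $n_0>n_1>\cdots>n_K$ and telescoping---is exactly the paper's, and your upper and lower bound constructions (chaining nearest points on successive boundaries, resp.\ last hull-exit points along an FPP-geodesic) match as well.

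The circularity you flag is, however, a real gap that your ``iterated choice'' does not close. Proposition~\ref{keytech} \emph{outputs} $\eta$ as a function of the target failure probability: invoking it with tolerance $\delta/K(\eta_0)$ may return some $\eta_1<\eta_0$, forcing $K(\eta_1)>K(\eta_0)$ layers and hence a yet smaller tolerance $\delta/K(\eta_1)$, and nothing in the black-box statement prevents this from diverging (it would be fine if one knew $\delta_0/\eta(\varepsilon',\delta_0)\to 0$ as $\delta_0\to 0$, but that is not given). The paper sidesteps the problem with a Markov-inequality device. One first picks $\delta<\varepsilon/(4|\log(\varepsilon/16)|)$ depending only on $\varepsilon$, applies Proposition~\ref{keytech} once with failure probability $\delta^2$ to fix $\eta$ and hence $q=K(\eta)$, and writes $\g_m$ for the good event of Proposition~\ref{keytech} at radius $m$. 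Instead of requiring all $q$ events $\g_{n_0},\ldots,\g_{n_{q-1}}$ to hold, one observes that $\E\big[\sum_{j<q} \mathbf{1}_{\g_{n_j}^c}\big]\leq \delta^2 q$, so $\P\big(\sum_{j<q} \mathbf{1}_{\g_{n_j}^c}>\delta q\big)\leq\delta$. On the complementary event the at most $\delta q$ failed layers are handled via the crude bound $\mathrm{d}_{\rm fpp}\leq \mathrm{d}_{\rm gr}$ and contribute at most $\delta q\,\eta n$ to the distance error; since $q\eta\leq |\log(\varepsilon/16)|$ is bounded \emph{independently of $\eta$}, this is at most $(\varepsilon/4)n$ by the initial choice of $\delta$, and the circle is broken. (A minor side remark: your threshold $n_K\leq \varepsilon n/(10\mathbf{c}_0)$ can be too large when $\mathbf{c}_0$ is close to a small $\kappa$, so that the upper-bound arithmetic does not go through; something like $n_K\leq \kappa\varepsilon n/10$ is safe.)
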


\begin{proof}
Fix $ \varepsilon\in(0,1)$ and let $\delta \in (0, \varepsilon/(4 |\log( \varepsilon/16)|))$. By  Proposition \ref{keytech}, we can fix $\eta \in (0, \frac{1}{4})$
such that, for every sufficiently large $n$, the event
$$\g_{n}:= \Big\{ (\mathbf{c}_0-\frac{\varepsilon}{2}) \lfloor \eta n\rfloor \leq
\mathrm{d}_{\rm fpp}(v,\partial^* B^\bullet_{n-\lfloor \eta n\rfloor}) \leq 
(\mathbf{c}_0+\frac{\varepsilon}{2})\lfloor \eta n\rfloor\, ,\ \forall v\in \partial^* B^\bullet_{n}\Big\},$$
holds with probability at least $1-\delta^2$.

Let $n\geq 1$. Set $n_{0} = n, n_{1} = n- \lfloor \eta n \rfloor$ and by induction $ n_{i} = n_{i-1} - \lfloor \eta\, n_{i-1} \rfloor$ for $i \geq 1$.  Set
$$q= \left\lfloor \frac { \log( \varepsilon/16)}{\log(1- \eta)} \right\rfloor$$ 
so that we have $n_{q} \leq \varepsilon n  /4$ for $n$ large enough. By our choice of $\eta$, we have $\E\Big[ \sum_{j=0}^{q-1} \mathbf{1}_{\g_{n_{j}}^c} \Big]\leq \delta^2\,q,$
as soon as $n$ is sufficiently large,
and the Markov inequality gives
$$\P\Bigg(\sum_{j=0}^{q-1} \mathbf{1}_{\g_{n_{j}}^c} > \delta q\Bigg) \leq \delta .$$
In what follows, we argue on the event
$$\h_n:= \Bigg\{ \sum_{j=0}^{q-1} \mathbf{1}_{\g_{n_{j}}^c} \leq \delta\,q\Bigg\}.$$

Let  $v\in  \partial^* B^\bullet_n$. We construct inductively a finite sequence $(v_{(j)})_{0\leq j\leq q}$, such that
$v_{(j)}\in \partial^* B^\bullet_{n_{j}}$, for every $0\leq j\leq q$. We start with 
$v_{(0)}=v$. Then, if we have constructed $v_{(0)},\ldots,v_{(j)}$ for some $0\leq j < q$, we define $v_{(j+1)}$
as follows. If the event $\g_{n_{j}}$ holds, we let $v_{(j+1)}$ be any point in 
$\partial^* B^\bullet_{n_{j+1}}$ such that 
$\mathrm{d}_{\rm fpp}(v_{(j)},v_{(j+1)})=\mathrm{d}_{\rm fpp}(v_{(j)},\partial^* B^\bullet_{n_{j+1}})$. 
Otherwise, we choose $v_{(j+1)}\in \partial^* B^\bullet_{n_{j+1}}$
such that $\mathrm{d}_{\rm gr}(v_{(j)},v_{(j+1)})= n_{j}-n_{j+1}$. We note that
$\mathrm{d}_{\rm fpp}(\rho,v_{(q)})\leq \mathrm{d}_{\rm gr}(\rho,v_{(q)})=n_q\leq \varepsilon n/4$. Hence, for $n$ large enough, we have
 on the event $\h_n$,
\begin{align*}
\mathrm{d}_{\rm fpp}(\rho,v) &\leq
\sum_{j= 0}^{q-1} \mathrm{d}_{\rm fpp}(v_{(j)},v_{(j+1)}) +\frac{\varepsilon n}{4}\\
&\leq
(\mathbf{c}_0+\frac{\varepsilon}{2})\sum_{j= 0}^{q-1} (n_{j}-n_{j+1})+ \delta q \max_{0\leq i<q} \{n_{i}-n_{i+1}\} + \frac{\varepsilon n}{4} \\ 
&\leq (\mathbf{c}_0+\frac{\varepsilon}{2})n + \delta q \eta \, n  + \frac{\varepsilon n}{4}\\
&\leq (\mathbf{c}_0+\varepsilon)\,n\,,
\end{align*}
where we used in the last line the fact that $ \eta\leq |\log(1-\eta)|$ for $\eta \in (0,1)$ to get that $\delta q \eta \leq \varepsilon/4$.

On the other hand, take any path $\omega$ from $v$ to $\rho$, and
for every integer $j\in\{0,1,\ldots,q\}$,
write  $w_{(j)}$ for the
last point of $\omega$ that belongs to $\partial^* B^\bullet_{n_{j}}$. Then, for $n$ large enough, on
the event $\h_n$, the weight of the path $\omega$ is bounded below by
\begin{align*}
\sum_{j= 0}^{q-1}  \mathrm{d}_{\rm fpp}(w_{(j)},\partial^* B^\bullet_{n_{j+1}})
&\geq (\mathbf{c}_0-\frac{\varepsilon}{2})(n_{0}-n_{q}) - \delta q \max_{0\leq i<q}\{ n_{i} - n_{i+1}\} \\
& \geq n(\mathbf{c}_0-\frac{\varepsilon}{2})(1 - \frac{ \varepsilon}{4}) -  \delta q \eta \, n \\
&\geq (\mathbf{c}_0-\varepsilon)\,n\,,
\end{align*}
since we have $ n_{q} \leq \varepsilon n /4$, $c_{0}\leq 1$ and $\delta q \eta \leq \varepsilon /4$. This implies that, on the event $\h_n$, we have 
$$\mathrm{d}_{\rm fpp}(\rho,v) \geq (\mathbf{c}_0-\varepsilon)\,n.$$
Since we have $\P((\h_n)^c)\leq \delta$, for all sufficiently large $n$, where $\delta$ can be taken arbitrarily small, this completes the proof.
\end{proof}

\section{First-passage percolation on finite triangulations}

For every $n\geq 1$, let $\t_n^{(1)}$ be uniformly distributed over $\T_{n,1}$. Recall from Fig.~\ref{fig:transform-root} that we can transform $ \mathcal{T}_{n}^{(1)}$ into a uniform rooted plane triangulation with $n+1$ vertices, which is denoted by $ \mathcal{T}_{n}$. We write $\rho_{n}$ for the root vertex
of $\t_n^{(1)}$. We assign i.i.d. weights to the edges of $\t_n^{(1)}$, with the same distribution as in the last section, and we write $\mathrm{d}_\mathrm{fpp}$ for the associated first-passage
percolation distance on the vertex set $\mathsf{V}(\t_n^{(1)})$. We also keep the notation $\mathbf{c}_0$ for the
constant in Proposition \ref{lem:subadditive}. 

\begin{proposition}
\label{key-finitetri}
Let $o_{n}$ be a uniformly distributed inner vertex of $ \t_{n}^{(1)}$. Then,
for every $\ve>0$,
$$\P\Big( \big|\mathrm{d}_\mathrm{fpp}(\rho_{n},o_{n}) 
-\mathbf{c}_0\, \mathrm{d}_\mathrm{gr}(\rho_{n},o_{n})\big| > \ve\,n^{1/4}\Big)
\build{\longrightarrow}_{n\to\infty}^{} 0.$$
\end{proposition}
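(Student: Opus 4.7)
The plan is to deduce this finite-volume statement from Proposition \ref{dist-hull} via the absolute continuity between the law of the pointed hull $B^\bullet_r(\ov\t_n^{(1)})$ and the UIPT hull law $\P_{1,r}$ given by Lemma \ref{law-hull}. Set $D_n:=\mathrm{d}_\mathrm{gr}(\rho_n,o_n)$. By the convergence of rescaled uniform triangulations to the Brownian map \cite{LG11}, $n^{-1/4}D_n$ is tight and bounded away from $0$ in probability, so for any $\eta>0$ one can fix $0<\alpha<\beta<\infty$ such that $\P(\alpha n^{1/4}\leq D_n\leq \beta n^{1/4})\geq 1-\eta$ for $n$ large. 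On this event, set $R:=D_n-1$. Then $o_n$ lies strictly outside $B^\bullet_R(\ov\t_n^{(1)})$, and the penultimate vertex of any graph-distance geodesic from $\rho_n$ to $o_n$ necessarily lies on $\partial^* B^\bullet_R$ (being a vertex of the hull adjacent to the vertex $o_n$ of the outer component). Since any path from $\rho_n$ to $o_n$ must cross $\partial^* B^\bullet_R$, and since all weights are in $[\kappa,1]$, this yields
\begin{equation*}
\min_{v \in \partial^* B^\bullet_R} \mathrm{d}_\mathrm{fpp}(\rho_n,v) \;\leq\; \mathrm{d}_\mathrm{fpp}(\rho_n,o_n) \;\leq\; \max_{v \in \partial^* B^\bullet_R} \mathrm{d}_\mathrm{fpp}(\rho_n,v) + 1,
\end{equation*}
reducing the problem to showing that, with probability $1-o(1)$, every $v\in\partial^* B^\bullet_R$ satisfies $|\mathrm{d}_\mathrm{fpp}(\rho_n,v) - \mathbf{c}_0 R| \leq \varepsilon R$.

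The core step is the transfer of this bound from the UIPT. For each integer $r\geq 1$, the formula \eqref{eq:hulldiscrete} combined with the uniform enumeration bounds of Lemma \ref{lem:bound} shows that the Radon--Nikodym derivative of the law of $B^\bullet_r(\ov\t_n^{(1)})$ with respect to $\P_{1,r}$ is essentially $(1-(N-1)/n)^{-3/2}$, where $N$ is the total vertex count of the hull; in particular, this derivative is bounded by an absolute constant on the event $\{N\leq \lambda n\}$ for any $\lambda<1$, which occurs with probability tending to $1$ at radius $r\sim n^{1/4}$, where $N$ is typically of order $n^{3/4}$. Denoting by $\mathcal{G}_r$ the event that $|\mathrm{d}_\mathrm{fpp}(\rho_n,v)-\mathbf{c}_0 r|\leq \varepsilon r$ for every $v\in\partial^* B^\bullet_r(\ov\t_n^{(1)})$ and by $\mathcal{G}_r^\infty$ its UIPT analogue, this absolute continuity---together with the fact that FPP geodesics from $\rho_n$ to $\partial^* B^\bullet_r$ stay inside the hull with high probability, so that $\mathcal{G}_r$ is essentially a function of the hull alone---gives $\P(\mathcal{G}_r^c)\leq C\,\P((\mathcal{G}_r^\infty)^c)+o(1)$, which tends to $0$ by Proposition \ref{dist-hull}. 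A union bound over a deterministic grid of radii in $[\alpha n^{1/4},\beta n^{1/4}]$ with spacing $\varepsilon n^{1/4}$, together with the inequality $\mathrm{d}_\mathrm{fpp}\leq \mathrm{d}_\mathrm{gr}$ used to interpolate between consecutive grid values, extends the estimate to every $r$ in this range, and in particular to the random value $r=R$.

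Combining the previous steps, on an event of probability at least $1-2\eta$, one obtains $|\mathrm{d}_\mathrm{fpp}(\rho_n,o_n)-\mathbf{c}_0 D_n| \leq \varepsilon D_n + O(1) = O(\varepsilon n^{1/4})$, which gives the statement. The main obstacle is the quantitative absolute continuity of the third step: although Lemma \ref{law-hull} only yields weak convergence of the hull law for each fixed $r$, one needs uniform Radon--Nikodym control over the range of $r$ of order $n^{1/4}$, together with a careful verification that FPP distances from $\rho_n$ to $\partial^* B^\bullet_r$---nominally functions of the whole triangulation---can be replaced, up to a small error, by quantities depending only on the hull.
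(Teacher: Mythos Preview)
Your overall strategy---absolute continuity between the pointed hull of $\ov\t_n^{(1)}$ and the UIPT hull, Proposition \ref{dist-hull}, and a grid of radii of order $n^{1/4}$---is exactly the one the paper uses. However, there is a genuine gap in your absolute continuity step.

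You assert that at radius $r\sim n^{1/4}$ the hull size $N$ is ``typically of order $n^{3/4}$'', hence $N\le\lambda n$ with high probability and the Radon--Nikodym factor $(1-N/n)^{-3/2}$ is bounded. The scaling $N\sim n^{3/4}$ is incorrect: the hull volume at radius $r$ in the UIPT is of order $r^4$, so at $r\sim n^{1/4}$ one has $N$ of order $n$, not $n^{3/4}$ (you may be confusing volume with perimeter, which is $r^2\sim n^{1/2}$). Worse, if you take $r=R=D_n-1$ as you first propose, the complement of the hull is the connected component of $o_n$ in $\t_n^{(1)}\setminus B_R$, which a priori could consist of just a handful of vertices; there is no mechanism preventing $N$ from being $n-O(1)$ on an event of nonvanishing probability, and then the Radon--Nikodym bound explodes.

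The paper's fix is precisely to manufacture a macroscopic buffer between the hull radius and $D_n$: one conditions on $\{\beta_j n^{1/4}<D_n\le\gamma_j n^{1/4}\}$ and considers the hull at the \emph{smaller} radius $\alpha_j n^{1/4}$ with $\alpha_j<\beta_j$. On this event the ball $\mathfrak{B}_{(\beta_j-\alpha_j)n^{1/4}}(\t_n^{(1)},o_n)$ is entirely disjoint from the hull, and the convergence of the rescaled profile of distances (Miermont's invariance principle, cited as \eqref{occup-snake}) shows that this ball has volume exceeding $bn$ for some $b>0$ with high probability. This is what forces $N\le(1-b)n$ and makes Lemma \ref{auxlem} usable. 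You should replace your volume estimate by this argument; once done, your grid-and-interpolate step coincides with the paper's choice $\alpha_j=j\delta^2$, $\beta_j=(j+1)\delta^2$, $\gamma_j=(j+2)\delta^2$ for $\lfloor\delta^{-1}\rfloor<j\le\lfloor\delta^{-3}\rfloor$.

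A secondary point: you worry that $\mathcal{G}_r$ is ``essentially a function of the hull alone''. In fact the paper simply defines $a_\varepsilon(\mathrm t)$ using the \emph{intrinsic} FPP distance inside the cylinder $\mathrm t$; the lower bound on $\mathrm{d}_\mathrm{fpp}(\rho_n,o_n)$ then follows because any path to $o_n$ first crosses $\partial^*B^\bullet_r$ using only hull edges, and the upper bound because the intrinsic hull distance dominates the ambient one. So no additional ``geodesics stay inside the hull'' argument is needed.
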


The proof relies on certain absolute continuity relations between finite triangulations and the UIPT, which are similar to \cite[Section 4.3]{CLGplane}. We start with a preliminary lemma.
Recall our notation $\C_{1,r}$ for the set of all triangulations of the cylinder of height $r$ with bottom cycle of 
length $1$. If $\mathrm{t}\in\C_{1,r}$, we denote the total number of vertices of $\mathrm{t}$
by $N(\mathrm{t})+1$. 

We write $\ov{\t}_n^{(1)}$ for the triangulation $\t_n^{(1)}$ given with the distinguished vertex $o_{n}$. 
The hull $B^\bullet_r(\ov\t_n^{(1)})$ makes sense provided that $\mathrm{d}_\mathrm{gr}(\rho_{n},
o_{n})>r$, and otherwise we let $B^\bullet_r(\ov\t_n^{(1)})$ be the edge-triangulation. 

\begin{lemma}
\label{auxlem}
There exists a constant $\ov c$ such that, for every $n\geq 1$, for every $r\geq 1$ and
every $\mathrm{t}\in\C_{1,r}$ such that $n>N(\mathrm{t})$,
\begin{equation}
\label{law-hull-fini}
\P(B^\bullet_r(\ov\t_n^{(1)})=\mathrm{t})\leq \ov c \,\Big(\frac{n}{n-N(\mathrm{t})}\Big)^{3/2}\,
\P(B^\bullet_r(\t_\infty^{(1)})= \mathrm{t}).
\end{equation}
\end{lemma}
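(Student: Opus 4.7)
The plan is to write both probabilities explicitly from the enumeration formula and then use the uniform estimates of Lemma \ref{lem:bound} to control the ratio. The only comparison needed is between the counts $\#\T_{n-N(\mathrm{t}),q}$ (which appears for the finite triangulation) and the asymptotic value $C(q)\,(12\sqrt3)^n n^{-5/2}$ (which effectively appears for the UIPT via \eqref{eq:loihullUIPHT}). The crucial point is that these bounds must be uniform in the top-cycle length $q$, which is exactly what Lemma \ref{lem:bound} provides.

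First, specializing \eqref{eq:hulldiscrete} to the case $p=1$ and letting $q$ denote the length of the top cycle of $\mathrm{t}$, we have
\[
\P(B^\bullet_r(\ov\t_n^{(1)})=\mathrm{t}) \;=\; \frac{\#\T_{n-N(\mathrm{t}),q}}{\#\T_{n,1}}\;\cdot\;\frac{n-N(\mathrm{t})}{n}.
\]
On the other hand, Lemma \ref{law-hull} (equivalently, the formula \eqref{eq:loihullUIPHT}) gives
\[
\P(B^\bullet_r(\t_\infty^{(1)})=\mathrm{t}) \;=\; \frac{C(q)}{C(1)}\,(12\sqrt{3})^{-N(\mathrm{t})}.
\]
Taking the ratio I obtain
\[
\frac{\P(B^\bullet_r(\ov\t_n^{(1)})=\mathrm{t})}{\P(B^\bullet_r(\t_\infty^{(1)})=\mathrm{t})}
\;=\; \frac{C(1)}{C(q)}\cdot\frac{\#\T_{n-N(\mathrm{t}),q}}{\#\T_{n,1}}\cdot\frac{n-N(\mathrm{t})}{n}\cdot(12\sqrt{3})^{N(\mathrm{t})}.
\]

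Next, I apply the upper bound of Lemma \ref{lem:bound} to the numerator, using \emph{precisely} the fact that the constant there does not depend on $q$:
\[
\#\T_{n-N(\mathrm{t}),q} \;\leq\; c\cdot C(q)\,(n-N(\mathrm{t}))^{-5/2}\,(12\sqrt3)^{n-N(\mathrm{t})}.
\]
For the denominator, since $p=1$ automatically satisfies $1\leq \alpha\sqrt n$ for any $\alpha>0$ and $n\geq 1$, the second (lower) bound of Lemma \ref{lem:bound} gives a matching estimate
\[
\#\T_{n,1} \;\geq\; c'\cdot C(1)\, n^{-5/2}\,(12\sqrt3)^n.
\]
Plugging both estimates into the ratio displayed above, the factors $C(1)/C(q)$ cancel against the $C(q)/C(1)$ coming from the bounds, and the powers of $12\sqrt3$ cancel exactly with the factor $(12\sqrt3)^{N(\mathrm{t})}$. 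What remains is
\[
\frac{\P(B^\bullet_r(\ov\t_n^{(1)})=\mathrm{t})}{\P(B^\bullet_r(\t_\infty^{(1)})=\mathrm{t})}
\;\leq\; \frac{c}{c'}\,\Bigl(\frac{n}{n-N(\mathrm{t})}\Bigr)^{5/2}\cdot\frac{n-N(\mathrm{t})}{n}
\;=\; \frac{c}{c'}\,\Bigl(\frac{n}{n-N(\mathrm{t})}\Bigr)^{3/2},
\]
which is the desired inequality with $\ov c := c/c'$. The main (minor) point to keep track of is simply that the $q$-dependence must cancel, which is precisely why the $p$-uniform enumeration Lemma \ref{lem:bound} was proved; no further obstacle arises.
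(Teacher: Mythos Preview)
Your proof is correct and follows essentially the same approach as the paper: both write the finite-$n$ probability via \eqref{eq:hulldiscrete}, the UIPT probability via \eqref{eq:loihullUIPHT}, and then control the ratio using the $p$-uniform enumeration bounds of Lemma~\ref{lem:bound}. The only cosmetic difference is that the paper bounds $\#\T_{n,1}$ from below using the fixed-$p$ asymptotics \eqref{eq:asymp} rather than the second assertion of Lemma~\ref{lem:bound}, which amounts to the same thing (and your remark ``for any $\alpha>0$'' should read ``for $\alpha\ge 1$'', but this is immaterial).
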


\proof Fix $r\geq 1$ and $\mathrm{t}\in\C_{1,r}$ and write $N=N(\mathrm{t})$ to
simplify notation. As a consequence of (\ref{eq:loihullUIPHT}) and of the fact that
$\t^{(1)}_\infty$ is the local limit of $\t^{(1)}_n$ as $n\to\infty$
(see the end of subsection \ref{subsec:skeletondecom}), we know that
\begin{equation}
\label{law-hull-infi}
\P(B^\bullet_r(\t_\infty^{(1)})= \mathrm{t}) = \frac{C(p)}{C(1)} \,(12\sqrt{3})^{-N},
\end{equation}
where $p=|\partial^*\mathrm{t}|$ is the length of the top cycle of $\mathrm{t}$. 
On the other hand,  \eqref{eq:hulldiscrete} gives the explicit formula 
$$\P(B^\bullet_r(\ov\t_n^{(1)})=\mathrm{t})= \frac{n-N}{n}\, \frac{\#\T_{n-N,p}}{\#\T_{n,1}}$$ 
from which, using 
Lemma \ref{lem:bound} and the asymptotics \eqref{eq:asymp} (with $p=1$), we derive the bound
$$\P(B^\bullet_r(\ov\t_n^{(1)})=\mathrm{t})\leq c^*\,C(p)\,\Big(\frac{n}{n-N}\Big)^{3/2}\,(12\sqrt{3})^{-N},$$
with some constant $c^*$. By comparing the last bound with \eqref{law-hull-infi}, we get the desired result.  \endproof

\begin{proof}[Proof of Proposition \ref{key-finitetri}] We fix $\ve>0$ and $\nu\in(0,1)$. We will prove that for all sufficiently large $n$
we have
$$\P\Bigg(\Big|\frac{\mathrm{d}_\mathrm{fpp}(\rho_{n},o_{n})}{\mathrm{d}_\mathrm{gr}(\rho_{n},o_{n})}
-\mathbf{c}_0\Big| >2\ve\Bigg) < \nu.$$
Since the Gromov--Hausdorff convergence of rescaled triangulations to the Brownian map \cite{LG11} 
(see Theorem \ref{convGHP} in Appendix A1 below)
implies that the sequence $n^{-{1/4}}\mathrm{d}_\mathrm{gr}(\rho_{n},o_{n})$ is bounded 
in probability, the statement of the proposition follows. 

Let $r\geq 1$. For every $\mathrm{t}\in\C_{1,r}$, we can equip the edges of $ \mathrm{t}$ with i.i.d.~weights
distributed as previously, and then consider the associated first-passage 
percolation distance $\mathrm{d}_\mathrm{fpp}$.
We let $a_\ve(\mathrm{t})$ be the random variable defined by $a_\ve(\mathrm{t})=1$ if
$$\sup_{x\in \partial^*\mathrm{t}} \Big| \frac{\mathrm{d}_\mathrm{fpp}(\rho,x)}
{\mathrm{d}_\mathrm{gr}(\rho,x)}- \mathbf{c}_0\Big| \geq \ve$$
and $a_\ve(\mathrm{t})=0$ otherwise (here $\rho$ stands for the root
vertex of $\mathrm{t}$ and we recall that $\partial^*\mathrm{t}$ is the top cycle of $\mathrm{t}$).
By convention we also define $a_\ve(\mathrm{t}_0)=0$, when $\mathrm{t}_0$ is the
edge-triangulation.

Let $b\in(0,1)$. We observe that 
\begin{align*}
\P\Big( \{a_\ve(B^\bullet_r(\ov\t_n^{(1)}))=1\}\cap \{\# B^\bullet_r(\ov\t_n^{(1)}) \leq (1-b)n\}\Big)
&=\sum_{\mathrm{t}\in \C_{1,r},\; N(\mathrm{t})+1\leq (1-b)n} \P(B^\bullet_r(\ov\t_n^{(1)})=\mathrm{t})\,
\P(a_\ve(\mathrm{t})=1)\\
&\leq \ov c\, b^{-3/2} \sum_{\mathrm{t}\in \C_{1,r}} \P(B^\bullet_r(\t_\infty^{(1)})=\mathrm{t})\,
\P(a_\ve(\mathrm{t})=1),
\end{align*}
using the bound \eqref{law-hull-fini}.
It follows that
$$\P\Big( \{a_\ve(B^\bullet_r(\ov\t_n^{(1)}))=1\}\cap \{\# B^\bullet_r(\ov\t_n^{(1)}) \leq (1-b)n\}\Big)
\leq \ov c\, b^{-3/2} \,\P(a_\ve(B^\bullet_r(\t_\infty^{(1)}))=1).
$$
Using Proposition \ref{dist-hull}, we now get
\begin{equation}
\label{bd-finitetri}
\lim_{r\to\infty} \Bigg( \sup_{n\geq 1} \;\P\Big( \{a_\ve(B^\bullet_r(\ov\t_n^{(1)}))=1\}\cap \{\# B^\bullet_r(\ov\t_n^{(1)}) <(1-b)n\}\Big)\Bigg) =0.
\end{equation}

Let us fix $0<\alpha<\beta<\gamma$. 
Write $ \mathfrak{B}_r(\t_n^{(1)},o_{n})$ for the ball of radius $r$ centered at $o_{n}$
in $\t_n^{(1)}$. For every $n\geq 1$, consider the event
$$D_{\beta,\gamma,n} :=\{\beta\,n^{1/4} < \mathrm{d}_\mathrm{gr}(\rho_{n},o_{n})\leq \gamma \,n^{1/4}\}.$$
For future use, we note that $B^\bullet_{\lfloor\alpha n^{1/4}\rfloor}(\ov\t_n^{(1)})$ is nontrivial on the event
$D_{\beta,\gamma,n}$.
We then observe that
$$\Big(D_{\beta,\gamma,n} \cap \{\#  \mathfrak{B}_{\lfloor(\beta-\alpha)n^{1/4}\rfloor}(\t_n^{(1)},o_{n}) > b\,n\}\Big)
\subset \{\#B^\bullet_{\lfloor\alpha n^{1/4}\rfloor}(\ov\t_n^{(1)})\leq (1-b)n\},$$
simply because if $D_{\beta,\gamma,n}$ holds, the whole ball
$ \mathfrak{B}_{(\beta-\alpha)n^{1/4}}(\t_n^{(1)},o_{n}) $ is contained in the complement of 
the hull $B^\bullet_{\lfloor \alpha n^{1/4} \rfloor}(\ov\t_n^{(1)})$. It then follows from \eqref{bd-finitetri} that
\begin{equation}
\label{conv-finitri}
\lim_{n\to\infty} \P\Big(\{a_\ve(B^\bullet_{\lfloor\alpha n^{1/4}\rfloor}(\ov\t_n^{(1)}))=1\}\cap D_{\beta,\gamma,n} \cap \{\#  \mathfrak{B}_{\lfloor(\beta-\alpha)n^{1/4}\rfloor}(\t_n^{(1)},o_{n}) > b\,n\}\Big) =0.
\end{equation}
On the other hand, given any $y<1$, we can choose $b\in(0,1)$ such that
\begin{equation}
\label{occup-snake}
\liminf_{n\to\infty} \P(\#  \mathfrak{B}_{\lfloor(\beta-\alpha)n^{1/4}\rfloor}(\t_n^{(1)},o_{n}) > b\,n) \geq y.
\end{equation}
This follows from the relation between $\t_n^{(1)}$ and $\t_n$, and  the well-known convergence
in distribution of the rescaled profile  of distances from 
a vertex chosen uniformly at random in $\t_n$ toward a random measure that gives positive mass to
every neighborhood of $0$: See Theorem 1(iii) in \cite{Mie08b}.
Since \eqref{occup-snake} holds for $y$ arbitrarily close to $1$, we deduce from 
\eqref{conv-finitri} that we have also
\begin{equation}
\label{conv-finitri2}
\lim_{n\to\infty} \P\Big(\{a_\ve(B^\bullet_{\lfloor\alpha n^{1/4}\rfloor}(\ov\t_n^{(1)}))=1\}\cap D_{\beta,\gamma,n}\Big) =0.
\end{equation}

To complete the argument, choose $\delta\in(0,1/2)$ such that $2\delta<\ve$. We will apply
\eqref{conv-finitri2} to 
$$ \alpha_j= j\,\delta^2,\;\beta_j=(j+1)\delta^2,\;\gamma_j=(j+2)\delta^2$$
for integers $j$ such that $\lfloor\delta^{-1}\rfloor< j\leq\lfloor \delta^{-3}\rfloor$. We observe that
$$\P\Bigg(\bigcup_{j=\lfloor \delta^{-1}\rfloor+1}^{ \lfloor \delta^{-3}\rfloor }
D_{\beta_j,\gamma_j,n}\Bigg)
= \P\Big( (\lfloor \delta^{-1}\rfloor+2)\delta^2\, n^{1/4} < \mathrm{d}_\mathrm{gr}(\rho_{n},o_{n})\leq
(\lfloor \delta^{-3}\rfloor+2)\delta^2\,n^{1/4}\Big).$$
Now recall that $n^{-1/4}\mathrm{d}_\mathrm{gr}(\rho_{n},o_{n})$ converges in distribution to a 
positive random variable (see e.g. \cite[Theorem 1.2(iii)]{MW08}). Hence,
by choosing $\delta$ smaller if needed, it follows from the previous display that we have for all 
sufficiently large $n$,
$$\P\Bigg(\bigcup_{j=\lfloor \delta^{-1}\rfloor+1}^{ \lfloor \delta^{-3}\rfloor }
D_{\beta_j,\gamma_j,n}\Bigg)\geq 1-\frac{\nu}{2}.$$
If we combine this with \eqref{conv-finitri2} (applied with $\alpha=\alpha_j,\beta=\beta_j,\gamma=\gamma_j$
for the relevant values of $j$), we get that, for $n$ large enough,
$$\P\Bigg(\bigcup_{j=\lfloor \delta^{-1}\rfloor+1}^{ \lfloor \delta^{-3}\rfloor }
\Big(\{a_\ve(B^\bullet_{\lfloor\alpha_j n^{1/4}\rfloor}(\ov\t_n^{(1)}))=0\}\cap D_{\beta_j,\gamma_j,n}\Big)\Bigg)\geq 1-\nu.$$
To complete the proof, we just need to verify that we have 
$$\Big|\frac{\mathrm{d}_\mathrm{fpp}(\rho_{n},o_{n})}{\mathrm{d}_\mathrm{gr}(\rho_{n},o_{n})}
-\mathbf{c}_0\Big| \leq 2\ve$$
on the event whose probability is considered in the previous display. Indeed, suppose that,
for some $j\in\{\lfloor \delta^{-1}\rfloor+1,\ldots, \lfloor \delta^{-3}\rfloor\}$, the event 
$\{a_\ve(B^\bullet_{\lfloor\alpha_j n^{1/4}\rfloor}(\ov\t_n^{(1)}))=0\}\cap D_{\beta_j,\gamma_j,n}$ holds. Then, clearly,
$$\mathrm{d}_\mathrm{fpp}(\rho_{n},o_{n}) \geq \min\big\{
\mathrm{d}_\mathrm{fpp}(\rho_{n},x) : x\in\partial^*B^\bullet_{\lfloor\alpha_j n^{1/4}\rfloor}(\ov\t_n^{(1)})\big\}  \geq (\mathbf{c}_0-\ve)\,\lfloor \alpha_j n^{1/4}\rfloor,$$
and it follows that
$$\frac{\mathrm{d}_\mathrm{fpp}(\rho_{n},o_{n})}{\mathrm{d}_\mathrm{gr}(\rho_{n},o_{n})}
\geq \frac{(\mathbf{c}_0-\ve)\,\lfloor \alpha_j n^{1/4}\rfloor}{\gamma_j n^{1/4}}
\geq \mathbf{c}_0-2\ve.$$
using the fact that $\frac{\alpha_j}{\gamma_j} = \frac{j}{j+2}\geq 1-\frac{2}{j}>1-2\delta>1-\ve$.
On the other hand,
still on the event $\{a_\ve(B^\bullet_{\lfloor\alpha_j n^{1/4}\rfloor}(\ov\t_n^{(1)}))=0\}\cap D_{\beta_j,\gamma_j,n}$,
we have 
\begin{align*}
\mathrm{d}_\mathrm{fpp}(\rho_{n},o_{n}) &\leq \Big( \max\big\{
\mathrm{d}_\mathrm{fpp}(\rho_{n},x) : x\in\partial^*B^\bullet_{\lfloor\alpha_j n^{1/4}\rfloor}(\ov\t_n^{(1)})\big\}  \Big)+ (\lfloor\gamma_j n^{1/4}\rfloor-\lfloor\alpha_j n^{1/4}\rfloor)\\
&\leq (\mathbf{c}_0+\ve)\,\lfloor \alpha_j n^{1/4}\rfloor+ (\lfloor\gamma_j n^{1/4}\rfloor-\lfloor\alpha_j n^{1/4}\rfloor)
\end{align*}
which implies
$$\frac{\mathrm{d}_\mathrm{fpp}(\rho_{n},o_{n})}{\mathrm{d}_\mathrm{gr}(\rho_{n},o_{n})}
\leq \frac{(\mathbf{c}_0+\ve)\,\lfloor \alpha_j n^{1/4}\rfloor+ (\lfloor\gamma_j n^{1/4}\rfloor-\lfloor\alpha_j n^{1/4}\rfloor)}{\beta_j n^{1/4}}\leq \mathbf{c}_0+2\ve.$$
This completes the proof. \end{proof}

In the next theorem, we deal with the uniform rooted  plane triangulation $\t_n$ with $n+1$ vertices. We 
equip the vertex set of $\t_n$ with the first-passage percolation distance $\mathrm{d}_\mathrm{fpp}$
defined as previously.

\begin{theorem}
\label{main-FPP-tri}
For every $\ve >0$, we have
$$\P\Bigg( \sup_{x,y\in \mathsf{V}(\t_n)}\big|\mathrm{d}_\mathrm{fpp}(x,y) 
-\mathbf{c}_0\, \mathrm{d}_\mathrm{gr}(x,y)\big| > \ve\,n^{1/4}\Bigg)
\build{\longrightarrow}_{n\to\infty}^{} 0.$$
\end{theorem}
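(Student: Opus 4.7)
The plan is to reduce Theorem \ref{main-FPP-tri} to Proposition \ref{key-finitetri} by combining a re-rooting argument with a net-covering argument based on the Gromov--Hausdorff convergence of rescaled triangulations to the Brownian map. Fix $\varepsilon > 0$ and let $\delta \in (0, \varepsilon)$ be a small parameter to be adjusted.

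\textbf{Step 1 (pairs of uniform vertices).} First I would upgrade Proposition \ref{key-finitetri} to the statement that, for two independent uniform inner vertices $o_n^{(1)}, o_n^{(2)}$ of $\t_n^{(1)}$,
\begin{equation*}
\P\bigl(\bigl|\mathrm{d}_\mathrm{fpp}(o_n^{(1)}, o_n^{(2)}) - \mathbf{c}_0\,\mathrm{d}_\mathrm{gr}(o_n^{(1)}, o_n^{(2)})\bigr| > \varepsilon n^{1/4}\bigr) \xrightarrow[n\to\infty]{} 0.
\end{equation*}
This relies on the invariance of the law of $\t_n$ under re-rooting at a uniformly chosen oriented edge: conditionally on $o_n^{(1)}$, one picks a uniform oriented edge emanating from $o_n^{(1)}$, re-roots there, and then $o_n^{(1)}$ plays the role of $\rho_n$ in Proposition \ref{key-finitetri} while $o_n^{(2)}$ remains uniform relative to this new root. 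The discrepancy between picking a uniform vertex and picking the initial vertex of a uniform oriented edge is absorbed by a size-bias proportional to the vertex degree, which is under control since the maximal degree of $\t_n$ is sub-polynomial in $n$ with high probability.

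\textbf{Step 2 (net and union bound).} Next, let $v_1, \ldots, v_N$ be independent uniform inner vertices of $\t_n^{(1)}$, where $N = N(\delta)$ is a large integer depending only on $\delta$. The Gromov--Hausdorff convergence of $(\mathsf{V}(\t_n^{(1)}), n^{-1/4}\mathrm{d}_\mathrm{gr})$ to the compact Brownian map \cite{LG11}, together with the full support of the limiting volume measure, guarantees that for $N$ large enough and every sufficiently large $n$, the set $\{v_1, \ldots, v_N\}$ is a $\delta n^{1/4}$-net of $(\mathsf{V}(\t_n^{(1)}), \mathrm{d}_\mathrm{gr})$ with probability at least $1 - \delta$. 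Applying Step~1 to each of the $N^2$ ordered pairs $(v_i, v_j)$ and taking a union bound yields that, with probability tending to $1$,
\begin{equation*}
\max_{1 \leq i, j \leq N} \bigl|\mathrm{d}_\mathrm{fpp}(v_i, v_j) - \mathbf{c}_0\,\mathrm{d}_\mathrm{gr}(v_i, v_j)\bigr| \leq \varepsilon n^{1/4}.
\end{equation*}

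\textbf{Step 3 (triangle inequality and conclusion).} On the intersection of the events in Steps~1 and~2, given arbitrary $x, y \in \mathsf{V}(\t_n)$ pick $v_i, v_j$ with $\mathrm{d}_\mathrm{gr}(x, v_i) \vee \mathrm{d}_\mathrm{gr}(y, v_j) \leq \delta n^{1/4}$; the triangle inequality applied to both $\mathrm{d}_\mathrm{fpp}$ and $\mathrm{d}_\mathrm{gr}$, combined with the elementary bound $\mathrm{d}_\mathrm{fpp} \leq \mathrm{d}_\mathrm{gr}$ (since weights lie in $[\kappa, 1]$), produces
\begin{equation*}
\bigl|\mathrm{d}_\mathrm{fpp}(x, y) - \mathbf{c}_0\,\mathrm{d}_\mathrm{gr}(x, y)\bigr| \leq \bigl(\varepsilon + 2(1 + \mathbf{c}_0)\delta\bigr)\, n^{1/4}.
\end{equation*}
Choosing $\delta$ so small that $2(1 + \mathbf{c}_0)\delta \leq \varepsilon$ gives the theorem with $\varepsilon$ replaced by $2\varepsilon$; since $\varepsilon$ was arbitrary, this concludes. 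The passage from $\t_n^{(1)}$ to $\t_n$ is harmless because of the local bijection of Fig.~\ref{fig:transform-root}, which perturbs graph distances only by a bounded amount near the root. \textbf{The main obstacle} is Step~1, where one must carefully absorb the size-bias introduced by re-rooting at a uniform vertex rather than at a uniform oriented edge, and propagate the convergence in probability of Proposition \ref{key-finitetri} through this absolute-continuity relation. A more robust alternative would be to redo the proof of Proposition \ref{key-finitetri} symmetrically in the two marked vertices, using the absolute continuity relations between finite triangulations and the UIPT rooted at either marked vertex, in the spirit of Lemma \ref{auxlem}.
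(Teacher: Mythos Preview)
Your proposal is correct and follows essentially the same three-step route as the paper: pass from Proposition~\ref{key-finitetri} to a statement about two uniform vertices via re-rooting, build an $\varepsilon n^{1/4}$-net with finitely many uniform vertices (the paper justifies this in Appendix~A1 via Gromov--Hausdorff--Prokhorov convergence), and conclude by the triangle inequality.

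The one point worth noting is that what you flag as the ``main obstacle'' --- the size-bias in Step~1 --- is dispatched in the paper by a one-line counting trick, with no need for maximal-degree bounds. The paper first re-roots $\ov\t_n$ at a uniformly chosen oriented edge $e_n$, which preserves the law exactly; then Proposition~\ref{key-finitetri} applies with $\rho_n$ replaced by the initial vertex $\rho'_n$ of $e_n$. The passage from the pair $(\rho'_n,o'_n)$ (edge-origin, uniform vertex) to the pair $(o'_n,o''_n)$ (two uniform vertices) is then immediate: since every vertex of $\t_n$ is the initial vertex of at least one oriented edge, the double sum $\sum_{v,\tilde v\in\mathsf{V}(\t_n)}\mathbf{1}_{\{\cdots\}}$ is term-by-term dominated by $\sum_{v\in\mathsf{V}(\t_n)}\sum_{e\in\vec{\mathsf{E}}(\t_n)}\mathbf{1}_{\{\cdots\}}$, so the former expectation is at most $6$ times the latter (up to factors of $(n+1)/(n-1)$). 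No Radon--Nikodym density, no degree control, no absolute-continuity argument is needed here.
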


\proof 
As mentioned above, we may assume that $\t_n$ and $\t^{(1)}_n$ are linked via the transformation of 
Fig.~\ref{fig:transform-root}. Then $\mathsf{V}(\t_n^{(1)})=\mathsf{V}(\t_n)$ and the graph 
distances are the same in $\mathsf{V}(\t_n^{(1)})$ and in $\mathsf{V}(\t_n)$. The root vertex $\rho_n$
is also the same in $\t_n$ and in $\t^{(1)}_n$. Furthermore, 
if $o_n$ stands for a uniformly distributed inner vertex of $\t_n^{(1)}$,
as in Proposition \ref{key-finitetri}, we can couple $o_n$  with a uniformly distributed vertex ${o}'_{n}$ of 
$\mathsf{V}(\t_{n})$, so that $\P(o_n=o'_n)=n/(n+1)$. Finally, we can assume that the FPP weights are the same
for all edges shared by $\t_n$ and $\t^{(1)}_n$ (that is, for all edges except for those involved in the 
transformation of 
Fig.~\ref{fig:transform-root}). It then follows from Proposition \ref{key-finitetri} that we have
\begin{equation}
\label{main-tri1}
\P\Big( \big|\mathrm{d}_\mathrm{fpp}(\rho_{n},o'_{n}) 
-\mathbf{c}_0\, \mathrm{d}_\mathrm{gr}(\rho_{n},o'_{n})\big| > \ve\,n^{1/4}\Big)
\build{\longrightarrow}_{n\to\infty}^{} 0,
\end{equation}
where the graph and FPP distances refer to $\t_n$. Indeed, on the event $\{o_n=o'_n\}$,
the graph distance $\mathrm{d}_\mathrm{gr}(\rho_{n},o'_{n})$ (in $\t_n$) 
is the same as the graph distance $\mathrm{d}_\mathrm{gr}(\rho_{n},o_{n})$ (in $\t_n^{(1)}$), and the
FPP distance $\mathrm{d}_\mathrm{fpp}(\rho_{n},o'_{n})$ (in $\t_n$) may only differ
from the FPP distance $\mathrm{d}_\mathrm{fpp}(\rho_{n},o_{n})$ (in $\t_n^{(1)}$) by a quantity bounded
in probability. 

Write $\ov\t_n$ for $\t_n$ pointed at $o'_n$. Conditionally on $\ov\t_n$, choose
an oriented edge $e_{n}$ of $\t_n$ uniformly at random. Then $\ov\t_n$ re-rooted at $e_{n}$
(and with the same distinguished vertex ${o}'_{n}$) has the same distribution as $\ov\t_n$.
Il follows that \eqref{main-tri1} still holds if $\rho_n$ is replaced by the root $\rho'_n$ of $e_n$.

Let $\vec{\mathsf{E}}(\t_n)$ be the set of all oriented edges of $\t_n$, and for 
$e\in \vec{\mathsf{E}}(\t_n)$ let $e_*$ denote the initial vertex of $e$. Since $\#\vec{\mathsf{E}}(\t_n)=6(n-1)$ and
$\#\mathsf{V}(\t_n)=n+1$, the probability in
\eqref{main-tri1} (with $\rho_n$ replaced by $\rho'_n$) can be rewritten as
$$\E\Bigg[ \frac{1}{n+1}\,\frac{1}{6(n-1)} \sum_{v\in \mathsf{V}(\t_n)}\sum_{e\in \vec{\mathsf{E}}(\t_n)}
\mathbf{1}_{\{ |\mathrm{d}_\mathrm{fpp}(e_*,v) 
-\mathbf{c}_0\, \mathrm{d}_\mathrm{gr}(e_*,v)| > \ve\,n^{1/4}}\Bigg]$$
and, since any vertex of $\t_n$ can be written as $e_*$ for 
(at least) one choice of $e$, this is bounded below by
$$\frac{1}{6(n+1)^2}\E\Bigg[  \sum_{v\in \mathsf{V}(\t_n)}\sum_{\tilde v\in \mathsf{V}(\t_n)}
\mathbf{1}_{\{ |\mathrm{d}_\mathrm{fpp}(v,\tilde v) 
-\mathbf{c}_0\, \mathrm{d}_\mathrm{gr}(v,\tilde v)| > \ve\,n^{1/4}}\Bigg].$$
Therefore the quantity in the last display also tends to $0$ as $n\to\infty$. This is just saying that, if
$o''_{n}$ is another vertex of $\mathsf{V}(\t_n)$, which conditionally on $\ov\t_n$ is uniformly distributed over $\mathsf{V}(\t_n)$,
we have also, for every $\ve>0$,
\begin{equation}
\label{main-tri3}
\P\Big( \big|\mathrm{d}_\mathrm{fpp}(o'_{n}, o''_{n}) 
-\mathbf{c}_0\, \mathrm{d}_\mathrm{gr}(o'_{n}, o''_{n})\big| > \ve\,n^{1/4}\Big)
\build{\longrightarrow}_{n\to\infty}^{} 0.
\end{equation}

Consider then, conditionally on $\t_n$, a sequence $(o_{n}^i)_{i\geq 1}$ of
vertices chosen independently at random uniformly over $\mathsf{V}(\t_n)$. Given any fixed $\delta>0$, we
can choose an integer $N\geq 1$ large enough such that, for every $n$,
\begin{equation}
\label{densite-uniform}
\P\Big( \sup_{x\in \mathsf{V}(\t_n)}\Big(\inf_{1\leq j\leq N} \mathrm{d}_\mathrm{gr}(x,o^j_{n})
\Big) < \ve \,n^{1/4}\Big) > 1- \delta.
\end{equation}
This essentially follows from  the convergence of rescaled triangulations to the Brownian map
obtained in \cite{LG11}. We provide a detailed proof
of \eqref{densite-uniform} in Appendix A1. 

Once $N$ is fixed, we deduce from 
\eqref{main-tri3} that we have also, for all sufficiently large $n$,
$$\P\Bigg(\bigcap_{1\leq i\leq j\leq N} \{ \big|\mathrm{d}_\mathrm{fpp}(o^i_{n}, o^j_{n}) 
-\mathbf{c}_0\, \mathrm{d}_\mathrm{gr}(o^i_{n},o^j_{n})\big| \leq \ve\,n^{1/4}\}\Bigg)
>1-\delta.$$

To complete the proof, just observe that
\begin{align*}
\sup_{x,y\in \mathsf{V}(\t_n)}\big|\mathrm{d}_\mathrm{fpp}(x,y) 
-\mathbf{c}_0\, \mathrm{d}_\mathrm{gr}(x,y)\big|
&\leq \sup_{i,j\in\{1,\ldots,N\}} \big|\mathrm{d}_\mathrm{fpp}(o^i_{n}, o^j_{n}) 
-\mathbf{c}_0\, \mathrm{d}_\mathrm{gr}(o^i_{n},o^j_{n})\big|\\ 
&\  +\,
4\, \sup_{x\in \mathsf{V}(\t_n)}\Big(\inf_{1\leq j\leq N} \mathrm{d}_\mathrm{gr}(x,o^j_{n})
\Big)
\end{align*}
and the preceding two displays show that the right-hand side is bounded above by 
$5\ve\,n^{1/4}$ outside a set of probability at most $2\delta$, for all sufficiently large $n$. 
\endproof

We now return to the UIPT $\t_\infty$, which we equip with the first-passage percolation
distance $\mathrm{d}_\mathrm{fpp}$. We write $B^{\rm fpp}_r(\t_\infty)$
for the ball of radius $r$ in $\t_\infty$ for the first-passage percolation
distance: This ball may be defined as the union of all faces of the UIPT that are incident
to a vertex at $\mathrm{d}_\mathrm{fpp}$-distance strictly less than $r$ from the root.

\begin{theorem}
\label{balls-uipt}
Let $\ve\in(0,1)$.
We have
$$\lim_{r\to\infty} \P\Bigg(\sup_{x,y\in \mathsf{V}(B_r(\t_\infty))} \big|\mathrm{d}_\mathrm{fpp}(x,y) 
-\mathbf{c}_0\, \mathrm{d}_\mathrm{gr}(x,y)\big| > \ve r\Bigg) = 0.$$
Consequently,
$$ \mathbb{P}\Big(B_{(1-\ve)r/\mathbf{c}_0}(\t_\infty) \subset B^\mathrm{fpp}_{r}(\t_\infty)
\subset B_{(1+\ve)r/\mathbf{c}_0}(\t_\infty)\Big) 
\build{\la}_{r\to\infty}^{} 1.$$
\end{theorem}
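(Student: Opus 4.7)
The plan is, first, to deduce the second assertion from the first by elementary manipulations, and then to prove the first assertion by transferring Theorem \ref{main-FPP-tri} from finite triangulations to the UIPT via the absolute continuity at the hull level, following the template of the proof of Proposition \ref{key-finitetri}.

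For the implication (first)$\Rightarrow$(second), since the weights lie in $[\kappa,1]$ we have $\kappa\,\mathrm{d}_\mathrm{gr}\leq \mathrm{d}_\mathrm{fpp}\leq \mathrm{d}_\mathrm{gr}$. Consequently both $B_{(1\pm\ve)r/\mathbf{c}_0}(\t_\infty)$ and $B^\mathrm{fpp}_r(\t_\infty)$ are contained in $B_{R'}(\t_\infty)$ with $R':=\lceil r/\kappa\rceil+1$. I will apply the first assertion at scale $R'$ with a sufficiently small error $\ve_0=\ve_0(\ve,\mathbf{c}_0,\kappa)>0$; on the resulting good event, $|\mathrm{d}_\mathrm{fpp}(\rho,v)-\mathbf{c}_0\,\mathrm{d}_\mathrm{gr}(\rho,v)|\leq\ve_0 R'$ for every vertex $v\in \mathsf{V}(B_{R'})$, and the two inclusions then follow by direct algebraic manipulation.

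To prove the first assertion, set $R:=\lceil 3r/\kappa\rceil$. For any $x,y\in \mathsf{V}(B_r(\t_\infty))$, a graph geodesic between them stays in $B_{3r}$ by the triangle inequality with $\rho$, and an FPP geodesic uses at most $2r/\kappa$ edges, hence stays in $B_{r+2r/\kappa}$. Both therefore lie inside $B^\bullet_R(\t_\infty^{(1)})$, so the event
$$E_r:=\Big\{\sup_{x,y\in \mathsf{V}(B_r(\t_\infty))}\bigl|\mathrm{d}_\mathrm{fpp}(x,y)-\mathbf{c}_0\,\mathrm{d}_\mathrm{gr}(x,y)\bigr|>\ve r\Big\}$$
is a measurable function of $B^\bullet_R(\t_\infty^{(1)})$, and distances computed within that hull agree with the distances in $\t_\infty$. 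Combining \eqref{eq:hulldiscrete}--\eqref{eq:loihullUIPHT} with the two-sided bounds of Lemma \ref{lem:bound}, a short computation yields a constant $C_*>0$ (depending only on a parameter $\alpha>0$ I will fix) such that for any triangulation $t$ of the cylinder of height $R$ with $N(t)\leq n/2$ and $p(t)\leq \alpha\sqrt{n-N(t)}$,
$$\P\bigl(B^\bullet_R(\t_\infty^{(1)})=t\bigr)\leq C_*\,\P\bigl(B^\bullet_R(\t_n^{(1)})=t\bigr).$$
Choosing $n:=\lfloor K r^4\rfloor$ for a large constant $K$, Lemma \ref{bdlawperi} gives $L_R=O(r^2)$ with probability tending to $1$, and a companion estimate $N(B^\bullet_R)=O(r^4)$ in probability (readable from the skeleton decomposition together with standard moment bounds on Boltzmann triangulations) ensures $N(B^\bullet_R)\leq n/2$ with probability tending to $1$. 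Summing over $t\in E_r$ then yields
$$\P(E_r)\leq C_*\,\P\bigl(E_r\text{ holds in }\t_n^{(1)}\bigr)+o_{r\to\infty}(1).$$

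For the remaining term, I will use that the diameter of $\t_n^{(1)}$ is of order $n^{1/4}=K^{1/4}r\gg R$ when $K$ is large, so $\t_n^{(1)}$ extends well beyond $B^\bullet_R$ with high probability. The same geodesic argument shows that distances in $B^\bullet_R(\t_n^{(1)})$ coincide with those in $\t_n^{(1)}$ for pairs in $B_r$, so $\{E_r\text{ in }\t_n^{(1)}\}$ is implied, up to $o(1)$, by the event $\{\sup_{x,y\in \mathsf{V}(\t_n^{(1)})}|\mathrm{d}_\mathrm{fpp}-\mathbf{c}_0\,\mathrm{d}_\mathrm{gr}|>\ve r\}$. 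Since $\ve r=(\ve/K^{1/4})\,n^{1/4}$, Theorem \ref{main-FPP-tri} (in its $\t_n^{(1)}$-form obtained via the transformation of Fig.~\ref{fig:transform-root}) applied with the fixed parameter $\ve/K^{1/4}$ gives that this probability tends to $0$ as $r\to\infty$. The main technical obstacle is the scale-matching: $n$ must be chosen of order $r^4$ (any larger and Theorem \ref{main-FPP-tri}'s $n^{1/4}$-scale precision no longer controls $\ve r$; any smaller and the hull volume $N(B^\bullet_R)\asymp r^4$ ceases to be negligible relative to $n$, making $C_*$ blow up). This forces a quantitative tail bound on $N(B^\bullet_R(\t_\infty^{(1)}))$, which is implicit in but not explicitly stated in the preceding material.
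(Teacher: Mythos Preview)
Your approach is essentially the same as the paper's: reduce to the hull $B^\bullet_{R}(\t^{(1)}_\infty)$ with $R$ a constant multiple of $r$, use the two-sided bounds of Lemma~\ref{lem:bound} together with \eqref{eq:hulldiscrete}--\eqref{eq:loihullUIPHT} to get absolute continuity of the hull law under the UIPT with respect to that under a pointed finite triangulation of size $\asymp r^4$, and then apply Theorem~\ref{main-FPP-tri}. Two small points deserve mention. First, when you write $\P(B^\bullet_R(\t_n^{(1)})=t)$ you must mean the \emph{pointed} triangulation $\ov\t_n^{(1)}$, since the hull is only defined relative to a distinguished vertex; formula \eqref{eq:hulldiscrete} is indeed for $\ov\t_n^{(1)}$, and the paper works with $\ov\t_{2\alpha r^4}^{(1)}$ throughout. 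Second, and more substantively, the ``companion estimate $N(B^\bullet_R)=O(r^4)$ in probability'' that you flag as the main technical obstacle is not derived from scratch in the paper: it is imported from \cite[Theorem~2]{CLGpeeling}, which simultaneously controls the hull volume and the boundary length (the latter also appears in your condition $p(t)\le\alpha\sqrt{n-N(t)}$). So the gap you identify is real, and the paper closes it by citation rather than by an argument internal to the skeleton decomposition.
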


\proof The second part of the theorem is an easy consequence of the first one, and so we
concentrate on the first assertion. By the same arguments as in the beginning of the proof of
Theorem \ref{main-FPP-tri}, it is enough to prove the desired result 
with $ \mathcal{T}_{\infty}$ replaced by $ \mathcal{T}_{\infty}^{(1)}$. We fix $\delta>0$, which can be taken arbitrarily small. Consider first an arbitrary 
(deterministic) rooted planar map
$\mathrm{m}$ with root vertex $\rho$. We equip the vertex set $\mathsf{V}(\mathrm{m})$ with
the graph distance $\mathrm{d}_\mathrm{gr}$ and with the (random)
first-passage percolation
distance $\mathrm{d}_\mathrm{fpp}$. For every integer $r>0$, we say that
$\mathrm{m}\in A^{(\delta)}_r$ if the property
$$\big|\mathrm{d}_\mathrm{fpp}(x,y) 
-\mathbf{c}_0\, \mathrm{d}_\mathrm{gr}(x,y)\big|\leq \ve r\,\hbox{ for all }x,y\in \mathsf{V}(\mathrm{m})
\hbox{
such that }\mathrm{d}_\mathrm{gr}(\rho,x)\vee \mathrm{d}_\mathrm{gr}(\rho,y)\leq r,$$
holds with probability at least $1-\delta$. 

In order to prove the first assertion of the theorem, it is enough to verify that, for
every fixed integer $K\geq 1$, we have for all sufficiently large integers $r$,
$$\P(B^\bullet_{Kr}(\t^{(1)}_\infty) \in A^{(\delta)}_r) \geq 1-\delta.$$
The point is that if $K$ is chosen sufficiently large, if $x$ and $y$ are
two vertices of $B_r(\t^{(1)}_\infty)$, the first-passage percolation distance (resp.~the
graph distance) between $x$ and $y$
in the graph $\t^{(1)}_\infty$ coincides with the first-passage percolation distance
(resp.~the graph distance) between $x$ and $y$ in the graph $B^\bullet_{Kr}(\t^{(1)}_\infty)$.

So let us fix $K\geq 1$. Recall our notation $1+N(\mathrm{t})$ for the total number
of vertices of $\mathrm{t}\in \C_{1,r}$.
We first observe that, by  \cite[Theorem 2]{CLGpeeling}, we can choose two positive integers $\alpha>1$ and $\beta>1$
such that, if $D_r:=\{\mathrm{t}\in \C_{1,Kr}: N(\mathrm{t})\geq \alpha r^4\hbox{ or } N(\mathrm{t})\leq \alpha^{-1} r^4 \hbox{ or } |\partial^*\mathrm{t}| \geq \beta r^2\}$,
we have 
$$\P(B^\bullet_{Kr}(\t^{(1)}_\infty)\in D_r) \leq \frac{\delta}{2}.$$
Note that Theorem 2 in \cite{CLGpeeling} deals with the type II UIPT, but the last section of \cite{CLGpeeling} explains that
a similar result holds for the type I triangulations that we consider here.

Then, if $\mathrm{t}\in \C_{1,Kr}\backslash D_r$, it follows from formula \eqref{eq:hulldiscrete}, using both assertions of Lemma  \ref{lem:bound}, that the quantity $\P(B^\bullet_{Kr}(\ov\t_{2\alpha r^4}^{(1)})=\mathrm{t})$ 
is bounded below by 
 $$\frac{c'\,C(|\partial^*\mathrm{t}|)\,(2\alpha r^4-N(\mathrm{t}))^{-3/2}\,(12\sqrt{3})^{2\alpha r^4-N(\mathrm{t})}}
 {c\,C(1)\,(2\alpha r^4)^{-3/2}\,(12\sqrt{3})^{2\alpha r^4}}
 \geq \frac{c'}{c}\, \frac{C(|\partial^*\mathrm{t}|)}{C(1)}\,(12\sqrt{3})^{-N(\mathrm{t})}
 =\frac{c'}{c}\,\P(B^\bullet_{Kr}(\t^{(1)}_\infty)=\mathrm{t})
 ,$$
 where the last equality is \eqref{law-hull-infi}. Summarizing, we have obtained the existence of 
 a constant $c''>0$ such that, for every $\mathrm{t}\in \C_{1,Kr}\backslash D_r$,
 $$\P(B^\bullet_{Kr}(\ov\t_{2\alpha r^4}^{(1)})=\mathrm{t})\geq c''\,\P(B^\bullet_{Kr}(\t^{(1)}_\infty)=\mathrm{t}).$$
 It follows that
\begin{align*}
\P(B^\bullet_{Kr}(\t^{(1)}_\infty) \notin A^{(\delta)}_r) &\leq 
 \P(B^\bullet_{Kr}(\t^{(1)}_\infty)\in D_r) + \sum_{\mathrm{t}\in (A^{(\delta)}_r)^c\cap (\C_{1,Kr}\backslash D_r)}
 \P(B^\bullet_{Kr}(\t^{(1)}_\infty)=\mathrm{t})\\
 &\leq \frac{\delta}{2} + (c'')^{-1}
  \sum_{\mathrm{t}\in (A^{(\delta)}_r)^c\cap (\C_{1,Kr}\backslash D_r)}
 \P(B^\bullet_{Kr}(\ov\t_{2\alpha r^4}^{(1)})=\mathrm{t})\\
 &\leq \frac{\delta}{2} + (c'')^{-1}\,\P(B^\bullet_{Kr}(\ov\t_{2\alpha r^4}^{(1)})\notin A^{(\delta)}_r).
 \end{align*}
 However, Theorem \ref{main-FPP-tri}, or more precisely the equivalent statement for triangulations of the $1$-gon, tells us that $\P(B^\bullet_{Kr}(\ov\t_{2\alpha r^4}^{(1)})\notin A^{(\delta)}_r)$
 tends to $0$ as $r\to\infty$. We thus obtain that $\P(B^\bullet_{Kr}(\t^{(1)}_\infty) \notin A^{(\delta)}_r) <\delta$
 for all sufficiently large $r$, which was the desired result. \endproof
 
\section{Dual and Eden distances}

In this section, we discuss variants of Theorems \ref{main-FPP-tri} and \ref{balls-uipt} that hold
for particular choices of distances on the dual graph associated with finite triangulations,
or with the UIPT.  These choices correspond to the distances
$ \mathrm{d}_{ \mathrm{gr}}^\dagger$ or $ \mathrm{d}_{ \mathrm{Eden}}^\dagger$ introduced in cases 1.~and 2.~discussed
in the Introduction. The main technical difference when dealing with these distances on the dual graph comes from the lack of an priori bound (both from above and 
from below) for the modified distance by a multiple of the graph distance. 

\subsection{Statement of the results and identification of the constants}

Recall our notation $\t_n$ for a uniformly distributed rooted plane triangulation
with $n+1$ vertices, and $\mathsf{F}(\t_n)$ for the set of all faces of $\t_n$. The dual graph distance 
on $\mathsf{F}(\t_n)$ is denoted by $\mathrm{d}^\dagger_{\mathrm{gr}}$. As in the Introduction, we also define the 
Eden distance $\mathrm{d}^\dagger_{\mathrm{Eden}}$ on $\mathsf{F}(\t_n)$ by assigning independently to every dual edge 
an exponential weight with parameter $1$, and considering the associated first-passage percolation distance. 

We now state our analog of Theorem \ref{main-FPP-tri} for these distances.

\begin{theorem}
\label{main-tri-dual} There exist two constants $ \mathbf{c}_{1}, \mathbf{c}_{2} \in (0, \infty)$ such that for every $\ve >0$, we have
\begin{align*}
\P\Bigg( \build{\sup_{ x, y \in \mathsf{V}(\t_{n}),f,g\in \mathsf{F}(\t_n)}}_{x\triangleleft f,y\triangleleft g}^{} 
\big|\mathrm{d}^\dagger_\mathrm{gr}(f,g) 
-  \mathbf{c}_{1}\, \mathrm{d}_\mathrm{gr}(x,y)\big| > \ve\,n^{1/4}\Bigg)
& \xrightarrow[n\to\infty]{} 0,\\
\P\Bigg( \build{\sup_{ x, y \in \mathsf{V}(\t_{n}),f,g\in \mathsf{F}(\t_n)}}_{x\triangleleft f,y\triangleleft g}^{} 
\big|\mathrm{d}^\dagger_\mathrm{Eden}(f,g) 
-  \mathbf{c}_{2}\, \mathrm{d}_\mathrm{gr}(x,y)\big| > \ve\,n^{1/4}\Bigg)
& \xrightarrow[n\to\infty]{} 0,
\end{align*}
where we recall that the notation $x\triangleleft f$ means that the vertex $x$ is incident to the face $f$.
\end{theorem}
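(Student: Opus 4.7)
The plan is to adapt the three-stage argument of Sections 5 and 6 to the dual setting. First, I produce the constants $\mathbf{c}_1$ and $\mathbf{c}_2$ via a Kingman-type subadditive limit in the LHPT. Second, I transfer the resulting asymptotics to $\mathcal{T}^{(1)}_\infty$ through analogs of Propositions \ref{keytech} and \ref{dist-hull}, combining the comparison principle of Proposition \ref{prop:compa} with the boundary estimates of Section 4. Third, I pass from $\mathcal{T}^{(1)}_\infty$ to $\mathcal{T}_n$ via the absolute continuity Lemma \ref{auxlem} and the density argument of the proof of Theorem \ref{main-FPP-tri}. The explicit values of $\mathbf{c}_1$ and $\mathbf{c}_2$ are then identified using the peeling analysis of \cite{CLGpeeling}.

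For the first stage, on $\mathcal{L}$, I consider the face $f_\rho$ lying immediately below the root edge and, for each $r\geq 1$, the set $\mathcal{F}^*_r$ of faces of $\mathcal{L}$ whose vertices all have second coordinate in $[-r,0]$. The translation invariance of $\mathcal{L}$ and the independence of its successive layers imply that the sequences $r\mapsto \mathrm{d}^{\dagger,\mathcal{L}}_{\mathrm{gr}}(f_\rho,\mathcal{L}\setminus\mathcal{F}^*_r)$ and $r\mapsto \mathrm{d}^{\dagger,\mathcal{L}}_{\mathrm{Eden}}(f_\rho,\mathcal{L}\setminus\mathcal{F}^*_r)$ satisfy a subadditivity property to which Liggett's theorem applies, producing deterministic limits $\mathbf{c}_1,\mathbf{c}_2\in (0,\infty)$. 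Positivity follows from the elementary fact that any dual path from $f_\rho$ to $\mathcal{L}\setminus\mathcal{F}^*_r$ must traverse all $r$ intermediate horizontal lines and hence has dual length at least $r$, while finiteness follows from the comparable growth of dual and primal graph-distance balls in $\mathcal{T}^{(1)}_\infty$ established in \cite{CLGpeeling}.

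The main obstacle, compared to Section 5, is the absence of a deterministic bound of the form $\mathrm{d}^\dagger_\star \leq \mathrm{d}_{\mathrm{gr}}$: the Eden weights are unbounded, and a single vertex of the triangulation can be incident to an unbounded number of faces, so dual graph distances can a priori exceed primal ones. This matters most in the proof of the analog of Proposition \ref{keytech}, where the bound $\mathrm{d}_{\mathrm{fpp}}\leq \mathrm{d}_{\mathrm{gr}}$ was crucially used to forbid FPP-geodesics in $B^\bullet_n$ from ``turning around'' the layer $B^\bullet_n\setminus B^\bullet_{n-\lfloor\eta n\rfloor}$. I would replace the deterministic bound by a high-probability one: for the dual graph distance, degrees of vertices in $\mathcal{T}^{(1)}_\infty$ have an exponential tail (a consequence of standard tail bounds on the perimeters of the Boltzmann triangulations filling the slots), and for the Eden distance, concentration of sums of $N$ i.i.d.\ exponentials around $N$ yields $\mathrm{d}^\dagger_{\mathrm{Eden}} \geq \tfrac{1}{2}\,\mathrm{d}^\dagger_{\mathrm{gr}}$ along every not-too-short path, except on an event of negligible probability. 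Combined with a union bound over the $O(n^2)$ faces adjacent to $\partial^*B^\bullet_n$, these tail estimates allow the arguments of Section 5 to run in the dual setting. Stages two and three then proceed essentially verbatim as in Sections 5 and 6.

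To identify the constants, I would feed the peeling results of \cite{CLGpeeling} into the analog of Proposition \ref{dist-hull}. The peeling exploration constructs the dual graph distance hulls of $\mathcal{T}^{(1)}_\infty$ face by face, and, after equipping each peeling step with an independent exponential clock, also constructs the Eden hulls as explained in \cite[Proposition 15]{CLGpeeling}. The scaling limits of the perimeter and volume processes of these hulls are computed explicitly in \cite{CLGpeeling}, and comparing them with the corresponding limits for the graph-distance hulls pins down the ratios $\mathbf{c}_1 = 1+2\sqrt{3}$ and $\mathbf{c}_2 = 2\sqrt{3}$, in agreement with the conjecture of \cite[Remark 5]{AB14}.
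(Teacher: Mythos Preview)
Your outline captures the right global structure and correctly identifies the main new difficulty: the lack of an a priori inequality $\mathrm{d}^\dagger_\star\leq C\,\mathrm{d}_{\mathrm{gr}}$ with a fixed constant $C$. However, your proposed remedy for this difficulty is not strong enough, and this creates a genuine gap.

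You suggest controlling the dual distance in terms of the primal one via exponential tail bounds on vertex degrees. A union bound over all vertices only gives that the maximal degree in $\t_n$ is $O(\log n)$ with high probability (this is Lemma~\ref{max-degree} in the paper), and hence only $\mathrm{d}^\dagger_{\mathrm{gr}}(f,g)\leq C(\log n)\,\mathrm{d}_{\mathrm{gr}}(x,y)$. This logarithmic factor is fatal in the endgame: in the analog of the proof of Proposition~\ref{key-finitetri} one must bound $\mathrm{d}^\dagger_{\mathrm{gr}}(f_n,B^\bullet_{\lfloor \alpha_j n^{1/4}\rfloor})$ on the event $\{\beta_j n^{1/4}<\mathrm{d}_{\mathrm{gr}}(\rho_n,o_n)\leq \gamma_j n^{1/4}\}$, and one needs a bound of the form $\alpha(\gamma_j-\alpha_j)n^{1/4}+o(n^{1/4})$ with a \emph{fixed} constant $\alpha$. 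Your bound would give $C(\log n)(\gamma_j-\alpha_j)n^{1/4}$, which is not $o(n^{1/4})$ for fixed $\gamma_j-\alpha_j$. The same issue arises in the analog of Proposition~\ref{keytech}: one needs an a priori upper bound $\mathrm{d}^\dagger_{\mathrm{gr}}(f,B^\bullet_{n-\lfloor\eta n\rfloor})\leq \alpha\,\eta n$ uniformly over downward triangles $f$ at height $n$, in order even to set up the ``no turning around'' argument. Similarly, your appeal to \cite{CLGpeeling} for the finiteness of the subadditive expectation is too vague; one needs $\E[\mathrm{d}^{\dagger,\mathcal{L}}_{\mathrm{gr}}(f_\rho,\l^\dagger_r)]<\infty$ with a linear-in-$r$ bound.

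The device the paper introduces to fill this gap is the \emph{downward path}: for each downward triangle $f$ at height $s$ one constructs an explicit dual path from $f$ to $B^\bullet_r$ by turning counterclockwise around the successive vertices of the left-most primal geodesic. Its length factorizes over the layers, and each layer contributes a random variable with uniformly exponential tails thanks to the degree estimate of Proposition~\ref{prop:subexpo} (which is proved by a peeling argument, not merely by slot-perimeter tails). This yields Lemma~\ref{bound-do-path} and, after a comparison with Galton--Watson forests, the uniform linear bound of Corollary~\ref{unif-bd-dual} and its finite-$n$ counterpart Lemma~\ref{bd-dual-fini}. With these in hand, the proofs of the analogs of Propositions~\ref{keytech} and \ref{dist-hull} go through; note also that the paper handles the ``turning around'' step in the opposite direction from what you sketch: a \emph{dual} path of length $L$ exiting the region $\g^{(n)}_j(\eta)$ produces a \emph{primal} path of length at most $L+1$ with the same property, so one reduces directly to the estimate already proved in Section~5, rather than bounding dual distances by primal ones. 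Your identification of the constants via \cite{CLGpeeling} is correct and matches the paper.
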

Combining the above theorem with Theorem \ref{main-FPP-tri} and the known convergence of rescaled triangulations towards the Brownian map we obtain the following joint convergences. If $(E,d)$ is a metric space and $\alpha >0$, we use the notation $\alpha \cdot (E,d)=(E,\alpha d)$.

\begin{corollary} \label{cor:brownianmap} Let $(\mathbf{m}_\infty, D^*)$ be the Brownian map. The following convergences in distribution 
$$\begin{array}{rcl}
3^{1/4}n^{-1/4}\cdot (\mathsf{V}(\t_n),\,\mathrm{d}_\mathrm{gr}) &\xrightarrow[n\to\infty]{ \rm (d)} &(\mathbf{m}_\infty, D^*)\\
3^{1/4}n^{-1/4}\cdot (\mathsf{V}(\t_n),\, \mathrm{d}_\mathrm{fpp}) &\xrightarrow[n\to\infty]{ \rm (d)} &\mathbf{c_0}\cdot (\mathbf{m}_\infty, D^*)\\
3^{1/4}n^{-1/4}\cdot (\mathsf{F}(\t_n),\, \mathrm{d}^\dagger_\mathrm{gr}) &\xrightarrow[n\to\infty]{ \rm (d)} &\mathbf{c_1}\cdot (\mathbf{m}_\infty, D^*)\\
3^{1/4}n^{-1/4}\cdot (\mathsf{F}(\t_n),\, \mathrm{d}^\dagger_\mathrm{Eden})&\xrightarrow[n\to\infty]{ \rm (d)} &\mathbf{c_2}\cdot (\mathbf{m}_\infty, D^*)
\end{array}$$
hold jointly (with the same limit), in the the space of all isometry classes of compact metric spaces equipped with the Gromov--Hausdorff  distance.
\end{corollary}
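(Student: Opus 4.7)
The first convergence is the Le Gall theorem \cite{LG11} applied to type I triangulations (which can be obtained from the type II statement by the standard transformation of Fig.~\ref{fig:transform-root}, or verified directly by the same method). The strategy is to deduce the other three convergences, together with the joint statement, from this first one by comparing each rescaled space to $(\mathsf{V}(\t_n),3^{1/4}n^{-1/4}\mathrm{d}_\mathrm{gr})$ via an explicit correspondence, exploiting the general bound $d_{\mathrm{GH}}(X,Y)\leq\tfrac12\,\mathrm{dis}(R)$ valid for any correspondence $R\subset X\times Y$.

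For the FPP convergence, take the identity correspondence on $\mathsf{V}(\t_n)$; its distortion between the metrics $3^{1/4}n^{-1/4}\mathbf{c}_0\,\mathrm{d}_\mathrm{gr}$ and $3^{1/4}n^{-1/4}\mathrm{d}_\mathrm{fpp}$ equals
\[
3^{1/4}n^{-1/4}\sup_{x,y\in\mathsf{V}(\t_n)}\bigl|\mathrm{d}_\mathrm{fpp}(x,y)-\mathbf{c}_0\,\mathrm{d}_\mathrm{gr}(x,y)\bigr|,
\]
which tends to $0$ in probability by Theorem~\ref{main-FPP-tri}. For the two dual convergences, use the incidence correspondence
\[
R_n=\{(x,f)\in\mathsf{V}(\t_n)\times\mathsf{F}(\t_n):x\triangleleft f\},
\]
which is indeed a correspondence because every vertex of $\t_n$ is incident to at least one face and every triangular face is incident to three vertices. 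Its distortion between $3^{1/4}n^{-1/4}\mathbf{c}_1\,\mathrm{d}_\mathrm{gr}$ on $\mathsf{V}(\t_n)$ and $3^{1/4}n^{-1/4}\mathrm{d}^\dagger_\mathrm{gr}$ on $\mathsf{F}(\t_n)$ is exactly the quantity controlled by Theorem~\ref{main-tri-dual}, hence tends to $0$ in probability; the same correspondence, combined with the corresponding statement of Theorem~\ref{main-tri-dual} for $\mathrm{d}^\dagger_\mathrm{Eden}$ and $\mathbf{c}_2$, handles the last case. Thus each of the three modified rescaled spaces lies within $o(1)$ Gromov--Hausdorff distance, in probability, from $\mathbf{c}_i\cdot(\mathsf{V}(\t_n),3^{1/4}n^{-1/4}\mathrm{d}_\mathrm{gr})$.

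The joint statement then follows from an elementary Slutsky-type principle: if $A_n$ converges in distribution to $A$ in a Polish space and $d_{\mathrm{GH}}(\mathbf{c}_i\cdot A_n,B_n^{(i)})\to 0$ in probability for $i=0,1,2$, then the quadruple $(A_n,B_n^{(0)},B_n^{(1)},B_n^{(2)})$ converges jointly in distribution to $(A,\mathbf{c}_0\cdot A,\mathbf{c}_1\cdot A,\mathbf{c}_2\cdot A)$, since the scaling map $X\mapsto\mathbf{c}_i\cdot X$ is continuous on the space of isometry classes of compact metric spaces. Applying this with $A_n=(\mathsf{V}(\t_n),3^{1/4}n^{-1/4}\mathrm{d}_\mathrm{gr})$ and $A=(\mathbf{m}_\infty,D^*)$ yields the corollary. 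The only conceptual subtlety is that the Gromov--Hausdorff comparison crosses between spaces supported on different underlying sets (vertices for the first two, faces for the last two), but this is precisely what the incidence correspondence $R_n$ resolves; the main technical input remains the uniform pointwise comparisons supplied by Theorems~\ref{main-FPP-tri} and~\ref{main-tri-dual}, which are exactly what one needs to bound correspondence distortions.
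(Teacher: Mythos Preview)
Your proof is correct and follows the same approach as the paper, which simply states that the first convergence is \cite{LG11} and that the remaining three (and the joint statement) follow from Theorems~\ref{main-FPP-tri} and~\ref{main-tri-dual}. You have merely made explicit the correspondence/distortion argument and the Slutsky-type step that the paper leaves implicit.
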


The first convergence in distribution of the corollary is proved in \cite{LG11}. The other convergences, and the fact that 
they hold jointly with the first one, then follow from Theorems \ref{main-FPP-tri} and \ref{main-tri-dual}.

\begin{remark} As explained below in Appendix A1, the first convergence of the corollary holds in the stronger
sense of the Gromov--Hausdorff--Prokhorov measure, if the sets $\mathsf{V}(\t_n)$ are equipped with the 
uniform probability measure and the Brownian map with its volume measure. It follows that the second
convergence also holds in this stronger sense. The same should be true for the last two ones, but
verifying this would require some additional work.
\end{remark}

We can also state an analog of Theorem \ref{balls-uipt}. We let $\mathsf{F}(\t_\infty)$ be the set
of all faces of the UIPT $\t_\infty$, and we equip this set with the dual graph distance 
$\mathrm{d}^\dagger_{\mathrm{gr}}$
 and with the Eden distance $\mathrm{d}^\dagger_{\mathrm{Eden}}$ defined as above from independent 
 exponential weights on the dual edges. 
  
For every $r\geq 0$, we let $B^\mathrm{dual}_r(\t_\infty)$ (resp. $B^\mathrm{Eden}_r(\t_\infty)$) be the union of all faces of $\t_\infty$
 at dual graph distance (resp. at Eden distance) less than or equal to $r$ from the root face (by definition, the root face is the face lying 
 on the right  of the root edge).
 
 \begin{theorem}
\label{balls-uipt-dual}
Let $\ve\in(0,1)$. With the same constants $ \mathbf{c}_{1}$ and $ \mathbf{c}_{2}$ as in Theorem \ref{main-tri-dual} we have 
\begin{align*}
\lim_{r\to\infty} \P\Bigg(\build{\sup_{ x, y \in \mathsf{V}(B_r(\t_\infty)),f,g\in \mathsf{F}(B_r(\t_\infty))}}_{x\triangleleft f,y\triangleleft g}^{}  \big|\mathrm{d}^\dagger_\mathrm{gr}(f,g) 
- \mathbf{c}_{1}\, \mathrm{d}_\mathrm{gr}(x,y)\big| > \ve r\Bigg) &= 0,\\
\lim_{r\to\infty} \P\Bigg(\build{\sup_{ x, y \in \mathsf{V}(B_r(\t_\infty)),f,g\in \mathsf{F}(B_r(\t_\infty))}}_{x\triangleleft f,y\triangleleft g}^{}  
\big|\mathrm{d}^\dagger_\mathrm{Eden}(f,g) 
-  \mathbf{c}_{2}\, \mathrm{d}_\mathrm{gr}(x,y)\big| > \ve r\Bigg) &= 0.
\end{align*}
Consequently,
\begin{align*}
 \mathbb{P}\Big(B_{(1-\ve)r/ \mathbf{c}_{1}}(\t_\infty) \subset B^\mathrm{dual}_{r}(\t_\infty)
\subset B_{(1+\ve)r/ \mathbf{c}_1}(\t_\infty)\Big) 
&\build{\la}_{r\to\infty}^{} 1,\\
 \mathbb{P}\Big(B_{(1-\ve)r/ \mathbf{c}_{2}}(\t_\infty) \subset B^\mathrm{Eden}_{r}(\t_\infty)
\subset B_{(1+\ve)r/ \mathbf{c}_{2}}(\t_\infty)\Big) 
&\build{\la}_{r\to\infty}^{} 1.
\end{align*}
\end{theorem}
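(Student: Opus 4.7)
\medskip

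\noindent\textbf{Proof plan for Theorem \ref{balls-uipt-dual}.}

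The plan is to mimic the proof of Theorem \ref{balls-uipt}, substituting Theorem \ref{main-tri-dual} for Theorem \ref{main-FPP-tri} at the relevant places. The two assertions (for $\mathrm{d}^\dagger_\mathrm{gr}$ and for $\mathrm{d}^\dagger_\mathrm{Eden}$) will be treated in parallel; I describe the argument for $\mathrm{d}^\dagger_\mathrm{gr}$, since for $\mathrm{d}^\dagger_\mathrm{Eden}$ only the localization step below requires an additional probabilistic input. The second part of the statement (the ball inclusions) is a direct consequence of the first part together with the definitions of $B^\mathrm{dual}_r$ and $B^\mathrm{Eden}_r$, so I concentrate on the uniform comparison of distances. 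As in the proof of Theorem \ref{balls-uipt}, the bijection of Fig.~\ref{fig:transform-root} lets me replace $\t_\infty$ by $\t^{(1)}_\infty$, at the cost of an additive $O(1)$ in the distances, which is harmless.

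Fix $\delta\in(0,1)$. Following the template, I would call a deterministic rooted planar map $\mathrm{m}$ \emph{good}, and write $\mathrm{m}\in A^{(\delta)}_r$, if
\[
\big|\mathrm{d}^\dagger_\mathrm{gr}(f,g) - \mathbf{c}_1\,\mathrm{d}_\mathrm{gr}(x,y)\big|\leq \ve r
\]
for all incident pairs $x\triangleleft f,\ y\triangleleft g$ with $\mathrm{d}_\mathrm{gr}(\rho,x)\vee\mathrm{d}_\mathrm{gr}(\rho,y)\leq r$, with probability at least $1-\delta$ (in the Eden case the randomness is over the exponential weights; in the dual graph case there is no extra randomness). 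The main reduction is to prove that one can find an integer $K\geq 1$ such that, for all large enough $r$,
\[
\P\bigl(B^\bullet_{Kr}(\t^{(1)}_\infty)\in A^{(\delta)}_r\bigr)\geq 1-\delta.
\]
Granting this, the theorem follows because for $K$ large the dual distance (resp.~the Eden distance) between two faces incident to vertices in $B_r(\t^{(1)}_\infty)$, computed in $\t^{(1)}_\infty$, agrees with the corresponding distance computed inside $B^\bullet_{Kr}(\t^{(1)}_\infty)$.

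The most delicate point is precisely the localization claim: in contrast with the FPP case, neither $\mathrm{d}^\dagger_\mathrm{gr}$ nor $\mathrm{d}^\dagger_\mathrm{Eden}$ is a priori dominated by a constant multiple of $\mathrm{d}_\mathrm{gr}$, so I need to check that a geodesic for the modified metric joining two faces in $B_r$ does not leave $B^\bullet_{Kr}$. For $\mathrm{d}^\dagger_\mathrm{gr}$, this is deterministic: crossing one layer of the hull decomposition of Section \ref{sec:skeletonmain} costs at most a bounded number of dual steps, so the dual distance between any two faces incident to $B_r$ is bounded by a universal constant times $r$, and one takes $K$ accordingly. For $\mathrm{d}^\dagger_\mathrm{Eden}$ one needs a probabilistic reinforcement: using that any such pair of faces is joined by a dual path of length $O(r)$ and that the sum of $O(r)$ independent exponentials concentrates, one gets the same conclusion outside an event of probability $o(1)$ as $r\to\infty$. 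This step is where the main work is concentrated, but it only requires rough bounds, not precise asymptotics.

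Granted the localization, the transfer from finite triangulations to the UIPT is a verbatim copy of the end of the proof of Theorem \ref{balls-uipt}. Using \cite[Theorem 2]{CLGpeeling} (in the type I version indicated there), I choose $\alpha,\beta>1$ so that the event
\[
D_r=\{\mathrm{t}\in\C_{1,Kr}:N(\mathrm{t})\geq\alpha r^4\text{ or }N(\mathrm{t})\leq\alpha^{-1}r^4\text{ or }|\partial^*\mathrm{t}|\geq\beta r^2\}
\]
has probability at most $\delta/2$ under the law of $B^\bullet_{Kr}(\t^{(1)}_\infty)$. Outside $D_r$, formula \eqref{eq:hulldiscrete} combined with the two-sided estimate of Lemma \ref{lem:bound} yields a constant $c''>0$ such that $\P(B^\bullet_{Kr}(\ov\t^{(1)}_{2\alpha r^4})=\mathrm{t})\geq c''\,\P(B^\bullet_{Kr}(\t^{(1)}_\infty)=\mathrm{t})$. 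Summing over $\mathrm{t}\notin D_r\cap (A^{(\delta)}_r)^c$ and invoking Theorem \ref{main-tri-dual} (applied to $n=2\alpha r^4$) to ensure that $\P(B^\bullet_{Kr}(\ov\t^{(1)}_{2\alpha r^4})\notin A^{(\delta)}_r)\to 0$, I obtain the claim. Once the uniform comparison is established, the ball inclusions follow directly, and the explicit values $\mathbf{c}_1=1+2\sqrt{3}$ and $\mathbf{c}_2=2\sqrt{3}$ can be read off from the peeling asymptotics of \cite{CLGpeeling} by matching the $\mathrm{d}^\dagger_\mathrm{gr}$- (resp.~$\mathrm{d}^\dagger_\mathrm{Eden}$-) ball of radius $r$ in $\t_\infty$ with the $\mathrm{d}_\mathrm{gr}$-ball of radius $r/\mathbf{c}_i$.
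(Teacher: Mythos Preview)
Your overall strategy matches the paper's: run the absolute continuity argument of Theorem \ref{balls-uipt} with Theorem \ref{main-tri-dual} in place of Theorem \ref{main-FPP-tri}. You also correctly single out the localization step (why a modified-metric geodesic between faces of $B_r$ stays inside $B^\bullet_{Kr}$) as the one place where the dual/Eden case differs from bounded FPP.

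However, your justification of that step is wrong. For $\mathrm{d}^\dagger_\mathrm{gr}$ you assert that ``crossing one layer of the hull decomposition costs at most a bounded number of dual steps'', hence the dual distance between faces of $B_r$ is deterministically $O(r)$. This is false: to go from a downward triangle at height $j$ to a face of layer $j-1$ you must traverse a slot, and the number of dual steps this takes equals the degree of the root vertex of the Boltzmann triangulation filling that slot (cf.~the construction of downward paths in Section \ref{dual-technic}). That degree has exponential tails (Proposition \ref{prop:subexpo}) but is not bounded. Indeed, the paper emphasises at the start of Section 7 that there is no a priori upper bound of $\mathrm{d}^\dagger_\mathrm{gr}$ by a multiple of $\mathrm{d}_\mathrm{gr}$, and Lemmas \ref{crude-bd-dual}--\ref{bd-dual-fini} are there precisely to provide probabilistic substitutes. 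Your Eden argument inherits the same error, since it relies on the deterministic $O(r)$ dual-length path.

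The correct localization goes the other way. What \emph{is} deterministic is the lower bound: a single dual step crosses at most one cycle $\partial^* B^\bullet_j$, so any dual path from a face of $B^\bullet_{r+1}$ that exits $B^\bullet_{Kr}$ and returns has length at least $2(K-1)r$ (this is the same observation behind $\mathbf{c}_1\geq 2$ in Proposition \ref{subadditive-dual}). The matching \emph{upper} bound then comes for free from the event $A^{(\delta)}_r$ itself (on it, dual distances inside $B^\bullet_{Kr}$ are at most $(2\mathbf{c}_1+\ve)r$), or from Lemma \ref{bd-dual-fini} on the finite-triangulation side. For $K>\mathbf{c}_1+1$ the two bounds are compatible and the geodesic cannot exit. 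For $\mathrm{d}^\dagger_\mathrm{Eden}$ one adds the large-deviation/union-bound argument already used to show $\mathbf{c}_2>0$: with high probability every injective dual path of length $\geq 2(K-1)r$ starting from a face of $B^\bullet_{r+1}$ has Eden weight $\geq \gamma(K-1)r$, which again beats $(2\mathbf{c}_2+\ve)r$ for $K$ large. With this correction, the rest of your plan goes through.
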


\paragraph{Identification of the constants.} The proofs of Theorems \ref{main-tri-dual} and \ref{balls-uipt-dual} will be given in the next subsections, but
we immediately explain how the values of the constants $ \mathbf{c}_{1}$ and $  \mathbf{c}_{2}$ can be derived by combining Theorem 
\ref{balls-uipt-dual} with the results of \cite{CLGpeeling}. 

\begin{theorem} \label{identification} The constants $ \mathbf{c}_{1}$ and $  \mathbf{c}_{2}$ of Theorems \ref{main-tri-dual} and \ref{balls-uipt-dual} are given by
$$ \mathbf{c}_{1} = 1 + 2 \sqrt{3} \quad \mbox{ and } \quad \mathbf{c}_{2} = 2 \sqrt{3}.$$
\end{theorem}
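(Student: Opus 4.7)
The plan is to identify $\mathbf{c}_1$ and $\mathbf{c}_2$ by combining the ball-inclusion statements of Theorem \ref{balls-uipt-dual} with the explicit asymptotic laws for the perimeter of hulls that can be read off from the peeling analysis carried out in \cite{CLGpeeling}. The basic principle is: once we know that the Eden (resp.\ dual graph) ball of radius $r$ is sandwiched between the graph distance balls of radii $(1\pm\varepsilon)r/\mathbf{c}_2$ (resp.\ $(1\pm\varepsilon)r/\mathbf{c}_1$), then the perimeter of the hull of that ball, properly rescaled, must have the same limiting distribution as $\mathbf{c}_i^{-2}$ times the rescaled perimeter of a graph distance hull, and matching with the explicit limiting distribution provided in \cite{CLGpeeling} determines $\mathbf{c}_i$.

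For the Eden distance, I would proceed as follows. The peeling of the UIPT can be set up so that, if one assigns i.i.d.\ exponential clocks to the edges to be peeled, the growing peeled region at time $r$ coincides with the Eden hull at radius $r$ (this is the Eden/peeling correspondence of \cite[Section 6]{AB14} and \cite[Proposition 15]{CLGpeeling}). The peeling results of \cite{CLGpeeling} then give a convergence in distribution of the rescaled perimeter of the Eden hull, of the form $r^{-2}|\partial^* B^{\mathrm{Eden},\bullet}_r(\t_\infty)|\to X$ for an explicit nondegenerate random variable $X$. On the other hand, by Remark \ref{conv-dist-L}, $r^{-2}L_r$ converges in distribution to a random variable $Y$ with density $\frac{2}{\sqrt{\pi}}\sqrt{x}\,e^{-x}$. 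Using Theorem \ref{balls-uipt-dual}, the perimeter of the Eden hull is asymptotically squeezed between the perimeters of the graph distance hulls at radii $\lfloor(1\pm\varepsilon)r/\mathbf{c}_2\rfloor$. Letting $r\to\infty$ and then $\varepsilon\to 0$ yields $X\stackrel{(d)}{=}\mathbf{c}_2^{-2}Y$, and matching the explicit rate parameters provided by \cite{CLGpeeling} produces $\mathbf{c}_2 = 2\sqrt{3}$.

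For the dual graph distance, the argument is identical in spirit, with the Eden hull replaced by $B^{\mathrm{dual}}_r(\t_\infty)$; the relevant peeling is now the layered one, and \cite{CLGpeeling} again gives an explicit limit law for the rescaled perimeter. Matching as above yields $\mathbf{c}_1 = 1+2\sqrt{3}$. The main technical obstacle is the translation from the inclusion statement of Theorem \ref{balls-uipt-dual} (which compares genuine graph balls of the UIPT) to the corresponding statement for hulls, which is what is needed in order to compare perimeters. This is a small perturbation argument: a hull differs from the corresponding ball only by the addition of finite connected components of its complement, and using Lemma \ref{bdlawperi} (together with the enumeration estimates of Section \ref{sec:enumeration}) one checks that, after rescaling by $r^{-2}$, the perimeter of the hull and the boundary size of the ball have the same asymptotic distribution. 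Once this reduction is in place, matching of limiting distributions and solving for $\mathbf{c}_i$ is an elementary computation.
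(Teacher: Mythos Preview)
Your overall strategy---match the explicit scaling limits of hulls provided by \cite{CLGpeeling} against the ball-inclusion of Theorem \ref{balls-uipt-dual} to read off $\mathbf{c}_1,\mathbf{c}_2$---is exactly the paper's approach. However, your choice to work with \emph{perimeters} rather than \emph{volumes} introduces a genuine gap.

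The paper's argument runs via the volume of hulls: from the ball inclusion $B_{(1-\ve)r/\mathbf{c}_2}(\t_\infty)\subset B^{\mathrm{Eden}}_r(\t_\infty)\subset B_{(1+\ve)r/\mathbf{c}_2}(\t_\infty)$ one passes to the same inclusion for hulls (taking the hull is a monotone operation), and then uses that volume is monotone under set inclusion to obtain
\[
|B^\bullet_{(1-\ve)r/\mathbf{c}_2}(\t_\infty)|\leq |B^{\mathrm{Eden},\bullet}_r(\t_\infty)|\leq |B^\bullet_{(1+\ve)r/\mathbf{c}_2}(\t_\infty)|.
\]
One then compares with the explicit convergences $n^{-4}|B^\bullet_{\lfloor nt\rfloor}|\to \tfrac{64}{3}\mathcal{M}_t$, $n^{-4}|B^{\mathrm{Eden},\bullet}_{\lfloor nt\rfloor}|\to \tfrac{4}{27}\mathcal{M}_t$, $n^{-4}|B^{\mathrm{dual},\bullet}_{\lfloor nt\rfloor}|\to (1+2\sqrt{3})^{-4}\tfrac{64}{3}\mathcal{M}_t$ of \cite[Theorems~2--4]{CLGpeeling}, together with the scaling $(\mathcal{M}_{\lambda t})\stackrel{d}{=}\lambda^4(\mathcal{M}_t)$, to solve $\tfrac{64}{3}\mathbf{c}_2^{-4}=\tfrac{4}{27}$ and the analogous equation for $\mathbf{c}_1$.

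Your step ``the perimeter of the Eden hull is asymptotically squeezed between the perimeters of the graph distance hulls at radii $\lfloor(1\pm\varepsilon)r/\mathbf{c}_2\rfloor$'' does not follow from the hull inclusion, because boundary length is \emph{not} monotone under inclusion of simply connected regions. Knowing $B^\bullet_{r_1}\subset B^{\mathrm{Eden},\bullet}_r\subset B^\bullet_{r_2}$ only tells you that $\partial^* B^{\mathrm{Eden},\bullet}_r$ lies in the annulus $B^\bullet_{r_2}\setminus B^\bullet_{r_1}$, which gives no immediate control on its length. (One could try to salvage this with a process-level argument---e.g.\ joint convergence of the perimeter processes together with a time-change---but that is substantially more work than you indicate, and not what the ``small perturbation argument'' you sketch addresses.) The what you flag as the ``main technical obstacle'' (ball versus hull) is in fact harmless, since the hull operation preserves inclusion; the real obstacle is the non-monotonicity of perimeter. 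Switching from perimeter to volume removes the difficulty and gives the paper's clean one-line matching.
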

\proof We rely on results of \cite{CLGpeeling} on asymptotics of the volume of hulls. For every $r>0$, write 
$B^{\bullet,\mathrm{dual}}_r(\t_\infty)$ for the hull associated with $B^{\mathrm{dual}}_r(\t_\infty)$ (that is, the complement of the
unbounded connected component of the complement of $B^{\mathrm{dual}}_r(\t_\infty)$), and similarly $B^{\bullet,\mathrm{Eden}}_r(\t_\infty)$ for the hull associated with $B^{\mathrm{Eden}}_r(\t_\infty)$.
Let $|B^\bullet_r(\t_\infty)|$, resp. $|B^{\mathrm{dual}}_r(\t_\infty)|$, resp. $|B^{\bullet,\mathrm{Eden}}_r(\t_\infty)|$, stand for the volume (number of faces) of $B^\bullet_r(\t_\infty)$,
resp. $B^{\bullet,\mathrm{dual}}_r(\t_\infty)$, resp. $B^{\bullet,\mathrm{Eden}}_r(\t_\infty)$.

By \cite[Theorems 2,3 and 4]{CLGpeeling}, we have 
\begin{eqnarray}
\label{conv-hulls}
\Big(n^{-4}|B^\bullet_{\lfloor nt\rfloor}(\t_\infty)|\Big)_{t\geq 0}  &\xrightarrow[n\to\infty]{ \mathrm{(d)}} & \Big(\frac{64}{3}\, \mathcal{M}_t\Big)_{t\geq 0}\\
\label{conv-hulls-dual}
\Big(n^{-4}|B^{\bullet,\mathrm{dual}}_{\lfloor nt\rfloor}(\t_\infty)|\Big)_{t\geq 0} &\xrightarrow[n\to\infty]{\rm (d)}& \Big((1+2\sqrt{3})^{-4}\frac{64}{3}\, \mathcal{M}_t\Big)_{t\geq 0},\\
\label{conv-hulls-Eden}
\Big(n^{-4}|B^{\bullet,\mathrm{Eden}}_{\lfloor nt\rfloor}(\t_\infty)|\Big)_{t\geq 0} & \xrightarrow[n\to\infty]{\rm (d)}& \Big(\frac{4}{27}\, \mathcal{M}_t\Big)_{t\geq 0}.
\end{eqnarray}
where the process $\mathcal{M}_t$, which is defined in the introduction of \cite{CLGHull,CLGpeeling}, satisfies the scaling property
$$(\mathcal{M}_{\lambda t})_{t\geq 0}\build{=}_{}^{\rm(d)} \lambda^4 (\mathcal{M}_t)_{t\geq 0},$$ for every $\lambda>0$. 
Note that Theorems 2,3 and 4 in \cite{CLGpeeling} give the preceding convergences in the case of the UIPT of type II, but Section 6.1
in \cite{CLGpeeling} explains how theses statements can be extended to the UIPT of type I, and gives the values of the constants
arising in this case.

On the other hand, \eqref{conv-hulls} and Theorem \ref{balls-uipt-dual} imply that the limit in distribution 
(at least in the sense of finite-dimensional marginals) of $(n^{-4}|B^{\bullet,\mathrm{Eden}}_{\lfloor nt\rfloor}(\t_\infty)|)_{t\geq 0}$
is the process $(\frac{64}{3}\, \mathcal{M}_{t/\mathbf{c}_2})_{t\geq 0}$, which has the same distribution as
$(\frac{64}{3}\,(\mathbf{c}_2)^{-4}\, \mathcal{M}_{t})_{t\geq 0}$. Comparing with \eqref{conv-hulls-Eden}, we get that
$\frac{64}{3}\,(\mathbf{c}_2)^{-4}= \frac{4}{27}$, hence $\mathbf{c}_2= 2\sqrt{3}$. Similarly, the value 
$\mathbf{c}_1= 1 + 2\sqrt{3}$ is derived by comparing \eqref{conv-hulls} and \eqref{conv-hulls-dual}.  \endproof

In the remaining part of this section, we explain the proof of Theorems \ref{main-tri-dual}
and \ref{balls-uipt-dual}. The general outline is the same as for Theorems \ref{main-FPP-tri} and \ref{balls-uipt},
but some additional ingredients are needed.

\subsection{Preliminary estimates}

As in the previous sections, we discuss the UIPT before considering finite triangulations. In order to get upper bounds
on the distances $\mathrm{d}^\dagger_\mathrm{gr}$ and $\mathrm{d}^\dagger_\mathrm{Eden}$, it will be convenient
to consider certain special paths in $\mathsf{F}(\t^{(1)}_\infty)$. For every $r\geq 1$, we let $\mathsf{F}_r(\t^{(1)}_\infty)$
be the set of all downward triangles at height $r$ in $\t^{(1)}_\infty$. A {\em downward path} is a dual path $\omega$
that starts from some $f_0\in \mathsf{F}_r(\t^{(1)}_\infty)$ and ends at the bottom face, which is constructed in the
following way, see Fig.~\ref{fig:downward-path}. Let $v_0$ be the unique vertex of $\partial^* B^\bullet_{r-1}(\t^{(1)}_\infty)$ that is incident to $f_0$, let $e_0$ be the edge of $\partial^* B^\bullet_r(\t^{(1)}_\infty)$ incident to $f_0$ and let 
$e_0,e_1,\ldots$ be the sequence of edges of $\partial^* B^\bullet_r(\t^{(1)}_\infty)$ listed in counterclockwise order from $e_0$  (recall our orientation of 
the cycles $\partial^* B^\bullet_{j}(\t^{(1)}_\infty)$ in clockwise order). Let $e_N$ ($N\geq 0$) be the first one in this list that has at least one child in the skeleton of $B^\bullet_r(\t^{(1)}_\infty)$,
and let $e'$ be the unique edge of $\partial^* B^\bullet_{r-1}(\t^{(1)}_\infty)$ whose terminal vertex is $v_0$. Let $f_0,f_1,\ldots,f_N$ and $f'$ be the downward triangles associated respectively with $e_0,\ldots,e_N$ and $e'$. Notice that $v_0$ is incident to all the faces $f_0,f_1,\ldots,f_N$ and $f'$.
Our dual path $\omega$ will visit successively the faces $f_0,f_1,\ldots,f_N$ and $f'$. Between the visits of $f_i$ and $f_{i+1}$,
for $0\leq i\leq N-1$, or the visits of $f_N$ and $f'$, the path ``crosses'' the slot of boundary size $2$ (or size $c_{e_N}+2$
for the last one) between these two faces: It does so by turning in counterclockwise order around $v_0$, visiting successively 
all faces of the triangulation filling in the slot that are incident to $v_0$
(see Fig.~\ref{fig:downward-path}, where $N=2$). We have just described how the
downward path $\omega$ goes from $f_0$ to a certain triangle $f'\in \mathsf{F}_{r-1}(\t^{(1)}_\infty)$, but we can now continue
the construction by induction until we reach the bottom face. Notice that this downward path is in general not a geodesic for the dual metric.
\begin{figure}[!h]
 \begin{center}
 \includegraphics[width=0.8\linewidth]{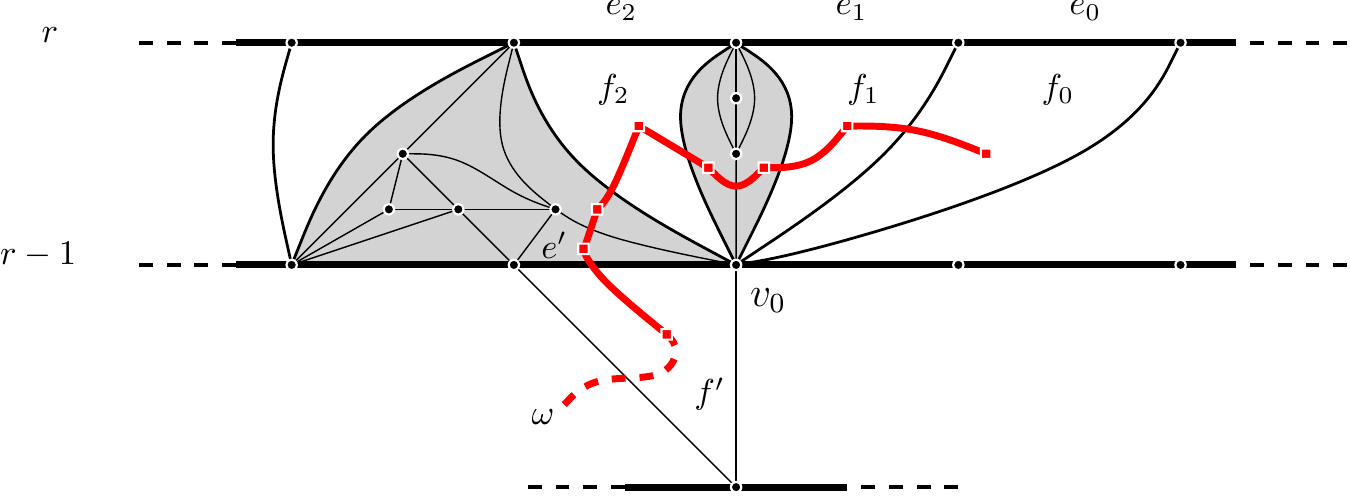}
 \caption{\label{fig:downward-path}Illustration of the construction of the downward path (in red).}
 \end{center}
 \end{figure}

Similarly, we can define downward paths in the lower half-plane model $\l$. For every
downward triangle $f$ incident to $\l_0$, there exists such a dual path connecting $f$
to a certain downward triangle $f'$ incident to $\l_r$, for
some $r\geq 1$. This path is constructed in exactly the same way as explained above for the UIPT. 

Notice that the time needed by a downward path to cross 
a slot  is exactly equal to the degree of the  root vertex of the triangulation filling in the slot (see
subsection \ref{sec:skeleton} for the definition of this root vertex). Tail estimates for the latter quantity are given in  Proposition \ref{prop:subexpo}
in Appendix A2 below,  and are used in the next lemma to bound the length of downward paths, first in the easier case of the LHPT. For every $i\in\Z$
and every $r\geq 0$, we write $f_{(i,r)}$ for the unique downward triangle of $\l$
that is incident to the edge between $(i-1,-r)$ and $(i,-r)$.

\begin{lemma}
\label{bound-do-path}
Let $\omega_r$ be the downward path in $\l$ connecting $f_{(0,0)}$ 
to a downward triangle incident to $\l_r$, and write $|\omega_r|$
for the length of $\omega_r$. There exist two constants
$\mu>0$ and $K<\infty$ such that for every integer $r\geq 1$,
$$\E[\exp(\mu |\omega_r|)] \leq K^r.$$
\end{lemma}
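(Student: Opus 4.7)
The plan is to decompose $|\omega_r|$ as a sum of one contribution per layer crossed, and then to exploit the layer-wise independence built into the skeleton description of the LHPT in order to reduce the exponential moment to a one-step computation. Write
\[
|\omega_r| = \sum_{k=0}^{r-1} L_k,
\]
where $L_k$ is the number of dual edges used by $\omega_r$ in its $k$-th ``super-step,'' namely the portion starting from a downward triangle whose horizontal edge lies on $\l_k$ and ending at one whose horizontal edge lies on $\l_{k+1}$. By the construction recalled just before the statement, this portion visits $N_k+1$ consecutive downward triangles incident to $\l_k$ (where $N_k$ is the number of childless skeleton vertices at height $k$ encountered before the first childful one) and crosses the corresponding $N_k+1$ slots; since crossing a slot uses as many dual edges as the degree of the root vertex of the Boltzmann triangulation filling it,
\[
L_k = \sum_{j=1}^{N_k+1} D_j^{(k)},
\]
where $D_j^{(k)}$ for $1\le j\le N_k$ is the root-vertex degree in a Boltzmann triangulation of the $2$-gon and $D_{N_k+1}^{(k)}$ is the root-vertex degree in a Boltzmann triangulation of the $(C_k+2)$-gon, with $C_k\ge 1$ the offspring count of the childful vertex.

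The first step is to show that $L_0,L_1,\ldots$ are i.i.d. The starting vertex of super-step $k+1$ is a specific child of the childful skeleton vertex that ended super-step $k$, but by the Galton--Watson property, the offspring counts of that starting vertex and of the vertices at height $k+1$ visited afterwards are i.i.d.~with law $\theta$, independent of everything occurring at heights $\le k$; the same independence holds for the Boltzmann triangulations filling the slots. Since the distribution of $L_k$ conditional on its starting vertex does not depend on that starting vertex, the $L_k$'s are indeed i.i.d., with $N_k$ geometric of parameter $1-\theta(0)=1/4$ and, independently, $C_k$ distributed as $\theta(\cdot)/\theta([1,\infty))$ on $\{1,2,\ldots\}$.

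The second step is to bound $\E[e^{\mu L_0}]$. Setting $\phi_c(\mu):=\E[e^{\mu D^{(c)}}]$ with $D^{(c)}$ the root-vertex degree in a Boltzmann triangulation of the $(c+2)$-gon, and using conditional independence of the $D_j$'s given $(N_0,C_0)$, one obtains
\[
\E[e^{\mu L_0}] = \E\bigl[\phi_0(\mu)^{N_0}\bigr]\cdot \E[\phi_{C_0}(\mu)] = \frac{1/4}{1-(3/4)\phi_0(\mu)}\cdot \sum_{c\ge 1}\frac{\theta(c)}{\theta([1,\infty))}\,\phi_c(\mu),
\]
provided both factors are finite. The exponential moment estimates for the root-vertex degree furnished by Proposition \ref{prop:subexpo} in Appendix A2 are precisely what is needed: for $\mu>0$ small enough they give $\phi_0(\mu)<4/3$ (so that the geometric sum converges), together with a control on $\phi_c(\mu)$ that, combined with the tail $\theta(c)\sim \tfrac{3}{2\sqrt{\pi}}c^{-5/2}$, makes the series finite. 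Setting $K:=\E[e^{\mu L_0}]<\infty$, the i.i.d.~property of the $L_k$'s then gives $\E[e^{\mu |\omega_r|}]=K^r$, as required.

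The main obstacle is the convergence of the series $\sum_{c\ge 1}\theta(c)\phi_c(\mu)$: it is precisely this point that cannot be handled by a translation-invariance argument alone and demands the quantitative estimate on the root-vertex degree in a Boltzmann triangulation of arbitrary perimeter provided by Proposition \ref{prop:subexpo}. Once that input is in hand, the rest of the argument is a clean application of the layer-wise independence coming from the skeleton decomposition of $\l$.
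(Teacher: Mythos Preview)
Your argument is correct and follows essentially the same route as the paper: reduce to a single layer by the i.i.d.\ structure of the layers of $\l$, decompose the one-step length as a geometric number of $2$-gon crossings plus one $(c+2)$-gon crossing, and invoke Proposition~\ref{prop:subexpo} for the exponential moments of the root-vertex degrees. Your justification of the layer-wise independence is in fact more detailed than the paper's (which simply appeals to ``the independence properties of the LHPT'').

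One small simplification: you do not need the tail asymptotics $\theta(c)\sim \tfrac{3}{2\sqrt{\pi}}c^{-5/2}$ at all. Proposition~\ref{prop:subexpo} gives $\P(\mathcal{D}_p\ge k)\le K_0\lambda^k$ with constants \emph{independent of $p$}, so $\phi_c(\mu)\le C$ uniformly in $c$ once $\mu$ is small enough that $\lambda e^\mu<1$; hence $\sum_{c\ge 1}\theta(c)\phi_c(\mu)\le C$ is immediate. This is exactly how the paper handles the last slot, bounding $\E[e^{\mu\mathcal{D}_{d+2}}]\le C$ uniformly in $d$ before summing over the geometric variable.
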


\proof By the independence properties of the LHPT, it is enough to consider the case
$r=1$ and to prove that $\E[\exp(\mu |\omega_1|)]<\infty$ for some $\mu>0$. Note that
the path $\omega_1$ connects $f_{(0,0)}$ to $f_{(0,1)}$. By the definition of
downward paths, $\omega_1$ visits successively $f_{(0,0)},f_{(-1,0)},\ldots, f_{(-N,0)}$ and $f_{(0,1)}$,
where the triangles $f_{(0,0)},f_{(-1,0)},\ldots, f_{(-N,0)}$ are incident to $(0,-1)$, but 
$f_{(-N-1,0)}$ is not (in the construction of $\l$, $N+1$
is the first positive integer $i$ such that the tree $\mathscr{T}_{-i}$ is non trivial). Furthermore, the construction of $\l$ shows that, for every $k\geq 0$,
$$\P(N\geq k)= \theta(0)^k.$$
Conditionally on the forest $(\mathscr{T}_i)_{i\in\Z}$ (and in particular on $N$), the slots associated with the downward triangles $f_{(0,0)},f_{(-1,0)},\ldots, f_{(-N+1,0)}$
are filled in by independent Boltzmann triangulations of the $2$-gon, and the slot associated 
with $f_{(-N,0)}$ is filled in by an independent Boltzmann triangulation of the $d+2$-gon, where 
$d$ is the number of children of the root of $\mathscr{T}_{-N-1}$. It then follows that we have
$$|\omega_1|=  \mathcal{D}_{2}^{(0)}+\cdots+ \mathcal{D}_2^{(N-1)} +  \mathcal{D}_{d+2},$$
where, conditionally on $(\mathscr{T}_i)_{i\in\Z}$, the variables $\mathcal{D}_{2}^{(0)}, \ldots ,\mathcal{D}_2^{(N-1)}$ and $ \mathcal{D}_{d+2}$ are independent, 
$\mathcal{D}_{2}^{(0)}, \ldots ,\mathcal{D}_2^{(N-1)}$ are distributed as the degree of the root vertex in a Boltzmann
triangulation of the $2$-gon, and $ \mathcal{D}_{d+2}$ is distributed as the degree of the root vertex in a Boltzmann
triangulation of the $d+2$-gon.  By Proposition \ref{prop:subexpo} we can choose $\beta>0$
small enough and a finite constant $C$ (not depending on $d$) such that $\E[\exp(\beta  \mathcal{D}_2^{(0)})]\leq C$ and $\E[\exp(\beta  \mathcal{D}_{d+2})]\leq C$. Then, 
if $0<\mu\leq \beta$, we have
$$\E[\exp(\mu |\omega_1|)] \leq C(1-\theta(0)) \sum_{k=0}^\infty \theta(0)^k \, \E[\exp(\mu  \mathcal{D}^{(0)}_2)]^k$$
and we get the desired result by choosing $\mu>0$ small enough so that $\theta(0)\,\E[\exp(\mu  \mathcal{D}^{(0)}_2)]<1$. \endproof

We can now state an analog of Proposition \ref{lem:subadditive}.
For every $r\geq 0$, we write $\l^\dagger_r$ for the collection of all downward triangles incident to an edge of $\l_r$
in the lower half-plane model. We assume that the dual graph of $\l$ is equipped with 
the graph distance $\mathrm{d}^\dagger_{\mathrm{gr}}$ and the Eden distance 
$\mathrm{d}^\dagger_{\mathrm{Eden}}$ defined as previously.

\begin{proposition}
\label{subadditive-dual}
There exist two constants $\mathbf{c}_1\geq 2$ and $\mathbf{c}_2>0$ such that
\begin{align*}
r^{-1}\,\mathrm{d}^\dagger_{\mathrm{gr}}(f_{(0,0)},\l^\dagger_r) & \xrightarrow[r\to\infty]{\rm a.s.} \mathbf{c}_1\,,\\
r^{-1}\,\mathrm{d}^\dagger_{\mathrm{Eden}}(f_{(0,0)},\l^\dagger_r) & \xrightarrow[r\to\infty]{\rm a.s.}\mathbf{c}_2\,.
\end{align*}
\end{proposition}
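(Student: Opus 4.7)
The plan is to follow the proof of Proposition \ref{lem:subadditive}, applying Liggett's version of Kingman's subadditive ergodic theorem separately to each of the two sequences $X_r^{(1)} := \mathrm{d}^\dagger_{\mathrm{gr}}(f_{(0,0)},\l^\dagger_r)$ and $X_r^{(2)} := \mathrm{d}^\dagger_{\mathrm{Eden}}(f_{(0,0)},\l^\dagger_r)$. The integrability needed by the theorem is supplied by Lemma \ref{bound-do-path}: the downward path $\omega_1$ connects $f_{(0,0)}$ to an element of $\l^\dagger_1$ and yields $X_1^{(1)}\leq|\omega_1|$, which has finite exponential moments. Since the Eden weights along $\omega_1$ are, conditionally on $\omega_1$, the sum of $|\omega_1|$ independent $\mathrm{Exp}(1)$ variables of mean $|\omega_1|$, one also gets $\mathbb{E}[X_1^{(2)}]\leq\mathbb{E}[|\omega_1|]<\infty$.

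For the subadditivity, I follow the scheme of Proposition \ref{lem:subadditive}. For integers $m,n\geq 1$, let $g_m\in\l^\dagger_m$ realize $X_m^{(j)}$, and write $\mathrm{d}^{\dagger,[m,m+n+1]}$ for the distance restricted to dual paths in the substrip $\l_{[m,m+n+1]}$; the triangle inequality then gives
$$X_{m+n}^{(j)}\;\leq\;X_m^{(j)}\;+\;\mathrm{d}^{\dagger,[m,m+n+1]}(g_m,\l^\dagger_{m+n}).$$
The principal obstacle, and the place where the argument differs from the proof of Proposition \ref{lem:subadditive}, is that $g_m$ lives in the \emph{layer} between $\l_m$ and $\l_{m+1}$ rather than on the $1$-dimensional line $\l_m$, so both sides of the displayed inequality depend on that common layer, breaking strict independence. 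I plan to resolve this by appending to the geodesic realizing $X_m^{(j)}$ one canonical downward segment carrying $g_m$ across $\l_{m+1}$ into the layer below, at a uniformly $L^1$-bounded extra cost controlled by one increment of the downward path of Lemma \ref{bound-do-path}. Once this extra step is performed, the remaining distance depends only on the part of $\l$ below $\l_{m+1}$, which, by the branching property of the underlying Galton--Watson trees (which makes the subtrees rooted at $\l_{m+1}$ conditionally i.i.d.) combined with the horizontal translation invariance of the LHPT and the independence of the slot-filling triangulations, is (after an appropriate horizontal shift) distributed as $X^{(j)}_{n-1}$ and is independent of $\l_{[0,m+1]}$ up to the bounded additive discrepancy. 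Liggett's theorem then applies and yields almost sure convergence of $X_r^{(j)}/r$ to a constant $\mathbf{c}_j\in[0,\infty)$, the limit being deterministic by tail-triviality.

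Finally I need the asserted positivity. For $\mathbf{c}_1\geq 2$ I will invoke a deterministic structural bound $X_r^{(1)}\geq 2r$: a downward triangle $f_{(i,k)}$ has its top edge on $\l_k$ but only a vertex (not an edge) on $\l_{k+1}$, so to descend from layer $k$ to layer $k+1$ a dual path must first move from $f_{(i,k)}$ to an adjacent slot face and only then cross $\l_{k+1}$, contributing at least two dual edges per layer change; summing over the $r$ descents required yields the bound. For $\mathbf{c}_2>0$ I plan to combine this length bound with a standard Chernoff--union bound: the dual graph of a triangulation has maximum degree $3$, so there are at most $3^L$ dual paths of length $L$ issued from $f_{(0,0)}$; together with Cram\'er's bound $\P(\mathrm{Gamma}(L)<L\lambda)\leq(\lambda e^{1-\lambda})^L$ for sums of i.i.d.~$\mathrm{Exp}(1)$ variables, the union bound gives exponential decay in $r$ of $\P(X_r^{(2)}<2\lambda r)$ as soon as $3\lambda e^{1-\lambda}<1$; this forces $\liminf_r X_r^{(2)}/r\geq 2\lambda>0$ almost surely, whence $\mathbf{c}_2\geq 2\lambda$.
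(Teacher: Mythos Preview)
Your approach coincides with the paper's: subadditive ergodic theorem as in Proposition~\ref{lem:subadditive}, integrability from Lemma~\ref{bound-do-path}, the deterministic bound $X_r^{(1)}\ge 2r$ for $\mathbf{c}_1\ge 2$, and the entropy/Cram\'er union bound for $\mathbf{c}_2>0$. The last two points are exactly what the paper does.

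Where you diverge is in the subadditivity step, and the ``obstacle'' you identify is not real. The key observation you are missing is that the face of $\l$ lying \emph{below} any horizontal edge of $\l_m$ is always a downward triangle, i.e.\ an element of $\l^\dagger_m$. Hence any dual path from $f_{(0,0)}$ that crosses $\l_m$ for the first time lands in $\l^\dagger_m$ at that very step; truncating there shows that the geodesic realizing $X_m^{(j)}$ has all its faces in $\l_{[0,m]}$ except the terminal one $g_m=f_{(i_m,m)}$. Consequently $X_m^{(j)}$ and the index $i_m$ are functions of $\l_{[0,m]}$ alone (together with the weights on dual edges crossing $\l_m$, in the Eden case). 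The second term $\mathrm{d}^{\dagger,[m,\infty)}(g_m,\l^\dagger_{m+n})$ then depends only on $\l_{[m,\infty)}$ and on $i_m$; by horizontal translation invariance its conditional law given $i_m$ does not depend on $i_m$, so it is independent of $(\l_{[0,m]},W_m)$ and distributed as $X_n^{(j)}$, exactly as in Proposition~\ref{lem:subadditive}. No extra downward step is needed, and your proposed fix actually creates a problem: the resulting inequality $X_{m+n}^{(j)}\le X_m^{(j)}+C_m+(\text{law of }X_{n-1}^{(j)})$ is not in the form required by Liggett's theorem, and you do not explain how to massage it into that form.
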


The proof is very similar to that of Proposition \ref{lem:subadditive} and the details are left to
the reader. In order to apply the subadditive ergodic theorem, we need the fact that
$\E[\mathrm{d}^\dagger_{\mathrm{gr}}(f_{(0,0)},\l^\dagger_r) ]<\infty$, which follows from Lemma
\ref{bound-do-path}. The property $\mathbf{c}_1\geq 2$ is obvious since we have
$\mathrm{d}^\dagger_{\mathrm{gr}}(f_{(0,0)},\l^\dagger_r)\geq 2r$. The fact that $\mathbf{c}_2$
is (strictly) positive is not completely obvious but can be verified as follows. Since we are dealing
with triangulations, there are at most $3^{2r}$ distinct injective dual paths of length $2r$ starting from $f_{(0,0)}$ in $\l$.
However, a crude large deviation argument shows that, if $\delta>0$ is small enough, a.s. for
all sufficiently large $r$, none of these
injective dual paths can have a total Eden weight smaller than $\delta r$. It follows that $\mathbf{c}_2\geq \delta>0$.

\subsection{Technical lemmas}
\label{dual-technic}

It will be important to have a good control of the dual distances $\mathrm{d}^\dagger_\mathrm{gr}$ and $\mathrm{d}^\dagger_\mathrm{Eden}$
in terms of the graph distance on the original graph. This is the goal of the two technical lemmas of this section. The first one
deals with the case of the UIPT and the second one with finite triangulations.

For every integer $r\geq 1$, we let $f_r$ be a downward triangle at height $r$ chosen uniformly at random in 
$\mathsf{F}_r(\t^{(1)}_\infty)$.

\begin{lemma}
\label{crude-bd-dual}
In the UIPT of the $1$-gon $\t^{(1)}_\infty$,
there exist positive constants $K,\alpha,\beta$ such that, for every integers $0\leq r<s$,
\begin{align*}
\P\Big(\mathrm{d}^\dagger_\mathrm{gr}(f_s,B^\bullet_r(\t^{(1)}_\infty))>\alpha(s-r)\Big)&\leq K\,e^{-\beta(s-r)},\\
\P\Big(\mathrm{d}^\dagger_\mathrm{Eden}(f_s,B^\bullet_r(\t^{(1)}_\infty))>\alpha(s-r)\Big)&\leq K\,e^{-\beta(s-r)},
\end{align*}
where $B^\bullet_0(\t^{(1)}_\infty)$ should be interpreted as the bottom face.
\end{lemma}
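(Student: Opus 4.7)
The plan is to exhibit an explicit dual path from $f_s$ down to $B^\bullet_r(\t^{(1)}_\infty)$ whose length has bounded exponential moments, in the spirit of Lemma \ref{bound-do-path} for the LHPT. Starting from $f_s$, I construct the downward path $\omega$ inductively: at each height $k\in\{r+1,\ldots,s\}$, $\omega$ enters the layer $B^\bullet_k(\t^{(1)}_\infty)\setminus B^\bullet_{k-1}(\t^{(1)}_\infty)$ at some downward triangle, successively visits the $N_k+1$ consecutive downward triangles at height $k$ that share a common vertex $v_k\in\partial^* B^\bullet_{k-1}(\t^{(1)}_\infty)$ (as in the definition preceding Fig.~\ref{fig:downward-path}), then turns around $v_k$ through the $D_k$ faces incident to $v_k$ in the Boltzmann triangulation filling the next slot, and finally arrives at a downward triangle at height $k-1$. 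The total length decomposes as $|\omega|=\sum_{k=r+1}^{s}\ell_k$ with $\ell_k=N_k+1+D_k$, and the endpoint of $\omega$ at layer $r$ is in $\partial^* B^\bullet_r(\t^{(1)}_\infty)$, so $\mathrm{d}^\dagger_{\mathrm{gr}}(f_s,B^\bullet_r(\t^{(1)}_\infty))\leq|\omega|$.

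The key step is to show that $\E[\exp(\mu\ell_k)\mid \mathcal{G}_k]\leq K_0$ for some constants $\mu>0$ and $K_0<\infty$ independent of $k$ and of the conditioning, where $\mathcal{G}_k$ is generated by $B^\bullet_k(\t^{(1)}_\infty)$ and the skeleton of the layer $k$. By the spatial Markov property established in subsection \ref{subsec:compa} and by the skeleton decomposition of subsection \ref{sec:skeleton}, conditionally on $\mathcal{G}_k$ the triangulations filling the slots at height $k$ are independent Boltzmann triangulations whose boundary lengths are prescribed by the skeleton, so that $D_k$ is distributed as the degree of the root vertex of an independent Boltzmann triangulation of the $(c+2)$-gon for some $c\geq 0$ read off from the skeleton. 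On the other hand, $N_k$ is dominated by a $\mathrm{Geom}(1-\theta(0))$ variable, since $N_k+1$ is the index of the first edge among those of $\partial^* B^\bullet_k(\t^{(1)}_\infty)$ enumerated in counterclockwise order from $\omega$'s entry edge that has at least one child in the skeleton, and by the spatial Markov property successive edges have no child independently with probability $\theta(0)=3/4$. Combining these two inputs with the uniform exponential moment bound on the degree of the root provided by Proposition \ref{prop:subexpo} in Appendix~A2, the same computation as in the proof of Lemma \ref{bound-do-path} yields a uniform constant $K_0$, and hence
\[
\E\bigl[\exp(\mu|\omega|)\bigr]\leq K_0^{\,s-r}.
\]
The announced bound for $\mathrm{d}^\dagger_{\mathrm{gr}}$ follows from Markov's inequality with any $\alpha>\log K_0/\mu$ and some $\beta>0$.

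For the Eden distance, the same path $\omega$ has total weight $W=\sum_{e\in\omega}w(e)$ where $(w(e))$ are i.i.d.\ $\mathrm{Exp}(1)$ variables assigned to the dual edges independently of $\omega$, and $\mathrm{d}^\dagger_{\mathrm{Eden}}(f_s,B^\bullet_r(\t^{(1)}_\infty))\leq W$. For $\mu\in(0,1)$ one has $\E[\exp(\mu W)\mid\omega]=(1-\mu)^{-|\omega|}$, so taking $\mu$ small enough that $K_0(1-\mu)^{-1}\leq e^{\mu'}$ for some $\mu'>0$ and applying tower followed by Markov gives the desired exponential tail for $W$, hence for $\mathrm{d}^\dagger_{\mathrm{Eden}}$.

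The main obstacle is the first step: obtaining a conditional exponential moment bound on $\ell_k$ that is genuinely uniform in the history $\mathcal{G}_k$. This is where the layer structure of the UIPT must be used in a slightly more careful way than in the LHPT case of Lemma \ref{bound-do-path}, because the boundary length $L_k$ is random and a priori unbounded; the point is that $N_k$ and $D_k$ depend on $\mathcal{G}_k$ only through local quantities (the number of consecutive childless edges starting from a prescribed edge of $\partial^* B^\bullet_k(\t^{(1)}_\infty)$, and the number of children of the first non-childless edge), whose conditional laws under $\P$ are governed by $\theta$ and independent of the ambient perimeter, so that Proposition \ref{prop:subexpo} applies uniformly.
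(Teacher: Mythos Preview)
Your argument has a genuine gap at the ``key step''. You claim that, conditionally on the history, $N_k$ is dominated by a $\mathrm{Geom}(1-\theta(0))$ variable because ``successive edges have no child independently with probability $\theta(0)$''. This would be correct in the LHPT, where the skeleton really is an i.i.d.\ forest of Galton--Watson trees, but it is \emph{not} true in the UIPT. The law of the skeleton of a layer in $\t^{(1)}_\infty$ carries the extra factor $\frac{12^{-q}C(q)}{12^{-p}C(p)}=\frac{h(q)}{h(p)}\cdot\frac{p}{q}$ (see \eqref{law-forest}); in particular the trees of the layer are not independent, and the number-of-children variables are coupled through the constraint on the bottom perimeter. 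There is no reason for the conditional law of $N_k$ to be geometric, nor for $\ell_k$ to have exponential moments bounded \emph{uniformly} in the history. Your last paragraph asserts that the relevant conditional laws ``are governed by $\theta$ and independent of the ambient perimeter'', but this is precisely what fails.

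The paper's proof does not try to control the layers one at a time. Instead it works with the entire skeleton $\wt{\mathcal{F}}^{(1)}_{r,s}$ of $B^\bullet_s\setminus B^\bullet_r$ and shows (equation \eqref{bd-law-for}) that its law on the event $\{L_s=q\}$ is dominated by $\frac{C}{\sqrt{q}(r+1)}$ times the law of $q$ independent Galton--Watson trees truncated at height $s-r$. This comparison is what allows one to transfer the LHPT bound of Lemma~\ref{bound-do-path} to the UIPT; the price is the non-uniform factor $C/(\sqrt{q}(r+1))$, which is then handled by summing over $q$ and using the tail bound \eqref{eq:boundLs} on $L_s$. In short, the step you skipped --- passing from the UIPT skeleton to an i.i.d.\ forest --- is exactly the substance of the paper's argument, and it cannot be done at the level of a single layer with a uniform constant.
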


\proof It is enough to treat the case of $\mathrm{d}^\dagger_\mathrm{gr}$. Indeed, by considering the weights along
a geodesic dual path from $f_s$ to $B^\bullet_r(\t^{(1)}_\infty)$, one immediately gets that, for every $\alpha'>\alpha$,
$$\P\Big(\mathrm{d}^\dagger_\mathrm{gr}(f_s,B^\bullet_r(\t^{(1)}_\infty))>\alpha(s-r),\mathrm{d}^\dagger_\mathrm{Eden}(f_s,B^\bullet_r(\t^{(1)}_\infty))>\alpha'(s-r)\Big)$$
is bounded above, for $s-r$ sufficiently large, by $\exp(-\beta'(s-r))$ for some constant $\beta'>0$. 

So we deal with $\mathrm{d}^\dagger_\mathrm{gr}$, and we assume that $r\geq 1$ (the case $r=0$
is exactly similar). Recall the notation introduced before Proposition \ref{prop:compa}: $\wt\f^{(1)}_{r,s}$
is the skeleton of $B^\bullet_s(\t^{(1)}_\infty)\backslash B^\bullet_r(\t^{(1)}_\infty)$ reordered via a random cyclic permutation, and where
the distinguished vertex at height $s-r$ has been ``forgotten''. We 
may assume that $f_s$ is the downward triangle corresponding to the root of the last tree in $\wt\f^{(1)}_{r,s}$. 
In the proof of Proposition \ref{prop:compa}, we observed that, for every $p,q\geq 1$, we have
$$\P\Big(\wt{\mathcal{F}}^{(1)}_{r,s}=\mathcal{F}\,\Big|\,
L_r=p\Big)
=\frac{h(q)}{h(p)}\,\prod_{v\in\mathcal{F}^*} \theta(c_v)
,$$
for every fixed forest $\f\in\F''_{p,q,s-r}$.  Furthermore, the law of 
$L_r$ was obtained in the proof of Lemma \ref{bdlawperi}:
\begin{equation}
\label{law-perime}
\P(L_r=p)
=\frac{h(p)}{h(1)}\,\P_p(Y_r=1)
=\frac{h(p)}{h(1)}\,\frac{p}{(r+1)^3}\,\Big(1-(r+1)^{-2}\Big)^{p-1}.
\end{equation}
It follows that, for every forest $\f\in\F''_{p,q,s-r}$
\begin{equation}
\label{bd-law-for}
\P\Big(\wt{\mathcal{F}}^{(1)}_{r,s}=\mathcal{F}\Big)= \frac{h(q)}{h(1)}\,\frac{p}{(r+1)^3}\,\Big(1-(r+1)^{-2}\Big)^p\,\prod_{v\in\mathcal{F}^*} \theta(c_v)
\leq \frac{C}{\sqrt{q}(r+1)}\,\prod_{v\in\mathcal{F}^*} \theta(c_v),
\end{equation}
for some finite constant $C$ (we use the fact that
$$\frac{p}{(r+1)^2}\,\Big(1-(r+1)^{-2}\Big)^p\leq \frac{p}{(r+1)^2}\,\exp\Big(-\frac{p}{(r+1)^2}\Big)$$
is bounded above by a constant). 

It follows from \eqref{bd-law-for} that the law of $\wt{\mathcal{F}}^{(1)}_{r,s}$ under $\P(\cdot \cap\{L_s=q\})$ is dominated
by $C/(\sqrt{q}(r+1))$ times the law of a forest of $q$ independent Galton--Watson trees with offspring distribution $\theta$
truncated at height $s-r$, and we may restrict the latter law to the event where the truncated forest has height
exactly $s-r$. Now note that the
length of the downward path from $f_s$ to $B^\bullet_r(\t^{(1)}_\infty)$ is determined by the forest $\wt{\mathcal{F}}^{(1)}_{r,s}$ and by the
triangulations with a boundary filling in the slots associated with the vertices of this forest at height strictly less than $s-r$. 
It follows that the law of this length under $\P(\cdot \cap\{L_s=q\})$ is dominated by $C/(\sqrt{q}(r+1))$ times the law 
of the length of the  downward path that one would get by considering a triangulation of the cylinder of height $s-r$ whose 
(cyclically permuted) skeleton is 
a forest of $q$ independent Galton--Watson trees with offspring distribution $\theta$ truncated at height $s-r$ (and we restrict our attention to the event
where the truncated forest has height $s-r$), and whose slots are filled in by independent Boltzmann triangulations. In the latter model, the numbers
of downward triangles that the downward path crosses in each layer are independent variables, and so are the sizes of the slots which are not $2$-gons
crossed in the different layers. These considerations show that the law under 
$\P(\cdot \cap\{L_s=q\})$ of the length of the downward path from $f_s$ to $B^\bullet_r(\t^{(1)}_\infty)$
is dominated by $C/(\sqrt{q}(r+1))$ times the law of the downward path from $f_{(0,0)}$ to $\l_{s-r}$ in the LHPT model. Using Lemma \ref{bound-do-path}, we now get
\begin{align*}
\P(\{\mathrm{d}^\dagger_\mathrm{gr}(f_s,B^\bullet_r(\t^{(1)}_\infty))>\alpha(s-r)\} \cap\{L_s=q\})
&\leq \frac{C}{\sqrt{q}(r+1)}\,\P(|\omega_{s-r}| >\alpha(s-r))\\
&\leq \frac{C}{\sqrt{q}(r+1)}\,e^{-\mu\alpha(s-r)}\,K^{s-r}.
\end{align*}
We can now fix $\alpha>0$ and $\beta'>0$ such that $e^{-\mu\alpha} K\leq e^{-\beta'}$, and we get
\begin{equation}
\label{bd-dual1}
\P(\{\mathrm{d}^\dagger_\mathrm{gr}(f_s,B^\bullet_r(\t^{(1)}_\infty))>\alpha(s-r)\} \cap\{L_s=q\})
\leq \frac{C}{\sqrt{q}(r+1)}\,\exp({-\beta'(s-r)}).
\end{equation}
To complete the argument we need to sum over the possible values of $q$. Using \eqref{bd-dual1} for $q \leq (s-r)s^2$ and \eqref{eq:boundLs} for $q > (s-r)s^2$ we get that
 \begin{eqnarray*}\P(\{\mathrm{d}^\dagger_\mathrm{gr}(f_s,B^\bullet_r(\t^{(1)}_\infty))>\alpha(s-r)\}) & \leq&  \mathbb{P}( L_{s} > (s-r)s^2) + \sum_{q=1}^{(s-r)s^2} \frac{C}{\sqrt{q}(r+1)}\,\exp({-\beta'(s-r)})\\
 & \leq & C_{0} \exp(-(s-r)/5) + 2C \frac{\sqrt{(s-r)s^2}}{r+1} \exp(-\beta'(s-r))\\
 & \leq & C_{0} \exp(-(s-r)/5) + 4C (s-r)^{3/2} \exp(-\beta'(s-r)).  \end{eqnarray*}
 Taking $\beta \in (0, \frac{1}{5} \wedge \beta')$, the last display is bounded above by $C' \exp(-\beta(s-r))$ for some constant $C'>0$.
  \endproof

\begin{remark}
As the proof shows, we can replace $\mathrm{d}^\dagger_\mathrm{gr}(f_s,B^\bullet_r(\t^{(1)}_\infty))$ (resp.
$\mathrm{d}^\dagger_\mathrm{Eden}(f_s,B^\bullet_r(\t^{(1)}_\infty))$)
in the statement of Lemma \ref{crude-bd-dual} by the length (resp. the weight) of the downward path 
from $f_s$ to $B^\bullet_r(\t^{(1)}_\infty)$. The same remark holds for the next corollary. 
\end{remark}

\begin{corollary}
\label{unif-bd-dual}
Let $\alpha$ be as in the preceding lemma, and let $\delta>0$. For every integer $R\geq 1$, let $A_R(\delta)$ be the event where the property
$$\mathrm{d}^\dagger_\mathrm{gr}(f,B^\bullet_r(\t^{(1)}_\infty))\leq \alpha (s-r)$$
holds for every $0\leq r<s\leq R$ such that $s-r\geq \delta R$, for every downward triangle $f$ at height $s$. There exists a constant
$\wt\beta >0$ such that, for every sufficiently large $R$,
$$\P(A_R(\delta))\geq 1-e^{-\wt \beta R}.$$
The same result holds if $\mathrm{d}^\dagger_\mathrm{dual}$ is replaced by $\mathrm{d}^\dagger_\mathrm{Eden}$. 
\end{corollary}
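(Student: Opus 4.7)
The strategy is to combine Lemma \ref{crude-bd-dual} with a first-moment union bound, first over downward triangles at a fixed height $s$, then over pairs $(r,s)$. I treat only the case of $\mathrm{d}^\dagger_{\mathrm{gr}}$; the Eden case is handled identically using the parallel bound in Lemma \ref{crude-bd-dual}.

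First, I would upgrade Lemma \ref{crude-bd-dual} from the uniformly chosen triangle $f_s$ to \emph{every} downward triangle at height $s$. Since $f_s$ is uniform over $\mathsf{F}_s(\t^{(1)}_\infty)$ conditionally on $\t^{(1)}_\infty$, for every integer $q\geq 1$,
$$\E\big[\#\{f\in\mathsf{F}_s:\mathrm{d}^\dagger_\mathrm{gr}(f,B^\bullet_r(\t^{(1)}_\infty))>\alpha(s-r)\}\cdot\mathbf{1}_{\{L_s=q\}}\big]=q\cdot\P\big(\{\mathrm{d}^\dagger_\mathrm{gr}(f_s,B^\bullet_r(\t^{(1)}_\infty))>\alpha(s-r)\}\cap\{L_s=q\}\big),$$
and the intermediate bound \eqref{bd-dual1} inside the proof of Lemma \ref{crude-bd-dual} controls the right-hand side by $C\sqrt{q}\,e^{-\beta'(s-r)}/(r+1)$. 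Summing over $q\leq M$ and applying Markov's inequality yields, for any $M\geq 1$,
$$\P\big(\{\exists f\in\mathsf{F}_s:\mathrm{d}^\dagger_\mathrm{gr}(f,B^\bullet_r(\t^{(1)}_\infty))>\alpha(s-r)\}\cap\{L_s\leq M\}\big)\leq\frac{C'\,M^{3/2}}{r+1}\,e^{-\beta'(s-r)}.$$

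Second, I would choose $M=M_{r,s}:=5\beta'(s-r)s^2$ so that the tail estimate \eqref{eq:boundLs} gives $\P(L_s>M)\leq C_0\,e^{-\beta'(s-r)}$, while $M^{3/2}\leq K_1 R^{9/2}$ because $s\leq R$. Combining this with the previous display, for any pair with $s-r\geq\delta R$,
$$\P\big(\exists f\in\mathsf{F}_s:\mathrm{d}^\dagger_\mathrm{gr}(f,B^\bullet_r(\t^{(1)}_\infty))>\alpha(s-r)\big)\leq K_2\,R^{9/2}\,e^{-\beta'(s-r)}\leq K_2\,R^{9/2}\,e^{-\beta'\delta R}.$$
A union bound over the at most $R^2$ admissible pairs $(r,s)$ then gives $\P(A_R(\delta)^c)\leq K_3\,R^{13/2}\,e^{-\beta'\delta R}$, which is bounded by $e^{-\tilde\beta R}$ for any $\tilde\beta<\beta'\delta$ and all sufficiently large $R$.

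The only substantive point is the upgrade from the uniformly chosen $f_s$ in Lemma \ref{crude-bd-dual} to all triangles in $\mathsf{F}_s(\t^{(1)}_\infty)$ simultaneously. This is inexpensive because the proof of Lemma \ref{crude-bd-dual} actually delivers the joint bound on $\{L_s=q\}$, so multiplying by $q$ produces only a $\sqrt{q}$-factor that is easily absorbed by the tail bound \eqref{eq:boundLs} on $L_s$. Once this is set up, the outer union bounds are routine because the exponential factor $e^{-\beta'\delta R}$ dominates all polynomial losses coming from the cardinality of the set of pairs $(r,s)$ and from the choice of $M$.
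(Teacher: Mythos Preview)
Your proposal is correct and follows essentially the same strategy as the paper: a union bound over downward triangles at height $s$ (controlling the cardinality via the tail estimate \eqref{eq:boundLs} on $L_s$), followed by a union bound over the at most $R^2$ pairs $(r,s)$, with all polynomial losses absorbed by the exponential decay in $s-r\geq\delta R$.

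The only methodological difference is in how the union over triangles is executed. You open up the proof of Lemma \ref{crude-bd-dual} to extract the joint bound \eqref{bd-dual1} on $\{L_s=q\}$, multiply by $q$, and sum. The paper instead stays at the black-box level: it lists the downward triangles cyclically as $f_{(1)},f_{(2)},\ldots$ starting from a uniformly chosen one, observes that each $f_{(j)}$ is again uniformly distributed over $\mathsf{F}_s(\t^{(1)}_\infty)$, and applies the \emph{statement} of Lemma \ref{crude-bd-dual} to each $f_{(j)}$ for $j\leq Rs^2$, adding $\P(L_s>Rs^2)$. This gives the slightly cleaner polynomial $R^3$ (then $R^5$ after the pair sum) versus your $R^{9/2}$ (then $R^{13/2}$), but of course this is irrelevant. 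The paper's route has the advantage of not requiring the intermediate formula \eqref{bd-dual1}; your route is a touch more direct once that formula is on the table.
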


\proof
Consider first fixed values of $r$ and $s$ such that $0\leq r<s\leq R$ and $s-r\geq \delta R$. Let $f_{(1)}$ be uniformly distributed
over $\mathsf{F}_s(\t^{(1)}_\infty)$, and define $f_{(1)},f_{(2)},f_{(3)},\ldots$ as the successive downward triangles at height $s$
visited when moving around $\partial B^\bullet_s(\t^{(1)}_\infty)$ in clockwise order, starting from $f_{(1)}$. For every integer $j\geq 1$, 
$f_{(j)}$ is also uniformly distributed
over $\mathsf{F}_s(\t^{(1)}_\infty)$. By Lemma \ref{crude-bd-dual},
$$\P\Big(\mathrm{d}^\dagger_\mathrm{gr}(f_{(j)},B^\bullet_r(\t^{(1)}_\infty))>\alpha(s-r)\Big)\leq K\,e^{-\beta\delta R}.$$
Using the bound \eqref{eq:boundLs}, it follows that
\begin{align*}
&\P\Big( \mathrm{d}^\dagger_\mathrm{gr}(f,B^\bullet_r(\t^{(1)}_\infty))> \alpha (s-r),\hbox{ for some downward triangle }f
\hbox{ at height }s\Big)\\
&\qquad\leq \sum_{j=1}^{R\,s^2} \P\Big(\mathrm{d}^\dagger_\mathrm{gr}(f_{(j)},B^\bullet_r(\t^{(1)}_\infty))> \alpha (s-r)\Big)
+ \P(L_s > R\,s^2)\\
&\qquad \leq K\,R^3\,\exp(-\beta \delta R) + C_{0}\,\exp(-R/5).
\end{align*}
To get the estimate of the corollary, we only need to sum this bound over the possible values of $r$ and $s$. The proof
for $\mathrm{d}^\dagger_\mathrm{Eden}$ is exactly the same. \endproof

We now turn to finite triangulations. As previously, $ \t_{n}$ denotes a uniform rooted plane triangulation with $n+1$ vertices. The next lemma gives a uniform estimate for the dual (or Eden)
distance on $\mathsf{F}(\t_n)$ in terms of the graph distance on $\mathsf{V}(\t_n)$.
Recall our notation $x\triangleleft f$ meaning that the vertex $x$ is incident
to the face $f$. 

\begin{lemma}
\label{bd-dual-fini}
Let $\alpha$ be as in Lemma \ref{crude-bd-dual}. 
Let $\ve\in(0,1/4)$, and for every integer $n\geq 1$, let $E_n$ be the event where the bound
$$ \mathrm{d}^\dagger_\mathrm{gr}(f,g) \leq \alpha \,\mathrm{d}_\mathrm{gr}(x,y) + n^\ve$$
holds for every $x,y\in \mathsf{V}(\t_n)$ and $f,g\in \mathsf{F}(\t_n)$ such that $x\triangleleft f$
and $y\triangleleft g$. Then
$$\P(E_n) \build{\la}_{n\to\infty}^{} 1.$$
The same result holds if $\mathrm{d}^\dagger_\mathrm{gr}$ is replaced by $\mathrm{d}^\dagger_\mathrm{Eden}$. 
\end{lemma}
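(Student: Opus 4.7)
The proof plan follows a transfer-and-re-rooting strategy analogous to the one used for $\mathrm{d}_{\rm fpp}$ in the previous section, with two extra ingredients: re-rooting invariance of $\t_n$ (to make the root-centred estimate of Corollary~\ref{unif-bd-dual} uniform in the base vertex) and a maximum-degree bound (to pass from downward triangles to arbitrary incident faces). I only treat $\mathrm{d}^\dagger_\mathrm{gr}$: as in the proof of Lemma~\ref{crude-bd-dual}, the Eden case follows by comparing the weight of a graph-dual geodesic to a sum of i.i.d.\ exponentials along its edges, which concentrates around its length.

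Fix $\ve\in(0,1/4)$ and set $R=\lfloor An^{1/4}\rfloor$ with $A$ to be chosen suitably. By Corollary~\ref{unif-bd-dual}, the event $A_R(\delta)^c$ has probability at most $e^{-\wt\beta R}$ in $\t^{(1)}_\infty$; using Lemma~\ref{auxlem} and the concentration $|B^\bullet_R(\ov\t^{(1)}_n)|=\Theta(R^4)$ from \cite{CLGpeeling} (for $A$ small enough), the same bound transfers to $\ov\t^{(1)}_n$ up to a multiplicative constant, and then to $\t_n$ via the bijection of Fig.~\ref{fig:transform-root}. Next, the re-rooting identity
\begin{equation*}
\E\Bigg[\sum_{e \in \vec{\mathsf{E}}(\t_n)} F(\t_n^{(e)})\Bigg] = 6(n-1)\,\E[F(\t_n)]
\end{equation*}
combined with the Markov inequality shows that, with probability $\to 1$, the conclusion of Corollary~\ref{unif-bd-dual} holds for $\t_n$ re-rooted at \emph{every} oriented edge, the $O(n)$-factor being absorbed by $e^{-\wt\beta A n^{1/4}}$. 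The same re-rooting trick applied to the exponential tail of the root degree in Boltzmann triangulations (Proposition~\ref{prop:subexpo}) also yields $\max_{v\in\mathsf{V}(\t_n)}\deg(v)\leq n^{\ve/2}$ with probability $\to 1$.

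On the intersection of these good events, fix $x,y\in\mathsf{V}(\t_n)$ and $f,g\in\mathsf{F}(\t_n)$ with $x\triangleleft f$ and $y\triangleleft g$, set $d=\mathrm{d}_\mathrm{gr}(x,y)$, and re-root $\t_n$ at an edge making $x$ the root vertex. Then $f$ sits within dual distance $\deg(x)\leq n^{\ve/2}$ of the root face (turning around $x$ in the dual), and $g$ sits within dual distance $\deg(y)\leq n^{\ve/2}$ of a downward triangle $f'$ at height $d$ or $d+1$. When $d\geq \delta R$, the transferred Corollary~\ref{unif-bd-dual} bounds $\mathrm{d}^\dagger_\mathrm{gr}(f',\textrm{root face})\leq \alpha(d+1)$; when $d<\delta R$, the same type of bound is obtained by shadowing a graph geodesic from $y$ to $x$ in the dual (which costs at most $2d+O(d\cdot n^{\ve/2})$, controlled by the degree bound). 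The triangle inequality then gives $\mathrm{d}^\dagger_\mathrm{gr}(f,g)\leq \alpha\,\mathrm{d}_\mathrm{gr}(x,y)+n^\ve$ for $n$ large, possibly after enlarging $\alpha$ to $\max(\alpha,2)$ (which only weakens the statement).

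The most delicate step will be the handling of ``medium'' distances $d$, between the regime where shadowing is cheap ($d\lesssim n^{\ve/2}$) and the regime where Corollary~\ref{unif-bd-dual} directly applies ($d\geq \delta R$). The cleanest fix is to let $\delta$ shrink with $n$ so that $\delta R \approx n^{\ve/2}$; inspecting the proof of Corollary~\ref{unif-bd-dual} one sees that the bad event probability remains $\lesssim R^3 e^{-\beta\delta R}+e^{-R/5}$, which is still super-polynomially small in $n$ under this choice, and therefore survives both the polynomial loss from Lemma~\ref{auxlem} and the $O(n)$ re-rooting union bound. Beyond this, one must calibrate $A$ so that the hull volume fits the density bound of Lemma~\ref{auxlem} while still exceeding the diameter of $\t_n$; both constraints are feasible since the diameter of $\t_n$ is $O(n^{1/4})$ with sharp concentration.
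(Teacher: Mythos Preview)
Your proposal assembles the right ingredients (Corollary~\ref{unif-bd-dual} transferred via Lemma~\ref{auxlem}, re-rooting invariance, and the maximal degree bound), but the execution around the fixed radius $R=\lfloor A n^{1/4}\rfloor$ has a genuine gap. The transferred event $A_R(\delta)$ is a property of the hull $B^\bullet_R(\ov\t^{(1)}_n)$, and this hull is only defined when $d_n=\mathrm{d}_{\mathrm{gr}}(\rho_n,o_n)>R$. After you re-root at $x$ and (implicitly) re-point at $y$, the relevant distance becomes $d_n=d=\mathrm{d}_{\mathrm{gr}}(x,y)$. If $d\leq R$, the hull of radius $R$ is undefined and you cannot invoke the transferred $A_R(\delta)$; if $d>R$, the hull of radius $R$ exists but $A_R(\delta)$ says nothing about downward triangles at height $d-1>R-1$, which is exactly what you need. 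Your final paragraph tries to resolve this by asking that $R$ exceed the diameter while the hull volume stays below $(1-b)n$; these two constraints are incompatible, since once $R$ exceeds the diameter there is no admissible distinguished vertex and the hull would be the whole map.

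The paper's proof avoids this by \emph{not} fixing $R$. For each $r\geq 1$ it transfers, via the crude bound $(n/(n-N))^{3/2}\leq n^{3/2}$ in Lemma~\ref{auxlem}, the event $\{B^\bullet_r\in\Theta\}$ (where $\Theta$ collects cylinder triangulations whose top boundary contains a face at dual distance $>\alpha r$ from the bottom). This yields
\[
\P\big(d_n>r,\;B^\bullet_r(\ov\t^{(1)}_n)\in\Theta\big)\leq \ov c\,n^{3/2}e^{-\wt\beta r},
\]
which is summable over $r\geq\lfloor n^\ve\rfloor$. Specializing to $r=d_n-1$ (always well defined) gives, on the complement of an event of stretched-exponential probability, that every face incident to $\partial^*B^\bullet_{d_n-1}$ is within dual distance $\alpha(d_n-1)$ of the bottom face. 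Re-rooting at a uniform edge and summing over all vertex pairs then upgrades this to a statement for arbitrary $(x,y)$ with $\mathrm{d}_{\mathrm{gr}}(x,y)>n^\ve$. The remaining short-distance case $\mathrm{d}_{\mathrm{gr}}(x,y)\leq n^\ve$ is handled, as you also suggest, by the trivial bound $\mathrm{d}^\dagger_{\mathrm{gr}}(f,g)\leq \mathsf{MD}(\t_n)(\mathrm{d}_{\mathrm{gr}}(x,y)+1)$ together with Lemma~\ref{max-degree}. In short, the missing idea is to let the radius be the random quantity $d_n-1$ and absorb the union over its values into a sum, rather than committing to a single deterministic $R$.
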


\proof Recall the notation of Proposition \ref{key-finitetri}: $ \overline{\mathcal{T}}_{n}^{(1)}$ is a uniform pointed triangulation of the $1$-gon with $n$ inner vertices, whose root vertex is $\rho_{n}$ and the distinguished vertex is denoted by $o_{n}$. To simplify notation, set $d_n=\mathrm{d}_\mathrm{gr}(\rho_{n},o_{n})$. We can make sense
of the hull $B^\bullet_r(\ov\t_n^{(1)})$ provided that $0<r<d_n$. 
Then write $\Theta$ for the set of all triangulations $\mathrm{t}$ that belong to $\C_{1,r}$
for some $r>1$ and are such that there exists a 
face $f$ incident to $\partial^*\mathrm{t}$ whose dual graph distance from the bottom face is strictly greater than 
$\alpha r$. 

Using Lemma \ref{auxlem},
we have
\begin{align*}
\P\Big(d_n>r\,; B^\bullet_r(\ov\t_n^{(1)})\in \Theta\Big)
&\leq \sum_{\mathrm{t}\in \C_{1,r}} \mathbf{1}_{\Theta}(\mathrm{t})\, \P(d_n>r\,;B^\bullet_r(\ov\t_n^{(1)})=\mathrm{t})\\
& \leq \ov c n^{3/2} 
\sum_{\mathrm{t}\in \C_{1,r}} \mathbf{1}_{\Theta}(\mathrm{t})\, \P(B^\bullet_r(\t^{(1)}_\infty)=\mathrm{t})\\
&\leq \ov c n^{3/2} \exp(-\wt \beta r),
\end{align*}
where the last inequality follows from Corollary \ref{unif-bd-dual}. We can sum this bound from $r=\lfloor n^\ve\rfloor$
to $\infty$, to get
$$\E\Bigg[\sum_{r=\lfloor n^\ve\rfloor}^\infty \mathbf{1}_{\{r<d_n\,; B^\bullet_r(\ov\t_n^{(1)})\in \Theta\}}\Bigg] \leq \wt c
\,\exp(-a \,n^\ve),$$
with some other constants $\wt c>0, a>0$. It follows that
\begin{equation}
\label{crude-tec1}
\P\Big( d_n>n^\ve\,; B^\bullet_{d_n-1}(\ov\t_n^{(1)})\in \Theta\Big)\leq  \wt c
\,\exp(-a \, n^\ve).
\end{equation}

Then notice that $o_{n}$ is adjacent to a vertex $v_0$ that belongs to the boundary of $B^\bullet_{d_n-1}(\ov\t_n^{(1)})$
(take $v_0$ on a geodesic from $o_{n}$ to $\rho_{n}$). If $g$ is any face incident to $o_{n}$ and 
if $g'$ is a face incident to an edge between $o_{n}$ and $v_0$, the dual graph distance
distance between $g$ and $g'$ is bounded above by the degree of $o_{n}$ and thus by the maximal vertex degree of $ \mathcal{T}_{n}^{(1)}$, which we denote
by $\mathsf{MD}( \mathcal{T}_{n}^{(1)})$.
For the same reason, the dual graph distance between $g'$ and a downward triangle at height $d_n-1$ incident to $v_0$
is bounded by $\mathsf{MD}( \mathcal{T}_{n}^{(1)})$, and so is the dual graph distance between any face $f$ incident to $\rho_{n}$ 
and the bottom face. The preceding considerations 
and the definition of $\Theta$ show that, on the event $\{d_n>n^\ve\,; B^\bullet_{d_n-1}(\ov\t_n^{(1)})\notin \Theta\}$,
we have $\mathrm{d}^\dagger_\mathrm{gr}(f,g)\leq \alpha d_n + 3 \mathsf{MD}( \mathcal{T}_{n}^{(1)})$, whenever $\rho_{n}\triangleleft f$ 
 and $o_{n}\triangleleft g$. 

Obviously, the same result holds if one considers instead the rooted and pointed plane triangulation  $\overline{ \mathcal{T}}_{n}$ constructed from $ \overline{ \mathcal{T}}_{n}^{(1)}$ via the transformation of Fig.~\ref{fig:transform-root}. Now re-root $\ov\t_n$ at an oriented edge $e_n$ chosen uniformly and independently of $o_{n}$, and write $\ov\t'_n$
for the resulting rooted and pointed planar map. Then, $\ov\t'_n$ has the same distribution as $\ov\t_n$
and writing $\rho'_n$ for the initial vertex of $e_n$, and $d'_n=\mathrm{d}_\mathrm{gr}(\rho'_{n},o_{n})$, we have
from \eqref{crude-tec1}
$$
\P\Big( d'_n>n^\ve\,; \mathrm{d}^\dagger_\mathrm{gr}(f,g)> \alpha d'_n + 3 \mathsf{MD}( \mathcal{T}_{n})
\hbox{ whenever  }\rho'_{n}\triangleleft f \hbox{ and  }o_{n}\triangleleft g\Big)
 \leq \wt c
\,\exp(-a \, n^\ve).
$$
By the same considerations as in the beginning of the proof of Theorem \ref{main-FPP-tri}, this implies
$$\E\Big[\sum_{v,v'\in \mathsf{V}(\t_n)}
\mathbf{1}_{\{\mathrm{d}_\mathrm{gr}(v,v')>n^\ve\}}
\mathbf{1}_{\{\mathrm{d}^\dagger_\mathrm{gr}(f,g)> \alpha \mathrm{d}_\mathrm{gr}(v,v') + 3 \mathsf{MD}( \mathcal{T}_{n})\;{\rm whenever\;}v
\triangleleft f \;{\rm and\;}v'\triangleleft g\}}\Big]
\leq 6(n+1)^2 \, \wt c \,\exp(-a \, n^\ve).
$$
Hence, with probability tending to $1$ as $n\to\infty$, we have the bound
$$\mathrm{d}^\dagger_\mathrm{gr}(f,g)> \alpha \mathrm{d}_\mathrm{gr}(v,v') + 3 \mathsf{MD}( \mathcal{T}_{n})$$
whenever $v,v'\in \mathsf{V}(\t_n)$,  
$\mathrm{d}_\mathrm{gr}(v,v')>n^\ve$ and $v
\triangleleft f$, $v'\triangleleft g$. However,
Lemma \ref{max-degree} in Appendix A2 below shows that 
we can find a constant $A>0$ such that the bound $\mathsf{MD}( \mathcal{T}_{n})\leq A\,\log n$
holds with probability tending to $1$ as $n\to\infty$. Combining this bound with the previous display, we get the desired result, except for the restriction $\mathrm{d}_\mathrm{gr}(v,v')>n^\ve$.
However, if $\mathrm{d}_\mathrm{gr}(v,v')\leq n^\ve$, we can just use the simple bound $\mathrm{d}^\dagger_{\mathrm{gr}}(f,g) \leq  \mathsf{MD}( \mathcal{T}_{n}) (\mathrm{d_{gr}}(v,v')+1)$. 
This completes 
the proof of the result for $\mathrm{d}^\dagger_\mathrm{gr}$. The case of $\mathrm{d}^\dagger_\mathrm{Eden}$
is treated in a similar way, using also the fact that the maximal weight of a dual edge in $\t_n$
can be bounded by $n^\ve$ outside a set of probability tending to $0$ as $n\to\infty$.  \endproof

\subsection{Proof of the theorems about dual distances}

Lemma \ref{crude-bd-dual} and Lemma \ref{bd-dual-fini} provide the technical ingredients that are needed to
extend the arguments of the proofs of Theorems \ref{main-FPP-tri} and \ref{balls-uipt} to the setting
of Theorems \ref{main-tri-dual}
and \ref{balls-uipt-dual}. In the present subsection, we briefly explain the necessary adaptations of the proofs.

Recall the constants $\mathbf{c}_1$ and
$\mathbf{c}_2$ from Proposition \ref{subadditive-dual}. 
Let us state an analog of Propositions \ref{keytech} and \ref{dist-hull}.
Recall that we interpret $B^\bullet_0(\t^{(1)}_\infty)$ as the bottom face.

\begin{proposition}
\label{keytech-dual}
Let $\ve >0$ and $\delta>0$. We can find $\eta\in(0, \frac{1}{2})$ 
such that, for every sufficiently large $n$, the bounds
$$(1- \varepsilon)\mathbf{c}_1\lfloor \eta n\rfloor \leq
\mathrm{d}_{\rm gr}^\dagger(f,B^\bullet_{n-\lfloor \eta n\rfloor}(\t^{(1)}_\infty)) \leq 
(1+ \varepsilon)\mathbf{c}_1\lfloor \eta n\rfloor$$
hold for every $f\in \mathsf{F}_n(\t^{(1)}_\infty)$, with probability at least $1-\delta$. Furthermore,
$$\P\Big((\mathbf{c}_1- \varepsilon)n\leq \mathrm{d}_{\rm gr}^\dagger(B^\bullet_0(\t^{(1)}_\infty),f)\leq (\mathbf{c}_1+ \varepsilon)n,\ 
\hbox{ for every }f\in \mathsf{F}_n(\t^{(1)}_\infty) \Big) \build{\la}_{n\to\infty}^{} 1.$$
The same properties hold when $\mathrm{d}_{\rm gr}^\dagger$ is replaced by
$\mathrm{d}_{\rm Eden}^\dagger$ provided that $\mathbf{c}_1$
is replaced by $\mathbf{c}_2$. 
\end{proposition}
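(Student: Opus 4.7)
The proof will follow the strategy of Propositions \ref{keytech} and \ref{dist-hull}, with two new ingredients: Proposition \ref{subadditive-dual} replaces Proposition \ref{lem:subadditive} and produces the constants $\mathbf{c}_1$ and $\mathbf{c}_2$, and Corollary \ref{unif-bd-dual} provides the uniform upper bound that plays the role of the trivial inequality $\mathrm{d}_{\rm fpp}\le\mathrm{d}_{\rm gr}$ used in the FPP case. I will describe the argument for $\mathrm{d}^\dagger_{\rm gr}$; the Eden case is identical after invoking the corresponding versions of these inputs.

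For the first assertion, I would mimic the proof of Proposition \ref{keytech}: fix $\varepsilon,\delta>0$ and $a\in(0,1/2)$ as there, keep the events $\mathcal{E}_n(\eta)$ controlling the perimeters and $\mathcal{H}_{n,j}(\eta)$ expressing non-coalescence of the bounding leftmost geodesics, and combine them on a finite grid $j=k\lfloor an^2/8\rfloor$ exactly as in the original proof. The delicate point is the replacement for the confinement event $\mathcal{A}_{n,j}(\eta)$: the FPP argument used the lower bound $\kappa$ on edge weights to convert short FPP paths into short primal paths, and no direct analog exists here. Instead, I would exploit the elementary observation that two faces at dual graph distance $d$ contain a pair of vertices at primal distance at most $d+1$, so a dual path of length $d$ lives in a primal ball of radius $d+1$ about its starting face. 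Combined with the a priori bound $\mathrm{d}^\dagger_{\rm gr}(g,B^\bullet_{n-\lfloor\eta n\rfloor})\le\alpha\eta n$ from Corollary \ref{unif-bd-dual}, this lets me define $\mathcal{A}^\dagger_{n,j}(\eta)$ as the event that, for some $i$ with $|i-j|\le an^2/16$, there exists a primal path in $B^\bullet_n\setminus B^\bullet_{n-\lfloor\eta n\rfloor}$ of length at most $(\alpha+1)\eta n$ from $u^{(n)}_i$ to $\partial_\ell\mathcal{G}^{(n)}_j(\eta)$. By Proposition \ref{prop:compa} and Corollary \ref{abso-con}, the probability of $\mathcal{A}^\dagger_{n,j}(\eta)$ is bounded by its LHPT analog, which is controlled via Proposition \ref{dist-bdry3} and the triangle inequality along the bounding leftmost geodesics: the primal distance $\mathrm{d}^\l_{\rm gr}((i,0),(\pm\lfloor an^2/4\rfloor,0))$ is at least $c(a)n$, which exceeds $(\alpha+2)\eta n$ once $\eta$ is small in terms of $a$ and $\alpha$. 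On the complement of $\mathcal{A}^\dagger_{n,j}(\eta)$, every dual geodesic from a face $g$ at height $n$ adjacent to some $u^{(n)}_i$ with $|i-j|\le an^2/16$ to $\partial^*B^\bullet_{n-\lfloor\eta n\rfloor}$ is confined to $\mathcal{G}^{(n)}_j(\eta)$, and a second application of Proposition \ref{prop:compa} dominates the remaining randomness by the corresponding LHPT data, to which Proposition \ref{subadditive-dual} applies, yielding the two-sided estimate $(1\pm\varepsilon)\mathbf{c}_1\lfloor\eta n\rfloor$. The coalescence argument of Proposition \ref{densit2} then promotes this estimate from the finite grid to uniformity over $f\in\mathsf{F}_n(\t^{(1)}_\infty)$.

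For the second assertion, I would iterate the first over the telescoping scales $n_0=n$, $n_{j+1}=n_j-\lfloor\eta n_j\rfloor$ exactly as in the proof of Proposition \ref{dist-hull}, concatenating dual geodesics layer by layer for the upper bound and inspecting the per-layer contribution of any dual path for the lower bound. The only new point is the treatment of the exceptional layers on which the one-layer estimate fails, and of the final truncated piece of height $n_q\le\varepsilon n/4$: in the FPP case these contributions were controlled by $\mathrm{d}_{\rm fpp}\le\mathrm{d}_{\rm gr}$, and here I would replace that bound by the uniform estimate from Corollary \ref{unif-bd-dual}, adjusting $\varepsilon$ at the outset by the absolute constant $\alpha$ to absorb these terms. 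The main obstacle throughout is the confinement step described above, since the loss of the uniform weight lower bound $\kappa$ forces one to argue via primal balls and Proposition \ref{dist-bdry3} rather than directly via modified path lengths.
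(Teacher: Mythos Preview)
Your overall strategy matches the paper's: adapt Proposition~\ref{keytech} using Proposition~\ref{subadditive-dual} for the subadditive constants and Corollary~\ref{unif-bd-dual} for the a~priori upper bound, then iterate as in Proposition~\ref{dist-hull}. Your confinement argument (dual path of length $d$ yields primal vertices at distance $\le d+1$, hence a primal path hitting $\partial_\ell\mathcal{G}^{(n)}_j(\eta)$) is essentially what the paper does as well. However, you gloss over two points that the paper treats explicitly and that are not automatic.

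First, in the final density step you invoke Proposition~\ref{densit2} to pass from the grid $\{f^{(n)}_{\lfloor kn^2/A\rfloor}\}$ to all of $\mathsf{F}_n(\t^{(1)}_\infty)$. In the FPP proof this worked because coalescence of left-most geodesics bounds the \emph{primal} distance between $u^{(n)}_i$ and a grid vertex, and $\mathrm{d}_{\rm fpp}\le\mathrm{d}_{\rm gr}$ closes the gap. Here you need a bound on the \emph{dual} distance between an arbitrary $f\in\mathsf{F}_n$ and a grid face, and there is no direct conversion from small primal distance to small dual distance. The paper supplies the missing link: coalescence of left-most geodesics from $u^{(n)}_j$ and $u^{(n)}_{j'}$ forces coalescence of the corresponding \emph{downward paths} from $f^{(n)}_j$ and $f^{(n)}_{j'}$, and the length bounds of Lemma~\ref{bound-do-path} (transferred via Corollary~\ref{unif-bd-dual}) then control the dual distance between these faces.

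Second, the Eden case is not ``identical''. Your confinement argument bounds the \emph{length} of a dual geodesic by $\alpha\eta n$ via Corollary~\ref{unif-bd-dual}, then converts to a primal path. For $\mathrm{d}^\dagger_{\rm Eden}$ the relevant geodesic has small Eden \emph{weight}, but since exponential weights have no positive lower bound, small weight does not immediately give small length. The paper inserts a large-deviation step: among the at most $3^k$ injective dual paths of length $k$ from a fixed face, none has total Eden weight below $\gamma k$ except on an event of probability $o(1)$, so a dual path of Eden weight $\le\alpha\eta n$ has length $\le(\alpha/\gamma)\eta n$, after which your primal-conversion argument applies.
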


\begin{proof}[Proof (sketch)]
We start with the first assertion.
Fix $n$ and first choose $f$ uniformly at random 
in $\mathsf{F}_n(\t^{(1)}_\infty)$. We argue in a very similar way as in the proof of Proposition \ref{keytech},
using Proposition \ref{subadditive-dual} instead of Proposition \ref{lem:subadditive}, and
noting that Corollary \ref{unif-bd-dual} already gives us the bound $\mathrm{d}_{\rm gr}^\dagger(f,B^\bullet_{n-\lfloor \eta n\rfloor}
(\t^{(1)}_\infty))\leq \alpha\lfloor \eta n\rfloor$
outside a set of small probability. Recall the notation of the proof of
Proposition \ref{keytech}, and, for every $i\in\Z$, write $f^{(n)}_i\in \mathsf{F}_n(\t^{(1)}_\infty)$ for the unique downward triangle 
incident to the edge of $\partial B^\bullet_n(\t^{(1)}_\infty)$ from $u^{(n)}_i$ to $u^{(n)}_{i+1}$. Let $j$ be such that $f=f^{(n)}_j$.
We need to bound the probability that, for
some $i$ with $j-an^2/16\leq i\leq j+an^2/16$, there is a dual path from $f^{(n)}_i$ to $B^\bullet_{n-\lfloor \eta n\rfloor}(\t^{(1)}_\infty)$ with
length smaller than  $4 \alpha \eta n$, which stays in $B^\bullet_{n}(\t^{(1)}_\infty)$ and
exits the region $\g^{(n)}_j(\eta)$ before hitting $B^\bullet_{n-\lfloor \eta n\rfloor}(\t^{(1)}_\infty)$.
However, a simple argument shows that,
if there exists such a dual path, there will also exist a path (in the primal graph) from $u^{(n)}_i$ to $\partial_\ell\g^{(n)}_j(\eta)$ in $B^\bullet_n(\t^{(1)}_\infty)
\backslash B^\bullet_{n-\lfloor \eta n\rfloor}(\t^{(1)}_\infty)$,
with length smaller than $4\alpha \eta n +1$, and we know from the proof of Proposition \ref{keytech} that this cannot occur except on a set of
small probability. To get a similar estimate in the case of $\mathrm{d}_{\rm Eden}^\dagger$, we need an additional ingredient. Precisely, the
same large deviation argument as in the proof of Proposition \ref{subadditive-dual} allows us to verify the existence of
a constant $\gamma>0$ such that, except on a set of probability tending to $0$ as $k\to\infty$, any injective dual path of length $k$ starting from 
$ \mathsf{F}_{n}(\t^{(1)}_\infty)$ will have total Eden weight at least $\gamma k$ (the point is that there are less than $3^k$ such paths with a given
starting face). So, except on a set of probability tending to $0$ as $n\to\infty$, the existence of a dual path (which can be assumed to be injective)
from $f^{(n)}_i$ to $B^\bullet_{n-\lfloor \eta n\rfloor}(\t^{(1)}_\infty)$ with
Eden weight smaller than  $4 (\alpha/\gamma) \eta n$, which stays in $B^\bullet_{n}(\t^{(1)}_\infty)$ and
exits the region $\g^{(n)}_j(\eta)$ before hitting $B^\bullet_{n-\lfloor \eta n\rfloor}(\t^{(1)}_\infty)$, implies that the same dual path has length smaller than $4\alpha\eta n$, and we can use
the first part of the argument. 

When adapting the final part of the proof of Proposition \ref{keytech}, we also need to verify that, for every $\beta>0$, we can find an integer $A$ sufficiently large so that,
except on a set of probability tending to $0$ as $n\to\infty$, any downward triangle at height $n$ is connected 
to one of the downward triangles $f^{(n)}_j$, $0\leq j\leq \lfloor a^{-1}A\rfloor$, by a dual path in $B^\bullet_n(\t^{(1)}_\infty)$ with length (or Eden weight) at most $\beta \mathbf{c_1}\eta n$
($\beta \mathbf{c_2}\eta n$ in the case of the Eden weight). 
To this end, we use again Proposition \ref{densit2}. We observe that if $f=f^{(n)}_j$ and $f'=f^{(n)}_{j'}$ are two downward triangles at height $n$, the fact that the left-most geodesics from $u^{(n)}_j$
and $u^{(n)}_{j'}$ coalesce above height $n'<n$ implies that the same property holds for the downward paths from $f$ and from $f'$. We can then use
the bounds on the lengths of downward paths obtained in subsection \ref{dual-technic}
(see the remark preceding Corollary \ref{unif-bd-dual}) to get the desired control on the length (or Eden weight) of the dual path from $f$ to $f'$
obtained by the concatenation of the respective downward paths from $f$ and $f'$ up to their coalescence time.

The proof of the second assertion of the proposition is similar to that of Proposition \ref{dist-hull}. We now use Corollary \ref{unif-bd-dual} to handle the
``bad'' values of $i$ for which the bound 
$$(1- \varepsilon)\mathbf{c}_1(n_{i}-n_{i+1})  \leq
\mathrm{d}_{\rm gr}^\dagger(f,B^\bullet_{n_{i+1}}(\t^{(1)}_\infty)) \leq 
(1+ \varepsilon)\mathbf{c}_1 (n_{i}-n_{i+1})  , \hbox{ for every }f\in\mathsf{F}_{n_{i}}(\t^{(1)}_\infty)$$
fails (with the notation of the proof of Proposition \ref{dist-hull}). The remaining part of the argument is the same.
\end{proof}

 \begin{proof}[Proof of Theorem \ref{main-tri-dual}] 
 We start by deriving an analog of Proposition \ref{key-finitetri}. Let $f_*$
 stands for the bottom face of $\t_n^{(1)}$. Let $o_n$ be distributed uniformly on 
 $\mathsf{V}(\t_n^{(1)})$ and let $f_{n}$ be a face incident to $o_n$ (which may be fixed in some deterministic manner
 given $o_n$). Then
\begin{equation}
\label{dual-estim}
\P\Big( \big|\mathrm{d}^\dagger_\mathrm{gr}(f_*,f_{n}) 
-\mathbf{c}_1\, \mathrm{d}_\mathrm{gr}(\rho_{n},o_{n})\big| > \ve\,n^{1/4}\Big)
\build{\longrightarrow}_{n\to\infty}^{} 0,
\end{equation}
and similarly if $\mathrm{d}^\dagger_\mathrm{gr}$ is replaced by $\mathrm{d}^\dagger_\mathrm{Eden}$
provided $\mathbf{c}_1$ is replaced by $\mathbf{c}_2$. The proof is essentially the same as that
of Proposition \ref{key-finitetri}, using the same absolute continuity argument (justified by Lemma \ref{auxlem}) but relying now on the second assertion of Proposition \ref{keytech-dual} 
instead of Proposition \ref{dist-hull}.
The only notable modification is at the end of the proof where, on the event $\{\beta_jn^{1/4}< \mathrm{d}_\mathrm{gr}(\rho_{n},o_{n})\leq \gamma_j n^{1/4}\}$, 
we now use Lemma \ref{bd-dual-fini} to get an upper bound on $\mathrm{d}^\dagger_\mathrm{gr}(f_{n},B^\bullet_{\lfloor \alpha_jn^{1/4}\rfloor}(\t_n^{(1)}))$, or
on $\mathrm{d}^\dagger_\mathrm{Eden}(f_{n},B^\bullet_{\lfloor \alpha_jn^{1/4}\rfloor}(\t_n^{(1)}))$

Once \eqref{dual-estim} has been established, we obtain the statement of Theorem \ref{main-tri-dual} via a straightforward adaptation of the proof
of Theorem \ref{main-FPP-tri}. Lemma \ref{bd-dual-fini} is used once again to verify that if we pick independently a
sufficiently large number $N$ of vertices uniformly distributed over $\mathsf{V}(\t_n)$, then, with high probability uniformly in $n$, any face will be within dual (or Eden)
distance at most $\ve n^{1/4}$ of one of the faces incident to these vertices.
\end{proof}

\begin{proof}[Proof of Theorem \ref{balls-uipt-dual}] This proof goes through by exactly the same absolute continuity argument as in the proof of Theorem 
\ref{balls-uipt}, modulo of course the replacement of $ \mathbf{c}_{0}$ by $\mathbf{c}_1$ or $\mathbf{c}_2$.
\end{proof}

\section*{Appendix A1}

In this appendix, we give a precise justification of \eqref{densite-uniform}. To this end, we need to
obtain a refined version of the convergence of rescaled triangulations to the Brownian map \cite{LG11}.
We will verify that this convergence holds in the sense of
 the Gromov--Hausdorff--Prokhorov metric, if the vertex set of the triangulations is equipped with the
 uniform probability measure, and the Brownian map with its canonical volume measure. 
We refer the reader to \cite[Section 6.2]{Mie09} for the definition of the Gromov--Hausdorff--Prokhorov metric.

\begin{theorem} 
\label{convGHP}
Let $ \mathcal{T}_{n}$ be a uniformly distributed rooted plane triangulation with $n+1$ vertices, and
let  $\mathrm{d^n_{gr}}$
denote the graph distance on $\mathsf{V}(\t_n)$.
If $\mu_{n}$ denotes the uniform probability measure on $\mathsf{V}(\t_n)$, we have 
$$ (\mathsf{V}( \mathcal{T}_{n}), 3^{1/4}n^{-1/4} \mathrm{d^n_{gr}}, \mu_{n}) \xrightarrow[n\to\infty]{\rm(d)} ( \mathbf{m}_{\infty}, D^*, \mu),$$ in distribution for the Gromov--Hausdorff--Prokhorov topology, where $( \mathbf{m}_{\infty}, D^*)$ is the Brownian map and $\mu$ is the volume 
measure on $\bm_\infty$.
\end{theorem}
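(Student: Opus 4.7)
The plan is to upgrade the Gromov--Hausdorff convergence for type I triangulations, which was established by combining \cite{LG11} with the universality arguments of \cite{AA13}, to Gromov--Hausdorff--Prokhorov convergence by tracking the natural reference measure through the encoding bijection used in the proof. Recall that this proof proceeds via a bijection associating with $\t_n$ a labeled plane tree $T_n$ having a number of vertices of order $n$, and that the graph distances in $\t_n$ are expressed through the label process on $T_n$. After rescaling, the contour and label processes of $T_n$ converge jointly in distribution to the pair $(\mathbf{e}, Z)$, where $\mathbf{e}$ is a normalized Brownian excursion and $Z$ is the head of the associated Brownian snake; the Brownian map $(\bm_\infty, D^*)$ is defined as the quotient of $[0,1]$ under an equivalence relation coming from $(\mathbf{e}, Z)$, and the volume measure $\mu$ is the push-forward of Lebesgue measure on $[0,1]$ under the canonical projection $p : [0,1] \to \bm_\infty$.

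Let $\sigma_n$ be the length of the contour sequence of $T_n$, and, for $t \in [0,1]$, let $\phi_n(t) \in \mathsf{V}(\t_n)$ be the vertex of $\t_n$ corresponding to the vertex of $T_n$ visited at contour time $\lfloor \sigma_n t\rfloor$. Set $\tilde\mu_n := (\phi_n)_* \mathrm{Leb}$. The convergence of the rescaled contour and label processes, together with the continuity of the projection $p$, yields by arguments analogous to those in \cite{LG11,AA13} that, jointly with the Gromov--Hausdorff convergence of $(\mathsf{V}(\t_n), 3^{1/4}n^{-1/4}\mathrm{d^n_{gr}})$ to $(\bm_\infty, D^*)$, one has
$$(\mathsf{V}(\t_n),\, 3^{1/4}n^{-1/4}\mathrm{d^n_{gr}},\, \tilde\mu_n) \xrightarrow[n\to\infty]{\rm(d)} (\bm_\infty, D^*, \mu)$$
for the Gromov--Hausdorff--Prokhorov topology. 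The final step is to replace $\tilde\mu_n$ by the uniform measure $\mu_n$: once we know $\|\mu_n - \tilde\mu_n\|_{\mathrm{TV}} \to 0$ in probability, GHP convergence of $(\mathsf{V}(\t_n), 3^{1/4}n^{-1/4}\mathrm{d^n_{gr}}, \mu_n)$ to $(\bm_\infty, D^*, \mu)$ follows from the triangle inequality for the Prokhorov distance.

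The main technical obstacle is thus the bound $\|\mu_n - \tilde\mu_n\|_{\mathrm{TV}} \to 0$. By construction, $\tilde\mu_n(\{v\})$ is proportional to the number of visits of the contour to the vertex of $T_n$ associated with $v$, which, up to a bounded additive constant, is just the degree of that vertex in $T_n$. Since $T_n$ is, up to standard conditioning on its size, a critical Galton--Watson tree with finite-variance offspring distribution (see \cite{AA13} for the concrete description of the encoding tree for type I triangulations), a direct second-moment argument on the degree sequence shows that $\|\mu_n - \tilde\mu_n\|_{\mathrm{TV}} \to 0$ in probability. This is the only step that genuinely requires an input specific to the type I setting; the remainder of the argument is a direct consequence of the construction of the scaling limit, and \eqref{densite-uniform} then follows from Theorem \ref{convGHP} by a standard compactness argument using that the volume measure $\mu$ almost surely has full support on $\bm_\infty$.
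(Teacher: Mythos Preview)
Your argument has a genuine gap: the claim that $\|\mu_n - \tilde\mu_n\|_{\mathrm{TV}} \to 0$ is false. As you yourself observe, $\tilde\mu_n(\{v\})$ is proportional to the number of contour visits of the tree vertex associated with $v$, which is essentially its degree in $T_n$. In a conditioned critical Galton--Watson tree, the empirical degree distribution converges to a nondegenerate law of mean $2$; in particular a positive fraction of vertices are leaves. Hence
\[
\|\mu_n - \tilde\mu_n\|_{\mathrm{TV}} \approx \frac{1}{2n}\sum_{v\in T_n}\Big|1 - \tfrac{1}{2}\deg_{T_n}(v)\Big|
\]
converges to $\tfrac12\,\mathbb{E}\big[|1-D/2|\big]>0$, where $D$ is the limiting degree of a typical vertex. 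No second-moment argument can rescue this: the obstruction is first-moment. The two measures do both converge to $\mu$ in the GHP sense, but not because they are close to each other in total variation.

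The paper's proof avoids this trap by never comparing $\mu_n$ and $\tilde\mu_n$. It works directly with the construction of \cite[Section~8]{LG11} (the type~I case is handled there, not in \cite{AA13}, which treats \emph{simple} triangulations): listing the vertices $w^n_0,\dots,w^n_{n-1}$ of $\t_n\setminus\{o_n\}$ in order of first appearance in the white contour sequence, \cite[Proposition~8.2]{LG11} gives that the first-visit times satisfy $\sup_t|k_n^{-1}\Lambda^n_{\lfloor nt\rfloor}-t|\to 0$ in probability. Composing this time-change with the uniform convergence of rescaled contour-parametrized distances yields uniform convergence of $(s,t)\mapsto 3^{1/4}n^{-1/4}\mathrm{d}^n_{\mathrm{gr}}(w^n_{\lfloor ns\rfloor},w^n_{\lfloor nt\rfloor})$ to $D^*$, after which an explicit correspondence and coupling, together with \cite[Proposition~6]{Mie09}, give the GHP convergence. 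The key input you are missing is precisely this uniform spreading of first-visit times, which is a statement about the \emph{time allocation} of the contour to vertices, not about total variation of the induced measures.
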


\begin{proof} 
We first recall that the Brownian map  is defined in terms of a random continuous function $D^*$ on $[0,1]\times [0,1]$. The mapping
$(s,t)\mapsto D^*(s,t)$ is a pseudo-metric on $[0,1]$, and if one considers the associated equivalence relation (namely,
$s\sim t$ if and only if $D^*(s,t)=0$), the Brownian map $\bm_\infty$ is the quotient space $[0,1]/\sim$, which is equipped
with the distance induced by $D^*$. We write $\bp$ for the projection from $[0,1]$ onto $\bm_\infty$
and note that $\bp(1)=\bp(0)$. The volume measure $\mu$ on $\bm_\infty$
is just the image of Lebesgue measure on $[0,1]$ under $\bp$. 

Let us recall some ingredients from the proof in \cite[Section 8]{LG11}, to which we refer for more details. It is convenient to consider a 
vertex $o_n$ uniformly distributed over $\mathsf{V}(\t_n)$. Note that it is enough to
prove that the convergence of the theorem holds when $\mathsf{V}( \mathcal{T}_{n})$ is replaced by 
$\mathsf{V}( \mathcal{T}_{n})\backslash\{o_n\}$ and $\mu_n$ is replaced by the uniform probability
measure $\nu_n$ on $\mathsf{V}( \mathcal{T}_{n})\backslash\{o_n\}$.

 Say that an edge of $\t_n$ is special if both ends of this edge are at the same
graph distance from $o_n$ (in particular, loops are special). Define another planar map $\wt\t_n$ by adding a new vertex 
at the ``middle'' of every special edge, and write $\mathsf{V}(\wt\t_n)\supset \mathsf{V}(\t_n)$ for the vertex set of $\wt \t_n$. 
The function $(u,v)\mapsto \mathrm{d}^n_{\mathrm{gr}}(u,v)$ defined on $\mathsf{V}(\t_n)\times\mathsf{V}(\t_n)$ is then extended
to $\mathsf{V}(\wt\t_n)\times\mathsf{V}(\wt\t_n)$ by declaring that the distance $\mathrm{d}^n_{\mathrm{gr}}(u,v)$ between $u\in \mathsf{V}(\wt\t_n)$
and $v\in \mathsf{V}(\wt\t_n)$ is the minimal length of a path from $u$ to $v$, assuming that edges of $\wt\t_n$ that correspond to
non-special edges of $\t_n$
have length $1$ (as usual) whereas edges of $\wt\t_n$ obtained by the splitting of a special edge of $\t_n$
have length $1/2$ (see \cite[Section 8.3]{LG11} for more details). 

According to \cite[Section 8]{LG11}, we can find an integer $k_n\geq n$ (which depends on $\t_n$)
and a mapping $j\mapsto v^n_j$ from $\{0,1,2,\ldots,k_n\}$ onto $\mathsf{V}(\wt\t_n)\backslash \{o_n\}$
(called the white contour sequence in \cite{LG11}), such that 
we have the convergence in distribution 
\begin{equation}
\label{convtoBM}
\Big(3^{1/4}n^{-1/4}\mathrm{d}^n_{\mathrm{gr}}(v^n_{\lfloor k_ns\rfloor},v^n_{\lfloor k_nt\rfloor})\Big)_{s,t\in[0,1]} \build{\la}_{n\to\infty}^{\rm(d)} 
\Big( D^*(s,t)\Big)_{s,t\in[0,1]}
\end{equation}
in the sense of the uniform convergence of continuous functions on $[0,1]^2$.  We refer to (58) and (59) in \cite{LG11} for the convergence \eqref{convtoBM}, which is
indeed a key ingredient of the proof of the convergence of rescaled triangulations to the Brownian map. 
By using the Skorokhod representation theorem, we may and will assume that the triangulations $\t_n$
have been constructed so that the convergence \eqref{convtoBM} holds a.s.

Write $w^n_0,w^n_1,\ldots,w^n_{n-1}$
for the vertices of $\mathsf{V}(\t_n)\backslash\{o_n\}$ listed in their order of appearance in the sequence $v^n_0,v^n_1,\ldots,v^n_{k_n}$. 
For every $j\in\{0,1,\ldots,k_n\}$, let $L^n_j$ be the number of distinct vertices of $\mathsf{V}(\t_n)\backslash\{o_n\}$
in the sequence $v^n_0,\ldots,v^n_j$. By Proposition 8.2 in \cite{LG11}, we have 
\begin{equation}
\label{unifo-repart}
\sup_{0\leq t\leq 1} | n^{-1} L^n_{\lfloor k_nt\rfloor} -t| \build{\la}_{n\to\infty}^{} 0,
\end{equation}
in probability. Also set $\Lambda^n_i=\min\{j\in\{0,1,\ldots,k_n\}: v^n_j= w^n_i\}$, for every $i\in\{0,1,\ldots,n-1\}$.
As a straightforward consequence of \eqref{unifo-repart}, we have 
\begin{equation}
\label{unifo-repart2}
\sup_{0\leq t\leq 1} | k_n^{-1} \Lambda^n_{\lfloor nt\rfloor} -t| \build{\la}_{n\to\infty}^{} 0,
\end{equation}
in probability. Writing $w^n_{\lfloor ns\rfloor}=v^n_{\Lambda^n_{\lfloor ns\rfloor}}$, we can now combine the convergence \eqref{convtoBM} (assumed to hold a.s.) with \eqref{unifo-repart2}
to get
\begin{equation}
\label{convtoBM2}
\sup_{s,t\in[0,1]}\Big| D^*(s,t)-3^{1/4}n^{-1/4}\mathrm{d}^n_{\mathrm{gr}}(w^n_{\lfloor ns\rfloor},w^n_{\lfloor nt\rfloor})\Big| \build{\la}_{n\to\infty}^{}
0,
\end{equation}
in probability. 

Let $\mathcal{R}_n=\{(\bp(s),w^n_{\lfloor ns\rfloor}): s\in [0,1)\}$, which is a compact correspondence between $(\bm_\infty, D^*)$
and $(\mathsf{V}(\t_n)\backslash\{o_n\},\mathrm{d}^n_{\mathrm{gr}})$. Also let $\pi$ be the probability measure on $\bm_\infty\times (\mathsf{V}(\t_n)\backslash\{o_n\})$
defined by
$$\langle \pi,\phi\rangle=\int_0^1 \mathrm{d}s\,\phi(\bp(s),w^n_{\lfloor ns\rfloor}).$$
 Plainly the first and second marginals of $\pi$ are $\mu$ and $\nu_n$ respectively, and moreover $\pi$ is supported
 on $\mathcal{R}_n$ by construction. According to \cite[Proposition 6]{Mie09}, the desired Gromov--Hausdorff--Prokhorov convergence
 will follow if we can check that the distortion of $\mathcal{R}_n$ converges to $0$ in probability as $n\to\infty$. However, this is an 
 immediate consequence of \eqref{convtoBM2}. 
\end{proof}

Let us now explain why \eqref{densite-uniform} follows from Theorem \ref{convGHP}. As in \eqref{densite-uniform}, we consider, for every $n\geq 1$, a 
sequence $(o^n_j)_{j\geq 1}$ of vertices chosen independently uniformly over $\mathsf{V}(\t_n)$. From \cite[Proposition 10]{Mie09}
and the preceding theorem, we get that, for every $k\geq 1$, the random $k$-pointed metric spaces 
$\big( ( \mathsf{V}( \mathcal{T}_{n}), 3^{1/4} n^{-1/4}\mathrm{d_{gr}}), ( o^n_{i})_{1 \leq i \leq k}\big) $ converge in distribution, in the sense
of the $k$-pointed Gromov--Haus\-dorff metric, to $\big( ( \mathbf{m}_{\infty}, D^*), ( \bp(\xi_{i}))_{1 \leq i \leq k}\big)$, where 
$\xi_1,\xi_2,\ldots$ are i.i.d. uniform random variables on $[0,1]$. On the other hand, for any fixed $\delta>0$ and $\ve>0$, we can choose an integer $N$
such that, with probability at least $1-\delta/2$, any point of $\bm_\infty$ lies within distance at most $\ve/2$ from one of the points
$\bp(\xi_{1}),\ldots,\bp(\xi_N)$. Using the preceding convergence of random $k$-pointed metric spaces (with $k=N$), we obtain that 
\eqref{densite-uniform} holds for all sufficiently large $n$. The small values of $n$ can then be handled by taking $N$
even larger if necessary.

\section*{Appendix A2}
This appendix gathers a few estimates about vertex degrees in random triangulations. Although these results will not be surprising to experts of the field, we were not able to locate precise references dealing explicitly with our case of type I triangulations.

\begin{proposition} \label{prop:subexpo} Let $p \geq 1$ and let $ \mathcal{T}^{(p)}$ be a Boltzmann triangulation of the $p$-gon. We denote by $   \mathcal{D}_{p}$ the degree (i.e.~the number of incident half-edges) of the root vertex in $ \mathcal{T}^{(p)}$. There exist two constants $K_{0}$ and $\lambda < 1$ which do not depend on $p$, such that, for every $k\geq 1$, 
$$ \mathbb{P}( \mathcal{D}_{p} \geq k) \leq K_{0} \lambda^k.$$
\end{proposition}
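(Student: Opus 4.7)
The plan is to establish the sub-exponential tail by a combinatorial decomposition that cuts open the root vertex, and then to estimate the resulting ratios of partition functions via the explicit formula \eqref{eq:zp}.

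Concretely, fix $d\geq 3$ and take $\mathrm{t}\in\mathcal{T}_{n,p}$ with root vertex $\rho$ of degree $d$. Setting aside for the moment the possibility of loops, the triangles incident to $\rho$ form a fan of $d-1$ triangles lying between the two boundary edges of the $p$-gon at $\rho$. Cutting open $\mathrm{t}$ at $\rho$---that is, removing $\rho$, its $d$ incident edges, and the $d-1$ incident triangles, and taking as new boundary the concatenation of the $p-2$ boundary edges of the $p$-gon not incident to $\rho$ with the $d-1$ outer edges of the fan---produces a triangulation of a polygon of perimeter $p+d-3$ in which the $d-2$ non-boundary neighbors of $\rho$ have become boundary vertices, so that the inner-vertex count drops from $n$ to $n-d+2$. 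Rooting the new map canonically (say, at the outer edge of the triangle just above the original root edge), this operation is injective and yields
\begin{equation*}
\sum_{n\geq 0} (12\sqrt{3})^{-n}\,\#\{\mathrm{t}\in\mathcal{T}_{n,p}:\mathcal{D}_p(\mathrm{t})=d\} \;\leq\; (12\sqrt{3})^{-(d-2)}\,Z(p+d-3),
\end{equation*}
the factor $(12\sqrt{3})^{-(d-2)}$ accounting for the inner vertices of $\mathrm{t}$ that migrate to the boundary of the cut triangulation.

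From the explicit formula \eqref{eq:zp} one computes $Z(p+1)/Z(p)=6(2p-3)/(p+1)$, which is bounded above by $12$ for all $p\geq 2$, while $Z(2)/Z(1)$ is bounded by an absolute constant. Iterating gives $Z(p+d-3)/Z(p)\leq K\cdot 12^{d}$ uniformly in $p\geq 1$. Dividing the displayed inequality above by $Z(p)$ we obtain
\begin{equation*}
\P(\mathcal{D}_p=d) \;\leq\; K\,\Bigl(\frac{12}{12\sqrt{3}}\Bigr)^{d} \;=\; K\,3^{-d/2},
\end{equation*}
from which the statement of the proposition follows by summing over $d\geq k$, with $\lambda$ any number in $(1/\sqrt{3},1)$.

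The main obstacle I anticipate is making the cutting-open bijection fully rigorous when the root vertex carries loops, or when distinct fan triangles share vertices other than $\rho$---situations which are allowed since we work with type I triangulations. In such degenerate cases the ``new boundary'' obtained after the cut may fail to be simple, and one must either work with triangulations having a non-simple boundary (treating them via gluings of simpler pieces) or, more robustly, replace the global cut-open by an iterative peeling of the fan one triangle at a time, in which each step decreases the degree of $\rho$ by one while modifying the perimeter of the remaining region in a controlled way. Either route produces the same geometric rate, possibly with a larger multiplicative constant $K_0$.
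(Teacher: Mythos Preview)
Your approach is genuinely different from the paper's. The paper does not cut open the root vertex in one shot; instead it runs a peeling exploration around $\rho$: at each step it reveals the triangle sitting on the boundary edge pointing into $\rho$, which either (i) discovers a new vertex, (ii) hits a boundary vertex $\neq \rho$, (iii) hits $\rho$ itself (creating a loop and \emph{branching} the exploration into two independent subtriangulations both still incident to $\rho$), or (iv) stops (when $p=2$ and the edge-triangulation is reached). This is coded by a multi-type branching process whose types are the boundary lengths of the subregions still attached to $\rho$; the transitions are explicit in terms of the $Z(k)$'s, and after a reduction to three types one checks numerically that the mean matrix has spectral radius $<1$, so the total progeny --- hence $\mathcal{D}_p$ --- has an exponential tail uniformly in $p$.

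The gap you flag is real and, in the type I setting, somewhat worse than you state. When fan vertices coincide (with each other, with $\rho$ via loops, or with boundary vertices reached by interior edges), two things fail simultaneously: the new boundary is non-simple \emph{and} the drop in the inner-vertex count is strictly smaller than $d-2$. The second point matters, because the factor $(12\sqrt{3})^{-(d-2)}$ in your displayed inequality is exactly what beats the $12^{d}$ growth of the partition function; if only $m<d-2$ inner vertices migrate to the boundary you get $(12\sqrt{3})^{-m}$ instead, and the bound no longer decays in $d$. So the injection does not merely land in the wrong target space --- the weight accounting itself breaks. Your second proposed fix, peeling the fan one triangle at a time, is essentially what the paper implements, and the reason it succeeds is that the loop case is handled by letting the process \emph{branch} rather than by trying to excise the whole neighbourhood of $\rho$ at once. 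What your global cut buys, when it applies (e.g.\ in the type II setting), is a short argument with an explicit rate $\lambda=3^{-1/2}$; the paper's route is longer but robust to the pathologies of type I.
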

\proof We denote the origin vertex of $ \mathcal{T}^{(p)}$ by $\rho^{(p)}$. The idea is to explore the neighborhood of $\rho^{(p)}$ from left to right using the peeling process and discarding the parts 
that are useless to determine the degree of the root vertex. This is similar to the proof \cite[Lemma 4.2]{AS03} in the case of type II triangulations, but the case of type I triangulations is  trickier. The peeling process of Boltzmann triangulations is studied in detail in \cite{BCK15}, and we will briefly recall the properties that we need. 

We assume that $\t^{(p)}$ is drawn in the plane so that the unbounded face is the bottom face, and the bottom cycle is then
oriented counterclockwise (in agreement with our convention that the bottom face lies on the right of the root edge). We start by revealing the (finite) face incident to the edge of the boundary whose 
terminal vertex is $\rho^{(p)}$. There are several possibilities, which are illustrated in Fig.~\ref{fig:degree} and whose respective probabilities are expressed in terms of the quantities $Z(k)$ given in \eqref{eq:zp}.
\begin{itemize}
\item The revealed triangle has a new vertex in $ \mathcal{T}^{(p)}$. This event happens with a probability equal to $ \frac{Z(p+1)}{12 \sqrt{3} Z(p)}$. In this case, the remaining triangulation, obtained after removing the discovered triangle, is distributed as $ \mathcal{T}^{(p+1)}$.
\item The third vertex of the revealed triangle belongs the boundary of $ \mathcal{T}^{(p)}$ 
and there are $k$ edges of the boundary, for some $k \in\{1,2, \ldots, p-1 \}$, on the path going from the root vertex to this third vertex along the
boundary, in counterclockwise order. This event happens with probability $ \frac{Z(k+1)Z(p-k)}{Z(p)}$. On this event, the removal of the discovered triangle splits the triangulation into two subtriangulations which are distributed respectively as $ \mathcal{T}^{(k+1)}$ and $ \mathcal{T}^{(p-k)}$. For the remaining part of the argument, we need only consider the subtriangulation (distributed as $ \mathcal{T}^{(k+1)}$) whose boundary contains the root vertex (we discard the hatched part in Fig.~\ref{fig:degree}).

\item The third vertex of the revealed triangle is the root vertex $\rho^{(p)}$. On this event, which happens with probability $Z(1)$, the root vertex is incident to two subtriangulations distributed respectively as $ \mathcal{T}^{(1)}$ and $ \mathcal{T}^{(p)}$. We then need to continue the exploration in each of these subtriangulations (we may say that the exploration branches).
\item Finally, when $p=2$, there is a special case: with probability $ Z(2)^{-1}$ the triangulation of the $2$-gon that we obtain is  the edge-triangulation and the exploration process stops.
\end{itemize}

\medskip
\begin{figure}[!h]
 \begin{center}
 \includegraphics[width=0.9\linewidth]{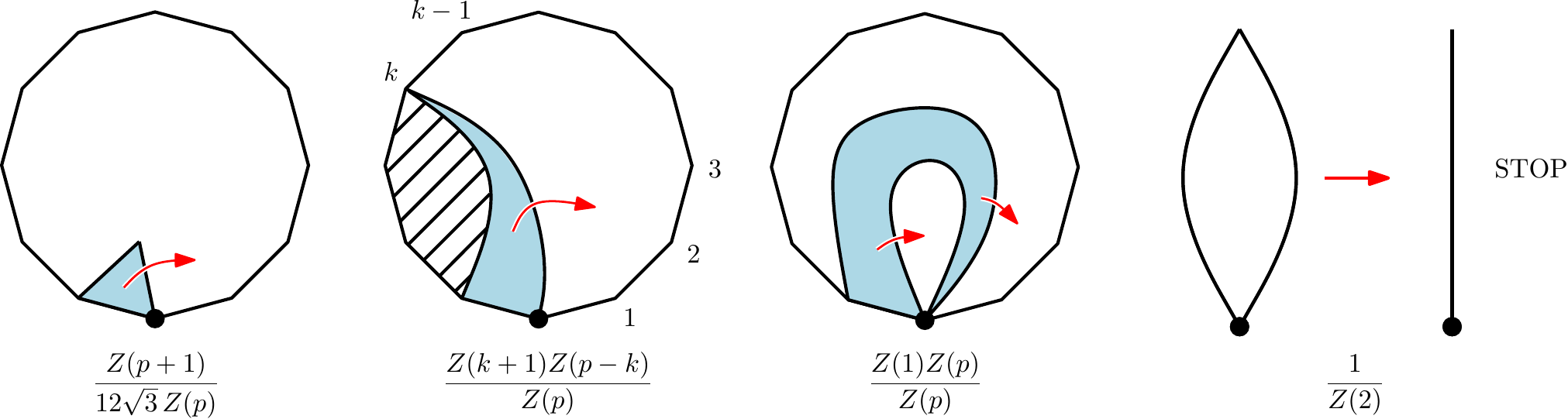}
 \caption{ \label{fig:degree}Discovering the triangle incident to the edge of the boundary whose terminal vertex is the root vertex. In the first two cases on the left, we continue discovering the triangles incident to the root vertex in the ``unknown'' part of the triangulation that is incident to the root vertex. In the third case, we
 need to  continue the exploration in the two unknown parts (they are both incident to the root vertex). In the last case the exploration stops.}
 \end{center}
 \end{figure} 
 
 \vspace{-5mm}

The above exploration allows us to discover the degree of the root vertex. Note that the exploration branches when the peeling of a face splits the triangulation into two subtriangulations that
are both incident to the root vertex. Recording the perimeters of the successive ``subtriangulations'' incident  to the root vertex
that pop up during the exploration leads to a (discrete time) multi-type branching process $ \mathcal{B}$ where the types of the particles are in $\{1,2,3, ... \}$. Furthermore, the branching transitions
are described as follows: for $p \geq 1$ and $ 1 \leq k \leq p-1$,
 \begin{eqnarray} \label{eq:branchingrules} \begin{array}{ll}
\hbox{an individual of type }p \hbox{ has 1 child of type } p+1 & \mbox{ with probab. }  \frac{Z(p+1)}{12 \sqrt{3} Z(p)}\\
\noalign{\smallskip}
\hbox{an individual of type }p \hbox{ has 1 child of type } k+1 & \mbox{ with probab. }  \frac{Z(k+1)Z(p-k)}{Z(p)}\\
\hbox{an individual of type }p \hbox{ has 2 children of respective types }1\hbox{ and }p &\mbox{ with probab. }  Z(1)\\
\hbox{an individual of type }2 \hbox{ has no child} & \mbox{ with probab. }  Z(2)^{-1}.
\end{array}  \end{eqnarray}
The total degree of the root vertex in $ \mathcal{T}^{(p)}$ is bounded above by twice the total number of individuals in the branching process $ \mathcal{B}$ starting from a single particle of type $p \geq 1$. The proof of the proposition is then completed by the following lemma. 

\begin{lemma} Let $ \mathcal{N}_{p}$ be the total number of particles in a multi-type branching process with branching transitions described in \eqref{eq:branchingrules} and started from a single particle of type $p \geq 1$. Then there exist two positive constants $K_{1}>0$ and $0<\wt\lambda <1$, which do not depend on $p$, such that, for all $k \geq 1$,
$$ \mathbb{P}( \mathcal{N}_{p} \geq k ) \leq K_{1} \wt\lambda^k.$$
\end{lemma}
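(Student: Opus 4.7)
My plan is to decompose the multi-type branching process $\mathcal B$ into a ``main lineage'' evolving as a Markov chain on types, plus independently spawned sublineages all of which start from type $1$. Concretely, I would rewrite the transitions \eqref{eq:branchingrules} as a three-way choice per step for a particle of type $p$: (a) ``move'' to a new type (with probability $1-Z(1)-\delta_{p,2}Z(2)^{-1}$), (b) ``spawn'' -- stay at type $p$ while producing an independent type-$1$ child (with probability $Z(1)$), or (c) ``die'' (with probability $\delta_{p,2}Z(2)^{-1}$, nonzero only when $p=2$). Following a particle through events (a)-(b) (and identifying the type-$p$ child of a spawn as the main lineage) yields a Markov chain $(Y_t)$ on $\{1,2,\ldots\}\cup\{\dagger\}$ with (random) lifetime $L_p$ and number of spawns $S_p\leq L_p$, together with the stochastic identity
\[
\mathcal N_p \;=\; L_p \;+\;\sum_{i=1}^{S_p}\mathcal N_1^{(i)},
\]
where the $(\mathcal N_1^{(i)})$ are i.i.d.\ copies of $\mathcal N_1$, independent of $(L_p,S_p)$.

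The first step is to establish a uniform exponential tail $\mathbb P(L_p>n)\leq Ce^{-cn}$ for the main lineage. The crucial observation is that for every $p\geq 2$ the one-step probability of reaching type $2$ satisfies
\[
P_{p,2}\;=\;\frac{Z(2)\,Z(p-1)}{Z(p)}\;\geq\;Z(1)\;>\;0.
\]
This is the main computational input: using the closed form \eqref{eq:zp}, one checks that $p\mapsto Z(p-1)/Z(p)$ attains its minimum over $p\geq 2$ at $p=2$ (where $Z(p-1)/Z(p)=Z(1)/Z(2)$), giving the bound above. Since from type $2$ the chain dies in the next step with probability $Z(2)^{-1}>0$, the main lineage dies within two consecutive steps with probability at least $Z(1)Z(2)^{-1}$ regardless of its current type; comparison with a geometric random variable then gives the exponential tail, with constants independent of $p\geq 2$ (the case $p=1$ is reduced to $p\geq 2$ by waiting one step, during which the chain leaves type $1$ with probability $(2+\sqrt 3)/4$).

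The second step is to close the recursion for $\mathcal N_1$. Since at each step of the main lineage the ``spawn'' event has unconditional probability $Z(1)$ (independent of the current type), one has $\mathbb E[S_p]=Z(1)\,\mathbb E[L_p]$; a short analysis of the linear system satisfied by $(\mathbb E[L_p])_{p\geq 1}$ (using the transition probabilities made explicit in the first step) gives $\mathbb E[S_1]<1$. Taking Laplace transforms in the identity for $p=1$ and setting $\phi(\lambda):=\mathbb E[\lambda^{\mathcal N_1}]$, one obtains the fixed-point equation
\[
\phi(\lambda)\;=\;\mathbb E\big[\lambda^{L_1}\phi(\lambda)^{S_1}\big];
\]
at $(\lambda,\phi)=(1,1)$ both sides are $1$ and the partial derivative of the right-hand side in $\phi$ is $\mathbb E[S_1]<1$, so the implicit function theorem produces a continuous branch of finite solutions $\phi(\lambda)\leq M$ on a right-neighborhood $[1,\lambda_1]$ of $1$, which standard monotonicity identifies with $\mathbb E[\lambda^{\mathcal N_1}]$. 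For general $p$, using $S_p\leq L_p$ and $\phi(\lambda)\geq 1$,
\[
\mathbb E[\lambda^{\mathcal N_p}]\;=\;\mathbb E\big[\lambda^{L_p}\phi(\lambda)^{S_p}\big]\;\leq\;\mathbb E\big[(\lambda\phi(\lambda))^{L_p}\big],
\]
and the right-hand side is uniformly bounded in $p$ for $\lambda$ close enough to $1$ that $\lambda\phi(\lambda)<e^{c}$, with $c$ the rate from the first step. Markov's inequality then yields $\mathbb P(\mathcal N_p\geq k)\leq K_1\widetilde\lambda^k$ uniformly in $p$.

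The main obstacle, and the only step really specific to this problem, is the uniform lower bound $P_{p,2}\geq Z(1)$ in the first step: this is the ``drift toward the cemetery at type~$2$'' that powers everything else, and it requires a direct combinatorial manipulation of \eqref{eq:zp}; once it is in hand, the remaining steps are routine for a subcritical multi-type branching process with bounded per-step offspring.
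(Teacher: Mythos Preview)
Your approach is sound and genuinely different from the paper's. The paper constructs a dominating three-type branching process $\mathcal B'$ (types $\mathbf 1,\mathbf 2,\overline{\mathbf 3}$, merging all types $p\ge3$) and verifies subcriticality by computing that the $3\times3$ mean matrix has spectral radius $<1$; you instead exhibit the ``spine plus type-$1$ immigrants'' structure and close a scalar fixed-point equation via the implicit function theorem. Both routes rest on the same numerical input---a uniform lower bound on the one-step probability of reaching type~$2$---and your bound $P_{p,2}\ge Z(1)$ is correct (indeed $Z(p-1)/Z(p)=p/(6(2p-5))\ge1/12>Z(1)/Z(2)$ for $p\ge3$). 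Your route is arguably more transparent about \emph{why} the process is subcritical, while the paper's route packages the numerical check into a single spectral-radius computation.

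There is, however, one step you gloss over that is not as short as you suggest: the inequality $\mathbb E[S_1]<1$. This is precisely the subcriticality of the induced Galton--Watson tree of lineages, hence the heart of the lemma. Your Step~1 bound is too weak to deliver it directly: from ``die within two steps with probability $\ge Z(1)Z(2)^{-1}$'' you only get $\mathbb E[L_p]\le 2Z(2)/Z(1)\approx 38.8$, so $\mathbb E[S_1]=Z(1)\mathbb E[L_1]\le 2Z(2)\approx 2.6$, which is useless. To close the argument you must exploit that the death probability \emph{from} type~$2$ is large (namely $Z(2)^{-1}\approx0.77$), not merely that type~$2$ is reached uniformly often. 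Concretely, passing to the reduced chain (spawn steps removed) and writing $\tilde m_p$ for its expected lifetime, the bound $\tilde P_{p,2}\ge Z(1)/(1-Z(1))$ for $p\ge3$ gives $\sup_{p\ge3}\tilde m_p\le\tilde m_2+(1-Z(1))/Z(1)$, and substituting into the explicit one-step recursion for $\tilde m_2$ then yields $\tilde m_2\lesssim 3$, hence $\tilde m_1\lesssim 4$ and $\mathbb E[S_1]=\frac{Z(1)}{1-Z(1)}\tilde m_1\lesssim 0.3<1$. This is only a few lines, but it is the exact analog of the paper's spectral-radius check and should be spelled out rather than asserted.
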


\proof We define another multi-type branching process $ \mathcal{B}'$ with only $3$ types of particles called $ \mathbf{1}, \mathbf{2}$ and $ \mathbf{\overline{3}}$, whose branching 
transitions are described as follows:
$$ \begin{array}{ll}
\hbox{an individual of type }\mathbf{1} \hbox{ has 1 child of type } \mathbf{2} & \mbox{ with probab. } \frac{Z(2)}{12 \sqrt{3}\, Z(1)}= 1-Z(1)\\
\hbox{an individual of type }\mathbf{2} \hbox{ has 1 child of type } \mathbf{\overline{3}} & \mbox{ with probab. } \frac{Z(3)}{12 \sqrt{3}\, Z(2)}\\
\hbox{an individual of type }\mathbf{\overline{3}} \hbox{ has 1 child of type } \mathbf{\overline{3}} & \mbox{ with probab. } 1- Z(1)- \frac{Z(2)}{12}\\
\hbox{an individual of type }\mathbf{2} \hbox{ has 1 child of type } \mathbf{2} & \mbox{ with probab. }  Z(1)\\
\hbox{an individual of type }\mathbf{\overline{3}} \hbox{ has 1 child of type } \mathbf{2} & \mbox{ with probab. }  \frac{Z(2)}{12}\\
\hbox{an individual of type }\mathbf{p} \in \{ \mathbf{1}, \mathbf{2}, \mathbf{\overline{3}}\} \hbox{ has 2 children of types }1\hbox{ and }p& \mbox{ with probab. }  Z(1)\\
\hbox{an individual of type }\mathbf{2}\hbox{ has no child } & \mbox{ with probab. }  Z(2)^{-1}.\\
\end{array}$$
We can interpret these transition probabilities as follows. Starting from either type $\mathbf{1}$ or type $\mathbf{2}$, the transition probabilities are the same as
in \eqref{eq:branchingrules} except that all types $p\geq 3$ are merged into a single type $\overline{\mathbf{3}}$. The probability starting from type $\overline{\mathbf{3}}$ 
of having two children (of types $1$ and $\mathbf{\overline 3}$) is the same as the corresponding probability in \eqref{eq:branchingrules}. The other transitions from type $\overline{\mathbf{3}}$ are designed so that
the following property holds. The probability of the transition $  \mathbf{\overline{3}} \to \mathbf{2}$ in $\mathcal{B}'$ is equal to $Z(2)/12$  and is thus smaller, for any $p\geq 3$, than 
the probability of the transition $p \to 2$ in $\mathcal{B}$, which is equal to 
$$Z(2)\frac{Z(p-1)}{Z(p)} = Z(2)\frac{p}{6(2p-5)}. $$
Note that the probability of the last transition $  \mathbf{\overline{3}} \to \mathbf{\overline{3}}$ is 
set to $1-Z(1)-Z(2)/12$ so that the sum of the transitions starting from $ \mathbf{ \overline{3}}$ is equal to $1$. Using the preceding remarks, it is then easy
to verify that we can couple a branching process $ \mathcal{B}$ starting from a single particle $p$ and a branching process $ \mathcal{B}'$ starting from a single particle of type $ \mathbf{1}$ if $p=1$, of type $ \mathbf{2}$ if $p=2$, and of type $ \mathbf{\overline{3}}$ if $p \geq 3$, so that the total number of particles in $ \mathcal{B}'$ is larger than that in $ \mathcal{B}$.

Now observe that the matrix of the mean offspring numbers of each type in $\mathcal{B}'$ is given by 
$$ \left( \begin{array}{ccc} 
2Z(1) &Z(1) & Z(1)\\
\frac{Z(2)}{12 \sqrt{3}\, Z(1)} & 2 Z(1)& \frac{Z(2)}{12}\\
0 & \frac{Z(3)}{12 \sqrt{3}\, Z(2)} & 1 - \frac{Z(2)}{12}\end{array}
\right)$$
and from the explicit formulas \eqref{eq:zp} one checks that the spectral radius of this matrix is $0.917457... < 1$. It follows by classical results (see \cite[Chapter V]{AN72}) that the total number of particles in $\mathcal{B}$ (starting from any of the three possible types) has an exponential tail. This completes the proof of the proposition. \endproof

Recall our notation $ \t_{n}$ for a uniformly distributed plane triangulation with $n+1$ vertices.
\begin{lemma}
\label{max-degree}
Let $\mathsf{MD}( \mathcal{T}_{n})$ be the maximal degree of a vertex in $\t_n$. There exists $A>0$ such that 
$$\P(\mathsf{MD}( \mathcal{T}_{n})> A \log n)  \build{\la}_{n\to\infty}^{} 0.$$
\end{lemma}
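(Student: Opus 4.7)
The strategy is to combine the sub-exponential tail estimate for root-vertex degrees in Boltzmann triangulations (Proposition \ref{prop:subexpo} with $p=1$) with a Markov/re-rooting argument that transfers the control to the maximum vertex degree.

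First, I would exploit the fact that a Boltzmann triangulation $\mathcal{T}^{(1)}$ of the $1$-gon, conditioned on having $n$ inner vertices, is uniformly distributed on $\T_{n,1}$. Applying Proposition \ref{prop:subexpo} and keeping only the $n$-th term of the resulting mixture yields
$$\frac{(12\sqrt{3})^{-n}\,\#\T_{n,1}}{Z(1)}\,\P\big(\deg(\rho_n^{(1)})\geq k\text{ in }\t_n^{(1)}\big)\leq \P(\mathcal{D}_1\geq k)\leq K_0\,\lambda^k.$$
The asymptotic $(12\sqrt{3})^{-n}\#\T_{n,1}\sim C(1)\,n^{-5/2}$ from \eqref{eq:asymp} (bounded below by $c\,n^{-5/2}$ for every $n\geq 1$) then produces a constant $C_1>0$ such that
$$\P\big(\deg(\rho_n^{(1)})\geq k\text{ in }\t_n^{(1)}\big)\leq C_1\,n^{5/2}\,\lambda^k,\qquad \forall n\geq 1,\ k\geq 1.$$
Via the bijection of Fig.~\ref{fig:transform-root}, the root vertex of $\mathcal{T}_n$ coincides with the root vertex of the associated $\t_n^{(1)}$, and its degree in $\t_n^{(1)}$ is larger by exactly the additive constant $3$ (one extra edge from doubling the root edge, two extra half-edges from the added loop). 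Consequently the same bound holds for the root degree of $\mathcal{T}_n$: $\P(\deg(\rho_n)\geq k)\leq C_1\,n^{5/2}\,\lambda^k$.

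Second, I would transfer this estimate to all vertices by using the invariance of the law of $\mathcal{T}_n$ under re-rooting at a uniformly chosen oriented edge. A plane triangulation with $n+1$ vertices has exactly $6n-6$ oriented edges, and counting the oriented edges whose origin has degree at least $k$ gives
$$\E\Bigg[\sum_{v\in \mathsf{V}(\mathcal{T}_n)}\deg(v)\,\mathbf{1}_{\{\deg(v)\geq k\}}\Bigg]=(6n-6)\,\P\big(\deg(\rho_n)\geq k\big)\leq 6C_1\,n^{7/2}\,\lambda^k.$$
Since on the event $\{\mathsf{MD}(\mathcal{T}_n)\geq k\}$ at least one vertex $v$ has $\deg(v)\geq k$, we have $\sum_v\deg(v)\,\mathbf{1}_{\{\deg(v)\geq k\}}\geq k\,\mathbf{1}_{\{\mathsf{MD}(\mathcal{T}_n)\geq k\}}$, so by Markov's inequality
$$\P\big(\mathsf{MD}(\mathcal{T}_n)\geq k\big)\leq \frac{6C_1}{k}\,n^{7/2}\,\lambda^k.$$
Setting $k=\lceil A\log n\rceil$ gives a bound of order $n^{7/2+A\log\lambda}/\log n$, which tends to $0$ as $n\to\infty$ as soon as $A> 7/(2|\log\lambda|)$.

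There is no serious obstacle in this plan: once the uniform-in-$n$ exponential-in-$k$ bound on the root degree is in hand, the re-rooting/Markov step automatically yields the desired $O(\log n)$ control on the maximum degree, because the polynomial factor $n^{7/2}$ is amply absorbed by the exponential $\lambda^{A\log n}=n^{A\log\lambda}$ for $A$ large enough. The only mildly clever point is the observation that the polynomial decay of the Boltzmann mass $(12\sqrt{3})^{-n}\#\T_{n,1}$ is largely outweighed by the exponential decay in $k$ coming from Proposition \ref{prop:subexpo}, so that extracting a single term of the Boltzmann mixture costs only a polynomial factor.
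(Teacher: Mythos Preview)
Your proof is correct and follows essentially the same route as the paper: condition the Boltzmann triangulation of the $1$-gon on having $n$ inner vertices to get $\P(\deg(\rho_n)\geq k)\leq C\,n^{5/2}\lambda^k$ from Proposition~\ref{prop:subexpo}, then use re-rooting invariance to pass from the root degree to the maximum degree, picking up only a polynomial factor in $n$ that is swallowed by $\lambda^{A\log n}$. The only cosmetic differences are that the paper uses the cruder bound $\sum_v \mathbf{1}_{\{\deg(v)\geq k\}}\leq \sum_v \deg(v)\,\mathbf{1}_{\{\deg(v)\geq k\}}$ (losing your extra factor $1/k$, which is immaterial) and records $\deg_{\t_n}(\rho_n)\leq \deg_{\t_n^{(1)}}(\rho_n)$ rather than the exact difference $+3$.
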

\proof We write $ \mathrm{deg}_{G}(x)$ for the degree of a vertex $x$ in a graph $G$ to avoid confusion.
Let $ \mathcal{T}_{n}^{(1)}$ be a uniform triangulation of the $1$-gon with $n$ inner vertices. One can assume that $ \mathcal{T}_{n}$ is obtained from $ \mathcal{T}_{n}^{(1)}$ 
via the transformation of Fig.~\ref{fig:transform-root} . In particular the root vertex $ \rho_{n}$ of $ \mathcal{T}_{n}$ is also the root vertex of $ \mathcal{T}_{n}^{(1)}$, and we have $$ \mathrm{deg}_{ \mathcal{T}_{n}}(\rho_{n}) \leq \mathrm{deg}_{ \mathcal{T}_{n}^{(1)}}( \rho_{n}).$$
On the other hand, for any $k \geq 1$, if $ \mathcal{T}^{(1)}$ denotes a Boltzmann triangulation of the $1$-gon and 
$\rho^{(1)}$ stands for its root vertex,
$$ \mathbb{P}( \mathrm{deg}_{ \mathcal{T}_{n}^{(1)}}( \rho_{n}) \geq k) = \mathbb{P}(\mathrm{deg}_{ \mathcal{T}^{(1)}}( \rho^{(1)}) \geq k \mid \# {N}( \mathcal{T}^{(1)}) =n) \leq \frac{Z(1)}{(12 \sqrt{3})^{-n}\# \mathbb{T}_{n,1}}\  \mathbb{P}(\mathrm{deg}_{ \mathcal{T}^{(1)}}( \rho^{(1)}) \geq k).$$ 
Using \eqref{eq:asymp} and the case $p=1$ of Proposition \ref{prop:subexpo}, we get for some constants $C>0$ and $\lambda \in (0,1)$,
 $$ \mathbb{P}( \mathrm{deg}_{ \mathcal{T}_{n}}( \rho_{n}) \geq k)\leq  \mathbb{P}(\mathrm{deg}_{ \mathcal{T}_{n}^{(1)}}( \rho_{n})\geq k) \leq C n^{5/2} \lambda ^k.$$ 
We finally use the same argument as in the proof of Theorem \ref{main-FPP-tri} and we get by re-rooting invariance that, for every $k\geq1$,
  \begin{eqnarray*} \mathbb{P}( \exists x \in \mathsf{V}( \mathcal{T}_{n}) : \mathrm{deg}_{ \mathcal{T}_{n}}(x) \geq k) &\leq&  \mathbb{E}\left[ \sum_{x \in \mathsf{V}( \mathcal{T}_{n})}\mathbf{1}_{ \mathrm{deg}_{ \mathcal{T}_{n}}(x) \geq k}\right]\\ 
  &\leq&   \mathbb{E}\left[\sum_{x \in \mathsf{V}( \mathcal{T}_{n})} \mathrm{deg}_{ \mathcal{T}_{n}}(x) \mathbf{1}_{ \mathrm{deg}_{ \mathcal{T}_{n}}(x) \geq k}\right]\\
  & =& 6(n-1)\,\mathbb{P}( \mathrm{deg}_{ \mathcal{T}_{n}}(\rho_{n}) \geq k)\\
  &\leq& 6C n^{7/2} \lambda^k.  \end{eqnarray*}
Applying the last bound to $k=A \log n$ with $A > 4/ |\log \lambda|$ yields the desired result.\endproof

\smallskip
\noindent  \textsc{Nicolas Curien \& Jean-Fran\c cois Le Gall}\\
Laboratoire de Mathématiques d'Orsay, \\
Univ. Paris-Sud, CNRS, Université Paris-Saclay, \\
91405 Orsay, France.


\begin{thebibliography}{10}

\bibitem{Ab13}
{\sc C.~Abraham}, {\em Rescaled bipartite planar maps converge to the
  {B}rownian map}, Ann. Inst. H. Poincar\'e Probab. Statist. (to appear),
  arXiv:1312.5959.

\bibitem{AA13}
{\sc L.~Addario-Berry and M.~Albenque}, {\em The scaling limit of random simple
  triangulations and random simple quadrangulations}, arXiv:1306.5227.

\bibitem{AB14}
{\sc J.~Ambj{\o}rn and T.~Budd}, {\em Multi-point functions of weighted cubic
  maps}, Ann. Inst. H. Poincar{\'e} D (to appear), arXiv:1408.3040.

\bibitem{ADJ97}
{\sc J.~Ambj{\o}rn, B.~Durhuus, and T.~Jonsson}, {\em Quantum geometry: A
  statistical field theory approach}, Cambridge Monographs on Mathematical
  Physics, Cambridge University Press, Cambridge, 1997.

\bibitem{Ang05}
{\sc O.~Angel}, {\em Scaling of percolation on infinite planar maps, {I}},
  arXiv:0501006.

\bibitem{Ang03}
\leavevmode\vrule height 2pt depth -1.6pt width 23pt, {\em Growth and
  percolation on the uniform infinite planar triangulation}, Geom. Funct.
  Anal., 13 (2003), pp.~935--974.

\bibitem{ACpercopeel}
{\sc O.~Angel and N.~Curien}, {\em Percolations on infinite random maps,
  half-plane models}, Ann. Inst. H. Poincar\'e Probab. Statist., 51 (2014),
  pp.~405--431.

\bibitem{AS03}
{\sc O.~Angel and O.~Schramm}, {\em Uniform infinite planar triangulation},
  Comm. Math. Phys., 241 (2003), pp.~191--213.

\bibitem{AN72}
{\sc K.~B. Athreya and P.~E. Ney}, {\em Branching processes}, vol.~196 of Die
  Grundlehren der mathematischen Wissenschaften, Springer-Verlag, 1972.

\bibitem{ADH15}
{\sc A.~Auffinger, M.~Damron, and J.~Hanson}, {\em 50 years of first passage
  percolation}, arxiv.org/abs/1511.03262.

\bibitem{BCK15}
{\sc J.~Bertoin, N.~Curien, and I.~Kortchemski}, {\em Random planar maps and
  growth-fragmentations}, arXiv:1507.02265.

\bibitem{BJM13}
{\sc J.~Bettinelli, E.~Jacob, and G.~Miermont}, {\em The scaling limit of
  uniform random plane maps, via the {A}mbj{\o}rn-{B}udd bijection}, Electronic
  J. Probab., 19 (2014).

\bibitem{Bud15}
{\sc T.~Budd}, {\em The peeling process of infinite {B}oltzmann planar maps},
  arXiv:1506.01590.

\bibitem{CCuihpq}
{\sc A.~Caraceni and N.~Curien}, {\em Geometry of the uniform infinite
  half-planar quadrangulation}, arXiv:1508.00133.

\bibitem{CurKPZ}
{\sc N.~Curien}, {\em A glimpse of the conformal structure of random planar
  maps}, Commun. Math. Phys., 333 (2015), pp.~1417--1463.

\bibitem{CLGHull}
{\sc N.~Curien and J.-F. Le~Gall}, {\em The hull process of the {B}rownian
  plane}, Probab. Theory Related Fields (to appear), arXiv:1409.4026.

\bibitem{CLGpeeling}
\leavevmode\vrule height 2pt depth -1.6pt width 23pt, {\em Scaling limits for
  the peeling process on random maps}, Ann. Inst. H. Poincar\'e Probab.
  Statist. (to appear), arXiv:1412.5509.

\bibitem{CLGplane}
\leavevmode\vrule height 2pt depth -1.6pt width 23pt, {\em The {B}rownian
  plane}, J. Theoret. Probab., 27 (2014), pp.~1249--1291.

\bibitem{DDS08}
{\sc D.~Denisov, A.~B. Dieker, and V.~Shneer}, {\em Large deviations for random
  walks under subexponentiality: the big-jump domain.}, Ann. Probab., 36
  (2008), pp.~1946--1991.

\bibitem{FS09}
{\sc P.~Flajolet and R.~Sedgewick}, {\em Analytic combinatorics}, Cambridge
  University Press, Cambridge, 2009.

\bibitem{GR10}
{\sc J.~T. Gill and S.~Rohde}, {\em On the {R}iemann surface type of random
  planar maps}, Revista Mat. Iberoamericana, 29 (2013), pp.~1071--1090.

\bibitem{How04}
{\sc C.~D. Howard}, {\em Models of first-passage percolation}, in Probability
  on discrete structures, vol.~110 of Encyclopaedia Math. Sci., Springer,
  Berlin, 2004, pp.~125--173.

\bibitem{Kri05}
{\sc M.~Krikun}, {\em Local structure of random quadrangulations},
  arXiv:0512304.

\bibitem{Kri04}
\leavevmode\vrule height 2pt depth -1.6pt width 23pt, {\em A uniformly
  distributed infinite planar triangulation and a related branching process},
  J. Math. Sci. (N.Y.), 131 (2005), pp.~5520--5537.

\bibitem{Kri07}
\leavevmode\vrule height 2pt depth -1.6pt width 23pt, {\em Explicit enumeration
  of triangulations with multiple boundaries}, Electron. J. Combin., 14 (2007).

\bibitem{LeGallICM}
{\sc J.-F. Le~Gall}, {\em Random geometry on the sphere}, Proceedings ICM 2014,
  to appear.

\bibitem{LG11}
\leavevmode\vrule height 2pt depth -1.6pt width 23pt, {\em Uniqueness and
  universality of the {B}rownian map}, Ann. Probab., 41 (2013), pp.~2880--2960.

\bibitem{Lig85}
{\sc T.~M. Liggett}, {\em An improved subadditive ergodic theorem}, Ann.
  Probab., 13 (1985), pp.~1279--1285.

\bibitem{LPP95b}
{\sc R.~Lyons, R.~Pemantle, and Y.~Peres}, {\em Conceptual proofs of {L} log
  {L} criteria for mean behavior of branching processes}, Ann. Probab., 23
  (1995), pp.~1125--1138.

\bibitem{MieStFlour}
{\sc G.~Miermont}, {\em Aspects of random maps}, Lecture Notes from the 2014
  {S}t-{F}lour {P}robability {S}chool, preliminary version available at
  http://perso.ens-lyon.fr/gregory.miermont/coursSaint-Flour.pdf.

\bibitem{Mie08b}
\leavevmode\vrule height 2pt depth -1.6pt width 23pt, {\em An invariance
  principle for random planar maps}, in Fourth {C}olloquium on {M}athematics
  and {C}omputer {S}cience {A}lgorithms, {T}rees, {C}ombinatorics and
  {P}robabilities, 2006, pp.~39--57.

\bibitem{Mie09}
\leavevmode\vrule height 2pt depth -1.6pt width 23pt, {\em Tessellations of
  random maps of arbitrary genus}, Ann. Sci. \'Ec. Norm. Sup\'er. (4), 42
  (2009), pp.~725--781.

\bibitem{Mie11}
\leavevmode\vrule height 2pt depth -1.6pt width 23pt, {\em The {B}rownian map
  is the scaling limit of uniform random plane quadrangulations}, Acta Math.,
  210 (2013), pp.~319--401.

\bibitem{MW08}
{\sc G.~Miermont and M.~Weill}, {\em Radius and profile of random planar maps
  with faces of arbitrary degrees}, Electron. J. Probab., 13 (2008), pp.~no. 4,
  79--106.

\bibitem{MS13}
{\sc J.~Miller and S.~Sheffield}, {\em Quantum {L}oewner evolution},
  arXiv:1312.5745.

\bibitem{St14}
{\sc R.~Stephenson}, {\em Local convergence of large critical multi-type
  {G}alton-{W}atson trees and applications to random maps}, arXiv:1412.6911.

\bibitem{Stu15}
{\sc B.~Stufler}, {\em Scaling limits of random outerplanar maps with
  independent link-weights}, arXiv:1505.07600.

\bibitem{VW92}
{\sc M.~Q. Vahidi-Asl and J.~C. Wierman}, {\em A shape result for first-passage
  percolation on the {V}oronoi tessellation and {D}elaunay triangulation}, in
  Random graphs, {V}ol.\ 2 ({P}ozna\'n, 1989), Wiley-Intersci. Publ., Wiley,
  New York, 1992, pp.~247--262.

\bibitem{RemcoRGII}
{\sc R.~van~der Hofstad}, {\em Random graphs and complex networks. vol. ii},
  preliminary version available at http://www.win.tue.nl/~rhofstad/.

\bibitem{Wat95}
{\sc Y.~Watabiki}, {\em Construction of non-critical string field theory by
  transfer matrix formalism in dynamical triangulation}, Nuclear Phys. B, 441
  (1995), pp.~119--163.

\end{thebibliography}
\end{document}